\documentclass[12pt,reqno]{amsart}

\usepackage[shortlabels]{enumitem}
\usepackage{esint}
\usepackage{physics}
\usepackage{relsize}
\usepackage[backref]{hyperref}
\usepackage{mathtools}
\usepackage{amsfonts}
\usepackage[all]{xy}
\usepackage{geometry}
\usepackage{appendix}
\usepackage{stackengine}
\usepackage{esint}
\usepackage{mathtools}
\usepackage{amssymb}
\usepackage{xcolor}
\usepackage{tikz-cd}
\newtheorem{theorem}{Theorem}[section]
\newtheorem{lemma}[theorem]{Lemma}
\newtheorem{definition}[theorem]{Definition}

\theoremstyle{corollary}
\newtheorem{corollary}[theorem]{Corollary}
\newtheorem{conjecture}[theorem]{Conjecture}

\theoremstyle{assumption}

\theoremstyle{proposition}
\newtheorem{proposition}[theorem]{Proposition}
\theoremstyle{remark}
\newtheorem{remark}[theorem]{Remark}
\numberwithin{equation}{section}

\newcommand{\pp}{\bar\partial}

\newcommand{\ch}{\mathrm{ch}}
\newcommand{\rk}{\mathrm{rank}}

\newcommand{\length}{\mathrm{length}}
\newcommand{\gr}{\mathrm{Gr}}
\everymath{\displaystyle}
 \date{\today}

\begin{document}

\title{Algebraic limits of Hermitian-Yang-Mills connections on K\"ahler surfaces}
\author{Keshu Zhou}
\address{Department of Mathematics, University of California, Berkeley, CA 94720, USA} 
\email{keshu\_zhou@berkeley.edu}
\thanks{}

\begin{abstract}
We develop a geometric way to uniformly carry out H\"ormander's $L^2$-estimates for Hermitian-Yang-Mills connections with bounded energy on K\"ahler surfaces. We then show the Uhlenbeck convergence of \textit{smooth} HYM connections on K\"ahler surfaces can be realized as algebraic convergence inside an analytic flat family. As applications, we prove a continuity theorem for smoothable families and a boundedness result for $\mu$-semistable sheaves with fixed topology. We also discuss potential applications for the moduli problem.
\end{abstract}

\maketitle
\tableofcontents

\section{Introduction}

Let $(X,\omega)$ be a K\"ahler surface that is not necessarily compact, and $(E,h)\to X$ be a complex Hermitian vector bundle. Let $\{A_j\}$ be a sequence of \textit{smooth} Hermitian-Yang-Mills connections (abbreviated HYM below) on $(E,h)$ with bounded energy and Einstein constant, i.e. $A_j$ are unitary connections satisfying
\[
\sqrt{-1}\Lambda_\omega F_{A_j}=\mu_j\mathrm{Id}_{E},\quad F_{A_j}^{0,2}=0,
\]
and
\[
\sup_j\int_X\vert F_{A_j}\vert^2d\mu_g\leq\Lambda,\quad\sup_j\vert\mu_j\vert\leq\mu.
\]

By Uhlenbeck compactness, after possibly passing to a subsequence we may assume it converges weakly to an ideal HYM connection $(A_\infty,\mathcal{C}_\infty)$. The collection of connections $\{A_j\}_{j=1}^{\infty}$ naturally defines a collection of holomorphic vector bundles $\mathcal E_j$, $\mathcal E_\infty$ on $X$, and we are concerned with realizing Uhlenbeck convergence as algebraic convergence inside a flat analytic family. 

Our first main result achieves this \textit{locally}:
\begin{theorem}\label{thm--local convergence in a flat family}
    By passing to a subsequence and shrinking the domain $X$ to a coordinate ball $U$ with a unique bubbling point $0$, the following holds. There exists a coherent subsheaf $\mathcal F_\infty\subset\mathcal E_\infty$ such that
    \[
    \mathcal F_\infty\vert_{U\setminus\{0\}}=\mathcal E_\infty\vert_{U\setminus\{0\}}.
    \]
    Moreover, given any local presentation
    \[
    0\to\mathcal O_U^k\xrightarrow{R_\infty}\mathcal O_U^N\xrightarrow{q_\infty}\mathcal F_\infty\to 0,
    \]
    after shrinking to a nested sequence of coordinate balls
    \[
    B^{\prime}\subset\subset B\subset\subset B^+\subset\subset U,
    \]
 there exist holomorphic morphisms $q_j:\mathcal O_{B^+}^N\to\mathcal E_j$ and holomorphic matrices $R_j$ on $B$, such that
 \begin{itemize}
     \item Columns of $q_j$ converge to those of $q_\infty$ in the Uhlenbeck sense;
     \item For $j$ sufficiently large, we have exact sequences of sheaves over $B^{\prime}$,
     \[
     0\to\mathcal O_{B^{\prime}}^k\xrightarrow{R_j}\mathcal O_{B^{\prime}}^N\xrightarrow{q_j}\mathcal E_j\vert_{B^{\prime}}\to 0.
     \]
     Moreover, we can assume $R_j\to R_\infty$ in $C^{\infty}(\overline{B^{\prime}})$;
     \item There exists a uniform constant $C$ independent of $j$, such that for all $j$ large and every holomorphic $L^2$-section of $\mathcal E_j$ over $B$, we can find $a\in\Gamma(B,\mathcal O_B^N)$, such that
     \[
     q_ja=s,\qquad \Vert a\Vert_{L^2(B)}\leq C\Vert s\Vert_{L^2(B)}.
     \]
     Here the $L^2$-norm on $\mathcal O^N_B$ is induced by standard Hermitian metric, and the $L^2$-norm on $\mathcal E_j$ is induced by given Hermitian metric $h$ on $E$.
 \end{itemize}
 As a result, the algebraic multiplicity is equal to the analytic multiplicity, i.e.
 \[
 \mathrm{length}_{\mathcal{O}_{U,0}}(\mathcal F_\infty^{\vee\vee}/\mathcal F_\infty)=\lim_{j\to\infty}\frac{1}{8\pi^2}\int_{U}\left(\vert F_{A_j}\vert^2-\vert F_{A_\infty}\vert^2\right)d\mu_g.
 \]
\end{theorem}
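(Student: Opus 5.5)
The plan is to build $q_j$ and $R_j$ from holomorphic sections of $\mathcal E_j$ produced by H\"ormander's $L^2$-method, with estimates uniform in $j$, and then pass to the limit using Uhlenbeck convergence away from $0$.

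\textbf{Step 1: uniform $\bar\partial$-estimates.} On $U\setminus\{0\}$ the connections $A_j$ converge smoothly, modulo gauge, to $A_\infty$. Near $0$, energy concentrates but the HYM equation gives $\sqrt{-1}\Lambda_\omega F_{A_j}=\mu_j\mathrm{Id}$ with $|\mu_j|\le\mu$. I would twist $\mathcal E_j$ by a weight $e^{-\phi}$, with $\phi$ strictly plurisubharmonic on $B^+$, for example $\phi=C|z|^2+\epsilon\log|z|$. The Bochner--Kodaira--Nakano curvature term for $\bar\partial$ on $(0,1)$-forms with values in $E$ then involves only $\Lambda F$, because on a surface the $(0,1)$-forms are controlled by the trace part. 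So this term is bounded below by $(C-\mu)$, independently of the bubbling. This is the key geometric input: it yields, for any $\bar\partial$-closed $E$-valued $(0,1)$-form $f$ on $B^+$, a solution $u$ of $\bar\partial_{A_j}u=f$ with $\|u\|_{L^2,\phi}\le C\|f\|_{L^2,\phi}$, where $C$ is independent of $j$.

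\textbf{Step 2: constructing $q_j$ and $\mathcal F_\infty$.} Choose holomorphic sections $\sigma_1,\dots,\sigma_N$ of $\mathcal E_\infty$ that generate $\mathcal E_\infty$ on an annulus. Transplant them to $\mathcal E_j$ using a cutoff function $\chi$ supported away from $0$ together with the gauge maps. Then correct them: solve $\bar\partial u_j=\bar\partial(\chi\sigma)$ by Step 1 and set $s_{j,i}=\chi\sigma_i-u_j$. The uniform estimate and interior elliptic regularity away from $0$ give Uhlenbeck convergence $s_{j,i}\to s_{\infty,i}$, where $s_{\infty,i}$ is holomorphic on $U$. Define $q_j=(s_{j,1},\dots,s_{j,N})$ and let $\mathcal F_\infty$ be the image of $q_\infty$. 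By Hartogs, the limits extend across $0$ as sections of $\mathcal E_\infty$. One can enlarge $N$ to include enough monomial multiples $z^\alpha\sigma$ so that $\mathcal F_\infty=\mathcal E_\infty$ off $0$.

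\textbf{Step 3: surjectivity with $L^2$ control.} Given $s\in H^0_{L^2}(B,\mathcal E_j)$, I would use the uniform estimate in a Skoda-type division argument. First approximate $s$ on the annulus by combinations of the $s_{j,i}$, where the bundles converge smoothly. Then iterate, solving $\bar\partial$ with weights, to write $s=q_ja$ with $\|a\|\le C\|s\|$. Uniformity holds because $\mathcal E_j$ is torsion free, and by a Grauert--Remmert type argument, sections over $B$ are determined by their behaviour near $\partial B$.

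\textbf{Step 4: the kernel and multiplicity.} Next, show that $\ker q_j$ is locally free of rank $k$. On a surface, $\mathcal O^N\to\mathcal E_j$ is onto a bundle, so its kernel is reflexive and hence locally free. Then show that its generators converge to the columns of $R_\infty$. The bound $\|a\|\le C\|s\|$ gives compactness of syzygies: limits of elements of $\ker q_j$ lie in $\ker q_\infty$. Conversely, flatness of the resulting family lets us lift the columns of $R_\infty$ by a perturbation argument, e.g.\ a division theorem with uniform constants. This gives $R_j\to R_\infty$.

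For the multiplicity, compare $c_2$. For large $j$ the sheaves $\mathcal E_j|_{B'}$ and $\mathcal F_\infty$ have resolutions by the same free modules with converging maps, so $\mathcal F_\infty$ is a flat limit of the $\mathcal E_j$. Local Chern--Weil theory then gives $\int(|F_{A_j}|^2-|F_{A_\infty}|^2)/8\pi^2\to$ the local $c_2$ defect. This defect equals $\mathrm{length}(\mathcal F_\infty^{\vee\vee}/\mathcal F_\infty)$, since $\mathcal F_\infty^{\vee\vee}=\mathcal E_\infty$. One could use a bundle-like argument via $\mathrm{ch}$ of the resolutions, or a degeneration/Quot-scheme length computation.

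\textbf{Main obstacle.} I expect the hardest part to be Step 3, together with the lifting in Step 4. Uniform surjectivity requires controlling sections concentrated at the bubbling point, where no smooth limit exists. My first attempt would be to run an inductive weighted H\"ormander argument with weight $k\log|z|$, using $k$ large relative to the energy bound $\Lambda$.
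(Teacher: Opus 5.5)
\textbf{There is a genuine gap, and it is in Step 1, on which everything else rests.} The Bochner--Kodaira--Nakano curvature term for $E$-valued $(0,1)$-forms on a surface does \emph{not} involve only $\Lambda_\omega F_{A_j}$.

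In the canonical spin$^c$ structure, $\Lambda^{0,0}\oplus\Lambda^{0,2}=S^+$ and $\Lambda^{0,1}=S^-$. Trace-only control holds on $S^+$: there the Weitzenb\"ock term is $F^+_{A_j}$, which is bounded by $\mu$ for HYM connections. To solve $\bar\partial u=f$ for a section $u$, however, you need the a priori estimate on $\Lambda^{0,1}\otimes E$. There the curvature term is the full $(1,1)$-curvature, i.e.\ $F^-_{A_j}$, and this is exactly the part that concentrates like $\lambda_{j,I}^2$ at the bubbles. This is the difficulty the paper starts from.

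No estimate of the type you claim can hold. Suppose it did, with $\phi=C|z|^2+\epsilon\log|z|$. For any $\sigma\in\Gamma(U,\mathcal E_\infty)$ you could transplant $\chi_\delta\sigma$, where $\|\bar\partial\chi_\delta\|_{L^2}=O(\delta)$ in real dimension four, and correct it. This would produce Uhlenbeck limits within $O(\delta^{1-\epsilon})$ of $\sigma$ in $L^2$. Then every germ of $\mathcal E_\infty$ would be a limit, so $\mathcal F_\infty=\mathcal E_\infty$. The multiplicity identity you are proving would then force the energy loss at $0$ to vanish. The paper's basic-instanton example makes the contradiction concrete: there only $\mathcal O\oplus(x,y)$ arises as limits. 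Your fallback weight $k\log|z|$ does not repair this. Its Levi form is degenerate in the radial direction, and it is centered at $0$ rather than at the moving bubble centers $p_{j,I}$ with scales $\lambda_{j,I}^{-1}$.

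\textbf{What is missing is the paper's actual mechanism, which has three parts.}

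(i) From the bubble tree and the neck decay estimate, the paper builds strictly psh weights $\varphi_j$: rescaled models $N_I\phi_{\lambda_{j,I}}(z-z_{j,I})$ on bubble and inner neck regions, and $N_I\psi_{\lambda_{j,P(I)}}(z-z_{j,I})$ on outer necks. These satisfy $(\mathcal E_j,he^{-2\varphi_j})\ge_{\mathrm{Nak}}0$ and $\varphi_j\ge N\sum_I\log d(\cdot,p_{j,I})-C$. The price is that the uniform estimate only reaches $\mathfrak m_0^m\mathcal E_{\infty,0}$.

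(ii) The multiplicity identity pins down the colength of $\mathcal F_\infty$. So $\mathcal F_\infty$ cannot be the image of an ad hoc $q_\infty$ as in your Step 2; it must be the module of \emph{all} subsequential limits. This module is made well defined (an $\mathcal O_0$-module, stable under passing to further subsequences) by choosing a subsequence that minimizes $\mathrm{length}(\mathcal E_{\infty,0}/M(\sigma))$. With any strictly smaller subsheaf, $q_j$ cannot be surjective with convergent relations.

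(iii) Uniform surjectivity comes from operator perturbation, not from Skoda-type division. Your Grauert--Remmert heuristic gives no constant that is uniform near $0$. The paper uses three ingredients:
\begin{itemize}
\item a bounded right inverse $B_\infty$ of $T_\infty:\mathcal D\to V$;
\item the convergence $\|T_j-T_\infty\|_{\mathrm{op}}\to0$;
\item the convergence $\|(I-P)|_{H_j}\|_{\mathrm{op}}\to0$, proved using H\"ormander only on the \emph{fixed} bundle $\mathcal E_\infty$, plus $\dim(H_\infty\ominus V)<\infty$ and the fact that weak limits lie in $V$.
\end{itemize}
Hence $C_j=PT_jB_\infty$ is invertible, and $B_\infty C_j^{-1}P|_{H_j}$ is a uniform right inverse of $T_j$. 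The relations are then $\rho_j=\rho_\infty-B_jT_j\rho_\infty$. A determinant argument on $B'$ shows that $R_j$ maps onto $\ker q_j$. Appealing to ``flatness of the resulting family'' in your Step 4 is circular, since flatness is what is being proved.

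Your $K$-theoretic outline for the multiplicity is fine once these resolutions are in hand.
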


In particular, the algebraic limit $\mathcal F_\infty$ provides refined information for the Uhlenbeck convergence. It endows the bubbling locus with a scheme theoretical structure, while the gauge theoretical limit only memorizes its reduced structure as well as multiplicities. This also answers a question in \cite[Remark 2.13]{DS2} in the $2$-dimensional sheaf theoretical setting; see also \cite[Remark 2.1]{sun25}.

Globalizing Theorem \ref{thm--local convergence in a flat family}, we obtain:
\begin{theorem}\label{thm-global convergence in a flat family}
    Assume $(X,\omega)$ is a compact K\"ahler surface. By possibly passing to a subsequence, there exists a coherent torsion-free sheaf ${\mathcal{F}}_\infty$ satisfying the following:
    \begin{itemize}
        \item [(1)] The normalization\footnote{The standard terminology in sheaf theory would be reflexive hull, or double dual. We nevertheless adopt this term to connect with the theory of algebraic varieties; see the discussions in \cite[Section 8.1]{ZZ26}.} of $\mathcal F_\infty$ is $\mathcal E_\infty$, i.e. $\mathcal F_\infty^{\vee\vee}=\mathcal E_\infty$. In particular, $\mathcal F_\infty$ is $\mu$-semistable.
        \item [(2)] The algebraic cycle $\mathcal C(\mathcal F_\infty)$ is identical to $\mathcal C_\infty$.
    \end{itemize}
    Moreover, there exist a finite dimensional complex analytic space germ $(T,t_0)$ and an analytic flat family $\mathcal F\to X\times T$, such that for some sequence of points $t_j\to t_0$ in $T$, we have $\mathcal F_{t_j}\cong\mathcal E_j$ and $\mathcal F_{t_0}\cong\mathcal F_\infty$.
\end{theorem}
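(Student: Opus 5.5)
The plan is to globalize Theorem \ref{thm--local convergence in a flat family} by gluing the local algebraic limits near the finitely many bubbling points, and then to realize all the $\mathcal E_j$ and the limit in one semi-universal deformation of $\mathcal F_\infty$.

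\textbf{Construction of $\mathcal F_\infty$.} By Uhlenbeck compactness the bubbling set $Z=\{x_1,\dots,x_m\}$ is finite. Away from $Z$ we have $A_j\to A_\infty$ smoothly modulo gauge, so $\mathcal E_j\to\mathcal E_\infty$ there. Around each $x_i$ choose a coordinate ball $U_i$ with $x_i$ its unique bubbling point. Theorem \ref{thm--local convergence in a flat family} then gives, after passing to a subsequence (a diagonal one over the finitely many $i$), a subsheaf $\mathcal F_i\subset\mathcal E_\infty|_{U_i}$ that agrees with $\mathcal E_\infty$ off $x_i$. Since each $\mathcal F_i$ differs from $\mathcal E_\infty$ only at one point, these glue with $\mathcal E_\infty|_{X\setminus Z}$ to a coherent subsheaf $\mathcal F_\infty\subset\mathcal E_\infty$ with $\mathcal E_\infty/\mathcal F_\infty$ supported on $Z$.

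\textbf{Properties (1) and (2).} Because $\mathcal E_\infty$ is locally free (an HYM connection on a surface extends across points) and $\mathcal F_\infty\subset\mathcal E_\infty$ has quotient of codimension $2$, we get $\mathcal F_\infty$ torsion-free with $\mathcal F_\infty^{\vee\vee}=\mathcal E_\infty$. Semistability holds because $\mathcal E_\infty$ carries an HYM metric, hence is polystable, and $\mu$-stability is insensitive to codimension-$2$ modifications. For (2), the cycle $\mathcal C(\mathcal F_\infty)=\sum_i \length(\mathcal E_\infty/\mathcal F_\infty)_{x_i}[x_i]$ agrees with $\mathcal C_\infty$ by the multiplicity equality in Theorem \ref{thm--local convergence in a flat family}, applied with $U=U_i$.

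\textbf{Flat family.} Take $(T,t_0)$ to be the Kuranishi, i.e. semi-universal, deformation space of the compact coherent sheaf $\mathcal F_\infty$, which is a finite dimensional germ; let $\mathcal F\to X\times T$ be the universal flat family. It remains to show that $\mathcal E_j$ is isomorphic to $\mathcal F_t$ for some $t=t_j\to t_0$. By versality it suffices to prove that $\mathcal E_j$ is ``close to'' $\mathcal F_\infty$ in a topology in which the semi-universal family is open. I would build this closeness from the following data:
\begin{itemize}
\item on the annular overlaps and on $X\setminus Z$, the isomorphisms $\mathcal E_j\cong\mathcal E_\infty$ given by smooth convergence of the gauge-fixed $\bar\partial$-operators;
\item near each $x_i$, the presentations $\mathcal O^k\xrightarrow{R_j}\mathcal O^N\xrightarrow{q_j}\mathcal E_j$ with $R_j\to R_\infty$ in $C^\infty$.
\end{itemize}
From these I would produce a locally free-by-presentation description of $\mathcal E_j$ as a gluing of data converging to that of $\mathcal F_\infty$. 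Concretely, I would use a Douady/Quot-type description of $\mathcal F_\infty$ via the gluing of $\mathrm{coker}(R_\infty)$ with $\mathcal E_\infty|_{X\setminus Z}$ along the annuli, and then show that the transition data for $\mathcal E_j$ converge. The point is that the gluing maps $\mathrm{coker}(R_j)\to\mathcal E_j$ on the annuli are induced by $q_j$, and columns of $q_j$ converge to $q_\infty$. The $L^2$-surjectivity estimate is what lets every local section be lifted uniformly. That uniformity gives convergence of the cocycles, hence of the resulting point in the Čech model of $\mathrm{Def}(\mathcal F_\infty)$.

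\textbf{Main obstacle.} This last step is where the work lies. Passing from convergent local presentations plus convergent gluings to an actual point $t_j$ of the Kuranishi space requires a form of openness of versality for coherent sheaves on compact complex spaces. It also requires controlling the gluing isomorphisms on overlaps uniformly. I would first try to phrase everything as convergent holomorphic data for a complex of locally free sheaves, namely the presentation near $Z$ glued with $\mathcal E_\infty$. I would then apply the standard fact that small perturbations of a Čech cocycle/complex representing $\mathcal F_\infty$ are pulled back from the semi-universal family.
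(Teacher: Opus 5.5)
\textbf{Verdict: genuine gap.} Your construction of $\mathcal F_\infty$ and your verification of (1) and (2) match the paper. Your plan to describe $\mathcal E_j$ by convergent local presentations plus convergent gluing data is also the right input; it is exactly the hypothesis of Theorem~\ref{thm--local deformation to global deformation}.

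\textbf{Where the gap is.} It sits in the flat family, at the step you call the ``main obstacle'' and then settle by a ``standard fact.'' That step is the whole content of the theorem beyond Theorem~\ref{thm--local convergence in a flat family}, and it is not formal. The $\mathcal E_j$ are a countable collection of sheaves, not the fibers of a family over an analytic base. Versality of the finite-dimensional Kuranishi family of $\mathcal F_\infty$ only classifies families over analytic bases, so it produces no points $t_j$ at all. A priori there is no topology on isomorphism classes in which ``$\mathcal E_j$ is close to $\mathcal F_\infty$'' means anything. Openness of versality is not the relevant tool either. Two things are missing: (a) an analytic parameter space that actually contains all the convergent data, and (b) a versality theorem valid over that space.

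\textbf{What (a) requires.} The gluing data are arbitrary holomorphic matrices, so such a base must be infinite-dimensional. The paper uses the Banach analytic space $\mathfrak B$ cut out in a finite product of $H^\infty$-spaces by $A_{\alpha\beta}R_\beta=R_\alpha B_{\alpha\beta}$, $A_{\alpha\beta}A_{\beta\gamma}-A_{\alpha\gamma}=R_\alpha H_{\alpha\beta\gamma}$ and $A_{\alpha\alpha}=\mathrm{Id}$. Getting $b_j\to b_0$ in $\mathfrak B$ needs the auxiliary lifts $B_{\alpha\beta,j}$ and $H_{\alpha\beta\gamma,j}$ to converge, which you never address. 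They converge because $R_{\alpha,0}$ is injective (a torsion-free sheaf on a surface has projective dimension at most $1$). This makes the lifts unique, and then Lemma~\ref{lem--convergence after injective relation} applies. Injectivity of $R_\alpha(b)$ for $b$ near $b_0$ is also what makes the tautological sheaf anaflat over $\mathfrak B$.

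\textbf{What (b) requires.} The paper first turns the sheaf into a compact space via the square-zero extension $X[\mathcal G]$. It then applies Douady's versality theorem (Theorem~\ref{thm--existence of versal deformation for compact analytic spaces}), whose essential feature is that it allows Banach analytic bases. Finally it recovers a finite-dimensional flat sheaf family as $\ker(p_*\iota^*)$ using relative morphism spaces, and checks $s_j\to 0$ through graphs in the Douady space. If you want to keep your route through the Kuranishi space of $\mathcal F_\infty$ as a sheaf, you must supply a versality statement for anaflat sheaf families over Banach analytic bases, together with continuity of the classifying map. As written, this is asserted rather than proved.

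\textbf{A smaller point.} Smooth convergence of the gauge-fixed $\pp$-operators on $X\setminus Z$ does not give holomorphic isomorphisms $\mathcal E_j\cong\mathcal E_\infty$. You need convergent local holomorphic frames on small Stein sets, obtained by H\"ormander's estimate. Convergent transition maps then follow from a holomorphic splitting $P_{\alpha\beta}$ of $q_{\alpha,\infty}$, namely $A_{\alpha\beta,j}=P_{\alpha\beta}(q_{\alpha,j}P_{\alpha\beta})^{-1}q_{\beta,j}$. The $L^2$-lifting estimate is not needed there.
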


Such a flat convergence theorem was essentially obtained for polarized projective manifolds by the work of \cite{li1993,GSTW2021}. Our main contribution here is to drop any algebraicity condition on both $X$ and $[\omega]$. As an immediate corollary, we know that the Donaldson-Uhlenbeck boundary of compactified HYM moduli can be \textit{covered} by images of analytic spaces parameterizing semistable sheaves. This overcomes one of the main obstacles in the moduli problem \cite[Page 996, Item (3)]{BTT2017}.

\

As an application of the above results, we derive two essentially well-known geometric consequences towards the construction of a compact analytic moduli space. From now on, we assume $(X,\omega)$ is a compact K\"ahler surface. 

Consider the moduli space $M_{HYM}^*$ of smooth irreducible HYM connections on $E$ with a fixed determinant, which is known to be a complex analytic space by the celebrated Donaldson-Uhlenbeck-Yau theorem \cite{donaldson85,donaldson87,uhlenbeck-yau86,fujiki-schumacher87,miyajima89}. According to \cite{donaldson86}, it admits a compactification by all ideal HYM connections with correct numerical invariants, denoted by $\widehat{M}_{HYM}$, which is first countable and sequentially compact. Therefore, we can take the closure of $M^*_{HYM}$ inside $\widehat{M}_{HYM}$, which gives a compactification $\overline{M}_{HYM}$ by \textit{smoothable} ideal HYM connections. A natural question is whether the complex structure on $M^*_{HYM}$ extends to the boundary of compactified moduli space $\overline{M}_{HYM}$. This was known for integrally polarized projective surface by the fundamental work of \cite{li1993}, and extended to higher dimensions by \cite{GSTW2021}. There is a folklore conjecture that the same analyticity should hold for general compact K\"ahler surfaces \textit{without} assuming any algebracity on both the surface itself or the polarization \cite{BTT2017,BTT2018}, which remains open at this moment.

To connect with this conjecture, consider a flat family of torsion-free $\mu$-semistable sheaves $\mathcal G\to X\times S$ with a fixed determinant. Then there exists a natural comparison map from $S$ to $\widehat{M}_{HYM}$ as follows:
\[
\Psi_S:S\to\widehat{M}_{HYM},\quad s\mapsto[(\gr^S(\mathcal G_s)^{\vee\vee},\mathcal C(\mathcal G_s))].
\]
Here $\gr^S(\mathcal G)^{\vee\vee}$ denotes reflexive hull of the graded object of \textit{some} Seshadri filtration of $\mathcal G$ and $\mathcal C(\mathcal G)$ denotes the algebraic cycle associated with a $\mu$-semistable sheaf; both are well-defined objects, see Section \ref{subsec--basic definitions} for a review.

The first consequence is the following continuity theorem; cf. \cite[Proposition 4.13]{GSTW2021}.
\begin{theorem}\label{thm--continuity theorem}
   Let $S^o$ be the locus of stable locally-free sheaves. Then the image of $\overline{S^o}$ under $\Psi_S$ is contained in $\overline{M}_{HYM}$, and the natural map
   \[
   \Psi_S\vert_{\overline{S^o}}: \overline{S^o}\to\overline{M}_{HYM}
   \]
    is continuous.
\end{theorem}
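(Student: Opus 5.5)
The plan is to reduce continuity on $\overline{S^o}$ to sequential continuity, since $\widehat{M}_{HYM}$ is first countable and sequentially compact. On $S^o$ the map $\Psi_S$ is the Donaldson–Uhlenbeck–Yau correspondence, $s\mapsto[A_s]$ with $A_s$ the HYM connection on $\mathcal G_s$. We first check that $\Psi_S|_{S^o}$ is continuous into $M^*_{HYM}$. For this we use the analytic structure of $M^*_{HYM}$: a flat family of stable bundles gives a holomorphic classifying map, and the implicit function theorem for the HYM equation shows that $A_s$ depends continuously on $s$ modulo gauge. It then suffices to prove the following. Given $s_j\to s_\infty$ in $\overline{S^o}$, after passing to a subsequence $\Psi_S(s_j)\to\Psi_S(s_\infty)$ in the Uhlenbeck topology. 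Since the Uhlenbeck limit is unique in $\widehat M_{HYM}$, the subsequence statement upgrades to convergence of the full sequence.

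By a diagonal argument, approximating each $s_j\in\overline{S^o}$ by points of $S^o$, we may assume $s_j\in S^o$. In that case $\Psi_S(s_j)=[A_j]$ with $A_j$ smooth irreducible HYM connections on $E$. These have fixed topology, so their energy is bounded by Chern–Weil. Uhlenbeck compactness gives a subsequence converging to an ideal HYM connection $(A_\infty,\mathcal C_\infty)$, and this already shows that $\Psi_S(\overline{S^o})\subset\overline M_{HYM}$ once we identify the limit with $\Psi_S(s_\infty)$. Next we apply Theorem \ref{thm-global convergence in a flat family}. It produces a torsion-free $\mu$-semistable $\mathcal F_\infty$ with $\mathcal F_\infty^{\vee\vee}=\mathcal E_\infty$ and $\mathcal C(\mathcal F_\infty)=\mathcal C_\infty$, where $\mathcal E_\infty$ is polystable as it carries the HYM connection $A_\infty$. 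Hence $[(A_\infty,\mathcal C_\infty)]=[(\gr^S(\mathcal F_\infty)^{\vee\vee},\mathcal C(\mathcal F_\infty))]$, and it remains to compare $\mathcal F_\infty$ with $\mathcal G_{s_\infty}$.

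The main obstacle is this comparison. We have two flat families, $\mathcal G$ over $S$ and the family $\mathcal F$ over $T$ from Theorem \ref{thm-global convergence in a flat family}, whose fibers agree along $s_j$ and $t_j$ because $\mathcal G_{s_j}\cong\mathcal E_j\cong\mathcal F_{t_j}$. Their central fibers $\mathcal G_{s_\infty}$ and $\mathcal F_\infty$ need not be isomorphic. I would show that they have the same Seshadri graded reflexive hull and the same cycle, through a semicontinuity and separatedness argument of Langton type.

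The first step is to use the isomorphisms $\mathcal E_j\to\mathcal G_{s_j}$. Normalize them with respect to $L^2$ norms, using the uniform $L^2$ estimates of Theorem \ref{thm--local convergence in a flat family}, which control holomorphic sections uniformly near the bubbling points. After rescaling, a subsequence converges to a nonzero morphism $\phi:\mathcal F_\infty\to\mathcal G_{s_\infty}$. Both sheaves are semistable with the same slope, so $\phi$ induces nonzero maps between their Jordan–Hölder factors. Iterating on kernels and cokernels, as in the standard proof that S-equivalence classes are separated, gives $\gr^S(\mathcal F_\infty)^{\vee\vee}\cong\gr^S(\mathcal G_{s_\infty})^{\vee\vee}$.

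For the cycles, flatness gives $\ch_2(\mathcal G_{s_\infty})=\ch_2(\mathcal F_\infty)$. The cycle $\mathcal C(\mathcal G)$ is built from the lengths of $\gr^{\vee\vee}/\gr$, and its total degree is determined by $\ch_2$ once the reflexive graded object is fixed, so the two cycles have equal total mass. Pointwise equality would follow from localizing the same argument with Theorem \ref{thm--local convergence in a flat family} on small balls around each point $x$. Applied in the $\mathcal G$ family, the equality of algebraic and analytic multiplicity gives $\mathrm{length}$ at $x$ equal to the energy concentration at $x$ for both sheaves, hence $\mathcal C(\mathcal G_{s_\infty})=\mathcal C_\infty$. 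I expect this local multiplicity identification in the second family $\mathcal G$, rather than in the constructed family $\mathcal F$, to be the delicate point. The approach I would try first is to lift a local presentation of $\mathcal G$ near $s_\infty$ and run the proof of Theorem \ref{thm--local convergence in a flat family} with it.
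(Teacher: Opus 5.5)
Your reduction steps (sequential continuity with a diagonal argument, Uhlenbeck compactness, and Theorem \ref{thm-global convergence in a flat family} to produce $\mathcal F_\infty$) match the paper. The comparison of $\mathcal F_\infty$ with $\mathcal G_{s_\infty}$, however, has a genuine gap. You can obtain a nonzero limit morphism $\phi:\mathcal F_\infty\to\mathcal G_{s_\infty}$, most cleanly from properness of the projectivized linear fiber space representing $\underline{\mathrm{Hom}}$ over $S\times T$ rather than from $L^2$ normalization. But a single such morphism does not determine graded objects. For example, take $L_1\not\cong L_2$ of degree zero; the map $\mathcal O_X\oplus L_1\to\mathcal O_X\oplus L_2$ that is the identity on $\mathcal O_X$ and zero on $L_1$ is nonzero between polystable sheaves of equal slope whose $\gr^S(\cdot)^{\vee\vee}$ differ. ``Iterating on kernels and cokernels'' has nothing to act on, since nothing relates $\ker\phi$ to $\mathrm{coker}\,\phi$. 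The Langton-type argument needs two flat families over one disk that are isomorphic over $\Delta^*$. The iteration is then the chain of elementary modifications $\mathcal E\cap t^j\mathcal F$, whose central fibers are extensions of $K_j$ by $I_j$ (Theorem \ref{thm--separatedness for slope stability}). Fibers agreeing only along the sequences $s_j,t_j$ do not supply this. The missing step is Lemma \ref{lem--reduction to common disk}: take a limit point of $[\varphi_j]$ in the projectivized relative Hom space, lift it to an honest morphism of families nearby, and choose a curve through it that meets the analytic non-isomorphism locus only at the center.

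Your cycle step fails as well. $\mathcal C(\mathcal G_{s_\infty})$ is defined through a Seshadri graded object and is not a local invariant of $\mathcal G_{s_\infty}$. For instance, a locally free non-split extension $0\to\mathcal O_X\to\mathcal G\to\mathcal I_Z\to 0$ (Serre construction, e.g.\ on a K3 surface) is $\mu$-semistable with $\mathcal C(\mathcal G)=[Z]\neq 0$. If $\mathcal G_{s_\infty}$ is of this kind, the family is locally free near $Z$, so any length computed from a local presentation of $\mathcal G$ vanishes, while the theorem requires energy concentration $[Z]$. Moreover, convergence of sections inside the family $\mathcal G$ is not Uhlenbeck convergence. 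So Theorem \ref{thm--local convergence in a flat family}, whose algebraic limit is by construction a subsheaf of $\mathcal E_\infty$, cannot simply be rerun on a presentation of $\mathcal G$. The paper needs no localization. The separatedness argument preserves the whole pair $(\gr^S(\cdot)^{\vee\vee},\mathcal C(\cdot))$ at each modification step, by Proposition \ref{prop--additivity of semistable extension} applied to $0\to K_j\to E_j\to I_j\to 0$ and $0\to I_j\to E_{j+1}\to K_j\to 0$. Lemma \ref{lem--gstw prop 2.15} then turns $(\gr^S(\mathcal F_\infty)^{\vee\vee},\mathcal C(\mathcal F_\infty))$ into $(\mathcal E_\infty,\mathcal C_\infty)$.
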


The second application is the following boundedness result for $\mu$-semistable sheaves with fixed topology:
\begin{theorem}\label{thm-boundedness of semistable sheaves with fixed topology}
    For every fixed rank, $c_1, c_2$, the set of isomorphism classes of $\mu$-semistable torsion-free sheaves with this given topological data is bounded in the sense of \cite{toma16,toma2021}.
\end{theorem}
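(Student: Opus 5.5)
The plan is to reduce boundedness of $\mu$-semistable torsion-free sheaves to boundedness of their reflexive hulls together with boundedness of the colength of $\mathcal G\subset\mathcal G^{\vee\vee}$, and then to obtain the former through gauge theory: Uhlenbeck compactness combined with Theorem \ref{thm-global convergence in a flat family}. In the sense of \cite{toma16,toma2021}, a set of isomorphism classes is bounded if it is contained in the set of fibres of finitely many flat families over finite-dimensional (compact, or relatively compact) complex spaces.

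The first step handles the torsion-free case. Let $\mathcal G$ be $\mu$-semistable torsion-free with the fixed rank, $c_1$ and $c_2$. Then $\mathcal G^{\vee\vee}$ is locally free, since $X$ is a surface, and it is $\mu$-semistable with the same $c_1$. Its second Chern class is $c_2(\mathcal G^{\vee\vee})=c_2-\ell$, where $\ell=\length(\mathcal G^{\vee\vee}/\mathcal G)\ge0$. Bogomolov's inequality holds for $\mu$-semistable sheaves on compact Kähler surfaces, using the HYM metric on the graded pieces of a Seshadri filtration. Applied to $\mathcal G^{\vee\vee}$, it bounds $\ell$ from above. Hence only finitely many values of $(c_2(\mathcal G^{\vee\vee}),\ell)$ occur. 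Once the reflexive hulls $\mathcal V$ form a bounded family $\mathcal V\to X\times S$, the sheaves $\mathcal G$ are parametrized by the relative Quot scheme, or Douady space, of length-$\ell$ quotients of $\mathcal V_s$. This space is proper over $S$, and the $\mathcal G$ therefore form a bounded family.

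The second step is the main one: showing that $\mu$-semistable locally free sheaves $\mathcal V$ with fixed topology are bounded. Take a Seshadri filtration of $\mathcal V$. The graded pieces are stable with the same slope, and their reflexive hulls carry HYM connections by Donaldson--Uhlenbeck--Yau. I would then argue by contradiction using sequential compactness. Suppose the set is not bounded. First reduce to the polystable reflexive objects $\gr^S(\mathcal V)^{\vee\vee}$; recovering $\mathcal V$ from its graded pieces is handled by iterated extensions, and the $\mathrm{Ext}^1$ groups involved are finite-dimensional with bounded topology, so they contribute finitely many flat families over vector spaces. By Chern--Weil together with Bogomolov, the polystable HYM bundles have uniformly bounded energy and bounded Einstein constants $\mu_j$. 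The number of possible topological types of the summands is finite, again by Bogomolov and the Hodge index theorem.

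Given any sequence $\mathcal E_j$ of such polystable bundles, Uhlenbeck compactness yields a subsequence converging to an ideal HYM connection. Theorem \ref{thm-global convergence in a flat family} then gives an analytic flat family $\mathcal F\to X\times T$ containing $\mathcal E_j$ for all large $j$. Cover the compact space $\widehat{M}_{HYM}$, or rather the union over the finitely many topological types, by the open neighbourhoods whose members lie in such a family near $t_0$. Extracting a finite subcover then gives finitely many flat families containing all the $\mathcal E_j$. This is the step I expect to be the main obstacle. Theorem \ref{thm-global convergence in a flat family} is sequential and only captures a subsequence. One must upgrade it to the statement that every HYM bundle Uhlenbeck-close to a given ideal limit appears as a fibre of the same family over a fixed relatively compact neighbourhood of $t_0$. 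I would try to argue by contradiction: if no single family works, take a sequence escaping every family and apply Theorem \ref{thm-global convergence in a flat family} to it, using the local uniform $L^2$ estimates from Theorem \ref{thm--local convergence in a flat family} so that the family can be taken to be versal, for instance a Kuranishi-type family of $\mathcal F_\infty$. The contradiction comes from versality: the escaping sequence would eventually lie in that family. Finally, the extensions and the quotients of length at most $\ell$ add only finitely many further bounded families, which completes the argument.
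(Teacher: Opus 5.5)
The gap is in the key step you flagged yourself. Your proposed fix does not close it.

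Your reductions are consistent with the paper's Lemma \ref{lem--reduce the full boundedness to bundle case}: Bogomolov plus Hodge index bound the topological types and colengths, then quotients and extensions recover the sheaves. The compactness of the Uhlenbeck compactification is also what the paper uses. Note, however, that first countability plus sequential compactness does not give you a finite subcover. For that you need Donaldson's metrizability theorem (Theorem \ref{thm--uhlenbeck compactification is metrizable and compact}).

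Here is why the contradiction argument fails.
\begin{enumerate}
\item Negating ``a single family works near $p$'' only gives, for each \emph{pre-chosen} family, a sequence converging to $p$ that escapes it. It does not give one sequence escaping every family.
\item Applying Theorem \ref{thm-global convergence in a flat family} to such a sequence yields an algebraic limit $\mathcal F'_\infty$ and, via Siu--Trautmann, its versal family. That family is built from the sequence, not chosen beforehand, so no contradiction arises.
\item The pre-chosen family cannot be the Kuranishi family of one fixed $\mathcal F_\infty$. The Uhlenbeck limit $p=[(A_\infty,\mathcal C_\infty)]$ determines only $\mathcal F_\infty^{\vee\vee}\cong\mathcal E_\infty$ and the cycle. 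It does not determine the scheme structure $\mathcal E_\infty\to\mathcal E_\infty/\mathcal F_\infty$ on the bubbling locus. So another sequence converging to $p$ may have an algebraic limit that is not a small deformation of $\mathcal F_\infty$.
\item Versality at $t_0$ is a statement about germs. To get one fixed relatively compact parameter set, you need versality at nearby points as well.
\item The local $L^2$-estimates have no bearing on versality.
\end{enumerate}

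Two ingredients are missing.

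\textbf{A bounded set of all possible algebraic limits, fixed in advance.} The paper gets this for all boundary points at once by induction on $\nu=\rk+e$, since the stable factors of a degenerate limit have strictly smaller $\nu$. It combines this with Proposition \ref{prop--boundedness for kernels, cokernels, images...}. In your pointwise setting you could argue directly instead. By Theorem \ref{thm-global convergence in a flat family} and Lemma \ref{lem--gstw prop 2.15}, every algebraic limit over $p$ is the kernel of a quotient of the fixed bundle $\mathcal E_\infty$, of length equal to the total multiplicity of $\mathcal C_\infty$. Hence these limits lie in a bounded set $\mathcal B_p$ coming from a compact Quot space.

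\textbf{Stability of boundedness under small flat deformation.} You need that flat families with central fibre in a bounded set $\mathcal B$ have all nearby fibres in a larger bounded set $\widehat{\mathcal B}$. This is Corollary \ref{cor--small perturbation of bounded set is bounded}. It rests on the existence of versal deformations, Flenner's openness of versality (Proposition \ref{prop-versality is open}), and flat parametrization of bounded sets.

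With both ingredients your contradiction argument goes through. Suppose $[A_j]\to p$ with $\mathcal E_j\notin\widehat{\mathcal B}_p$. Theorem \ref{thm-global convergence in a flat family} gives a flat family with central fibre in $\mathcal B_p$ and $\mathcal F_{t_j}\cong\mathcal E_j$. This forces $\mathcal E_j\in\widehat{\mathcal B}_p$ for large $j$, a contradiction. A finite subcover then finishes the proof, and this pointwise variant would even avoid the paper's induction on $\nu$.
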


Therefore, our work here provides further evidence for this analyticity conjecture, and builds up the missing bridge between gauge theory and complex analytic/algebraic geometry; see detailed discussions in Section \ref{sec-discussions}. We were also informed that there is an ongoing project \cite{TM} by A. Teleman and M. Toma on this boundedness result using a different method. The author thanks Professor Toma for email exchange and discussions on related topics.

\

We briefly outline the idea of proof. The key is to understand the convergence of holomorphic sections under the gauge provided by Riemannian convergence theory, a fundamental point appeared in \cite{DS1}. To establish this, we appeal to the robust H\"ormander's weighted $L^2$-theory. However, in contract to \cite{DS1} which only requires a uniform Ricci lower bound, the $L^2$-estimate for holomorphic bundles requires a uniform lower bound on the full curvature tensor, which does not hold for HYM connections developing curvature concentration.

To overcome this issue, we use the bubbling analysis of the degeneration, and construct suitable plurisubharmonic weights at different scales that indeed control the curvature tensor. Due to the curvature concentration, however, those psh weights cannot be uniformly bounded near bubbling points. The crux is then to obtain a uniform control of the blow-up order of those psh weights, which allows us to carry out H\"ormander's estimates for sections with sufficiently high vanishing order along bubble locus. We remark that with extra significant effort, such technique has been extended to non-collapsed polarized cscK surfaces \cite{ZZ26}, where the same issue regarding the lack of a uniform Ricci curvature lower bound appears due to bubbling phenomenon.

Said in another way, we are still able to apply $L^2$-technique uniformly modulo a possibly finite dimensional obstruction. Geometrically, this corresponds to the fact that the Uhlenbeck limit is the normalization of the algebraic limit, which indeed has more local sections. Therefore we simply \textit{define} the algebraic limit as the saturated subsheaf of Uhlenbeck limit, whose local sections are all (sub)-sequential limits of holomorphic sections under Uhlenbeck topology. The proof of Theorem \ref{thm--local convergence in a flat family}, i.e. the desired local algebraic convergence is then a \textit{formal} consequence of the finite dimensionality of the obstruction space. To globalize this, we use the classical theory of Douady on deformation of compact analytic spaces \cite{douady66,douady74}, and its relative version in \cite{pourcin69}. The local convergent data allows us to realize Uhlenbeck convergence inside an infinite dimensional Banach analytic space, so Douady's versality for infinite dimensional parameter spaces would produce a finite dimensional family as required in Theorem \ref{thm-global convergence in a flat family}.

Given Theorem \ref{thm-global convergence in a flat family}, the proof of Theorem \ref{thm--continuity theorem} essentially follows the line of \cite{li1993,GSTW2021}. We use a reduction lemma (cf. Lemma \ref{lem--reduction to common disk}) to obtain two flat families over $\Delta$, which are isomorphic on $\Delta^*$, and have prescribed central fibers: one is the original $\mu$-semistable sheaf and the other is the algebraic limit constructed by Theorem \ref{thm-global convergence in a flat family}. Then the separatedness result in \cite{toma20} extended to slope stability (cf. Theorem \ref{thm--separatedness for slope stability}) identifies the corresponding ideal HYM connection. 

To prove Theorem \ref{thm-boundedness of semistable sheaves with fixed topology}, we use an induction argument to handle possible degeneration to boundaries. According to Theorem \ref{thm-global convergence in a flat family}, any degeneration of HYM connections to Donaldson-Uhlenbeck boundary would be fit into a finite dimensional flat family with correct numerical data, thus belong locally bounded. On the other hand, the double dual of algebraic limit $\mathcal F_\infty$ in Theorem \ref{thm-global convergence in a flat family} is a polystable bundle, which either has smaller energy or must split. In this way one can induct to bound all such degenerate limits, and hence bound all stable bundles with given topological type. By taking double dual and using the Bogomolov's inequality, it is then not hard to bound all semistable torsion-free sheaves.

\subsection*{Notation and Conventions} 
\begin{itemize}
    \item Throughout the paper, $\Psi\left(\epsilon_1, \ldots, \epsilon_k \mid a_1, \ldots, a_l\right)$ denotes a function such that for fixed parameters $a_i$ we have $\lim _{\epsilon_1, \ldots, \epsilon_k \rightarrow 0} \Psi=0$.
\item In a metric space, we use $B_r(p)$ and $B(p,r)$ interchangeably to denote the open metric ball of radius $r$ centered at $p$, and 
\(
A_p(r_0,r_1):=B_{r_1}(p)\setminus \overline{B_{r_0}(p)}
\)
the open annulus centered at $p$ with inner radius $r_0$ and outer radius $r_1$.
\item We use $dV_g$ and $d\mu_g$ interchangeably to denote the volume measure associated with a Riemannian metric $g$. When there is no risk of confusion, we will omit the subscript $g$.
\item We use $\Delta$ to denote the unit disk in $\mathbb C$ and $\Delta^*$ to denote the punctured disk $\Delta\setminus\{0\}$.
\end{itemize}
 
\subsection*{Acknowledgments} 

The author thanks his advisor Song Sun for his consistent support and patient teaching over the years, and Bin Xu for leading him into the field of gauge theory back into time in USTC. The author thanks Xuemiao Chen and Junsheng Zhang for stimulating discussions and suggestions throughout the project. He thanks Anuk Dayaprema for email exchange on discussion of Yang-Mills flows and its relation to Theorem \ref{thm--uhlenbeck compactification is metrizable and compact}, and Matei Toma for his interest in this work and helpful comments on the draft. He also thanks Yohei Hada and Lei You for discussions on the draft. Part of the work was done during the author's visit to the Institute for Advanced Study in Mathematics (IASM) at Zhejiang University in 2025 and 2026 Spring. The author thanks IASM for its warm hospitality. The author was partially supported by the NSF grant DMS-2304692 and the Shoshichi Kobayashi Memorial Fund at UC Berkeley.

\subsection*{Declaration of AI usage} 
The key new $L^2$-estimate for HYMs grew out of the author's attempt to understand the bubbling phenomenon and was developed independently in the fall of 2024. This was also subsequently generalized in a collaborative work for cscK surfaces in the summer of 2025, culminating the results in \cite{ZZ26}. On the other hand, we use GPT-6 Astra to explore the deformation theoretical construction in Section \ref{sec-convergence in a flat family}, especially the delicate theory developed by Douady \cite{douady66,douady74}. The paper was written by the human author. We take full responsibility for the paper's content
and correctness.

\section{Preliminaries}

\subsection{Definitions}\label{subsec--basic definitions}

Let $(X,\omega)$ be a compact K\"ahler manifold.
\begin{definition}
    A torsion-free sheaf $\mathcal{E}$ is said to be $\mu$-semistable with respect to $\omega$ if for any saturated coherent subsheaf $\mathcal{F}\subset\mathcal{E}$ with $0<\mathrm{rank}(\mathcal{F})<\mathrm{rank}(\mathcal{E})$, we have $\mu_\omega(\mathcal{F})\leq\mu_\omega(\mathcal{E})$.
\end{definition}

Since we always fix the underlying K\"ahler manifold, we will omit the $\omega$-notation in any place. 

A semistable sheaf can be filtered by stable sheaves via the so-called Seshadri filtration:
\begin{definition}
    Given a torsion-free $\mu$-semistable sheaf $\mathcal{F}$ on $X$, a filtration of torsion-free sheaves $0=\mathcal{F}_0\subset\mathcal{F}_1\subset...\subset\mathcal{F}_m=\mathcal{F}$ is called a Seshadri filtration if $\mathcal{Q}_j=\mathcal{F}_j/\mathcal{F}_{j-1}$ are all torsion-free and $\mu$-stable of slope $\mu(\mathcal{F})$. The graded object is defined by 
    \[
    \gr^S(\mathcal F)\coloneqq\bigoplus\mathcal Q_j.
    \]
\end{definition}

In general, neither the Seshadri filtraion, nor its graded object is unique \cite{BTT2017}. However, we can still extract some canonical object from this filtration.

Recall that for a general torsion sheaf $\mathcal T$ whose support has codimension $p$, we can define its effective support cycle $\mathcal C_{\mathcal T}$; cf. \cite[Section 2.5.3]{GSTW2021}. Therefore given any $\mu$-semistable sheaf $\mathcal F$ and \textit{some} Seshadri filtration of $\mathcal F$, we define its (codimension $2$) support cycle as
\[
\mathcal C(\mathcal F)\coloneqq\mathcal C_{\gr^S(\mathcal F)^{\vee\vee}/\gr^S(\mathcal F)}\in\mathcal C_{n-2}(X).
\]
The following theorem is then well-known:

\begin{theorem}[Appendix in \cite{BTT2017}]
    For any torsion-free $\omega$-semistable sheaf $\mathcal{F}$, Seshadri filtrations exist. Moreover, the pair $(\mathrm{Gr}(\mathcal{F})^{\vee\vee},\mathcal C(\mathcal F))$ is canonically defined by $\mathcal F$.
\end{theorem}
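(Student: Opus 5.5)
Existence comes first. I will induct on the rank of $\mathcal F$. If $\mathcal F$ is $\mu$-stable, take the trivial filtration. Otherwise there is a saturated subsheaf $\mathcal F'\subset\mathcal F$ with $\mu(\mathcal F')=\mu(\mathcal F)$ and $0<\rk\mathcal F'<\rk\mathcal F$. Among all such subsheaves I choose one of minimal rank; it is then $\mu$-stable. Saturation makes $\mathcal F/\mathcal F'$ torsion-free. Additivity of degree makes it of slope $\mu(\mathcal F)$, and it is again semistable: a destabilizing quotient of $\mathcal F/\mathcal F'$ would also destabilize $\mathcal F$. Induction on $\mathcal F/\mathcal F'$ then finishes, by pulling its filtration back to $\mathcal F$. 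One point needs care on a non-projective Kähler $X$: degrees of saturated subsheaves must be bounded above, so that slopes are attained. I will use the standard fact that $\deg_\omega$ of subsheaves of a fixed sheaf is bounded above, via the determinant line bundle embedding into $\Lambda^r\mathcal F^{\vee\vee}$ and a Kähler-metric estimate.

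For canonicity I take two Seshadri filtrations $\mathcal F_\bullet$, $\mathcal F'_\bullet$ and argue by induction on the length. Let $\mathcal F_1$ be the first step. Choose the smallest $i$ with $\mathcal F_1\subset\mathcal F'_i$. The composition $\mathcal F_1\to\mathcal Q'_i$ is then nonzero, and it is a map between $\mu$-stable torsion-free sheaves of the same slope. Such a map is injective with cokernel supported in codimension $\ge 2$, so its reflexive hull is an isomorphism $\mathcal F_1^{\vee\vee}\cong\mathcal Q_i'^{\vee\vee}$.

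Next I build a filtration of $\mathcal F/\mathcal F_1$ out of $\mathcal F'_\bullet$. Its pieces are the images $(\mathcal F'_k+\mathcal F_1)/\mathcal F_1$, saturated as necessary. Its graded pieces agree with those of $\mathcal F'_\bullet$ except at step $i$, where $\mathcal Q'_i$ is replaced by the torsion sheaf $\mathcal Q'_i/\mathcal F_1$. I then compare $\gr$ of this filtration, after double duals, with the $\mathcal F_\bullet$-induced filtration on $\mathcal F/\mathcal F_1$, and apply induction. This gives equality of the multisets of reflexive hulls of graded pieces, so $\gr^S(\mathcal F)^{\vee\vee}$ is well defined up to isomorphism.

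For the cycle I use additivity. In a short exact sequence $0\to\mathcal A\to\mathcal B\to\mathcal C\to0$ of torsion-free sheaves, the codimension-2 cycle $c_2$-type invariant
\[
\mathcal C_{\mathcal B^{\vee\vee}/\mathcal B}-\mathcal C_{\mathcal A^{\vee\vee}/\mathcal A}-\mathcal C_{\mathcal C^{\vee\vee}/\mathcal C}
\]
is expressible through the cycle of the failure of $0\to\mathcal A^{\vee\vee}\to\mathcal B^{\vee\vee}\to\mathcal C^{\vee\vee}$ to be exact, and on a surface this is local length data. Equivalently, I will show that the cycle $\mathcal C(\mathcal F)$ equals the cycle of $\mathcal F^{\vee\vee}/\mathcal F$ plus a correction determined by the filtration of $\mathcal F^{\vee\vee}$. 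The swap step above changes this correction by the torsion sheaf $\mathcal Q'_i/\mathcal F_1$ in one place and compensates it in another. Tracking these terms through the induction shows that the total cycle is independent of the filtration.

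The main obstacle is this bookkeeping of the cycle under the exchange step. Lengths of non-saturated images do not obviously add up, and the comparison needs to be arranged so that a single exact sequence of torsion sheaves accounts for the difference. I would first try to prove the cleaner identity
\[
\mathcal C(\mathcal F)=\mathcal C_{\mathcal G/\mathcal F}+\sum_k\mathcal C_{\mathcal Q_k^{\vee\vee}/\bar{\mathcal Q}_k}.
\]
Here $\mathcal G=\mathcal F^{\vee\vee}$, and $\bar{\mathcal Q}_k$ are the graded pieces of the saturated induced filtration on $\mathcal G$. I would then check invariance of the right side directly.
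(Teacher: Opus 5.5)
The paper gives no proof of this statement; it quotes the appendix of \cite{BTT2017}, so your argument has to stand on its own. Existence and the uniqueness of $\gr^S(\mathcal F)^{\vee\vee}$ follow the standard Jordan--H\"older exchange argument and are fine as sketched. The degree bound is not even needed, since you only minimize rank among saturated subsheaves of slope exactly $\mu(\mathcal F)$.

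\emph{The gap is the canonicity of $\mathcal C(\mathcal F)$.} This is the substantive half of the statement, and you leave it as a plan. Your remark that the swap step changes the correction ``in one place and compensates it in another'' is the right intuition, but it is asserted, not shown. The route you then propose does not close it. Your identity $\mathcal C(\mathcal F)=\mathcal C_{\mathcal G/\mathcal F}+\sum_k\mathcal C_{\mathcal Q_k^{\vee\vee}/\bar{\mathcal Q}_k}$ is true and easy: $\mathcal Q_k\subset\bar{\mathcal Q}_k\subset\mathcal Q_k^{\vee\vee}$, and $\mathcal G/\mathcal F$ is filtered by the $\bar{\mathcal Q}_k/\mathcal Q_k$. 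But ``checking invariance of the right side directly'' is circular. Saturation in $\mathcal G$ is a bijection between Seshadri filtrations of $\mathcal F$ and of $\mathcal G$, with inverse $\mathcal G_\bullet\mapsto\mathcal G_\bullet\cap\mathcal F$. Moreover, $\sum_k\mathcal C_{\mathcal Q_k^{\vee\vee}/\bar{\mathcal Q}_k}$ is exactly $\mathcal C(\mathcal G)$ computed from the induced filtration. So the identity only moves the problem to the reflexive sheaf $\mathcal G$, where all the difficulty remains. On a surface, $\mathcal G$ is locally free and the cycle records where the saturated steps fail to be subbundles, which a priori depends on the filtration.

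\emph{What is missing is the exchange computation, and that is where your identity should be used.} Note first that after saturating, the graded pieces below step $i$ also change, and that is where the compensation happens.
\begin{enumerate}
\item Since $\mathcal F_1\hookrightarrow\mathcal Q'_i$, we have $\mathcal F_1\cap\mathcal F'_{i-1}=0$. Put $T:=\mathcal Q'_i/\mathcal F_1$.
\item For $k<i$, let $\hat{\mathcal F}'_k$ be the saturation of $\mathcal F'_k$ in $\mathcal F'_i/\mathcal F_1$. Then $\hat{\mathcal F}'_{i-1}=\mathcal F'_i/\mathcal F_1$, and the graded pieces satisfy $\hat{\mathcal Q}'_k\supset\mathcal Q'_k$ with cokernel supported in codimension $\geq 2$.
\item Form the filtration of $\mathcal F$ given by $\mathcal F_1$ followed by the preimages of $\hat{\mathcal F}'_1\subset\cdots\subset\hat{\mathcal F}'_{i-1}\subset\mathcal F'_{i+1}/\mathcal F_1\subset\cdots$. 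Its graded pieces are $\mathcal F_1,\hat{\mathcal Q}'_1,\dots,\hat{\mathcal Q}'_{i-1},\mathcal Q'_{i+1},\dots$, so it is Seshadri.
\item It has the same first step as $\mathcal F_\bullet$, so induction applied to $\mathcal F/\mathcal F_1$ compares it with $\mathcal F_\bullet$.
\item Comparing it with $\mathcal F'_\bullet$ requires exactly
\[
\mathcal C_{\mathcal F_1^{\vee\vee}/\mathcal F_1}=\mathcal C_{(\mathcal Q'_i)^{\vee\vee}/\mathcal Q'_i}+\mathcal C_T,\qquad \sum_{k<i}\mathcal C_{(\mathcal Q'_k)^{\vee\vee}/\hat{\mathcal Q}'_k}=\sum_{k<i}\mathcal C_{(\mathcal Q'_k)^{\vee\vee}/\mathcal Q'_k}-\mathcal C_T .
\]
\end{enumerate}
The first equality holds because $\mathcal F_1\subset\mathcal Q'_i\subset(\mathcal Q'_i)^{\vee\vee}=\mathcal F_1^{\vee\vee}$.

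For the second, apply your identity twice: to $\mathcal F'_{i-1}$ with the filtration $\mathcal F'_\bullet$, and to $\mathcal F'_i/\mathcal F_1$ with $\hat{\mathcal F}'_\bullet$. These two sheaves differ by $T$, so they have the same reflexive hull $\mathcal H$ and induce the same saturated filtration of $\mathcal H$. Hence the two sums differ by $\mathcal C_{\mathcal H/\mathcal F'_{i-1}}-\mathcal C_{\mathcal H/(\mathcal F'_i/\mathcal F_1)}=\mathcal C_T$, and the two $\mathcal C_T$ terms cancel.

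Finally, the statement is posed on a K\"ahler manifold of arbitrary dimension. The computation should therefore be phrased with codimension-$2$ cycles, which are additive on sheaves supported in codimension $\geq 2$, rather than with point lengths; nothing else changes.
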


We also review basic definitions and properties of numerical invariants of coherent sheaves beyond the algebraic setting, following \cite{sibley-wentworth15}. The reader may find references therein for different approaches for construction of these invariants.

Let $X$ be a compact complex manifold. For any coherent sheaf $\mathcal E\to X$, we can associate it with a sequence of cohomology classes, i.e. \textit{Chern characters} $\ch_p(\mathcal E)\in H^{2p}(X;\mathbb Q)$ for $p\geq 0$. One way of construction is to regard $\mathcal E$ as a \textit{real} analytic sheaf, then choose a global resolution by real analytic complex vector bundles. More precisely, let $\mathcal A_X$ be the sheaf of germs of real analytic complex-valued functions on $X$, so we can find a sequence of real analytic complex vector bundles $E_i$ fitting into the following long exact sequence \cite{grauert58}:
\begin{equation}
    0\to E_r\to E_{r-1}\to\cdots\to E_0\to\mathcal E\otimes_{\mathcal O_X}\mathcal A_X\to 0.
\end{equation}
Then we \textit{define} the Chern character of $\mathcal E$ as
\begin{equation}
    \ch_p(\mathcal E)\coloneqq\sum_{i=0}^r(-1)^i\ch_p(E_i).
\end{equation}
It is easy to see this does not depend on the choice of resolutions. It also provides the definition of Chern classes.

The following explains the topological meaning of algebraic multiplicity appeared in support cycle of a torsion sheaf. We essentially use this in the calculation of local algebraic multiplicity in Section \ref{subsec--computing algebraic multiplicity}.
\begin{proposition}[Proposition 3.1 in \cite{sibley-wentworth15}]\label{prop--chern character of torsion sheaf}
    Let $X$ be a compact complex manifold and $\mathcal T\to X$ a torsion sheaf whose support has codimension $p$. Denote by $Z_j\subset\mathrm{supp}\mathcal T$ all irreducible components of codimension exactly $p$, by $[Z_j]$ the corresponding homology classes and $m_j$ the corresponding algebraic multiplicity in $\mathcal C_{\mathcal T}$. Then for all $k<p$, $\ch_k(\mathcal T)=0$, and $\ch_p(\mathcal T)$ is Poincar\'{e} dual to $\sum m_j Z_j$ in $H^{2p}(X;\mathbb Q)$.
\end{proposition}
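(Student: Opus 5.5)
The plan is to reduce to the locally free case with a resolution by real analytic bundles, and to use Grothendieck--Riemann--Roch type functoriality only through local computations. Since $\mathcal T$ is torsion with support $Z=\mathrm{supp}\,\mathcal T$ of codimension $p$, the restriction of any resolution $E_\bullet\to\mathcal T\otimes\mathcal A_X$ to $X\setminus Z$ is exact. The alternating sum $\sum(-1)^i\ch(E_i)$ is therefore represented by a closed form that can be chosen to vanish outside a tubular neighbourhood of $Z$. Concretely, I would pick Hermitian metrics and connections on the $E_i$ compatible with the exact complex away from $Z$, via splittings of the exact sequence there. The Chern--Weil difference then yields a relative class in $H^{2k}(X,X\setminus Z;\mathbb Q)$ mapping to $\ch_k(\mathcal T)$.

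For the vanishing when $k<p$, I use the fact that $Z$ is a complex analytic subset of real codimension $2p$. Hence $H^{2k}(X,X\setminus Z;\mathbb Q)\cong H^{BM}_{2n-2k}(Z;\mathbb Q)$ by Alexander--Lefschetz duality, and this vanishes for $2n-2k>\dim_{\mathbb R}Z=2n-2p$. This gives $\ch_k(\mathcal T)=0$. For $k=p$, the group $H^{BM}_{2n-2p}(Z)$ is freely generated by the fundamental classes of the top-dimensional components $Z_j$. So $\ch_p(\mathcal T)=\sum c_j\,\mathrm{PD}[Z_j]$ for some rational $c_j$, and it remains to show $c_j=m_j$.

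To identify $c_j$, I localize at a generic smooth point $x\in Z_j$ away from the other components and from the singular locus of $\mathrm{supp}\,\mathcal T$. The coefficient $c_j$ can be computed in a small transverse slice $D\cong\mathbb C^p$ through $x$, as the value of the relative class on $(D,D\setminus\{x\})$. Near $x$, choose coordinates with $Z_j=\{z_1=\dots=z_p=0\}$. By genericity, $\mathcal T$ is there a flat family over $Z_j$, and one may assume it is locally $\mathcal O_{Z_j}$-free with fibre a module of length $m_j$ over $\mathcal O_{\mathbb C^p,0}$. Restricting to a generic slice $D$ then gives a finite-length module $M$ with $\length(M)=m_j$; additivity of $\ch_p$ on short exact sequences together with a composition series reduces us to $M=\mathbb C_0$. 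The skyscraper $\mathbb C_0$ on $\mathbb C^p$ is resolved by the Koszul complex of $(z_1,\dots,z_p)$. Its localized top Chern character is the Thom class, with integral $1$ (the classical Bott--Chern/Mathai--Quillen computation), so $c_j=m_j$.

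The main obstacle is this last localization step. One must justify that the relative class restricts compatibly to transverse slices, which needs the Tor-independence of $\mathcal T$ with $\mathcal O_D$ for generic $D$. One must also check that the real analytic resolution can be replaced by a holomorphic local resolution near $x$, as the global Chern character is independent of the resolution. I would handle the latter by observing that any two resolutions are dominated by a third, so the localized classes agree.
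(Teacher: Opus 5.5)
Your argument is correct, and it takes a different route from the paper, which does not prove this proposition itself but quotes it from \cite{sibley-wentworth15}. You run the classical localized Chern character argument:
\begin{itemize}
\item the real-analytic resolution is exact off $Z=\mathrm{supp}\,\mathcal T$, so $\ch(\mathcal T)$ lifts to $H^{*}(X,X\setminus Z;\mathbb Q)\cong H_{2n-*}(Z;\mathbb Q)$;
\item this kills $\ch_k$ for $k<p$ and forces $\ch_p(\mathcal T)=\sum_j c_j\,\mathrm{PD}[Z_j]$;
\item each $c_j$ is then read off on a transverse slice at a generic point of $Z_j$.
\end{itemize}
The decisive computation on the slice is a composition series followed by the Koszul resolution of $\mathbb C_0$, whose class integrates to $+1$. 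This is exactly what the paper itself does for isolated support in Lemma \ref{lem--length of quotient as relative chern character}. So your proof shows the global statement is the generic-point globalization of that local lemma, and it is self-contained (Alexander duality, generic flatness, Koszul). The citation buys only brevity.

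The "by genericity" step needs more care than you give it.
\begin{itemize}
\item $\mathcal T$ need not be an $\mathcal O_{Z_j}$-module; think of $\mathcal O_X/(z_1^2)$. What generic flatness actually gives is that $\pi_*\mathcal T$ is locally free, where $\pi$ is the local coordinate projection onto $Z_j$.
\item If you take $D$ to be a fibre of $\pi$, Tor-independence is automatic. The transverse coordinates form a regular sequence on the base and $\mathcal T_x$ is flat over it.
\item You still have to check that the rank of $\pi_*\mathcal T$ equals the multiplicity $m_j$ in $\mathcal C_{\mathcal T}$, i.e.\ the length at the generic point of $Z_j$. A prime filtration of $\mathcal T_x$ does this: all its primes contain the ideal $\mathfrak p$ of $Z_j$, and both numbers count the quotients isomorphic to $\mathcal O_{X,x}/\mathfrak p$.
\item Your comparison of the real-analytic and holomorphic resolutions near $x$ ("dominated by a third") is vague. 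It is cleaner to lift the identity to a chain map on a small polydisc, which is possible because the terms are free and Theorem B holds for real-analytic coherent sheaves. The mapping cone is then exact everywhere, so the two classes with supports coincide.
\end{itemize}
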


\subsection{Boundedness and separatedness}

In this subsection, we review the results developed in \cite{toma16,toma20,toma2021}.

We start with the boundedness property. Recall the following definition:
\begin{definition}[Definition 5.1 in \cite{toma2021}]\label{def--boundedness}
    Let $X$ be an analytic space and $\mathfrak C$ be a set of isomorphism classes of coherent sheaves on $X$. We say the set $\mathfrak C$ is bounded if there exists an analytic space $S$, and a coherent sheaf $\mathcal E\to X\times S$, such that for a compact subset $K\subset S$, the set $\mathfrak C$ is contained in the set of isomorphism classes of fibers of $\mathcal E$ over points in $K$.
\end{definition}
This recovers the definition in the classical algebraic geometry, see \cite[Remark 3.3]{toma16}. Note that the flatness of $\mathcal E$ is \textit{not} included in the definition.

Boundedness is preserved under basic operations of sheaves:
\begin{proposition}[Proposition 3.4 in \cite{toma16}]\label{prop--boundedness for kernels, cokernels, images...}
    Let $\mathfrak C,\mathfrak C^{\prime}$ be two bounded sets of coherent sheaves on $X$. Then the following sets are also bounded:
    \begin{itemize}
        \item [(1)] The set of kernels, cokernels and images of sheaf homomorphisms $\mathcal F\to\mathcal F^{\prime}$, when isomorphism classes of $\mathcal F$ and $\mathcal F^{\prime}$ belong to $\mathfrak C$ and $\mathfrak C^{\prime}$ respectively;
        \item [(2)] The set of isomorphism classes of extensions of $\mathcal F$ by $\mathcal F^{\prime}$, for $\mathcal F,\mathcal F^{\prime}$ as above;
        \item [(3)] The set of isomorphism classes of tensor products $\mathcal F\otimes\mathcal F^{\prime}$.
    \end{itemize}
\end{proposition}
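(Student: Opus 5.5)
The plan is to work directly from Definition \ref{def--boundedness}. Suppose $\mathfrak C$ is witnessed by a coherent sheaf $\mathcal E\to X\times S$ and a compact set $K\subset S$, and $\mathfrak C^{\prime}$ by $\mathcal E^{\prime}\to X\times S^{\prime}$ and $K^{\prime}\subset S^{\prime}$. The idea is to build, for each operation, a new analytic parameter space carrying a coherent sheaf whose fibers over a compact set exhaust the required isomorphism classes. Throughout, the basic device is the product $S\times S^{\prime}$ with the pulled-back sheaves $p_1^*\mathcal E$ and $p_2^*\mathcal E^{\prime}$ on $X\times S\times S^{\prime}$. Here $K\times K^{\prime}$ is compact, and the fiber of $p_1^*\mathcal E$ over $(s,s^{\prime})$ is $\mathcal E_s$.

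For (3) the construction is immediate. Take $p_1^*\mathcal E\otimes p_2^*\mathcal E^{\prime}$, which is coherent. Restriction to a fiber commutes with tensor product, so its fiber over $(s,s^{\prime})$ is $\mathcal E_s\otimes\mathcal E^{\prime}_{s^{\prime}}$. No flatness is needed for this.

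For (1) and (2) we must also parametrize the morphisms and the extensions themselves.
\begin{itemize}
\item[(1)] Use the linear fiber space $V=\mathbb V(\mathcal{H}om_{p}(p_1^*\mathcal E,p_2^*\mathcal E^{\prime}))$ over $S\times S^{\prime}$. This is the analytic space representing relative homomorphisms, and it exists by the theory of linear spaces (Grauert, Fischer) applied to the direct image of the coherent $\mathcal{H}om$ sheaf. Over $V$ there is a universal morphism $\phi:\mathcal E_V\to\mathcal E^{\prime}_V$, and its kernel, cokernel and image are coherent on $X\times V$.
\item[(2)] Replace $V$ by the linear space associated with the relative $\mathrm{Ext}^1$, and take the universal extension over it.
\end{itemize}

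The main obstacles are two. First, base change: the fibers of $\ker\phi$, $\mathrm{coker}\,\phi$ and $\mathrm{im}\,\phi$ need not equal the kernel, cokernel and image of $\phi_v$. Cokernels are right exact and do commute with base change. For kernels and images, I would stratify $V$ (or $S\times S^{\prime}$) into finitely many locally closed analytic pieces over a neighbourhood of the compact set, on which the relevant sheaves are flat, using Frisch's generic flatness. On each piece the formation of kernel and image commutes with restriction, and a finite disjoint union of such pieces is again a witness. Second, compactness: the fibers of $V$ are vector spaces, which are not compact. The fix is that $\phi_v$ and $\lambda\phi_v$ have isomorphic kernels, cokernels and images for $\lambda\neq0$, as do extension classes and their nonzero multiples. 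So one may pass to the projectivization $\mathbb P(V)$, or the unit sphere bundle, over a compact neighbourhood of $K\times K^{\prime}$. This gives a compact set once a Hermitian structure on the linear space is fixed, and the zero morphism or the split extension is added separately.

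These steps are routine but technical. The stratification-and-base-change step is the one I expect to require the most care, and I would carry it out following Toma's argument in \cite{toma16}.
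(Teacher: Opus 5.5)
The paper does not prove this statement; it quotes it from Toma. So I am judging your argument on its own. Part (3) is fine, and so is the scaling device: locally $V=\{(w,z):D(w)z=0\}\subset W\times\mathbb C^N$ carries the fiberwise linear structure of $\mathbb C^N$, so $\{\vert z\vert\le 1\}$ over finitely many compact pieces of $K\times K'$ is compact and meets every $\mathbb C^*$-orbit. The cokernel case is also fine, but only once a universal morphism exists.

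The first genuine gap is the existence of $V$. The functor $T\mapsto\mathrm{Hom}_{X\times T}(\mathcal E_T,\mathcal E'_T)$ is represented by a linear fiber space \cite{flenner81a,bingener80} when $\mathcal E'$ is flat over the base. That is exactly the hypothesis under which the paper invokes it in Lemma \ref{lem--reduction to common disk}. Definition \ref{def--boundedness} gives you no flatness, and without it representability fails. Take $\mathcal E=\mathcal O_X$ over a point and $\mathcal E'=\mathcal O_{X\times\{0\}}$ over $S'=\mathbb C$. The elements $0,1\in\mathrm{Hom}_{X\times\mathbb C}(\mathcal O,\mathcal O_{X\times\{0\}})=\mathbb C$ would give two sections of a representing space that agree on $\mathbb C^*$ but differ at $0$. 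This is impossible, since complex spaces are Hausdorff. For the same reason, the direct image of $\mathcal{H}om(p_1^*\mathcal E,p_2^*\mathcal E')$ does not commute with base change, so no linear space built from it has fibers $\mathrm{Hom}(\mathcal E_s,\mathcal E'_{s'})$.

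Part (2) is worse. Relative $\mathrm{Ext}^1$ carries no universal extension without base-change hypotheses on $\mathrm{Ext}^0$ and $\mathrm{Ext}^1$. Also, restricting $0\to\mathcal E'_T\to\mathcal G\to\mathcal E_T\to 0$ to a fiber is only guaranteed to stay left exact where $\mathcal E_T$ is flat. So flattening must come \emph{before} you parametrize morphisms and extensions, not only afterwards for kernels and images.

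The second gap is the claim that \emph{a finite disjoint union of such pieces is again a witness}. Generic flatness only gives locally closed strata $Z_i\setminus Z_{i+1}$. For your compact set $C$, the sets $C\cap(Z_i\setminus Z_{i+1})$ are in general not compact, so the union fails Definition \ref{def--boundedness}.

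One repair is Hironaka's flattening, whose proper modification keeps preimages of compact sets compact. This is how the paper proves Lemma \ref{lem--flat parameterization of bounded collections}. With that lemma you may take $\mathcal E,\mathcal E'$ flat, build $V$, and then handle kernels and images.

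A cheaper repair uses that the definition asks neither for flatness nor for \emph{every} fiber over $K$ to lie in the set. Run the induction on the closed sets $Z_i$, which are reduced and whose dimension drops near $C$, so it terminates. Use the compact sets $C\cap Z_i$ and the coherent sheaves $\ker(\phi\vert_{X\times Z_i})$ and $\mathrm{im}(\phi\vert_{X\times Z_i})$. Their fibers are $\ker\phi_v$ and $\mathrm{im}\,\phi_v$ on the dense open part where $\mathcal E$, $\mathcal E'$ and $\mathrm{coker}\,\phi$ are flat. The wrong fibers over $Z_{i+1}$ do no harm.

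The same device also bypasses $V$ and the $\mathrm{Ext}$ problem. On a closed stratum of $S\times S'$, cover the compact set by finitely many Stein opens $W$. Choose generators $g_1,\dots,g_m$ of the coherent direct image of $\mathcal{H}om$, or classes $\xi_1,\dots,\xi_m$ generating $\mathrm{Ext}^1_{X\times W}(\mathcal E_W,\mathcal E'_W)=H^0(W,\mathcal{E}xt^1_p(\mathcal E_W,\mathcal E'_W))$. Over $W\times\mathbb C^m$ with $\vert c\vert\le 1$, use $\sum c_lg_l$, respectively the extension of class $\sum c_l\xi_l$. On the dense open part of the stratum where the families are flat and relative $\mathrm{Hom}$ and $\mathrm{Ext}^1$ commute with base change, these exhaust all fiber morphisms and extensions up to scaling.
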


Although we will not use Toma's boundedness criterion in this paper, it seems appropriate to include the statement here, so as to illustrate Definition \ref{def--boundedness}.
\begin{definition}
    We say a set $\mathfrak C$ of isomorphism classes of sheaves is dominated, if there exists a bounded set $\mathfrak D$, such that for any element of $\mathfrak C$ is a quotient of an element of $\mathfrak D$.
\end{definition}

Given this definition, the boundedness criterion in \cite{toma16} can be rephrased in the following way:
\begin{theorem}[Main theorem in \cite{toma16}]\label{thm-boundedness criterion}
    Let $(X,\omega)$ be a compact K\"ahler manifold and $\mathfrak C$ a set of isomorphism classes of sheaves on $X$. Then $\mathfrak C$ is bounded if and only if the following holds:
    \begin{itemize}
        \item [(1)] $\mathfrak C$ is dominated;
        \item [(2)] The degrees of sheaves of $\mathfrak C$ are uniformly bounded from above.
    \end{itemize}
\end{theorem}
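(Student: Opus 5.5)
The plan is to prove the two implications separately. The forward one is essentially formal. The converse reduces, via relative Douady spaces, to a properness statement for spaces of quotients with bounded degree.

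\emph{Bounded $\Rightarrow$ (1) and (2).} Suppose $\mathfrak C$ is realized by a coherent sheaf $\mathcal E\to X\times S$ and a compact $K\subset S$. Then (1) holds trivially: take $\mathfrak D=\mathfrak C$, since every sheaf is a quotient of itself. For (2), $\mathcal E$ need not be flat, so I would first use the flattening stratification of $\mathcal E$ relative to the projection $X\times S\to S$ (Frisch/Hironaka). Over a neighbourhood of the compact set $K$ this gives finitely many locally closed analytic strata $S_\alpha$ such that $\mathcal E|_{X\times S_\alpha}$ is flat over $S_\alpha$. On each connected component of a stratum the Chern characters of the fibers are locally constant, for instance via the real analytic resolutions recalled above. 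Hence $\deg_\omega(\mathcal E_s)=\int_X c_1(\mathcal E_s)\wedge\omega^{n-1}$ takes only finitely many values for $s\in K$, which gives the upper bound.

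\emph{(1) and (2) $\Rightarrow$ bounded.} Let $\mathfrak D$ be realized by $\mathcal G\to X\times S$ with compact $K\subset S$, and suppose every member of $\mathfrak C$ is a quotient $\mathcal G_s\twoheadrightarrow\mathcal F$ with $s\in K$ and $\deg\mathcal F\le d$. I would consider Pourcin's relative Douady space $Q\to S$ of quotients of $\mathcal G$ (the analytic analogue of Grothendieck's Quot scheme), which carries a universal flat quotient $\mathcal G_Q\twoheadrightarrow\mathcal F^{univ}$ on $X\times Q$. Every element of $\mathfrak C$ then appears as a fiber of $\mathcal F^{univ}$ over some point of $Q$ lying above $K$. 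It remains to show that those points of $Q$ lie in a compact subset, or equivalently in a finite union of irreducible components of $Q$ that are proper over $S$.

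\emph{Main obstacle: properness from a degree bound.} This is the compactness step, where the K\"ahler hypothesis enters. First I would reduce to quotients of fixed topological type. By the flattening step above, $\mathcal G_s$ has finitely many possible Chern characters for $s\in K$, and dually the kernels of the quotients have degree bounded below. Via a Grauert--M\"ulich-type argument on the kernel, I expect this to leave only finitely many possible values of $\mathrm{ch}_1$ paired with $\omega^{n-1}$; I would handle the higher Chern characters by working with one irreducible component of $Q$ at a time. For properness of a component I would use Bishop--Fujiki compactness: families of analytic cycles in $X\times(\text{fiber})$ with uniformly bounded $\omega$-volume form a relatively compact set in the Barlet/Douady space. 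To apply this I would translate each quotient into a cycle. Locally $\mathcal G_s$ is a quotient of $\mathcal O^N$, so a quotient of rank $r$ determines a meromorphic map to the Grassmannian $\mathrm{Gr}(N,r)$. The closure of its graph has volume computed by $\int c_1(\mathcal F)\wedge\omega^{n-1}$ plus contributions bounded in terms of the fixed data of $\mathcal G$, so it is bounded by $d$ plus a constant.

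\emph{Torsion in the quotient.} The quotients may have torsion, or lower-dimensional components, that the graph construction does not see. For these I would argue by induction on the dimension of the support, filtering $\mathcal F$ by its torsion filtration. Each graded piece is a quotient of a member of a dominating family that is bounded by Proposition~\ref{prop--boundedness for kernels, cokernels, images...}, and the degree bound is inherited using positivity of the degree of torsion sheaves. Finally, I would reassemble $\mathcal F$ as an extension of the pieces and apply Proposition~\ref{prop--boundedness for kernels, cokernels, images...}(2) again. I expect the volume estimate and this torsion bookkeeping to be the delicate part of the argument.
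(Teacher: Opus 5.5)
The paper does not prove this statement: it is quoted from Toma and never used. So your argument has to stand on its own. The forward direction is fine; it is essentially the flattening argument of Lemma \ref{lem--flat parameterization of bounded collections}. The converse, however, breaks at your last step, and the failure is substantive rather than technical.

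You read the degree as $\deg_\omega\mathcal F=\int_X c_1(\mathcal F)\wedge\omega^{n-1}$. By Proposition \ref{prop--chern character of torsion sheaf}, this degree vanishes for every sheaf supported in codimension $\ge 2$. Hence the pieces of the torsion filtration of dimension $\le n-2$ inherit no bound from your induction, and nothing else in your argument controls them.

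With this reading the statement is in fact false for $n\ge 2$. Take the sheaves $\mathcal O_Z$ with $Z\subset X$ zero-dimensional of arbitrary length. They are all quotients of $\mathcal O_X$, so they are dominated by the bounded set $\{[\mathcal O_X]\}$, and they all have degree $0$. But your own flattening argument shows that a bounded set realizes only finitely many Chern characters, whereas $\int_X\mathrm{ch}_n(\mathcal O_Z)=\mathrm{length}(\mathcal O_Z)$ is unbounded. So condition (2) can only be correct if the degree is taken in a sense that also controls the lower-dimensional parts, or if one restricts to torsion-free (or pure) sheaves. The one-line paraphrase in the paper hides this.

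Concretely, your torsion induction would need, for each piece of dimension $d$, two upper bounds: one on the $\omega^{d}$-volume of its support cycle counted with multiplicities, and one on its $d$-dimensional degree. These are exactly the inputs the Bishop compactness argument consumes. Neither follows from $\int_X c_1(\mathcal F)\wedge\omega^{n-1}\le C$.

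There are two smaller problems in the torsion-free step.

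First, the volume estimate is not what you claim. The $\omega$-volume of the closed graph $\Gamma$ in the relative Grassmannian is $\int_\Gamma(\omega+\Omega)^n$ for a suitable semipositive Pl\"ucker form $\Omega$. In the expansion $\sum_k\binom nk\int_\Gamma\omega^{n-k}\wedge\Omega^k$, only the terms $k=0,1$ are computed by the fixed data and by $\deg(\mathcal F/\mathrm{tors})\le\deg\mathcal F$. The terms with $k\ge 2$ involve $c_1^2$, $c_2$-type quantities and the indeterminacy locus, so the volume is not ``computed by'' the degree. These terms can be bounded by Khovanskii--Teissier log-concavity on a resolution of $\Gamma$:
\[
\int\omega^{n-k}\wedge\Omega^k\le\Big(\int\omega^{n-1}\wedge\Omega\Big)^k\Big/\Big(\int\omega^{n}\Big)^{k-1}.
\]
You need to supply this step, with $\Omega$ chosen uniformly over $K$.

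Second, your preliminary reduction to finitely many topological types is neither available nor needed. When $[\omega]$ is irrational, a bounded range of degrees allows infinitely many classes $c_1$, and no Grauert--M\"ulich argument changes that. Working ``one irreducible component of $Q$ at a time'' is circular, since showing that only finitely many components are hit is the whole point. Finiteness of the Chern data should come out of the Bishop compactness, not be fed into it.
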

This extends the celebrated Grothendieck's boundedness criterion in polarized setting \cite[Theorem 2.1]{grothendieck60} to the analytic category. There is also a relative version of this criterion, as further developed in \cite{toma2021}.

\

We will also use the following separatedness result in the compact K\"ahler setting, which is analogous to \cite[Theorem 6.4]{toma20} for slope stability:
\begin{theorem}\label{thm--separatedness for slope stability}
    Let $(X,\omega)$ be a compact K\"ahler manifold and $\mathcal E,\mathcal F$ be two flat families of torsion-free $\mu$-semistable sheaves over the disk $\Delta$. Suppose
    \[
    \mathcal E_t\cong\mathcal F_t,\quad \forall t\in\Delta^*,
    \]
    then we have
    \[
    \gr^S(\mathcal E_0)^{\vee\vee}\cong\gr^S(\mathcal F_0)^{\vee\vee},\quad \mathcal C(\mathcal E_0)=\mathcal C(\mathcal F_0).
    \]
\end{theorem}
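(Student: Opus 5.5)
We follow the strategy of \cite[Theorem 6.4]{toma20}, replacing Gieseker stability by slope stability and keeping track of the reflexive hull and the codimension-two cycle. First we reduce to a statement about a single stable constituent. Take a Seshadri filtration of $\mathcal E_\eta$ over the generic point of $\Delta$; after shrinking $\Delta$ it extends to a filtration of $\mathcal E|_{X\times\Delta^*}$ by flat subfamilies with fibrewise stable torsion-free quotients. By flatness over a smooth curve we can saturate these inside $\mathcal E$, i.e.\ take the closure $\mathcal E^i := \overline{\mathcal E^i|_{X\times\Delta^*}}\cap\mathcal E$. This gives flat subfamilies of $\mathcal E$ with $\mathcal E$-flat quotients, and slopes are constant in flat families. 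Hence each $(\mathcal E^i/\mathcal E^{i-1})_0$ is $\mu$-semistable of slope $\mu(\mathcal E_0)$. Refining by Seshadri filtrations of these central fibres shows that $\gr^S(\mathcal E_0)$ is obtained by concatenating the graded objects of the central fibres of the flat families $\mathcal Q^i=\mathcal E^i/\mathcal E^{i-1}$, up to the canonical pair $(\gr^{\vee\vee},\mathcal C)$ from the theorem of \cite{BTT2017}. The same holds for $\mathcal F$, using the transported filtration via $\mathcal E_t\cong\mathcal F_t$. The cycle $\mathcal C$ is additive in short exact sequences of the relevant torsion quotients, computed via Proposition \ref{prop--chern character of torsion sheaf} and lengths at points, so the cycle adds up along the concatenation. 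We are therefore reduced to the case where the generic fibre is $\mu$-stable.

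In the stable case, $\mathcal E$ and $\mathcal F$ are two flat extensions over $\Delta$ of the same family $\mathcal G$ on $X\times\Delta^*$. Regard both as subsheaves of $j_*\mathcal G$, where $j: X\times\Delta^*\to X\times\Delta$. Multiplying by $t^{-k}$ we may assume $\mathcal F\subset\mathcal E$ with $\mathcal F\not\subset t\mathcal E$. This gives a nonzero map $\phi:\mathcal F_0\to\mathcal E_0$ whose image and kernel have slope $\mu$ by semistability, together with a chain of elementary modifications. Following Langton's argument, or Toma's, we iterate: if $\phi$ is not injective in codimension one, we replace $\mathcal E$ by $\mathcal E':=\ker(\mathcal E\to\mathcal E_0/\mathrm{im}\phi)$. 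Since both $\mathcal E'$ and $\mathcal E$ are flat extensions of $\mathcal G$, the discrete invariants are preserved. The process must terminate, because $\mathcal F\subset t^{k}\mathcal E$ cannot hold for all $k$. It ends with $\phi$ generically an isomorphism, hence an isomorphism off codimension two.

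The outcome is that the graded pieces of $\mathcal E_0$ and $\mathcal F_0$ agree off a finite set of points. They are therefore isomorphic after taking reflexive hulls, since on a surface reflexive sheaves are determined off points. For the cycle $\mathcal C$ we use $\ch_2$. Flatness gives $\ch_2(\mathcal E_0)=\ch_2(\mathcal F_0)$, and by Proposition \ref{prop--chern character of torsion sheaf} the total degree of $\mathcal C$ is $\ch_2(\gr^{\vee\vee})-\ch_2$, so the total multiplicities agree. Agreement of the cycle pointwise requires a local version of this argument. Near each point $x$ we compare $\mathrm{length}_x(\mathcal E_0^{\vee\vee}/\mathcal E_0)$ by upper semicontinuity: $j_*$ of the reflexive hull of $\mathcal G$ gives a common reflexive family, and the local second Chern character is computed in a neighbourhood of $x$ using the flat family. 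This yields equal local lengths summed over graded pieces.

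The main obstacle is this pointwise identification of $\mathcal C$ rather than just its degree. The elementary-modification step destroys control near points, and the Seshadri filtration of the central fibre need not be a limit of filtrations of the generic fibre. I would first try to prove that the local quantity $\mathrm{length}_x(\gr^{\vee\vee}/\gr)$, summed over constituents, equals a locally defined invariant of the flat family near $x$. The candidate invariant is the local $\ch_2$ defect of $\mathcal G$ relative to its reflexive extension across $t=0$. Such an invariant depends only on $\mathcal G$ on $\Delta^*$, and would therefore force $\mathcal C(\mathcal E_0)=\mathcal C(\mathcal F_0)$.
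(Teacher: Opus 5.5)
Your proposal has a genuine gap, and it is larger than the pointwise identification of $\mathcal C$ that you flag. Your Langton iteration ends by comparing $\mathcal F_0$ with the central fibre of the \emph{modified} family, not with $\mathcal E_0$. To carry anything back to $\mathcal E_0$ you must know that the pair $(\gr^S(\cdot)^{\vee\vee},\mathcal C(\cdot))$ of the central fibre is unchanged by each elementary modification. Your remark that ``the discrete invariants are preserved'' only gives this for Chern classes, so in general neither half of the conclusion is reached.

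The missing idea, which is how the paper argues, is that one modification swaps sub and quotient. First apply Toma's first step; it is needed anyway, since the hypothesis is only fibrewise, to get $\varphi:\mathcal E\to\mathcal F$ that is an isomorphism over $\Delta^*$. Then $\mathcal E\subset\mathcal F$ and $t^N\mathcal F\subset\mathcal E$. Put $\mathcal E^{(j)}=\mathcal E\cap t^j\mathcal F$ with central fibres $E_j$, and let $I_j,K_j$ be the image and kernel of $E_j\to(t^j\mathcal F)_0\cong\mathcal F_0$. The squeeze $\mu\le\mu(I_j)\le\mu$ makes $I_j,K_j$ torsion-free semistable of slope $\mu$. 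Restrict $0\to\mathcal E^{(j+1)}\to\mathcal E^{(j)}\to\iota_*I_j\to0$ to $t=0$, using $\mathrm{Tor}_1(\iota_*I_j,\mathcal O/(t))\cong I_j$. This gives $0\to I_j\to E_{j+1}\to K_j\to0$ alongside $0\to K_j\to E_j\to I_j\to0$. The pair is additive on short exact sequences of torsion-free semistable sheaves of equal slope (concatenate Seshadri filtrations; you already use this in your reduction step). Hence $E_{j+1}$ and $E_j$ have the same pair, and $E_N\cong\mathcal F_0$ finishes. This gives the cycle identity pointwise, with no $\ch_2$ bookkeeping, no reduction to a stable generic fibre, and no surface assumption. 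The theorem is stated in arbitrary dimension, so your ``finite set of points'' steps are too special anyway.

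Your proposed local substitute cannot work, because $\mathcal C$ is governed by the global Seshadri filtration. Take a locally free non-split extension $0\to\mathcal O_X\to E\to\mathcal I_Z\to0$ (Serre construction, e.g.\ $Z$ two points on a K3 surface). It is $\mu$-semistable with $\mathcal C(E)=[Z]$. Yet for $x\in Z$ the constant families with fibres $E$ and $\mathcal O_X^{\oplus 2}$ are isomorphic near $(x,0)$, while their cycles differ at $x$. Moreover, an extension of $\mathcal G$ across $X\times\{0\}$ is only defined up to elementary modifications, and $j_*\mathcal G$ is not coherent, so no invariant of $\mathcal G$ alone arises this way.

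Three further points need repair:
\begin{itemize}
\item Your modification $\ker(\mathcal E\to\iota_*(\mathcal E_0/\mathrm{im}\,\phi))$ has central fibre containing the $\mathrm{Tor}_1$-term $\mathcal E_0/\mathrm{im}\,\phi$. This has torsion whenever $\mathrm{im}\,\phi$ is not saturated, so torsion-freeness is lost. Saturate $\mathrm{im}\,\phi$ first, or modify as above, where the $\mathrm{Tor}_1$-term sits inside the torsion-free $\mathcal F_0$.
\item Termination comes from the strict drop of the generic length of $\mathcal E/\mathcal F$, not from the reason you give.
\item Your reduction step claims the central fibres of the saturated quotients $\mathcal Q^i$ are semistable, but they may acquire torsion in codimension $\ge 2$. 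The reduction is unnecessary in any case, since stability of the generic fibre is never used.
\end{itemize}

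With additivity in hand, your own route does close. At the end $\phi$ is injective with cokernel $T$ supported in codimension $\ge 2$. Equality of $\ch_2$ for the two flat families then forces the codimension-two cycle of $T$ to vanish.
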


A proof of separatedness for Gieseker-Maruyama semistability can be found in \cite{toma20}. However, it is not hard to formulate such a proof for slope semistability by mimicking the argument there. We include such a proof in Appendix \ref{sec--proof of separatedness for slope stability}.

\section{$L^2$-estimate for HYM connections on K\"ahler surfaces}\label{sec-L2 estimate for HYM connections}

In this section, we prove the key analytic ingredient in this paper, namely a uniform $L^2$-estimate result for HYM connections developing singularities. 

To set up, let $B_{10}(p)\subset\subset (X,\omega)$ be a ball with bounded geometry and $\{A_j\}$ be a sequence of HYM connections with Einstein constant $\mu_j$ on $E\to \overline{B_{10}(p)}$ such that
\[
\int_{B_{10}(p)}\vert F_{A_j}\vert^2 d\mu_g\leq\Lambda,\quad \mu\coloneqq\sup_j\vert\mu_j\vert<\infty.
\]
By passing to some subsequence, we may assume $\{A_j\}$ converges in Uhlenbeck sense to an ideal connection $(A_\infty,\mathcal C_\infty)$. Without loss of generality, we may assume $p$ is the unique bubbling point in $\overline{B_4(p)}$, so we can find a sequence of unitary gauges $u_j: E_\infty\vert_{B_{4}(p)\setminus\{p\}}\to E\vert_{B_{4}(p)\setminus\{p\}}$, such that $u_j^* A_j$ converges to $A_\infty$ in $C^{\infty}_{loc}(B_{4}(p)\setminus\{p\})$.

We recall the general pattern of local convergence of holomorphic sections under the Uhlenbeck gauge. Denote by $\mathcal E_j$ the holomorphic structure on $(E,h)$ defined by $A_j$. Let $s_j\in\Gamma(B_2(p),\mathcal E_j)$ be a sequence of holomorphic sections. Since they are $L^2$ over $B_1(p)$, we may normalize them and assume $\Vert s_j\Vert_{L^2(B_1(p))}=1$. Then the standard Bochner formula and Moser iteration imply that
\[
\sup_{B_r(p)}\vert s_j\vert\leq C(r,\mu,g)\Vert s_j\Vert_{L^2(B_1(p))}=C.
\]
Therefore by passing to some subsequence, we may assume $u_j^* s_j$ converges to a holomorphic section $s_\infty$ in $C^{\infty}_{loc}(B_1(p)\setminus\{p\})$. By Hartogs' extension, $s_\infty$ extends across $p$ as an element in $\Gamma(B_1(p),\mathcal E_\infty)$. From now on we will use the terminology that $\{s_j\}$ $C^{\infty}_{loc}$-converges to $s_\infty$ in the Uhlenbeck sense, without explicitly pointing out the unitary gauge if unnecessary.

By Fatou's lemma, we know that $\Vert s_\infty\Vert_{L^2(B_1(p))}\leq 1$. The fundamental difficulty in understanding the \textit{local} convergence of holomorphic sections, however, lies in the potential $L^2$-concentration near the boundary $\partial B_1(p)$. That is, we can only conclude that $s_j$ weakly converges to $s_\infty$ in $L^2$-sense, but in general the $L^2$-norm could be lost when passing to the limit. In fact, \textit{a prior} more severe thing could happen: it is possible that all (sub-)sequences of sections normalized in this way converge to $0$, so we lose control of the holomorphic geometry for Uhlenbeck convergence. This reveals the key distinction between global geometry and local geometry \cite{li1993,GSTW2021}. When the sequence is given by rescaling of a fixed (singular) connection, it is also the fundamental issue handled in \cite{chen-sun2021}.

In this paper, we will address this question for \textit{arbitrary} convergent sequences in complex dimension two via the robust H\"ormander's weighted $L^2$-technique. The crux is the following construction of psh weights:
\begin{theorem}\label{thm-existence of psh weights}
    By passing to some subsequence, we can find a sequence of continuous psh functions $\varphi_j$ on $B_2(p)$, such that for some uniform constants $N,C>0$ depending only on $\Lambda,\mu$ and the local geometry of $B_2(p)$, the following is true:
    \begin{itemize}
        \item [(1)] $(\mathcal E_j,h\cdot e^{-2\varphi_j})\geq_{\mathrm{Nak}}0$ on $B_1(p)$;
        \item [(2)] There exist finitely many points $p_{j,I}$ converging to $p$ as $j\to\infty$, such that
        \[
        N\sum_I \log d(\cdot,p_{j,I})-C\leq\varphi_j(\cdot)\leq 0\ \text{ on}\ B_1(p)
        \]
    \end{itemize}
\end{theorem}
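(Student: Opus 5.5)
We construct $\varphi_j$ by a bubble-tree decomposition of the region where $A_j$ concentrates. The goal is that $\sqrt{-1}\partial\bar\partial\varphi_j$ dominates the curvature of $A_j$ in the Nakano sense, so that $\sqrt{-1}F_{A_j}+\sqrt{-1}\partial\bar\partial\varphi_j\otimes \mathrm{Id}\geq_{\mathrm{Nak}}0$. On a K\"ahler surface the HYM equation gives $F_{A_j}=F^0_{A_j}+\frac{\mu_j}{2}\omega\otimes\mathrm{Id}$, where $F^0_{A_j}$ is anti-self-dual, so the obstruction is $\vert F^0_{A_j}\vert$. A pointwise bound $\vert F_{A_j}\vert\leq K\rho_j^{-2}$ suggests a weight $\varphi$ with $\sqrt{-1}\partial\bar\partial\varphi\geq K\rho_j^{-2}\omega$.

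The model is a single point $q$ at which $\vert F_{A_j}\vert(x)\leq K d(x,q)^{-2}$. There $\psi=N\log d(\cdot,q)$, corrected by a smooth strictly psh function $\lambda\vert z\vert^2$ in holomorphic coordinates, works. The reason is that $\sqrt{-1}\partial\bar\partial\log\vert z\vert$ is only semi-positive, since it vanishes in the radial direction. We therefore use instead a weight like $\log(\vert z\vert^2+\epsilon^2)$ at the appropriate scale $\epsilon$, or combine $\log\vert z\vert$ with $-(-\log\vert z\vert)^{a}$-type corrections, which are strictly psh with Hessian comparable to $\vert z\vert^{-2}(\log)^{-\cdot}$. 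The loss in the radial direction must be compensated by using several centers, or by the fact that on annuli the rescaled curvature decays. Concretely, we choose $\varphi=N\log(\vert z-q\vert^2+r^2)$, whose Hessian is $\gtrsim N r^2(\vert z-q\vert^2+r^2)^{-2}$ in all directions. This is exactly the curvature profile $\lesssim r^2/d^4$ of a bubble of scale $r$ centered at $q$, since a finite-energy ASD instanton on $\mathbb R^4$ has quadratic curvature decay at infinity, and it is the source of the lower bound in part (2).

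The steps are as follows.

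\begin{enumerate}[(i)]
\item We run the standard bubble-tree analysis at $p$, in the style of Parker and Taubes. After passing to a subsequence we obtain finitely many centers $p_{j,I}\to p$ and scales $r_{j,I}\to 0$, with a number of bubbles bounded by $\Lambda/\epsilon_0$. The output is a pointwise estimate on $B_2(p)$:
\[
\vert F^0_{A_j}\vert(x)\leq C_0+C\sum_I \frac{r_{j,I}^2}{(d(x,p_{j,I})^2+r_{j,I}^2)^2}+\Psi(j^{-1})\sum_I\frac{1}{d(x,p_{j,I})^2}.
\]
This combines $\epsilon$-regularity on each neck, together with the exponential or power decay of energy along necks, with the decay of the bubble curvatures.
\item We set $\varphi_j=\sum_I N\log\frac{d(\cdot,p_{j,I})^2+r_{j,I}^2}{C}+\lambda(\vert z\vert^2-C')$ in a holomorphic chart. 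Each summand is psh and contributes Hessian at least $N r^2(d^2+r^2)^{-2}$, and the $\lambda$ term absorbs $C_0+\mu$.
\item The neck error terms $\Psi d^{-2}$ need strict positivity in every direction, including the radial one. To get it we add terms $-\delta(-\log(d^2+r^2))^{1/2}$, whose Hessian has a full-rank part of order $\delta d^{-2}(\log)^{-3/2}$. We expect this to suffice because the neck curvature decays like a power of $d/R$ along the necks. This is the main obstacle: choosing the weights at intermediate scales between bubbles, and on nested bubbles, so that the Hessian dominates curvature uniformly while each center still enters only at logarithmic order $N\log d$.
\item We check the bounds. $\varphi_j\leq 0$ after subtracting a constant. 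The lower bound $\varphi_j\geq N\sum\log d(\cdot,p_{j,I})-C$ holds because $\log(d^2+r^2)\geq 2\log d$ and the $\sqrt{-\log}$ corrections are dominated by $\log$ terms. Continuity is clear.
\item Finally we convert the Hessian domination to Nakano positivity. On surfaces an ASD $(1,1)$-form $F^0$ with $\vert F^0\vert\leq \eta$ satisfies $\sqrt{-1}F^0+c\,\eta\,\omega\otimes\mathrm{Id}\geq_{\mathrm{Nak}}0$ for a dimensional constant $c$. It therefore suffices to have $\sqrt{-1}\partial\bar\partial\varphi_j\geq c\,\eta_j\,\omega$, with $\eta_j$ the bound from step (i).
\end{enumerate}
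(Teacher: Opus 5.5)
There is a genuine gap, exactly at the step you flag as the main obstacle: the neck regions.

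The problem starts in (i), which packages the neck curvature as $\Psi(j^{-1})d^{-2}$. What $\epsilon$-regularity plus power decay of energy actually gives on the neck of a vertex $I$ (with $r_{j,I}=\lambda_{j,I}^{-1}$ and $d=d(\cdot,p_{j,I})$) is
\[
d^{2}\vert F_{A_j}\vert\leq C\max\bigl\{(\lambda_{j,I}d)^{-\delta},(\lambda_{j,P(I)}d)^{\delta}\bigr\}
\]
for some $\delta>0$ you do not control. This bound is small only in the middle of a long neck; near either end it is of fixed size, not $o(1)$. For $\delta<2$ it is also not dominated by the profiles your weights in (ii) provide. On the inner half it exceeds $r_{j,I}^2d^{-4}\sim(\lambda_{j,I}d)^{-2}d^{-2}$ by the factor $(\lambda_{j,I}d)^{2-\delta}$, which is unbounded along the neck. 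On the outer half it exceeds the parent's contribution $\sim\lambda_{j,P(I)}^{2}$ by $(\lambda_{j,P(I)}d)^{\delta-2}$. So (i) is not justified by the ingredients you cite.

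Even granting (i), step (iii) does not close. The radial eigenvalue of $\sqrt{-1}\partial\bar\partial\bigl(-(-\log(d^2+r^2))^{1/2}\bigr)$ is only of order $d^{-2}\vert\log d\vert^{-3/2}$, and along the neck $\vert\log d\vert$ reaches $\log\lambda_{j,I}\to\infty$. To absorb $\Psi(j^{-1})d^{-2}$ you would need either $\Psi(j^{-1})\lesssim(\log\lambda_{j,I})^{-3/2}$, which nothing gives you, or a coefficient on the correction that grows with $j$. The latter destroys the uniform constant $C$ in (2), since $\sqrt{x}\leq tx+\frac{1}{4t}$ only trades a bounded log-coefficient for a growing additive constant. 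Logarithmic corrections give borderline positivity $d^{-2}(\log)^{-a}$ and cannot exploit the power decay you allude to.

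The missing idea is to match the weights to the power-law profile of the neck estimate rather than to $d^{-2}$. Split each neck at $d\sim(\lambda_{j,I}\lambda_{j,P(I)})^{-1/2}$.
\begin{itemize}
\item On the inner half, use $\log\bigl(1+(1+\lambda_{j,I}^2\vert z-z_I\vert^2)^{\epsilon}\bigr)$, normalized to be $\leq 0$ on the unit ball. It is psh, with Hessian $\gtrsim\lambda_{j,I}^{-2\epsilon}\vert z-z_I\vert^{-2-2\epsilon}$ for $\vert z-z_I\vert\gtrsim\lambda_{j,I}^{-1}$ and $\gtrsim\lambda_{j,I}^2$ inside, which also covers the bubble region. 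It dominates $\lambda_{j,I}^{-\delta}d^{-2-\delta}$ once $2\epsilon\leq\delta$, and it is bounded below by $2\epsilon\log\vert z-z_I\vert$.
\item On the outer half, use $\log(1+(\lambda_{j,P(I)}\vert z-z_I\vert)^{\epsilon})-\log(1+\lambda_{j,P(I)}^{\epsilon})$. Its Hessian is $\gtrsim\lambda_{j,P(I)}^{\epsilon}\vert z-z_I\vert^{-2+\epsilon}$ for $\vert z-z_I\vert\lesssim\lambda_{j,P(I)}^{-1}$, so it dominates $\lambda_{j,P(I)}^{\delta}d^{-2+\delta}$ once $\epsilon\leq\delta$. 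It is bounded below by $\epsilon\log\vert z-z_I\vert$.
\end{itemize}
Sum large multiples of these over the finitely many vertices and add $N\vert z\vert^2-N$. This gives (1) and (2) with uniform constants, and your step (v) then converts the Hessian domination to Nakano positivity.

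Alternatively, you could invoke the sharp exponent $\delta=2$ (Groisser--Parker). Then the neck bound becomes $C(\lambda_{j,I}^{-2}d^{-4}+\lambda_{j,P(I)}^{2})$, and your weights in (ii), with the parent vertex's term absorbing $\lambda_{j,P(I)}^2$, already suffice without (iii). That, however, is not the argument you gave.
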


The proof is essentially the same as \cite[Section 5]{ZZ26} by grafting psh weights from the bubble tree. However, since we fix the underlying surface in the HYM setting, it is worth spelling out details more explicitly. As we will see, as the base is $C^{\infty}$-close to Euclidean space in arbitrary scales, the construction of psh weights is straightforward \textit{without} an extra need of the gluing procedure.

Given Theorem \ref{thm-existence of psh weights}, by cutting off in the tangent cone scale, one can immediately conclude that sections of $\mathcal E_\infty$ with sufficiently high vanishing order must arise as (sub-)sequential limits; cf. \cite[Proposition 8.2]{ZZ26}.
\begin{corollary}\label{cor--L2-convergence for sections with high vanishing order}
    There exists an integer $m$ depending only on $\Lambda,\sup\vert\mu_j\vert$ and the local geometry of $B_2(p)$, such that for any holomorphic section $s_\infty\in\Gamma(B_2(p),\mathcal I^{m}_p\mathcal E_\infty)$, there exists a sequence of holomorphic sections $s_{j_k}\in\Gamma(B_1(p),\mathcal E_{j_k})$ converging to $s_\infty$ in the Uhlenbeck sense both in $C^{\infty}_{loc}(B_1(p)\setminus\{p\})$ and $L^2(B_1(p))$.
\end{corollary}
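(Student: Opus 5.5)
The plan is to transplant $s_\infty$ to $\mathcal E_j$ by cutting off near the bubbling point, and then to correct by H\"ormander's estimate with the weights $\varphi_j$ of Theorem \ref{thm-existence of psh weights}. The weights blow up at rate $N\log d$, but $s_\infty$ vanishes to high order at $p$, and this will pay for that blow-up.

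\textbf{Step 1 (cut-off).} Fix $s_\infty\in\Gamma(B_2(p),\mathcal I_p^m\mathcal E_\infty)$, where $m$ is to be chosen. The point $p$ is a zero of order $m$, and $\vert s_\infty\vert\leq C d(\cdot,p)^m$ holds in a smooth frame of $\mathcal E_\infty$ near $p$ (recall $\mathcal E_\infty$ is a holomorphic bundle extending across $p$). Choose scales $\delta\to0$. Let $\chi_\delta$ be a cut-off equal to $1$ on $B_{1}(p)\setminus B_{2\delta}(p)$ and vanishing on $B_\delta(p)$, with $\vert\nabla\chi_\delta\vert\leq C/\delta$. Also let $\eta$ be a fixed cut-off supported in $B_{1}(p)$, equal to $1$ on $B_{1/2}(p)$. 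On the annulus $A_p(\delta,1)$ we have $u_j^*A_j\to A_\infty$ smoothly. So for $j\geq j(\delta)$ the section $\sigma_j:=(u_j^{-1})^*(\eta\chi_\delta s_\infty)$ is a smooth section of $E$ satisfying
\[
\Vert\pp_{A_j}\sigma_j\Vert_{L^2}\leq \Vert(\pp\chi_\delta) s_\infty\Vert_{L^2}+\Psi(j^{-1}\mid\delta)+(\text{outer term from }\eta).
\]
The outer term is harmless: we work on the whole $B_1(p)$ and keep it, or handle the outer boundary by a standard second cut-off, noting that only convergence on compact subsets of $B_1(p)\setminus\{p\}$ plus $L^2$ is required. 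To avoid the outer boundary, it is cleaner to apply the estimate on a slightly smaller ball and use the fact that $\eta s_\infty$ is fixed.

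\textbf{Step 2 (weighted estimate).} Solve $\pp_{A_j}v_j=\pp_{A_j}\sigma_j$ on $B_1(p)$, which is pseudoconvex, with respect to the metric $h e^{-2\varphi_j-2\psi}$. Here $\psi=\vert z\vert^2$, or another strictly psh function, supplies strict positivity, and by Theorem \ref{thm-existence of psh weights}(1) the Nakano positivity holds. H\"ormander gives
\[
\int\vert v_j\vert^2e^{-2\varphi_j}\leq C\int\vert\pp_{A_j}\sigma_j\vert^2e^{-2\varphi_j}.
\]
The left side dominates $\Vert v_j\Vert_{L^2}^2$ since $\varphi_j\leq0$. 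On the right side, by (2) we have $e^{-2\varphi_j}\leq C\prod_I d(\cdot,p_{j,I})^{-2N}$. Because $p_{j,I}\to p$, for $j$ large all $p_{j,I}$ lie in $B_{\delta/2}(p)$. Hence on $\mathrm{supp}\,\pp\chi_\delta\subset A_p(\delta,2\delta)$ we get $e^{-2\varphi_j}\leq C\delta^{-2NK}$, where $K$ bounds the number of points $p_{j,I}$; this bound is uniform in $j$ by the energy bound. Therefore
\[
\int\vert(\pp\chi_\delta)s_\infty\vert^2e^{-2\varphi_j}\leq C\delta^{-2}\delta^{2m}\delta^{4}\delta^{-2NK}.
\]
Choosing $m>NK$ makes this $\to0$ as $\delta\to0$. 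The error $\Psi(j^{-1}\mid\delta)$ is also supported away from $B_{\delta}(p)$, so the weight there is bounded by $C\delta^{-2NK}$, and choosing $j\geq j(\delta)$ large makes that term small too. The outer term from $\eta$ is supported in a fixed annulus where $\varphi_j\geq -C'$; this is a problem only if it does not vanish. To make it vanish, I would instead choose $\eta$ so that $\pp_{A_j}$ of the outer part tends to zero. This is possible because $s_\infty$ is holomorphic on $B_2(p)$ and we may take $\eta\equiv1$ on $B_1(p)$, working on $B_1(p)$ directly; then $\sigma_j$ is defined on $B_1(p)$ with error supported only near $p$ plus the convergence error.

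\textbf{Step 3 (conclusion).} Set $s_j:=\sigma_j-v_j$. Then $s_j$ is holomorphic in $\mathcal E_j$ on $B_1(p)$ and $\Vert u_j^*s_j-\chi_\delta s_\infty\Vert_{L^2}\to0$. A diagonal choice $\delta_k\to0$, $j_k\to\infty$ gives $u_{j_k}^*s_{j_k}\to s_\infty$ in $L^2(B_1(p))$. The $C^\infty_{loc}(B_1(p)\setminus\{p\})$ convergence then follows from the Moser/Bochner sup bound recalled above and elliptic regularity of $\pp$ in the Uhlenbeck gauge: a subsequence converges smoothly away from $p$, and its limit must equal $s_\infty$ by the $L^2$ convergence.

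\textbf{Main obstacle.} The hard part is the uniform count: the number of points $p_{j,I}$, together with the exponent $N$, must be bounded independently of $j$ so that $m$ is fixed. I expect this to come from the bubble-tree structure, since the energy quantum bounds the number of bubbles.
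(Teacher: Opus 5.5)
Your proposal is correct and follows the paper's argument. The paper cuts off at a fixed small scale around $p$ (the ``tangent cone scale''), transplants via the Uhlenbeck gauge, and solves $\bar\partial$ by H\"ormander's estimate with the weights $\varphi_j$ of Theorem \ref{thm-existence of psh weights}: $\varphi_j\le 0$ controls the unweighted $L^2$ norm, and the vanishing order $m$ beats the $\delta^{-2NK}$ blow-up of $e^{-2\varphi_j}$ on the cut-off annulus once $\delta$ is chosen small and then $j$ large. The ``main obstacle'' you flag is already settled: $N$ is uniform by that theorem, and the number of points $p_{j,I}$ is the vertex count of the fixed bubble tree, which energy quantization bounds in terms of $\Lambda$.
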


This section is organized as follows. In Section \ref{subsec--bubbling of yang-mills connections}, we review the standard bubbling analysis for HYM connections in real dimension four, especially the neck region estimate. Then in Section \ref{subsec--construction of psh weights}, we prove Theorem \ref{thm-existence of psh weights}. We also include an explicit example calculation in Section \ref{subsec-an example}, illustrating that our estimate is optimal.

\subsection{Bubbling of HYM connections}\label{subsec--bubbling of yang-mills connections}

In this subsection, we review the standard theory on bubbling of HYM connections in complex dimension two.

To improve clarity and ease the notation, we need the notion of a rooted tree.
Let $\mathcal{T}$ be a rooted tree with a finite number of vertices. The vertices of $\mathcal{T}$ are partially ordered such that $I \leq  J$ if and only if the unique path from the root to $J$ passes through $I$. 
If $I\leq J$ and $I\neq J$, we denote this by $I<J$. The depth of a vertex, denoted by  $\mathrm{dep}(I)$ is defined to be the length of the path to its root.  In particular, the root has depth zero. The depth of a tree is the maximum depth of any vertex.
For any vertex $I$, we let $C(I)$ denote its set of children; if it is not the root, we let $P(I)$ denote its unique parent.

Using this notation, we can state the following classical bubble tree decomposition in complex dimension two as follows:
\begin{theorem}\label{thm--bubble tree decomposition}
   Passing to a subsequence, there exist a rooted tree with finitely many vertices \( \mathcal{T} \) and a constant \( \eta > 0 \) such that, for each \( j \) and each vertex \( I \in \mathcal{T} \), there exist points \( p_{j,I} \in X \) with \( p_{j,I} \to p_{\infty} \), scaling factors \( \lambda_{j,I} \to \infty \) as \( j \to \infty \), and constants \( \delta_I \in (0,1) \) satisfying the following properties:
    \begin{itemize}
    \item for the root vertex $I$,  we denote $p_{j,I}=p_{j,1}$ and we adopt the convention that $\lambda_{j,P(1)}=1$, then for any vertex $I\in \mathcal{T}$, we have 
    \begin{equation}\label{curvature estimate on annulus}
        \int_{A_{p_{j,I}}(\lambda_{j,I}^{-1},\delta_{I}\lambda_{j,P(I)}^{-1})}|F_{A_j}|^2 d\mu_g\leq \epsilon_0,
    \end{equation}
        \item for any $I<J$, we have $p_{j,J}\in B(p_{j,I}, \lambda_{j,I}^{-1})$ and $\lim_{j\rightarrow \infty} \frac{\lambda_{j,I}}{\lambda_{j,J}}=0,$ 
        \item for any $I<J$, we have either $p_{j,J}=p_{j,I}$ or $\lim_{j\rightarrow \infty}\lambda_{j,I}d(p_{j,J},p_{j,I})\geq \eta>0$,
        \item for any $I$,  $(A_j, p_{j,I},\lambda_{j,I} ^2g)$ converges to an ideal HYM connection $Z_{I}=(B_I,\mathcal C_I)$ on $(T_p X,g_0)$ in the pointed Uhlenbeck sense. Moreover, either $B_I$ is non-flat, or $\#\vert\mathcal C_I\vert\geq 2$.
        \item if a vertex $I$ has no children, then $(A_j, p_{j,I},\lambda_{j,I} ^2g)$ converges smoothly to a smooth HYM connection $B_{I}$ in the pointed Uhlenbeck sense.
        \item for any $I$, $C(I)$ has a natural one-to-one correspondence to the bubbling locus $\vert\mathcal C_I\vert$. Moreover, if $J\in C(I)$ corresponds to a point $q_J\in \vert\mathcal C_I\vert$, then the asymptotic cone of $Z_J$ is isomorphic to the local tangent cone of $Z_I$ at $q_J$, i.e.
        \[
        \frac{1}{8\pi^2}\int_{T_p X}\vert F_{B_J}\vert^2d\mu_{g_0}+\sum_{q\in\vert\mathcal C_{ J}\vert}m_q=m_{q_J}.
        \]
        Here $m_q, m_{q_J}$ denote the analytic multiplicity of $q, q_J$ appearing in $\mathcal C_J,\mathcal C_I$.
    \end{itemize}
\end{theorem}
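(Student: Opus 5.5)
The plan is to build the tree by the standard inductive rescaling argument: repeatedly locate points of curvature concentration, blow up at the scale where an $\epsilon_0$-quantum of energy first concentrates, and pass to pointed Uhlenbeck limits on $T_pX$. The induction terminates because the total energy is bounded by $\Lambda$ and every non-trivial bubble carries at least a fixed energy quantum. That quantum comes from Uhlenbeck's gap theorem for finite-energy Yang--Mills connections on $\mathbb R^4$ together with removable singularities at infinity, which give energy at least $8\pi^2$ modulo the flat case. Since the base is a fixed smooth surface, all rescaled metrics $\lambda^2 g$ based at $p_{j,I}$ converge in $C^\infty_{loc}$ to the flat metric $g_0$ on $T_pX\cong\mathbb C^2$. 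The HYM equations rescale to $\sqrt{-1}\Lambda F=\lambda^{-2}\mu_j\mathrm{Id}$ and $F^{0,2}=0$. Hence every limit is an ideal HYM connection (in fact anti-self-dual after trace removal) on flat $\mathbb C^2$, and Uhlenbeck compactness with $\epsilon$-regularity applies at every scale.

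The construction itself goes as follows.
\begin{enumerate}[(i)]
\item \emph{The root.} Choose $p_{j,1}$ and $\lambda_{j,1}$ as a center of mass and concentration scale. Concretely, let $\lambda_{j,1}^{-1}$ be the smallest radius $r$ with $\sup_x\int_{B_r(x)}\vert F_{A_j}\vert^2\ge m_p\cdot 8\pi^2-\epsilon_0/2$, or, in the usual normalization, the scale at which all but $\epsilon_0$ of the energy $8\pi^2 m_p$ concentrating at $p$ lies in a ball. With this choice, $\lambda_{j,1}\to\infty$.
\item \emph{Limit at the root scale.} The rescaled sequence converges, after a subsequence and in the pointed Uhlenbeck sense, to $Z_1=(B_1,\mathcal C_1)$. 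The normalization forces $Z_1$ to be non-trivial: either $B_1$ is non-flat, or the energy splits among at least two points, so that $\#\vert\mathcal C_1\vert\ge2$. A single point with a flat limit would contradict minimality of the scale.
\item \emph{Children and recursion.} For each $q\in\vert\mathcal C_1\vert$, find $p_{j,J}\to q$ in rescaled coordinates together with a new scale $\lambda_{j,J}\gg\lambda_{j,1}$. This gives the separation dichotomy: either $p_{j,J}=p_{j,I}$ (possible when $q=0$ is the origin and we recenter there), or $\lambda_{j,I}d(p_{j,J},p_{j,I})\to\vert q\vert\ge\eta$. Recurse on each child. Each vertex then carries energy at least $\epsilon_0$ that is not counted in its children, so the depth and the number of vertices are bounded by $C\Lambda/\epsilon_0$, and the process stops. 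Leaves are exactly those vertices with $\mathcal C_I=\emptyset$, giving smooth convergence there.
\end{enumerate}

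The main obstacle is the neck estimate \eqref{curvature estimate on annulus} and, with it, the energy identity in the last bullet, i.e.\ no energy loss on the annuli $A_{p_{j,I}}(\lambda_{j,I}^{-1},\delta_I\lambda_{j,P(I)}^{-1})$. By construction, no $\epsilon_0$-quantum concentrates at any intermediate scale, so $\epsilon$-regularity gives the pointwise bound $\vert F_{A_j}\vert\le \epsilon\, r^{-2}$ on dyadic annuli. To upgrade this to a total energy bound, I would use the standard argument for ASD-type connections in dimension four: on an $\epsilon$-small annulus, the trace-free part is nearly anti-self-dual, because the self-dual part is controlled by $\mu_j$ and the scale. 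The Chern--Simons functional on the spheres $\partial B_r$ then yields the differential inequality
\[
\mathcal E(r)\le C r\,\mathcal E'(r)+C\mu r^4,\qquad \mathcal E(r):=\int_{A(\lambda^{-1},r)}\vert F\vert^2 .
\]
This inequality comes from the isoperimetric inequality for Chern--Simons in a Coulomb gauge on $S^3$. Integrating it gives exponential decay of the energy across the neck in the conformal coordinate $\log r$. Summing over annuli, the neck energy is bounded by $\epsilon_0$ and in fact tends to zero. The tangent-cone/asymptotic-cone matching is then the accounting identity: the energy of the limit $Z_J$ plus its bubbling multiplicities equals $m_{q_J}$, and $m_{q_J}\in\mathbb Z$ follows from the quantization of $\frac{1}{8\pi^2}\int\vert F\vert^2$ (a Chern--Weil integer via removable singularities).
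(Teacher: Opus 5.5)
Your construction (energy-scale centres and radii, pointed Uhlenbeck limits on $T_pX$, recursion at bubble points, termination by quantization) is the energy-scale bubble-tree method of \cite{bando1990a}, which is what the paper defers to. With your threshold $8\pi^2m_p-\epsilon_0/2$, the annulus bound \eqref{curvature estimate on annulus} is automatic once $\delta_I$ is small, so it is not the real obstacle.

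\textbf{The neck argument fails as written.} For a child $J$ of $I$, put $\mathcal E(r)=\int_{A_{p_{j,J}}(\lambda_{j,J}^{-1},r)}\vert F_{A_j}\vert^2$. Up to normalization and the $F^+$ error (which is $O(\mu^2r^4)$), Stokes gives $\mathcal E(r)=\mathrm{CS}(\partial B_r)-\mathrm{CS}(\partial B_{\lambda_{j,J}^{-1}})$. The isoperimetric inequality controls only the first term. The inner sphere lies in the bubble region, where $r^2\vert F\vert$ is not small and the Chern--Simons term is of order one.

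In fact the inequality $\mathcal E\le Cr\mathcal E'+C\mu^2r^4$ is false. Integrating it from $K\lambda_{j,J}^{-1}$ to $\delta\lambda_{j,I}^{-1}$ forces $\mathcal E(K\lambda_{j,J}^{-1})\to0$. But this quantity tends to at least $\int_{A(1,K)}\vert F_{B_J}\vert^2>0$ whenever $B_J$ is non-flat.

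To fix this, put both ends of the neck where the scale-invariant curvature is small:
\begin{itemize}
\item the inner end at $K\lambda_{j,J}^{-1}$ with $K$ large, using the decay of $B_J$ at infinity;
\item the outer end at $\delta\lambda_{j,I}^{-1}$ with $\delta$ small, using the smoothness of $B_I$ across $q_J$.
\end{itemize}
Then either run the two-sided version of the inequality, which gives decay away from both ends as in Theorem \ref{thm--neck region estimate}, or argue topologically. In the topological argument, the neck energy is $8\pi^2k+o(1)$, where $k\in\mathbb Z$ is the relative degree of the good gauges on the two boundary spheres. Since the neck energy is at most $\epsilon_0<8\pi^2$, this forces $k=0$, so no energy is lost.

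\textbf{Recentring when a bubble point is the origin.} If a bubble point of $Z_I$ is the origin, you cannot simply set $p_{j,J}=p_{j,I}$. The child's energy-scale centre $x_j$ only satisfies $\lambda_{j,I}d(x_j,p_{j,I})\to0$, while $\lambda_{j,J}d(x_j,p_{j,I})$ may diverge. In that case the pointed limit at $p_{j,I}$ on scale $\lambda_{j,J}$ loses the bubble. Instead, move $p_{j,I}$ to $x_j$, together with every ancestor whose centre coincides with it. This shift is $o(\lambda_{j,I}^{-1})$, so it does not change $Z_I$ and costs only constants in \eqref{curvature estimate on annulus}. It is exactly what produces the alternative $p_{j,J}=p_{j,I}$; without it, $\lambda_{j,I}d(p_{j,J},p_{j,I})\to0$ violates the dichotomy.

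\textbf{Termination.} A vertex with flat $B_I$ and $\#\vert\mathcal C_I\vert\ge2$ carries no energy of its own. So termination should be argued from the integer multiplicity dropping strictly from each vertex to each of its children, not from each vertex carrying its own $\epsilon_0$ of energy.
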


 For the proof of Theorem \ref{thm--bubble tree decomposition}, one might proceed using the energy scale method in \cite{bando1990a,bando1990b}; see also \cite{ZZ26} for a detailed account on cscK surfaces, which applies verbatim here.

We also need the following neck region estimate:
\begin{theorem}\label{thm--neck region estimate}
    Under the setting of Theorem \ref{thm--bubble tree decomposition}, there exist uniform constants $K_0,C,j_0>1$ and $\delta>0$, such that for all $j\geq j_0$ and any vertex $I$, we have
    \[
    r^2\vert F_{A_j}\vert(q)\leq C\max\{(\lambda_{j,I}r)^{-\delta},(\lambda_{j,P(I)}r)^{\delta}\},
    \]
    where $q\in A_{p_{j,I}}(K_0\lambda_{j,I}^{-1},K_0^{-1}\lambda_{j,P(I)}^{-1})$ and $r=d(q,p_{j,I})$.
\end{theorem}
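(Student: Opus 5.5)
We prove the neck estimate by combining $\epsilon$-regularity with the standard exponential-decay argument for Yang-Mills connections on annuli of small energy. Fix a vertex $I$ and write $\lambda=\lambda_{j,I}$, $\Lambda'=\lambda_{j,P(I)}$. Consider the neck region $N_j=A_{p_{j,I}}(\lambda^{-1},\delta_I\Lambda'^{-1})$. By \eqref{curvature estimate on annulus}, the $L^2$-energy of $F_{A_j}$ on $N_j$ is at most $\epsilon_0$.

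\emph{Step 1: pointwise smallness.} HYM connections are Yang-Mills. Moreover, on scale $r\le 1$ the metric $r^{-2}g$ is $C^\infty$-close to Euclidean, with the term coming from $\mu_j$ being lower order, of size $O(r^2\mu)$. Uhlenbeck's $\epsilon$-regularity, applied on balls $B_{r/2}(q)$ for $q\in A_{p_{j,I}}(2\lambda^{-1},\tfrac12\delta_I\Lambda'^{-1})$, then gives
\[
r^2|F_{A_j}|(q)\le C\Big(\int_{A_{p_{j,I}}(r/2,2r)}|F_{A_j}|^2\Big)^{1/2}+Cr^2\le C\epsilon_0^{1/2}.
\]
So the rescaled curvature is uniformly small on the whole neck.

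\emph{Step 2: decay of energy on dyadic annuli.} Let $a_k=\int_{A(2^{k},2^{k+1})\cdot\lambda^{-1}}|F_{A_j}|^2$, using dyadic annuli in the log-radius $t=\log(\lambda r)$. The goal is a three-annulus, or discrete convexity, inequality
\[
a_k\le \gamma\,(a_{k-1}+a_{k+1}),\qquad \gamma<\tfrac12,
\]
which holds as long as the energy is below a threshold. One proves it by contradiction and compactness. After rescaling each triple annulus to unit size, a violating sequence of small-energy Yang-Mills connections on $A(1/2,4)\subset\mathbb R^4$ converges after normalization by $\sqrt{a_k}$. The limit is a solution of the linearized equation, that is, an $L^2$ harmonic $2$-form in Coulomb gauge on the annulus; here Step 1 and Uhlenbeck's Coulomb gauge on small-energy annuli are used. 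Such limits expand in spherical harmonics with rates $r^{\pm(\text{nonzero})}$, since the spectrum of the relevant Laplacian on $S^3$ has a gap away from the ``zero mode''. They therefore satisfy strict three-annulus convexity, which gives the contradiction.

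An equivalent route uses an isoperimetric inequality for Chern-Simons on $S^3$: for the function $f(t)=\int_{|x|\le e^t}$-type tail energy, one shows $f'\ge cf$ from $|CS(a)|\le C\|F_a\|_{L^2(S^3)}^2$ in a good gauge. This is the step I expect to be the main obstacle. One has to handle the zero mode, namely flat connections on $S^3$ or the lens-type cone links; on $S^3$ the only flat connection is trivial, so there is no obstruction. One also has to absorb the $\omega$-perturbation errors, which are of size $O(r^2)$ and are harmless once $\Lambda'^{-1}$ is small.

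\emph{Step 3: conclusion.} Iterating the discrete convexity inequality from both ends of the neck gives
\[
a_k\le C\epsilon_0\big(e^{-\delta' k}+e^{-\delta'(k_{\max}-k)}\big),
\]
where $k_{\max}\approx\log(\delta_I\lambda/\Lambda')$. In terms of $r$ this reads
\[
a(r)\le C\max\{(\lambda r)^{-2\delta},(\Lambda' r)^{2\delta}\}.
\]
Feeding this into the $\epsilon$-regularity bound of Step 1 yields the pointwise estimate
\[
r^2|F_{A_j}|(q)\le C\max\{(\lambda r)^{-\delta},(\Lambda' r)^{\delta}\}.
\]
Here $K_0$ is chosen so that the balls used in Step 1 stay inside $N_j$, and $j_0$ is chosen so that $\Lambda'^{-1}$ is small enough for the curvature of the base metric and $\mu$ to be negligible. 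The constants are uniform because the tree is finite.
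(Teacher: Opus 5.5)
Your proposal is correct, but it follows a different route from the one the paper indicates. The paper gives no self-contained proof. It cites the classical Chern--Simons/isoperimetric argument \cite{donaldson83,freed-uhlenbeck,DK97}, which is essentially your ``equivalent route'', and it sketches a quicker argument via Rade's refined Kato inequality \cite{rade93} and Bando's maximum-principle argument \cite[Proposition 3]{bando1990a}. In that argument, the HYM condition on a K\"ahler surface makes $F^+_{A_j}$ a constant multiple of $\omega\otimes\mathrm{Id}$, so only $F^-_{A_j}$ needs controlling. Refined Kato makes a power $u=|F^-_{A_j}|^{\alpha}$ with $\alpha<1$ a subsolution, and on the neck your Step 1 turns this into $\Delta u\geq-\epsilon r^{-2}u$. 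Comparison with radial barriers $r^{-\beta}$ on the annulus then gives the two-sided power decay directly. That route is scalar and local, needs no gauge fixing or compactness, and has explicit exponents, close to the sharp $\delta=2$ of \cite{groisser-parker97}.

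Your three-annulus lemma instead relies on the spectral gap of the linearised problem: there are no harmonic $1$- or $2$-forms on $S^3$, so no translation-invariant closed and coclosed $2$-forms on the cylinder. The cost is Coulomb gauges on small-energy annuli and a contradiction argument with non-effective constants. In exchange, it uses only the Yang--Mills equation and adapts to settings with no refined Kato inequality or with flat connections on the link.

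Two small adjustments are needed. First, for the root vertex $\lambda_{j,P(1)}=1$, so it is $K_0$ large, not $\Lambda'^{-1}$ small, that makes $r^{-2}g$ nearly Euclidean on every dyadic annulus of the neck; your compactness argument needs this to land on the flat linear equation. Second, the energy threshold of your three-annulus lemma may force you to take $\epsilon_0$ in Theorem \ref{thm--bubble tree decomposition} smaller, which that construction permits. Finally, since HYM connections are exactly Yang--Mills, $\mu_j$ is not an error term in Step 2: the central part of $F^+_{A_j}$ simply appears as a growing mode.
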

For the proof of Theorem \ref{thm--neck region estimate}, we refer to \cite{donaldson83,freed-uhlenbeck,DK97}. Note that the optimal exponent $\delta=2$ was obtained in \cite{groisser-parker97}, although it is not needed for our purposes. One quick proof is to use the refined Kato's inequality for Yang-Mills connections in \cite{rade93} (see also \cite[Lemma 3.1]{groisser-parker97}), using the fact that $(X,\omega)$ is K\"ahler, and then immediately appeal to the proof of \cite[Proposition 3]{bando1990a}.

\subsection{Construction of plurisubharmonic weights}\label{subsec--construction of psh weights}

In this subsection, we construct suitable psh weights which control the curvature blow-up. We start with an explicit model psh function:
\begin{lemma}\label{lem--model psh weight}
    Given any $0<\epsilon\leq 2$, we have the following:
    \begin{itemize}
        \item [(1)] Let $\phi(z;\epsilon)=\log(1+(1+\vert z\vert^2)^{\epsilon})\in C^{\infty}(\mathbb{C}^n)$, then the smallest eigenvalue $\lambda_1(z;\epsilon)$ of $\sqrt{-1}\partial\pp\phi(z;\epsilon)$ satisfies:
\[
  \lambda_1(z;\epsilon)\geq\frac{\epsilon^2}{2(1+\vert z\vert^2)(1+(1+\vert z\vert^2)^{\epsilon})}.
\]
      Therefore, $\phi(\cdot)=\phi(\cdot;\epsilon)$ is a strictly psh function on $\mathbb{C}^n$ satisfying $\phi\sim\log r$ and $\sqrt{-1}\partial\pp\phi\gtrsim \vert z\vert^{-2-2\epsilon}\omega_0$ for $\vert z\vert\gtrsim 1$.

     Moreover, for all $\lambda$ large, define $\hat\phi_{\lambda}(z)\coloneqq\phi(\lambda z)$, then 
     \begin{equation}\label{equa--positivity for model psh at infinity}
         \sqrt{-1}\partial\pp\hat\phi_{\lambda}\gtrsim
         \begin{cases}
             \lambda^2\omega_0,& \text{if}\ \vert z\vert\lesssim\lambda^{-1},\\
             \lambda^{-2\epsilon}\vert z\vert^{-2-2\epsilon}\omega_0,& \text{if}\ \vert z\vert\gtrsim\lambda^{-1}.
         \end{cases}
     \end{equation}
    If we normalize by $\phi_{\lambda}=\hat\phi_{\lambda}-\sup_{B_1}\hat\phi_{\lambda}$, then 
    \begin{equation}\label{equa--growth estimate for model psh at infinity}
        2\epsilon\log\vert z\vert\leq\phi_{\lambda}\leq 0,\quad\text{if}\ 0<\vert z\vert< 1.
    \end{equation}
    \item [(2)] 
    Similarly, consider the rescaled psh function with parameter
    \[
    \psi_\lambda(z;\epsilon)=\log(1+(\lambda\vert z\vert)^\epsilon)-\log(1+\lambda^\epsilon)\in C^{\infty}(\mathbb C^n\setminus\{0\})\cap C^0(\mathbb C^n).
    \]
    It satisfies the uniform estimate
    \begin{equation}\label{equa--growth estimate for model psh near zero}
        \epsilon\log\vert z\vert\leq\psi_\lambda\leq 0\quad\text{if}\ 0<\vert z\vert< 1,
    \end{equation}
    and
    \begin{equation}\label{equa--positivity for model psh near zero}
        \sqrt{-1}\partial\pp\psi_\lambda\gtrsim\lambda^\epsilon\vert z\vert^{-2+\epsilon}\omega_0\quad\text{if}\ 0<\vert z\vert\lesssim\lambda^{-1}.
    \end{equation}
   
    \end{itemize}
\end{lemma}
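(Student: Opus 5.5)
The lemma is an explicit computation with radial functions, so the plan is to reduce everything to one-variable calculus. For a radial function $f(t)$ with $t=|z|^2$, the eigenvalues of $\sqrt{-1}\partial\pp f(|z|^2)$ are
\[
f'(t)\quad\text{(tangential directions)},\qquad f'(t)+tf''(t)=(tf'(t))'\quad\text{(radial direction)}.
\]
Hence $\lambda_1=\min\{f',(tf')'\}$, and it suffices to bound these two quantities from below.

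\textbf{Part (1).} Take $f(t)=\log(1+u^\epsilon)$ with $u=1+t$. Then
\[
f'=\frac{\epsilon u^{\epsilon-1}}{1+u^\epsilon},\qquad tf'=\frac{\epsilon (u-1)u^{\epsilon-1}}{1+u^\epsilon}.
\]
Differentiating in $u$ gives
\[
(tf')'=\frac{\epsilon u^{\epsilon-2}\bigl[(u+\epsilon(u-1))(1+u^\epsilon)-\epsilon(u-1)u^{\epsilon}\bigr]}{(1+u^\epsilon)^2}
=\frac{\epsilon u^{\epsilon-2}\bigl[u+\epsilon(u-1)+u^{\epsilon+1}\bigr]}{(1+u^\epsilon)^2}.
\]
Since $\epsilon\le 2$ and $u\ge1$, the numerator bracket is at least $u(1+u^\epsilon)$. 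Therefore
\[
(tf')'\ge \frac{\epsilon u^{\epsilon-1}}{1+u^\epsilon}\ge\frac{\epsilon^2}{2u(1+u^\epsilon)},
\]
using $u^\epsilon\ge1$ and $\epsilon\le2$. The same lower bound holds for $f'$, which gives the claimed bound on $\lambda_1$.

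For $|z|\gtrsim1$ this bound is $\gtrsim |z|^{-2-2\epsilon}$. Also $\phi\sim\epsilon\log|z|^2=2\epsilon\log r$, which is comparable to $\log r$ up to the constant $2\epsilon$.

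The rescaling is immediate: $\sqrt{-1}\partial\pp\hat\phi_\lambda(z)=\lambda^2(\sqrt{-1}\partial\pp\phi)(\lambda z)$. On $|z|\lesssim\lambda^{-1}$ this is $\gtrsim\lambda^2$. On $|z|\gtrsim\lambda^{-1}$ it is $\gtrsim\lambda^2(\lambda|z|)^{-2-2\epsilon}$, which is \eqref{equa--positivity for model psh at infinity}.

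For \eqref{equa--growth estimate for model psh at infinity}, note that $\hat\phi_\lambda$ is radially increasing, so $\sup_{B_1}\hat\phi_\lambda=\phi(\lambda)$, where we write $\phi(\rho)$ for its value at radius $\rho$. The upper bound $\phi_\lambda\le0$ follows. For the lower bound, set $g(s)=\log(1+(1+e^{2s})^\epsilon)$ with $s=\log|\lambda z|$. Then
\[
g'(s)=2\epsilon\,\frac{e^{2s}(1+e^{2s})^{\epsilon-1}}{1+(1+e^{2s})^\epsilon}\le 2\epsilon,
\]
because $e^{2s}(1+e^{2s})^{\epsilon-1}\le(1+e^{2s})^\epsilon$. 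Integrating from $\log|\lambda z|$ up to $\log\lambda$ gives $\phi(\lambda)-\phi(\lambda|z|)\le 2\epsilon\log(1/|z|)$, which is the claim.

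\textbf{Part (2).} The function $\psi_\lambda$ is $\log(1+e^{\epsilon s})$ in $s=\log(\lambda|z|)$, up to a constant, and this is convex and increasing in $s$. Since a radial function on $\mathbb C^n$ is psh exactly when it is convex and increasing in $\log|z|$, $\psi_\lambda$ is psh away from $0$. It is continuous at $0$ and bounded there, so it extends as a psh function across the origin.

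The growth bound \eqref{equa--growth estimate for model psh near zero} follows from the same integration argument as in Part (1), now using $\frac{d}{ds}\log(1+e^{\epsilon s})\le\epsilon$.

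For the positivity \eqref{equa--positivity for model psh near zero}, use the formula above with $f(t)=\log(1+(\lambda^2 t)^{\epsilon/2})$. On $\lambda|z|\lesssim1$, the tangential eigenvalue is
\[
f'=\frac{(\epsilon/2)t^{-1}(\lambda^2t)^{\epsilon/2}}{1+(\lambda^2t)^{\epsilon/2}}\gtrsim\lambda^\epsilon|z|^{-2+\epsilon}.
\]
The radial eigenvalue is $(tf')'$, where $tf'=\frac{\epsilon}{2}\frac{v}{1+v}$ and $v=(\lambda^2t)^{\epsilon/2}$. Thus
\[
(tf')'=\frac{\epsilon^2}{4}\,\frac{v}{t(1+v)^2}\gtrsim\lambda^\epsilon|z|^{-2+\epsilon}
\]
on the same region, with constants depending on $\epsilon$.

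The only step requiring any care is the algebraic sign check of the radial eigenvalue in Part (1) with the explicit constant $\epsilon^2/2$. That check is where the hypothesis $\epsilon\le2$ is used; everything else is routine.
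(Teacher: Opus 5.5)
Your overall route is the brute-force computation the paper has in mind: reduce to the two eigenvalues $f'$ and $(tf')'$ of a radial function, rescale, and integrate $g'(s)\le 2\epsilon$ and $h'(s)\le\epsilon$ in $s=\log|\lambda z|$. The rescaling, the growth estimates, and all of Part (2) check out.

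\textbf{The error.} The radial eigenvalue in Part (1), the step you flagged, is computed incorrectly. When differentiating $(u-1)u^{\epsilon-1}$ you wrote $u^{\epsilon-2}\bigl[u+\epsilon(u-1)\bigr]$. The correct derivative is $u^{\epsilon-2}\bigl[u+(\epsilon-1)(u-1)\bigr]$, which gives
\[
(tf')'=\frac{\epsilon u^{\epsilon-2}\bigl[u^\epsilon+\epsilon u-(\epsilon-1)\bigr]}{(1+u^\epsilon)^2}.
\]
As a result, your intermediate claim $(tf')'\ge \epsilon u^{\epsilon-1}/(1+u^\epsilon)=f'$ is false. Indeed
\[
f''=\frac{\epsilon u^{\epsilon-2}\bigl[(\epsilon-1)-u^\epsilon\bigr]}{(1+u^\epsilon)^2}<0 \quad\text{for } t>0,\ \epsilon\le 2,
\]
so the radial eigenvalue is strictly the smaller one. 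For a concrete check take $\epsilon=1$. Then $f=\log(2+t)$, $f'=1/(2+t)$ and $(tf')'=2/(2+t)^2$. At $t=1$ this is $2/9<1/3$, whereas your formula gives $7/18$.

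\textbf{The fix.} The lemma still holds, but the radial bound needs a different check. With the correct formula, the desired inequality $(tf')'\ge \epsilon^2/\bigl(2u(1+u^\epsilon)\bigr)$ is equivalent to
\[
2u^{\epsilon-1}\bigl(u^\epsilon-(\epsilon-1)\bigr)+\epsilon\,(u^\epsilon-1)\ge 0 .
\]
This holds for $u\ge 1$, since $u^\epsilon\ge 1\ge\epsilon-1$ when $\epsilon\le 2$. Together with $u^\epsilon\ge 1\ge \epsilon/2$ for the tangential eigenvalue $f'$, this is where the hypothesis $\epsilon\le 2$ is actually used.

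You cannot shortcut this by bounding the bracket below by $1+u^\epsilon$. That would give $(tf')'\gtrsim \epsilon u^{-2}$ for large $u$. But for $\epsilon<1$ the required rate is $\tfrac{\epsilon^2}{2}u^{-1-\epsilon}\gg u^{-2}$, and it comes precisely from the $\epsilon u$ term in the bracket.
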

\begin{proof}
    Brute force computation.
\end{proof}

\begin{remark}
    A sharper positivity $O(r^{-2}\vert\log r\vert^{-1-\alpha})$ can be obtained using the weights in \cite[Lemma 3.7]{sun-zhang2023} and \cite[Section 4]{zhang2025}, although it is not needed for our purpose.
\end{remark}

Now we construct the desired psh weights:
\begin{proof}[Proof of Theorem \ref{thm-existence of psh weights}]
    According to Theorem \ref{thm--bubble tree decomposition}, we can find constants $\delta=\delta_1\in(0,1)$ and $K>K_0^2$ in Theorem \ref{thm--neck region estimate}, so that for any fixed
    \[
    0<\rho<\min\{\delta_1/4, K^{-\frac12}/4\},
    \]
    the ball $B_{\rho}(p)$ can be decomposed into finitely many pieces
    \[
    B_\rho(p)=\big(\bigcup_{ I\in\mathrm{V}(\mathcal T)}\mathcal B_I\big)\cup\big(\bigcup_{ I\in\mathrm{V}(\mathcal T)}\mathcal A_I\big),
    \]
    where
    \[
    \mathcal B_I= B(p_{j,I},K\lambda_{j,I}^{-1})\setminus\bigcup_{J\in C(I)} B(p_{j,J},K^{-1}\lambda_{j,I}^{-1})
    \]
    is the bubble region, and
    \[
    \mathcal A_I= A_{p_{j,I}}(K^{\frac12}\lambda_{j,I}^{-1},K^{-\frac12}\lambda_{j,P(I)}^{-1})
    \]
    is the neck region. The formulation of quantitative decomposition here is taken from \cite{NV19}. Note that we choose $K$ large enough such that estimates in Lemma \ref{lem--model psh weight} hold, and then fix such a choice. 
    
    Since our base surface is fixed, we may assume there exists a holomorphic coordinate $(z_1,z_2)$ near $p$ such that
    \[
    z_i(p)=0,\quad \omega=\omega_0+O(\vert z\vert^2).
    \]
    It suffices to construct psh weights under this explicit coordinate, which indeed control the curvature along different regions. This is where we can avoid the general complicated gluing procedure in \cite[Section 5]{ZZ26}. Denote by $\mathbb C^2\ni z_{j,I}\to 0$ the coordinate of bubbling center $p_{j,I}$.

    Let $\mathcal B_I$ be a bubble region. The geometry of $A_j$ is bounded over $\mathcal B_I$ under the rescaled metric $\lambda_{j,I}^2g$, so we may assume for all $j$ large, we have
    \[
    \sup_{\mathcal B_I}\vert F_{A_j}\vert_g\leq C_I\lambda_{j,I}^2.
    \]
    Since $\mathcal B_I\subset\{z:\vert z-z_I\vert\leq 2K\lambda_{j,I}^{-1}\}$, we can consider $\varphi_{j,I,0}=N_I\phi_{\lambda_{j,I}}(z-z_I)$ on $B(p,1/2)$ for some $N_I$ large, using the rescaled weight in Lemma \ref{lem--model psh weight}. The estimate \eqref{equa--positivity for model psh at infinity} shows that this psh function controls the curvature, and estimate \eqref{equa--growth estimate for model psh at infinity} says it has a controlled logarithmic blow-up order.

    Then we deal with the neck region. Let $\mathcal A_I$ be a neck region, for which we may assume
    \[
    \mathcal A_I\subset\{z:\frac{1}{2}K^{\frac12}\lambda_{j,I}^{-1}\leq\vert z-z_I\vert\leq 2K^{-\frac{1}{2}}\lambda_{j,P(I)}^{-1}\}.
    \]
    Given Theorem \ref{thm--neck region estimate}, we may decompose further this region into two parts with uniform estimates:
    \begin{equation}\label{equa--curvature estimate along neck under coordinate}
         \vert F_{A_j}\vert_g\leq
    \begin{cases}
        C\lambda_{j,I}^{-\delta}\vert z-z_I\vert^{-2-\delta},&\text{if}\ K^{\frac{1}{2}}\lambda_{j,I}^{-1}\lesssim\vert z-z_I\vert\lesssim\lambda_{j,I}^{-\frac{1}{2}}\lambda_{j,P(I)}^{-\frac12},\\
        C\lambda_{j,P(I)}^{\delta}\vert z-z_I\vert^{-2+\delta},&\text{if}\ \lambda_{j,I}^{-\frac{1}{2}}\lambda_{j,P(I)}^{-\frac12}\lesssim\vert z-z_I\vert\lesssim K^{-\frac{1}{2}}\lambda_{j,P(I)}^{-1}.
    \end{cases}
    \end{equation}

For the inner annulus, we consider $\varphi_{j,I,1}=N_I\phi_{\lambda_{j,I}}(z-z_I)$ with $2\epsilon<\delta$. By \eqref{equa--positivity for model psh at infinity} and \eqref{equa--curvature estimate along neck under coordinate}, one can verify directly that this weight controls the curvature, and \eqref{equa--growth estimate for model psh at infinity} provides the blow-up order control. On the other hand, for the outer annulus, we consider $\varphi_{j,I,2}=N_I\psi_{\lambda_{j,P(I)}}(z-z_I)$ with $0<\epsilon\leq\delta$, which by \eqref{equa--growth estimate for model psh near zero} and \eqref{equa--positivity for model psh near zero} is a desired psh weight.

Finally, since the curvature is uniformly bounded on $B_1(p)\setminus B_{\rho/2}(p)$, we can choose $\varphi_0= N\vert z\vert^2-N$ on $B_1(p)$ to control the curvature over $A_p(\rho/2,1)$. Then
\[
\varphi_j\coloneqq\varphi_0+\sum_{I\in\mathrm{V}(\mathcal T),k\in\{0,1,2\}}\varphi_{j,I,k}
\]
is a desired psh function.
   
\end{proof}

\subsection{An example}\label{subsec-an example}

In this subsection, we include an explicit example, which shows that in general we cannot expect all sections of $\mathcal E_\infty$ are sequential limits of sections of $\mathcal E_j$ under Uhlenbeck gauge. Therefore, Corollary \ref{cor--L2-convergence for sections with high vanishing order} is optimal in this sense.

Let $A$ be the basic instanton on $\mathbb{R}^4$, i.e. it is of rank $2$ and extends as a smooth instanton on a bundle with $c_2=1$ over $S^4$. By scaling down $A$, we obtain a family of instantons $A_\lambda$, which converges
 to a smooth connection $A_\infty$ away from the origin as $\lambda\to 0$. The origin is the unique bubbling point, which has multiplicity one. This is the baby model of HYM connections developing curvature singularities.
 
By Uhlenbeck's removable singularity theorem, we can choose a unitary gauge transform $u$ over $B_1-\{0\}$, such that the limit connection $ u^*A_{\infty}$ extends to 
a smooth connection over $B_1$. By energy condition, we know that it has to be the trivial connection. In particular, the holomorphic vector bundle defined by $u^* A_\infty$ is holomorphically trivial. We prove:
 \begin{proposition}\label{prop-explicit example for basic instanton}
Let $\{s_{\lambda_k}\}$ be any sequence of holomorphic sections converging to $s_{\infty}$ in the Uhlenbeck sense.
 Then $u^*s_{\infty}\in\mathcal{O}\oplus\mathcal{I}$, where $\mathcal{I}$ is the ideal generated by $x$ and $y$. Conversely, any holomorphic
 section $s\in\mathcal{O}\oplus\mathcal{I}$ is a sequential limit of holomorphic sections under Uhlenbeck gauge.
 \end{proposition}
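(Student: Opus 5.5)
We will work with an explicit holomorphic model of the basic instanton rather than the general theory. On $\mathbb C^2$ with coordinates $(x,y)$, the rank two holomorphic bundle $\mathcal E_\lambda$ defined by $A_\lambda$ is trivial on $B_1$, since $B_1$ is Stein and contractible. A convenient model comes from the ADHM (or Atiyah--Ward) description: $\mathcal E_\lambda$ restricted to $\mathbb C^2$ is the cohomology of a monad
\[
\mathcal O\xrightarrow{a}\mathcal O^{3}\xrightarrow{b}\mathcal O,
\]
with $a=(\lambda,x,y)^T$ and $b=(0,-y,x)$ up to normalization. Equivalently, $\mathcal E_\lambda$ is the trivial bundle $\mathcal O^2$ carrying the Hermitian metric induced from the flat metric on $\ker b/\mathrm{im}\, a$. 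As $\lambda\to0$, the monad degenerates to $\mathcal O\oplus\mathcal I$. Concretely, $\ker b/\mathrm{im}\,a$ at $\lambda=0$ is generated by the first basis vector $e_1$ and by the image of the Koszul syzygy. The algebraic limit is therefore $\mathcal F_\infty=\mathcal O\oplus\mathcal I$, with double dual $\mathcal O^2$ and colength one, which matches the analytic multiplicity.

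\textbf{Converse direction.} Fix $\lambda$ and a holomorphic frame of $\ker b_\lambda$ modulo $\mathrm{im}\,a_\lambda$. Sections of $\ker b$ are the triples $(f,gx,gy)$ plus multiples of $(0,x,y)$ when $\lambda=0$. Modulo $a_\lambda$, they are represented by $(f-\lambda h,\,gx-xh,\,gy-yh)$. For a target $s=(f,\,\alpha x+\beta y)\in\mathcal O\oplus\mathcal I$, we take $s_\lambda$ to be the class of $(f,\,\cdot\,,\,\cdot\,)$ realizing $\alpha x+\beta y$ through the syzygy. We then check directly that the harmonic (unitary) representatives converge in $C^\infty_{loc}(B_1\setminus\{0\})$ after the gauge $u$, with uniform $L^2$ bounds. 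The unitary gauge on the complement of the origin is simply the orthogonal projection onto $(\mathrm{im}\,a_\lambda)^\perp\cap\ker b$, which converges smoothly away from $0$. This gives the second statement.

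\textbf{Necessity.} Let $s_{\lambda_k}\to s_\infty$ with $u^*s_\infty=(f,g_1,g_2)$ in the trivialization $\mathcal O^2$ of the limit. We must show that $(g_1,g_2)$ vanishes at $0$, in the appropriate component. The cleanest route is a Chern--Weil or Wronskian argument. Suppose the limit section has $u^*s_\infty(0)\notin \mathcal O\oplus 0$ at the origin. Then two limit sections would span the fiber $\mathcal E_\infty$ at $0$, so the family $(s_\lambda,s'_\lambda)$ would give nowhere-vanishing determinants $s_\lambda\wedge s'_\lambda\in\Gamma(\det\mathcal E_\lambda)$ near $0$ for small $\lambda$, by Hurwitz-type continuity of the determinant on $\partial B_r$.

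We then pair this with the local degree computation. The determinant bundle is flat, since the instanton is $SU(2)$. The frame $(s_\lambda,s'_\lambda)$ over $B_r$ would then trivialize $\mathcal E_\lambda$ compatibly with the gauge $u$ on $\partial B_r$. The resulting relative second Chern number $\frac{1}{8\pi^2}\int_{B_r}|F_{A_\lambda}|^2\to1$ would instead have to tend to $0$, since it equals the degree of the transition map on $S^3$, which is $0$ for the limiting frame. This contradiction shows that the limit sections span at most a rank one subspace of the fiber at $0$. Invariance of the model under $U(2)$ rotations then pins this subspace down as the $\mathcal O$ summand. Hence $u^*s_\infty\in\mathcal O\oplus\mathcal I$.

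\textbf{Main obstacle.} We expect the hardest step to be making the gauge comparison on $\partial B_r$ rigorous, namely showing that the holomorphic frame and the Uhlenbeck gauge $u$ differ by a map $S^3\to U(2)$ of degree equal to the bubbled charge. We would try first to do this explicitly in the monad model, where everything is computable.
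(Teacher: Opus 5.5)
There is a genuine gap, and it is in the explicit model that the rest of your argument depends on.

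The complex $\mathcal O\xrightarrow{(\lambda,x,y)^T}\mathcal O^3\xrightarrow{(0,-y,x)}\mathcal O$ has cohomology of generic rank $3-1-1=1$, not $2$, and $b=(0,-y,x)$ is not surjective at the origin. By exactness of the Koszul complex, $\ker b=\mathcal O\cdot(1,0,0)\oplus\mathcal O\cdot(0,x,y)$, while $\mathrm{im}\,a=\mathcal O\cdot(\lambda,x,y)$. So the cohomology is $\mathcal O$ for every $\lambda$, including $\lambda=0$; there the Koszul syzygy $(0,x,y)$ is exactly $\mathrm{im}\,a_0$ and does not survive in the quotient. Nothing degenerates to $\mathcal O\oplus\mathcal I$, and the converse direction built on this model has no content. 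The sentence ``we then check directly that the unitary representatives converge'' is where all the work lies.

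The correct rank-two, charge-one monad is $\mathcal O\xrightarrow{\alpha}\mathcal O^4\xrightarrow{\beta}\mathcal O$ with $\alpha=(x,y,\lambda,0)$ and $\beta=(-y,x,0,\lambda)$. What you must extract from it cannot be guessed from the limiting sheaf alone:
\begin{itemize}
\item the holomorphic frame $s_1=(1,0,0,\lambda^{-1}y)$, $s_2=(0,1,0,-\lambda^{-1}x)$ has $L^2$-norm of order $\lambda^{-1}$;
\item the normalized sections $\hat s_i=\lambda s_i$ converge, in the singularity-removing gauge, to $(0,-y)$ and $(0,x)$, which vanish at $0$;
\item a sequence converging to $(f,0)$ needs the further rescaling $\lambda^{-2}f(x\hat s_1+y\hat s_2)$.
\end{itemize}

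Your necessity argument is sound as far as it goes. Since the structure group is $SU(2)$, $\det(s_\lambda,s'_\lambda)$ is an honest holomorphic function, and the maximum principle on $\partial B_r$ makes it nonvanishing on $B_r$. The relative class $[E_\infty,E_\lambda,u_\lambda]\in K^0(B_r,\partial B_r)$ would then vanish, while its $\ch_2$ tends to the bubbled charge $1$. This is the same mechanism as the paper's multiplicity computation. But it only shows that the values at $0$ of limit sections along a common subsequence lie in \emph{some} line.

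To identify that line with $\mathbb C\oplus 0$, you need a limit section along the given $\lambda_k$ with nonzero first component at $0$. You need this even to run the contradiction for a given $s_{\lambda_k}$. That is exactly the broken converse.

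The $U(2)$-symmetry substitute does not close this. In the singularity-removing gauge, the isotropy action on the fiber of $\mathcal E_\infty$ at $0$ is $\mathrm{diag}(1,\det g)$. Both coordinate lines are therefore invariant, and symmetry cannot choose between them.

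The paper avoids all of this with a direct computation in the correct model. Write $s_k=f^1_k\hat s_1+f^2_k\hat s_2$. The second component of $u(s_k)$ is $\frac{\sqrt{r^2+\lambda_k^2}}{r}(-yf^1_k+xf^2_k)$: a holomorphic function vanishing at $0$, times a factor tending to $1$ away from $0$. Convergence away from $0$, together with Cauchy's formula on one-dimensional slices, gives convergence on the whole ball, so the limit lies in $\mathcal I$.
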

\begin{proof}
    
We use the cohomological description of instantons, as obtained by Donaldson in \cite{donaldson84}; see also \cite[Chapter 2]{nakajima99} for an exposition. Here we use $(x,y)$ to denote the coordinate of $\mathbb{C}^2$. The holomorphic bundle $\mathcal E_\lambda$ defined by $A_\lambda$ is given by the cohomology sheaf of the following sequence, known as a monad:
\[
 \underline{\mathbb{C}}\xrightarrow{\alpha=(x,y,\lambda,0)}\underline{\mathbb{C}}^4\xrightarrow{\beta=(-y,x,0,\lambda)}\underline{\mathbb{C}}.
\]
We equip these trivial bundles with trivial Hermitian metrics, so $\mathcal{E}_{\lambda}$ is endowed
 with a Hermitian metric under the smooth identification
 \[
  \mathcal{E}_{\lambda}\cong E_{\lambda}:=\mathrm{Ker}\ \alpha^H\cap \mathrm{Ker}\ \beta. 
 \]
  The Chern connection gives the desired instanton $A_\lambda$.

By straightforward computation, we see that
\[
 \alpha^H=(\bar{x},\bar{y},\lambda,0), 
\]
  so a smooth basis for $E_{\lambda}$ is given by
\[
E_1=(1,0,-\lambda^{-1}\bar{x},\lambda^{-1}y), E_2=(0,1,-\lambda^{-1}\bar{y},-\lambda^{-1}x). 
\]
   Therefore by Gram-Schmidt process, a unitary basis is given by
\[
\begin{aligned}
   & e_1=\frac{1}{(\vert x\vert^2+\vert y\vert^2+\lambda^2)^{\frac12}}(\lambda,0,-\bar{x},y),\\
   & e_2=\frac{1}{(\vert x\vert^2+\vert y\vert^2+\lambda^2)^{\frac12}}(0,\lambda,-\bar{y},-x),
\end{aligned}
\]
under which we can identify $E_{\lambda}$ with the trivial $SU(2)$-bundle $\underline{\mathbb{C}}^2$ over $B_1$. One can also compute the connection matrix $A_\lambda$ using this unitary gauge. To see this, put $r=\sqrt{\vert x\vert^2+\vert y\vert^2}$, and consider the gauge transform
\[
    u:B_1\setminus \{0\}\to SU(2),(x,y)\mapsto 
\frac1r\left(\begin{matrix}
     \bar x &\bar y\\
     -y & x
\end{matrix}\right).
\]
Then one can find that
\[
A_\lambda=\frac{r^2}{r^2+\lambda^2} u^{-1}du.
\]
In particular, it converges to $A_\infty=u^{-1}du$ in $C^{\infty}_{loc}(B_1\setminus\{0\})$. We also identify the gauge transform for removing singularity at $0$:
\[
    \widehat{A}_\infty=uA_\infty u^{-1}-(du) u^{-1}=0.
\]

Now we choose a holomorphic basis of $\mathcal{E}_{\lambda}$ over $B_1$. It can be represented by two linearly
 independent holomorphic sections of $\mathrm{Ker}\ \beta$ which are not contained in $\mathrm{Im}\ \alpha$. Explicitly, we can choose
\[
 s_1=(1,0,0,\lambda^{-1}y),\quad s_2=(0,1,0,-\lambda^{-1}x),
\]
    and thus $\{\bar{s}_1,\bar{s}_2\}$ forms a holomorphic basis of $\mathcal{E}_{\lambda}$.

We compute the coordinate of $\bar s_i$ under unitary gauge. Using the smooth identification $\mathcal{E}_{\lambda}\cong E_{\lambda}$, these two holomorphic
sections are given by:
\[
s_i^{\prime}=s_i-\alpha(\alpha^{H}\alpha)^{-1}\alpha^{H}s_i,
\]
from which we obtain
\[
\begin{aligned}
    & s_1^{\prime}=\frac{1}{\lambda(\vert x\vert^2+\vert y\vert^2+\lambda^2)^{\frac12}}((\vert y\vert^2+\lambda^2)e_1-\bar{x}ye_2),\\
    &  s_2^{\prime}=\frac{1}{\lambda(\vert x\vert^2+\vert y\vert^2+\lambda^2)^{\frac12}}(-x\bar{y}e_1+(\vert x\vert^2+\lambda^2)e_2).
\end{aligned}
\]
   By abusing notation, we will just them by $s_i$. Using unitary gauge, we can compute its $L^2$-norm via
\[
\vert s_1\vert^2=\frac{\vert y\vert^2+\lambda^2}{\vert x\vert^2+\vert y\vert^2+\lambda^2}+\frac{\vert y\vert^2}{\lambda^2}.
\]
    Thus $\Vert s_1\Vert_{L^2(B_1)}\sim\lambda^{-1}$, and we shall consider the normalized section
\[
    \hat{s}_1=\lambda s_1= \frac{1}{{(\vert x\vert^2+\vert y\vert^2+\lambda^2)}^{\frac12}}(\vert y\vert^2+\lambda^2,-\bar{x}y)
\]
As a result, $\hat{s}_{1,\lambda}$ $C^{\infty}_{\text{loc}}$-converges over $B_1-\{0\}$ to
\[
    \hat{s}_{1,\infty}=\frac{1}{(\vert x\vert^2+\vert y\vert^2)^{\frac12}}(\vert y\vert^2,-\bar{x}y).
\]
We can do similar computation, so $\hat{s}_{2,\lambda}$ $C^{\infty}_{\text{loc}}$-converges over $B_1-\{0\}$ to:
\[
    \hat{s}_{2,\infty}=\frac{1}{(\vert x\vert^2+\vert y\vert^2)^{\frac12}}(-x\bar{y},\vert x\vert^2).
\]
Therefore, under the singularity-removing gauge $u$, we have
\[
    u(\hat{s}_{1,\infty})=(0,-y),\quad u(\hat{s}_{2,\infty})=(0,x).
\]

Suppose now $s_k$ is any sequence of holomorphic sections of $\mathcal E_{\lambda_k}$ converging to $s_{\infty}$ in the Uhlenbeck sense. We write $s_{k}=f_k^1\hat{s}_{1,\lambda_k}+f_k^2\hat{s}_{2,\lambda_k}$.  Then under unitary gauge, we have
\[
    u(s_{k})=
\left(\begin{matrix}
     \frac{\lambda_k^2}{r\sqrt{r^2+\lambda_k^2}}(\bar{x}f_k^1+\bar{y}f_k^2)\\
     \frac{\sqrt{r^2+\lambda_k^2}}{r}(-yf_k^1+xf_k^2)
\end{matrix}\right).
\]
Our assumption is that both components converges in $C^{\infty}_{loc}(B_1\setminus\{0\})$. By restricting to a one-dimensional slice and applying Cauchy's integral formula, the second component must converge to a holomorphic function vanishing at the origin.
Therefore $u(s_{\infty})\in\mathcal{O}\oplus\mathcal{I}$.

Conversely, for any $s=(f,0)\in\mathcal{O}\oplus\mathcal{I}$, the sequence $s_{\lambda}=\lambda^{-2}f\cdot(x\hat{s}_{1,\lambda}+y\hat{s}_{2,\lambda})$
converges to $s$. On the other hand, for any $s=(0,f)\in \mathcal{O}\oplus\mathcal{I}$, let $f=-yf_1+xf_2$, then the 
sequence $s_{\lambda}=f_1\hat{s}_{1,\lambda}+f_2\hat{s}_{2,\lambda}$ converges to $s$. This finishes our proof.
\end{proof}

Let us define the sheaf $\mathcal F_\infty$ as a subsheaf of $\mathcal E_\infty$ generated by all limit sections, then the above calculation can be rephrased as the fact that $\mathcal F_\infty\cong\mathcal O\oplus (x,y)$. As we will see in Section \ref{sec-convergence in a flat family}, this is \textit{not} simply a coincidence. The length of $\mathcal E_\infty/\mathcal F_\infty$ at $0$ must be equal to the analytic multiplicity, which is one in this model case. This forces $\mathcal F_\infty$ to be isomorphic to $\mathcal O\oplus(x,y)$. The above calculation actually motivates the Definition \ref{def--algebraic limit} as a local analogue of algebraic limits in \cite{li1993,GSTW2021}, and finally leads to the construction in Section \ref{sec-convergence in a flat family}.

\section{Convergence inside an analytic flat family}\label{sec-convergence in a flat family}

In this section, we prove Theorem \ref{thm--local convergence in a flat family} and Theorem \ref{thm-global convergence in a flat family}.

\subsection{Local construction}

In this subsection, we prove Theorem \ref{thm--local convergence in a flat family}. The Hilbert space method is inspired by \cite{li20}. By choosing a sufficiently small coordinate ball $U$, we may assume $\mathcal E_\infty$ is holomorphically trivial over $U$ and $0\in U$ is the unique bubbling point.

\subsubsection{Hilbert space method}\label{subsubsec--Hilbert space method}
We first introduce the definition of the algebraic limit.
\begin{definition}\label{def--algebraic limit}
    Let $\sigma$ be a subsequence of the original sequence. We define $M(\sigma)$ as a subset of $\mathcal E_{\infty,0}$ consisting of germs satisfying the following: there exists a representative $s$ on a ball $D\ni 0$, such that for any subsequence $\tau\subset\sigma$, there exists a further subsequence $\tau^{\prime}\subset\tau$ and sections
    \[
    s_j\in\Gamma(D,\mathcal E_j),\quad j\in\tau^{\prime},\quad s_j\to s\ \text{in} \ C^{\infty}_{loc}(D\setminus\{0\}).
    \]
\end{definition}
Note that $D$ depends on the representative $s$, but not on the subsequence $\tau$. We always allow possibly shrinking the domain $D$. This \textit{``sub-subsequence''} definition makes the module structure legitimate, i.e.
\begin{lemma}
    $M(\sigma)$ is an $\mathcal O_0$-submodule of $\mathcal E_{\infty,0}$. If $\tau\subset\sigma$, then $M(\sigma)\subset M(\tau)$.
\end{lemma}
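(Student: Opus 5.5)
The plan is to verify the two assertions directly from Definition \ref{def--algebraic limit}. Throughout, the point is that sequences of holomorphic sections can be added, multiplied by holomorphic functions, and restricted to smaller balls without destroying Uhlenbeck convergence. The ``sub-subsequence'' clause lets us pass to common further subsequences.

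\emph{Submodule property.} First, $0\in M(\sigma)$, since we may take $s_j=0$. Let $[s],[t]\in M(\sigma)$ have representatives $s$ on $D_1$ and $t$ on $D_2$, and let $f\in\mathcal O_0$ have a representative on $D_3$. Set $D=D_1\cap D_2\cap D_3$, which is a ball around $0$ after shrinking; shrinking is allowed.

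Given an arbitrary subsequence $\tau\subset\sigma$, apply the definition for $s$ to get $\tau'\subset\tau$ and sections $s_j\in\Gamma(D_1,\mathcal E_j)$, $j\in\tau'$, with $s_j\to s$ in $C^\infty_{loc}(D_1\setminus\{0\})$. Then apply the definition for $t$ to the subsequence $\tau'$ to get $\tau''\subset\tau'$ and $t_j\in\Gamma(D_2,\mathcal E_j)$ with $t_j\to t$.

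On $\tau''$, the sections $s_j|_D+f\,t_j|_D\in\Gamma(D,\mathcal E_j)$ are holomorphic, because $f$ is a holomorphic function. In the fixed Uhlenbeck gauges $u_j$ they satisfy
\[
u_j^*(s_j+ft_j)=u_j^*s_j+f\,u_j^*t_j\to s+ft\quad\text{in } C^\infty_{loc}(D\setminus\{0\}).
\]
This holds because the gauges act linearly, $f$ is smooth on $\overline{D'}$ for $D'\subset\subset D$, and $C^\infty_{loc}$ convergence is preserved under sums and under multiplication by a fixed smooth function. Hence the representative $s+ft$ on $D$ satisfies the defining property for every $\tau$, with the further subsequence $\tau''$, so $[s]+f[t]\in M(\sigma)$.

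The only care needed is that $D$ must be independent of $\tau$. This holds, since $D$ was chosen from the representatives alone before $\tau$ was given.

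\emph{Monotonicity.} Let $[s]\in M(\sigma)$ with representative $s$ on $D$, and suppose $\tau\subset\sigma$. Any subsequence $\tau_1\subset\tau$ is also a subsequence of $\sigma$. The definition for $\sigma$ therefore yields $\tau_1'\subset\tau_1$ and convergent sections on the same $D$. This is exactly the defining condition for $[s]\in M(\tau)$, so $M(\sigma)\subset M(\tau)$.

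No step is a genuine obstacle. The only subtle point is the diagonal-type passage $\tau\supset\tau'\supset\tau''$, which is why the definition uses arbitrary subsequences.
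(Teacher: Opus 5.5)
Your proposal is correct and is the same direct verification from the definition that the paper intends with its one-line proof (``Direct check of the definition''). You pass to nested subsequences $\tau\supset\tau'\supset\tau''$ and use the fixed Uhlenbeck gauges $u_j$, so that sums and products with holomorphic functions still converge. These are exactly the details that one-line proof leaves to the reader.
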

\begin{proof}
    Direct check of the definition.
\end{proof}

Now we apply the crucial $L^2$-estimate. By Corollary \ref{cor--L2-convergence for sections with high vanishing order}, there exists an integer $m>0$ such that
\[
\mathfrak m_0^m\mathcal E_{\infty,0}\subset M(\sigma)\subset\mathcal E_{\infty,0}.
\]
It follows that
\[
0\leq\ell(\sigma)\coloneqq\length(\mathcal E_{\infty,0}/M(\sigma))<\infty.
\]
Thus we can saturate the choice of $M(\sigma)$ as follows:
\begin{lemma}\label{lem--saturation of algebraic limit}
    There exists a subsequence $\sigma$, such that $M(\tau)=M(\sigma)$ for any subsequence $\tau\subset\sigma$. For this choice, if $s$ is a Uhlenbeck limit of sections $s_j\in\Gamma(D,\mathcal E_j)$ along some subsequence $\tau\subset\sigma$, i.e. $j\in\tau$, over some ball $D\ni 0$, then $s\in M(\sigma)$.
\end{lemma}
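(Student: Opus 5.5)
The plan is to choose $\sigma$ so that $\ell(\sigma)$ is as small as possible, using the monotonicity of $M$ under passing to subsequences. By the previous lemma, $\tau\subset\sigma$ implies $M(\sigma)\subset M(\tau)$, so $\ell(\tau)\leq\ell(\sigma)$. Every such $M$ contains $\mathfrak m_0^m\mathcal E_{\infty,0}$ by Corollary \ref{cor--L2-convergence for sections with high vanishing order}, so the values $\ell(\cdot)$ lie in the finite set $\{0,1,\dots,\length(\mathcal E_{\infty,0}/\mathfrak m_0^m\mathcal E_{\infty,0})\}$. Here $m$ depends only on $\Lambda,\mu$ and the local geometry, so it applies uniformly to every subsequence.

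\textbf{Step 1: choosing $\sigma$.} Start from the original sequence $\sigma_0$. If some $\tau\subset\sigma_0$ has $\ell(\tau)<\ell(\sigma_0)$, replace $\sigma_0$ by $\tau$ and repeat. Since $\ell$ is a nonnegative integer, this stops after finitely many steps at some $\sigma$ with $\ell(\tau)=\ell(\sigma)$ for every $\tau\subset\sigma$. Combined with $M(\sigma)\subset M(\tau)$, both being submodules of finite colength in $\mathcal E_{\infty,0}$, equal colength forces $M(\tau)=M(\sigma)$. No diagonal argument is needed.

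\textbf{Step 2: limits along subsequences lie in $M(\sigma)$.} Let $s$ be the Uhlenbeck limit of $s_j\in\Gamma(D,\mathcal E_j)$ for $j\in\tau\subset\sigma$. We show $s\in M(\tau)$, which equals $M(\sigma)$ by Step 1. Take the representative $s$ on $D$ itself. For any further subsequence $\tau''\subset\tau$, set $\tau':=\tau''$; the given sections $s_j$ with $j\in\tau''$ still converge to $s$ in $C^\infty_{loc}(D\setminus\{0\})$. So the condition in Definition \ref{def--algebraic limit} holds for $\tau$, and hence $s\in M(\tau)=M(\sigma)$.

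The only subtle point is Step 1. It needs the bound $m$ from Corollary \ref{cor--L2-convergence for sections with high vanishing order} to be uniform over all subsequences, so that the lengths are finite and the descent argument terminates. Given the corollary, Step 2 is immediate.
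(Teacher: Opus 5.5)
Your proposal is correct and follows essentially the same route as the paper. The paper takes a minimizer of the finite colength $\ell(\sigma)$ directly, where you reach one by a terminating descent; both then use $M(\sigma)\subset M(\tau)$ to force $M(\tau)=M(\sigma)$ and note that a limit along $\tau$ persists along every further subsequence, so it lies in $M(\tau)=M(\sigma)$.
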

\begin{proof}
    As we discussed above, all colengths $\ell(\sigma)$ is finite. The set of non-negative integers 
    \[
    \{l(\sigma):\sigma \ \text{is a subsequence of the original sequence}\}
    \]
    admits a minimizer $\sigma$. If $\tau\subset\sigma$, then the inclusion $M(\sigma)\subset M(\tau)$ implies that 
    \[
    \ell(\tau)\leq\ell(\sigma).
    \]
    The mininality of $\ell(\sigma)$ implies that $\ell(\tau)=\ell(\sigma)$, thus $M(\tau)=M(\sigma)$.

    Now let $s$ be a Uhlenbeck limit along a subsequence $\tau\subset\sigma$. Every further subsequence of $\tau$ has the same convergence, so $s\in M(\tau)$, thus $s\in M(\sigma)$.
    
\end{proof}

Now we simply relabel this sequence by $\{j\}$, and set $M=M(\sigma)$ and $Q_0=\mathcal E_{\infty,0}/M$. Since $\mathfrak m_0^m Q_0=0$, we can regard it as a coherent sheaf on the non-reduced point space $(0,\mathcal O_0/\mathfrak m_0^m)$, so pushforward by the natural inclusion extends it as a coherent sheaf $\mathcal Q_0$ on $U$. There is also a natural evaluation morphism $\mathcal E_\infty\to\mathcal Q_0$. Now define
\[
\mathcal F_\infty\coloneqq\ker\left(\mathcal E_\infty\to\mathcal Q_0\right),
\]
which is coherent, has stalk $M$ at $0$ and equals $\mathcal E_\infty$ on $U\setminus\{0\}$. In particular, it is torsion-free and $\mathcal F_\infty^{\vee\vee}=\mathcal E_\infty$.

By coherence we may choose a presentation over a possibly smaller ball $B^+$
\[
    0\to\mathcal O_{B^+}^k\xrightarrow{R_\infty}\mathcal O_{B^+}^N\xrightarrow{q_\infty}\mathcal F_\infty\to 0.
    \]
    Note that here we can assume the kernel is locally-free, since $\mathcal F_\infty$ is torsion-free and $\dim_{\mathbb C} X=2$. Moreover, we can assume columns of $q_\infty$ are Uhlenbeck limits of sections over $B^{+}$ along a common sub-sequence. We may relabel again to denote this sub-sequence by $\{j\}$, which does not change the module $M$ we construct. Thus the approximating sequence of sections defines a sequence of morphisms
    \[
    \mathcal O_{B^+}^N\xrightarrow{q_j}\mathcal E_j,
    \]
    such that columns of $q_j$ converges to columns of $q_\infty$ in Uhlenbeck sense. It is not hard to see that $q_j$ are generically surjective for $j$ large. Our goal is to show $q_j$ is indeed a surjection for all $j$ large, by possibly shrinking the ball.

    Fix $B\subset\subset B^+$. Then by standard Bochner technique and Moser iteration, we can assume
\begin{equation}\label{equa--uniform interior estimate for q_j}
    \sup_j\sup_{\bar B}\vert q_j\vert\leq C
\end{equation}
for some uniform constant $C>0$. Here $\vert q_j\vert$ denotes the pointwise norm induced by standard Hermitian metric on $\mathcal O_{B^+}^N$ and the given Hermitian metric $h$ on $E$.

Consider the Bergman space
\[
\mathcal D\coloneqq A^2(B,\mathbb C^N),\quad H_\infty\coloneqq A^2(B,\mathcal E_{\infty})
\]
i.e. vector-valued $L^2$-integrable holomorphic functions on $B$. Recall we fix a holomorphic trivilization of $\mathcal E_\infty$ over $U$ and the HYM metric is smooth, elements of $H_\infty$ are also identified with vector-valued $L^2$-holomorphic functions with an equivalent norm. By Cauchy's interior estimate, we know that both $\mathcal D$ and $H_\infty$ are complete, hence are Hilbert spaces. We also consider
\[
V\coloneqq\{f\in H_\infty: f_0\in M\}.
\]
Then we have:
\begin{lemma}
    $V$ is closed in $H_\infty$, hence being complete.
\end{lemma}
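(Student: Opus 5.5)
Since $H_\infty$ is a Hilbert space, I will write $V$ as the preimage of a closed set under a continuous linear map. Consider the germ map
\[
\mathrm{ev}: H_\infty\to \mathcal E_{\infty,0}/\mathfrak m_0^m\mathcal E_{\infty,0},
\]
which sends $f$ to the class of its germ at $0$, i.e.\ to its Taylor jet of order $m-1$ at $0$. In the fixed trivialization $\mathcal E_\infty|_U\cong\mathcal O_U^{r}$ the target is the finite-dimensional vector space of $\mathbb C^r$-valued polynomials of degree $<m$. Continuity of $\mathrm{ev}$ follows from Cauchy's interior estimates. For $f\in H_\infty$ and a fixed small ball $B_\delta(0)\subset\subset B$, the mean value property gives
\[
\sup_{B_\delta}|\partial^\alpha f|\le C_{\alpha,\delta}\|f\|_{L^2(B)}.
\]
This uses that the HYM metric is smooth and uniformly equivalent to the flat one on $\bar B$, so the $H_\infty$-norm is equivalent to the flat $L^2$-norm. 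Hence each Taylor coefficient of order $<m$ at $0$ is a bounded linear functional on $H_\infty$.

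Next I use the inclusion $\mathfrak m_0^m\mathcal E_{\infty,0}\subset M$, which holds by Corollary \ref{cor--L2-convergence for sections with high vanishing order}. Since $M$ contains $\mathfrak m_0^m\mathcal E_{\infty,0}$, a germ $f_0$ lies in $M$ if and only if its class lies in $\overline M\coloneqq M/\mathfrak m_0^m\mathcal E_{\infty,0}$. Therefore
\[
V=\mathrm{ev}^{-1}(\overline M).
\]
Now $\overline M$ is a linear subspace of a finite-dimensional vector space, so it is automatically closed. It follows that $V$ is a closed linear subspace of $H_\infty$, and a closed subspace of a Hilbert space is complete.

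The only points needing care are routine. The first is the uniform comparison of norms near $0$, which holds because $\mathcal E_\infty$ is holomorphically trivial on $U$ with a smooth metric. The second is that the germ of an element of $H_\infty$ is indeed determined modulo $\mathfrak m_0^m$ by its finite jet, which is Taylor expansion of a holomorphic function. I expect no genuine obstacle. The key input is the finite colength $\ell(\sigma)<\infty$ supplied by Corollary \ref{cor--L2-convergence for sections with high vanishing order}. Without it, membership of the germ in $M$ would not reduce to a finite-dimensional condition, and closedness would require a Krull-type argument in place of this elementary one.
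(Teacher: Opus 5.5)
Your proposal is correct and is essentially the paper's argument. The paper also uses $\mathfrak m_0^m\mathcal E_{\infty,0}\subset M$ to reduce $f_0\in M$ to a condition on a truncated Taylor polynomial of $f$, and then writes $V$ as the preimage of a linear subspace of a finite-dimensional polynomial space under the Taylor truncation map, which is continuous by Cauchy's estimates. The only difference is cosmetic: you truncate at order $m-1$, i.e.\ work in $\mathcal E_{\infty,0}/\mathfrak m_0^m\mathcal E_{\infty,0}$, whereas the paper truncates at order $\le m$; both work.
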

\begin{proof}
    For any $f\in H_\infty$, let $p_f$ be its truncated Taylor expansion of order $\leq m$ under the given holomorphic frame. Then by Corollary \ref{cor--L2-convergence for sections with high vanishing order}, 
    \[
    f_0-(p_f)_0\in M.
    \]
    Here the subscript $0$ denotes the germ of section. It follows that $f_0\in M$ if and only if $(p_f)_0\in M$. Therefore we define the space of allowed truncated Taylor polynomials
    \[
    \mathcal J\coloneqq\{p\in\mathcal P_{\leq m}(\mathbb C^2,\mathbb C^r):p_0\in M\},
    \]
    which is a linear subspace of a finite dimensional space, hence being closed. The Taylor truncation is a continuous map by Cauchy's estimate, i.e.
    \[
    \mathcal T: H_\infty\to\mathcal P_{\leq m},\quad f\mapsto p_f
    \]
    is continuous. Thus $V=\mathcal T^{-1}(J)$ is the preimage of a closed subset, hence being closed.
\end{proof}

Now we switch from the sheaf theoretical perspective to the operator theoretical perspective. Denote by $Q$ the holomorphic matrix of the composed map
\[
\mathcal O^N_{B^+}\xrightarrow{q_\infty}\mathcal F_\infty\hookrightarrow\mathcal E_\infty.
\]
Since $Q$ is bounded on $\overline B$, multiplication by $Q$ defines a bounded linear operator $T_\infty:\mathcal D\to V$. Then we construct a bounded right inverse:
\begin{proposition}[Existence of bounded lifting]
    There exists a bounded linear operator $B_\infty: V\to\mathcal D$, such that
    \[
    T_\infty B_\infty=\mathrm{id}_V.
    \]
\end{proposition}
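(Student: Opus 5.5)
We need to show $T_\infty:\mathcal D\to V$ is surjective; boundedness of a right inverse then follows from the open mapping theorem by a Hilbert space argument. Indeed, once $T_\infty$ is a bounded surjection between Hilbert spaces, its kernel $K$ is closed. The restriction $T_\infty\vert_{K^\perp}:K^\perp\to V$ is then a continuous bijection, so its inverse $B_\infty$ is bounded by the open mapping theorem, and $T_\infty B_\infty=\mathrm{id}_V$. So the whole content is \emph{surjectivity with $L^2$ control}: every $f\in V$ is of the form $Qa$ with $a\in A^2(B,\mathbb C^N)$.

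For surjectivity, I would first solve the problem sheaf-theoretically on $B^+$ and then pass to $B$. Let $f\in V$. Away from $0$, $\mathcal F_\infty=\mathcal E_\infty$, and $q_\infty$ is surjective onto $\mathcal F_\infty$ everywhere on $B^+$. At $0$, the condition $f_0\in M=\mathcal F_{\infty,0}$ says that $f$ defines a section of $\mathcal F_\infty$ on all of $B$. To lift $f$ globally through $q_\infty$, I would use the exact sequence $0\to\mathcal O^k\xrightarrow{R_\infty}\mathcal O^N\to\mathcal F_\infty\to0$ together with Cartan's Theorem B on the Stein domain $B$ (a ball). Since $H^1(B,\mathcal O^k)=0$, the map $\Gamma(B,\mathcal O^N)\to\Gamma(B,\mathcal F_\infty)$ is surjective, so $f=Qa$ for some holomorphic $a$ on $B$.

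The obstacle is that $a$ need only be holomorphic on the open ball $B$, with no $L^2$ bound up to $\partial B$, whereas $\mathcal D$ requires $a\in L^2(B)$. I would handle this with an $L^2$ version of the division problem rather than bare Theorem B. The plan is to split $f$ using the proof of the previous lemma: write $f=p_f+(f-p_f)$, where $p_f\in\mathcal J$ up to a correction lying in $\mathfrak m_0^m\mathcal E_\infty$. For the finite-dimensional piece $\mathcal J$, I choose once and for all holomorphic lifts on $B^+$ of a basis; these are bounded on $\overline B$, and linearity plus finite dimensionality gives a bounded lifting. For the remaining piece $g=f-p_f$, which has vanishing order $>m$ at $0$, I use that $\mathcal E_\infty$ is trivial on $U$ and $q_\infty$ is surjective onto $\mathcal E_\infty$ away from a neighborhood of $0$. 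The generators then produce a holomorphic matrix $Q$ whose image contains $\mathfrak m_0^{m'}\mathcal O^r$ stalkwise. I would solve $Qa=g$ via Skoda's $L^2$ division theorem, or equivalently Hörmander's $L^2$ estimate for the Koszul complex, applied to the $r\times r$ minors of $Q$ on the pseudoconvex domain $B$. This gives $\|a\|_{L^2(B)}\le C\|g\|_{L^2(B)}$, provided we only need the weight $|\det|^{-2(1+\epsilon)}$ to be integrable against $g$. That integrability is guaranteed by the high vanishing order of $g$ at $0$, after possibly enlarging $m$ in the Taylor truncation, which is harmless since $\mathfrak m_0^{m}\mathcal E_{\infty,0}\subset M$ for all larger $m$. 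Alternatively, one can avoid Skoda and use the closed range theorem with Theorem B on the slightly larger ball $B^+$ to get a priori interior control. This requires care, since an $f\in V$ is only defined on $B$, and it is this step that I expect to be the main technical point.

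Combining the two pieces shows that $T_\infty$ is surjective onto $V$, with an explicit bound $\|a\|\le C\|f\|$ where $C$ depends on the Taylor truncation norm and the Skoda constant. This already constructs a (possibly nonlinear-looking but in fact linear after projecting to $K^\perp$) bounded right inverse. The open mapping argument above then packages it as the bounded linear operator $B_\infty$.
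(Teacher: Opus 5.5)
Your proposal is correct, but it resolves the key $L^2$ issue by a different mechanism than the paper. Both arguments reduce to surjectivity of $T_\infty$ via the bounded inverse theorem.

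The paper keeps the Cartan's Theorem B lift $b$ on $B$, which is perfectly fine near the singular point $0$, and repairs only its behaviour near $\partial B$:
\begin{itemize}
\item[(i)] It glues $a_0=\eta b+(1-\eta)Q^\dagger f$ using the pointwise pseudo-inverse $Q^\dagger=Q^*(QQ^*)^{-1}$. This is bounded on $\overline B\setminus B_\rho$ because $Q$ has full rank there.
\item[(ii)] Since $Q\,\bar\partial a_0=0$, it writes $\bar\partial a_0=R_\infty\beta$ with $\beta$ a $\bar\partial$-closed $L^2$ form vanishing near $0$.
\item[(iii)] It solves $\bar\partial w=\beta$ by the unweighted H\"ormander estimate on the ball and sets $a=a_0-R_\infty w$.
\end{itemize}

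You instead place the difficulty at $0$. The finite-dimensional jet part $p_f\in\mathcal J$ is lifted once and for all on $B^+$. The remainder $g=f-p_f$, of vanishing order $>m$, is divided by Skoda's theorem, and the boundary is harmless since $Q$ is a submersion near $\partial B$.

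To make the bound $\|a\|_{L^2(B)}\le C\|f\|_{L^2(B)}$ airtight, you should state the two inputs that convert vanishing order into integrability:
\begin{itemize}
\item[(a)] a \L{}ojasiewicz (Nullstellensatz) lower bound $\det(QQ^*)\ge c|z|^{2K}$ near $0$;
\item[(b)] the Schwarz/Cauchy estimate $|g(z)|\le C|z|^{m+1}\|g\|_{L^2(B)}$ on a small ball.
\end{itemize}
With these, whatever the fixed exponent in Skoda's weight is (it is not $1+\epsilon$ in general), it is absorbed by taking $m$ large.

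The trade-off is as follows. The paper's route is more elementary: plain H\"ormander, no jet splitting, no \L{}ojasiewicz exponent. However, it relies on the length-one locally free resolution, i.e.\ on $\ker Q$ being locally free, which is special to surfaces. Your route imports heavier division machinery, but it never uses the syzygy matrix $R_\infty$. It would therefore carry over to higher dimensions, given the finite colength of $M$.
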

\begin{proof}
    Since $V$ is closed, by bounded inverse theorem it suffices to show $T_\infty$ is surjective.

Let $f\in V$ be an arbitrary element. By Cartan's theorem B, there exists $b\in H^0(B,\mathcal O^N)$, such that $Qb=f$. We then correct the potentially bad boundary behavior of $b$, so that it is $L^2$-integrable.

Since away from $0$ $\mathcal F_\infty$ is locally-free, we know that $Q$ has full rank away from $0$. Thus we consider the fiberwise linear map
\[
Q^{\dagger}=Q^*(QQ^*)^{-1},
\]
which is defined away from $0$ and has bounded derivative on any pre-compact annulus. Now let $\eta$ be a cut-off function supported in $B$ and equals to $1$ near $0$, so we can define
\[
a_0=\eta b+(1-\eta)Q^{\dagger}f.
\]
It is clear that
\[
Qa_0=f,\quad a_0,\pp a_0\in L^2(B),\quad \pp a_0=0\ \text{near}\ 0.
\]
Since $Q,f$ are holomorphic, it also follows that
\begin{equation}\label{equa--dbar a_0 belongs to ker Q}
    Q\pp a_0=0.
\end{equation}

Denote by $R=R_\infty$ the holomorphic matrix of inclusion $\iota:\ker Q\hookrightarrow\mathcal O^N_{B^+}$. Then $R$ also has full rank away from $0$, we can define
\[
\beta=(R^*R)^{-1}R^*\pp a_0\in L^2(B).
\]
Here we use the fact that $\pp a_0$ vanishes near $0$. The map $R(R^*R)^{-1}R^*$ is the orthogonal projection to $\Im R$, so by \eqref{equa--dbar a_0 belongs to ker Q} we know
\[
\pp a_0=R\beta.
\]
Differentiate this we get $R\pp\beta=0$ away from $0$, thus $\pp\beta=0$ on $B^{\setminus}\{0\}$. By definition of $\beta$, we conclude that $\pp\beta=0$ on $B$.

Since $\dim_{\mathbb C}X=2$, by standard H\"ormander's estimate we can find $w\in L^2(B,\mathbb C^k)$, such that
\[
\pp w=\beta.
\]
We then define
\[
a=a_0-Rw,
\]
which satisfies
\[
\pp a=0,\quad Qa=f,\quad a\in L^2(B).
\]
Thus $a\in\mathcal D$ and $T_\infty (a)=f$.
\end{proof}

Next we transport all spaces of $L^2$-holomorphic sections into one common space. Let $u_j: E_\infty\vert_{U\setminus\{0\}}\to E\vert_{U\setminus\{0\}}$ be the unitary gauge realizing the Uhlenbeck convergence, i.e. $\{u_j^* A_j\}$ converges to $A_\infty$ in $C^{\infty}_{loc}(U\setminus\{0\})$. Define
\[
\mathcal H\coloneqq L^2(B, E_\infty),\quad H_j\coloneqq u_j^* A^2(B,\mathcal E_j)\subset\mathcal H.
\]
Note that each $H_j$ is closed in $\mathcal H$, since $u_j^*$ is an isometry and $A^2(B,\mathcal E_j)$ is complete. We can also regard $H_\infty$ as a closed subspace in $\mathcal H$.

Define
\[
T_j:\mathcal D\to H_j,\quad T_ja=u_j^*(q_j a).
\]
By the estimate \eqref{equa--uniform interior estimate for q_j}, the operator norm of $T_j$ is uniformly bounded. Regarding them as an operator on the common space $\mathcal H$, we have:
\begin{lemma}\label{lem--operator convergence}
    As $j\to\infty$, $\Vert T_j-T_\infty\Vert_{\mathrm{op}}\to 0$.
\end{lemma}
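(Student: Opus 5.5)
We must show $\sup_{\|a\|_{L^2(B)}\le 1}\|u_j^*(q_ja)-Qa\|_{L^2(B)}\to 0$. Here $T_\infty a=Qa$ is viewed in $\mathcal H$, and we use that $u_j^*$ is an isometry together with the smooth identification of $E_\infty$ with the trivialized bundle. The plan is to split the ball $B$ into a small ball $B_r(0)$ around the bubbling point and the annular region $B\setminus B_r(0)$. On the first region we use a uniform smallness argument; on the second we use the Uhlenbeck convergence of the columns of $q_j$.

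\emph{Step 1: the region near $0$.} By \eqref{equa--uniform interior estimate for q_j}, $|u_j^*q_j|=|q_j|\le C$ on $\bar B$, uniformly in $j$, and $|Q|\le C$ there as well. It is therefore enough to control $\|a\|_{L^2(B_r(0))}$ uniformly for $a$ in the unit ball of $\mathcal D$. I would do this with Cauchy's interior estimate, using that $a$ is holomorphic. Fix $B_{r_0}(0)\subset\subset B$. For $r<r_0$, the mean value inequality gives $\sup_{B_{r_0}}|a|\le C(r_0)\|a\|_{L^2(B)}$. Hence
\[
\|a\|_{L^2(B_r(0))}\le C(r_0)\,r^{2}\,\|a\|_{L^2(B)},
\]
since the volume of $B_r$ in real dimension four is of order $r^4$. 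Consequently the contribution of $B_r(0)$ to $\|(T_j-T_\infty)a\|$ is at most $C r^2\|a\|$, uniformly in $j$.

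\emph{Step 2: the annular region $B\setminus B_r(0)$.} This region is not compactly contained in $B$, since it reaches $\partial B$. On $\overline{B}\setminus B_r(0)$ we want the uniform convergence $\sup|u_j^*q_j-Q|\to 0$. The natural approach is to use the Uhlenbeck convergence of $q_j$ on the larger ball $B^+$, which is where the columns of $q_j$ were constructed as Uhlenbeck limits. That convergence holds in $C^\infty_{loc}(B^+\setminus\{0\})$, and $\overline B\setminus B_r(0)$ is a compact subset of $B^+\setminus\{0\}$. Choosing $B\subset\subset B^+$ was made exactly for this purpose. So $\epsilon_j(r):=\sup_{\overline B\setminus B_r}|u_j^*q_j-Q|\to 0$ for each fixed $r$. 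This gives
\[
\|(T_j-T_\infty)a\|_{L^2(B\setminus B_r)}\le \epsilon_j(r)\,\|a\|_{L^2(B)}.
\]

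\emph{Step 3: conclusion.} Combining the two regions,
\[
\|T_j-T_\infty\|_{\mathrm{op}}\le C r^2+\epsilon_j(r).
\]
Letting $j\to\infty$ and then $r\to 0$ gives the lemma.

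The main point to check is Step 2, namely that the convergence of columns is genuinely $C^0$ on a neighborhood of $\overline B\setminus\{0\}$ in a fixed gauge $u_j$. This requires the gauges $u_j$ used to define $H_j$ to be the same ones in which the columns of $q_j$ converge, and the convergence to hold up to $\partial B$, i.e.\ on the larger $B^+$. Both hold by construction.

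One subtlety remains: $Q$ takes values in $\mathcal E_\infty$ through a fixed holomorphic trivialization, while $\mathcal H=L^2(B,E_\infty)$ carries the HYM metric. The two norms are uniformly equivalent on $\bar B$, so this affects only constants.
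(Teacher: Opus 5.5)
Your proof is correct and is essentially the paper's argument. You split $B$ into a small ball $B_r(0)$ and the region $B\setminus B_r(0)$: on the ball, the uniform bound \eqref{equa--uniform interior estimate for q_j} and the Cauchy interior estimate give a contribution $O(r^2)\Vert a\Vert_{L^2(B)}$ uniformly in $j$, and on the rest, the Uhlenbeck convergence of the columns of $q_j$ is uniform on $\overline B\setminus B_r(0)$, after which you let $j\to\infty$ and then $r\to 0$.
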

\begin{proof}
    Fix a small ball $B_\rho\subset B$. Then by Cauchy's estimate, for any $a\in\mathcal D$ we have
    \begin{equation}\label{equa--interior estimate in convergence estimate for T_j}
        \sup_{B_\rho}\vert a\vert\leq C\Vert a\Vert_{L^2(B)}.
    \end{equation}

 Let $0<\delta<\rho$ be determined later. By estimate \eqref{equa--uniform interior estimate for q_j} and \eqref{equa--interior estimate in convergence estimate for T_j}, we have
    \begin{equation}\label{equa--norm estimate on B_delta}
        \Vert(T_j-T_\infty)a\Vert_{L^2(B_\delta)}\leq C\delta^2\Vert a\Vert_{L^2(B)}.
    \end{equation}
    On the other hand, columns of $q_j$ converge to $q_\infty$ uniformly on $\overline{B}\setminus B_{\delta}$, so
    \begin{equation}\label{equa--norm estimate outside B_delta}
        \Vert (T_j-T_\infty)a\Vert_{L^2(B\setminus B_\delta)}=\Psi(j^{-1}\vert\delta)\Vert a\Vert_{L^2(B)}.
    \end{equation}
    Thus for any $\epsilon>0$, we choose $\delta$ such that $C\delta^2<\epsilon$, and then choose $j\geq j_0$ such that $\Psi(j^{-1}\vert\delta)<\epsilon$. It follows from \eqref{equa--norm estimate on B_delta} and \eqref{equa--norm estimate outside B_delta} that $\Vert T_j-T_\infty\Vert_{\mathrm{op}}<2\epsilon$ for $j\geq j_0$, thus finishing the proof.
\end{proof}

Let $P_0:\mathcal H\to\mathcal H$ be the orthogonal projection to $H_\infty$, then we have
\begin{lemma}\label{lem--convergence of Hj to Hinfty}
    As $j\to\infty$, $\Vert(I-P_0)\vert_{H_j}\Vert_{\mathrm{op}}\to 0$.
\end{lemma}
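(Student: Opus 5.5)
The plan is to show that every unit vector $f_j\in H_j$ lies within a small $L^2(B)$-distance of $H_\infty$, uniformly in $f_j$. Since $\Vert(I-P_0)f_j\Vert=\mathrm{dist}(f_j,H_\infty)$, this gives the claim. The construction cuts $f_j$ off near the bubbling point $0$ and then corrects by a $\pp$-equation for $\mathcal E_\infty$ on $B$. The possible concentration of $L^2$-mass near $\partial B$ needs no separate treatment: since $\overline B\subset\subset U$, the gauged connections $u_j^*A_j$ converge to $A_\infty$ smoothly on all of $\overline B\setminus B_\delta$, up to and including $\partial B$. So $f_j$ is almost $\mathcal E_\infty$-holomorphic there.

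\emph{Setup.} Write $f_j=u_j^*s_j$, where $s_j\in A^2(B,\mathcal E_j)$ and $\Vert s_j\Vert_{L^2(B)}=1$. Fix a small ball $B_\rho\subset\subset B$. By the Bochner formula and Moser iteration, as used for \eqref{equa--uniform interior estimate for q_j}, we have $\sup_{B_\rho}\vert f_j\vert\leq C$ with $C$ independent of $j$ and $s_j$. Work in the fixed holomorphic trivialization of $\mathcal E_\infty$ on $U$. In this trivialization the HYM metric is uniformly equivalent to the standard one on $\overline B$, and $\pp_{A_\infty}$ is the standard $\pp$ on $\mathbb C^r$-valued functions. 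On $\overline B\setminus B_\delta$ write
\[
\pp_{A_\infty}-\pp_{u_j^*A_j}=a_j,
\]
where $a_j$ is an $\mathrm{End}$-valued $(0,1)$-form. By smooth convergence, $\sup_{\overline B\setminus B_\delta}\vert a_j\vert=\Psi(j^{-1}\vert\delta)$, and consequently $\pp_{A_\infty}f_j=a_jf_j$ there.

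\emph{Cut-off.} Let $0<2\delta<\rho$, and choose a cut-off $\chi$ with $\chi=0$ on $B_\delta$, $\chi=1$ off $B_{2\delta}$, and $\vert\pp\chi\vert\leq C\delta^{-1}$. Set $g_j=\chi f_j$, a smooth section of $E_\infty$ on $B$. The sup bound on $B_{2\delta}$ gives $\Vert f_j-g_j\Vert_{L^2(B)}\leq C\delta^2$. Moreover
\[
\pp g_j=\pp\chi\otimes f_j+\chi\, a_j f_j .
\]
The first term is supported in the annulus $B_{2\delta}\setminus B_\delta$, where $\vert f_j\vert\leq C$, so its $L^2$-norm is at most $C\delta^{-1}\cdot\delta^2=C\delta$. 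The second term has $L^2$-norm at most $\Psi(j^{-1}\vert\delta)\Vert f_j\Vert_{L^2}=\Psi(j^{-1}\vert\delta)$. The form $\pp g_j$ is $\pp$-closed in the trivialization, since it is $\pp$ of a smooth function.

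\emph{Correction.} By H\"ormander's $L^2$-estimate on the pseudoconvex ball $B$ for $\mathbb C^r$-valued functions, with a bounded weight such as $\vert z\vert^2$, we find $v$ with $\pp v=\pp g_j$ and
\[
\Vert v\Vert_{L^2(B)}\leq C\bigl(C\delta+\Psi(j^{-1}\vert\delta)\bigr).
\]
By the uniform equivalence of metrics, the same bound holds for the HYM norm. Then $g_j-v$ is holomorphic and $L^2$ on $B$, so $g_j-v\in H_\infty$, and
\[
\Vert(I-P_0)f_j\Vert\leq\Vert f_j-(g_j-v)\Vert_{L^2(B)}\leq C\delta^2+C\delta+\Psi(j^{-1}\vert\delta).
\]
The right-hand side is independent of the choice of unit $f_j\in H_j$. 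Given $\epsilon>0$, first choose $\delta$ so that $C\delta^2+C\delta<\epsilon/2$, and then choose $j_0$ so that $\Psi<\epsilon/2$ for $j\geq j_0$. This proves $\Vert(I-P_0)\vert_{H_j}\Vert_{\mathrm{op}}\to0$.

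\emph{Main point of care.} The uniformity of the sup bound near $0$ for arbitrary unit $L^2$-holomorphic sections of $\mathcal E_j$ must hold despite curvature concentration. This is exactly what Moser iteration provides: it uses only the Bochner inequality $\Delta\vert s\vert^2\geq-C\mu\vert s\vert^2$, which depends on the bounded Einstein constant and not on $\vert F_{A_j}\vert$. The remaining check is that the H\"ormander constant depends only on $B$ and on the metric comparison in the fixed trivialization, both of which are independent of $j$.
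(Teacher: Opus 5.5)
Your proposal is correct and follows essentially the same argument as the paper. Both use a uniform Moser-iteration sup bound near the bubbling point, a cut-off at scale $\delta$ costing $O(\delta^2)$ in $L^2$, and a $\bar\partial$-error of size $C\delta+\Psi(j^{-1}\vert\delta)$ from the cut-off gradient and the smooth convergence of $u_j^*A_j$ on $\overline B\setminus B_\delta$, followed by a H\"ormander correction on the fixed bundle $\mathcal E_\infty$, which you simply carry out in the holomorphic trivialization the paper has already fixed.
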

\begin{proof}
    Let $\hat s=u_j^* s\in H_j$ be an element with $\Vert\hat s\Vert\leq 1$, where $s$ is a holomorphic section of $\mathcal E_j$. Then by Moser iteration, we fix one $\rho$, so for any $\delta<\rho$ sufficiently small to be determined later, we have a uniform estimate
    \begin{equation}\label{equa--interior estimate used in convergece of Hj to Hinfty}
        \sup_{B_{2\delta}}\vert \hat s\vert_h\leq C=C(\rho).
    \end{equation}

    Choose a cut-off $\chi_\delta$ equal to zero on a neighborhood of $\overline{B_\delta}$ and equal to one outside $B_{2\delta}$. Moreover, assume $\vert d\chi_{\delta}\vert\leq\frac{C}{\delta}$. Put $v=\chi_{\delta}\hat s$. Then by \eqref{equa--interior estimate used in convergece of Hj to Hinfty} we know
    \begin{equation}\label{equa--closeness of hat s and v}
        \Vert\hat s-v\Vert_{L^2(B)}\leq C\delta^2.
    \end{equation}

    Let $b_j=u_j^*\pp_j-\pp_\infty$, then 
    \[
    \Vert b_j\Vert_{L^{\infty}(\overline B\setminus B_\delta)}=\Psi(j^{-1}\vert\delta).
    \]
    Since $u_j^*\pp_j\hat s=0$ on $B\setminus B_{\delta}$, we can estimate
    \[
    \begin{aligned}
            \Vert\pp_\infty v\Vert_{L^2}&\leq\Vert\pp\chi_\delta\wedge\hat s\Vert_{L^2}+\Vert\chi_\delta b_j\hat s\Vert_{L^2}\\
            &\leq C\delta+\Psi(j^{-1}\vert\delta).
        \end{aligned}
    \]
    Now we apply H\"ormander's estimate to the \textit{fixed} holomorphic vector bundle $\mathcal E_\infty$, so we can find $u\in L^2(B,E_\infty)$, such that
    \begin{equation}\label{equa--perturb v by hormander}
         \pp_\infty u=\pp_\infty v,\quad \Vert u\Vert_{L^2}\leq C\Vert\pp_\infty v\Vert_{L^2}\leq C\delta+\Psi(j^{-1}\vert\delta).
    \end{equation}
    It follows that $t=v-u\in H_\infty$. Moreover, by \eqref{equa--closeness of hat s and v} and \eqref{equa--perturb v by hormander} we know
    \[
    \Vert (I-P_0)\hat s\Vert=d(\hat s,H_\infty)\leq C\delta+\Psi(j^{-1}\vert\delta).
    \]
    Since $\hat s$ is arbitrary, we conclude that
    \[
    \Vert(I-P_0)\vert_{H_j}\Vert\leq C\delta+\Psi(j^{-1}\vert\delta).
    \]
    We can then first choose $\delta$ small, then choose $j$ large to conclude that $\lim_{j\to\infty}\Vert(I-P_0)\vert_{H_j}\Vert=0$.
\end{proof}

The following is standard:
\begin{lemma}\label{lem--weak compactness for bounded section in Hj}
    Every bounded sequence $s_j\in H_j$ has a weakly convergent subsequence in $\mathcal H$, whose limit is contained in $V$.
\end{lemma}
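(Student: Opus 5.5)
The plan has three steps: extract a weakly convergent subsequence, identify its limit as an element of $H_\infty$ using the local smooth convergence away from $0$, and then place the germ at $0$ in $M$ using the saturation Lemma \ref{lem--saturation of algebraic limit}.

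\textbf{Weak limit and holomorphy.} Since $\mathcal H$ is a Hilbert space and $\Vert s_j\Vert\leq C$, Banach--Alaoglu gives a subsequence $s_j\rightharpoonup s$ weakly in $\mathcal H$. Write $s_j=u_j^*\sigma_j$ with $\sigma_j\in A^2(B,\mathcal E_j)$ and $\Vert\sigma_j\Vert_{L^2(B)}\leq C$. On each compact $K\subset B\setminus\{0\}$, the Bochner formula and Moser iteration give a uniform $C^0$ bound on $\sigma_j$. The gauged operators $u_j^*\pp_j$ converge smoothly to $\pp_\infty$ on $K$, so elliptic estimates for $\pp$ (together with $\pp^*$ fixed by the metric) give uniform $C^k(K')$ bounds on $s_j$. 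After a diagonal subsequence, $s_j\to s'$ in $C^\infty_{loc}(B\setminus\{0\})$, and $\pp_\infty s'=0$ there. Weak $L^2$ limits and locally uniform limits agree on $B\setminus\{0\}$, so $s=s'$ almost everywhere. Fatou gives $s\in L^2$, and Hartogs extension across $0$ puts $s\in H_\infty$.

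\textbf{The germ lies in $M$.} The sections $\sigma_j\in\Gamma(B,\mathcal E_j)$, for $j$ in the chosen subsequence $\tau\subset\sigma$, converge to $s$ in $C^\infty_{loc}(B\setminus\{0\})$ in the Uhlenbeck sense, over the ball $D=B\ni 0$. This is exactly the hypothesis of the second half of Lemma \ref{lem--saturation of algebraic limit}, since $\{j\}$ was relabelled to be the saturated subsequence $\sigma$. That lemma yields $s_0\in M$, hence $s\in V$.

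\textbf{Main obstacle.} The delicate point is the interior regularity uniform in $j$ away from $0$. This needs the Moser bound to be controlled by $\Vert\sigma_j\Vert_{L^2(B)}$ on interior compact sets and the gauges $u_j$ to converge smoothly there, both of which are available from the setup. There is also a passage from $L^2$ bounds to $C^0$ bounds near $\partial B$, but it only matters on compact subsets and is harmless. Note that no convergence near $0$ is needed, since Hartogs handles the puncture and the saturation lemma handles the module membership.
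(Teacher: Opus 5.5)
Your proposal is correct and follows the same route as the paper. Moser iteration and elliptic estimates upgrade the weak limit to a $C^\infty_{loc}(B\setminus\{0\})$ Uhlenbeck limit that extends across $0$ by Hartogs, and Lemma \ref{lem--saturation of algebraic limit} then places its germ in $M$; you simply spell out the details the paper's two-line proof leaves implicit.
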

\begin{proof}
    The weak convergence follows from Moser iteration and elliptic estimate; see discussion in the beginning of Section \ref{sec-L2 estimate for HYM connections}. The last statement follows from Lemma \ref{lem--saturation of algebraic limit}.
\end{proof}

Let $P:\mathcal H\to\mathcal H$ be the orthogonal projection to $V\subset H_\infty$. Then we can improve Lemma \ref{lem--convergence of Hj to Hinfty} in the following way:
\begin{proposition}\label{prop--upper gap tends to zero}
    As $j\to\infty$, $\Vert (I-P)_{H_j}\Vert_{\mathrm{op}}\to 0$.
\end{proposition}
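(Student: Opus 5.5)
We argue by contradiction, combining the weak compactness of Lemma \ref{lem--weak compactness for bounded section in Hj} with the finite codimension of $V$ in $H_\infty$. Suppose the conclusion fails. Then there are $\epsilon>0$, a subsequence (still denoted $\{j\}$) and unit vectors $\hat s_j\in H_j$ with $\Vert(I-P)\hat s_j\Vert\geq\epsilon$. By Lemma \ref{lem--convergence of Hj to Hinfty} we have $\Vert(I-P_0)\hat s_j\Vert\to0$. Since $V\subset H_\infty$, the operator $P-P_0$ vanishes on $V$, and $P_0-P$ is the orthogonal projection onto $W\coloneqq H_\infty\ominus V$. Hence
\[
(I-P)\hat s_j=(I-P_0)\hat s_j+(P_0-P)\hat s_j ,
\]
so $\Vert (P_0-P)\hat s_j\Vert\geq\epsilon/2$ for $j$ large.

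The key point is that $W$ is finite dimensional. Indeed, $V=\mathcal T^{-1}(\mathcal J)$, where $\mathcal T$ is the continuous Taylor truncation map into the finite dimensional space $\mathcal P_{\leq m}$. So $V$ contains $\ker\mathcal T$ and has codimension at most $\dim\mathcal P_{\leq m}$. Let $e_1,\dots,e_d$ be an orthonormal basis of $W$. Then
\[
(P_0-P)\hat s_j=\sum_i\langle\hat s_j,e_i\rangle e_i .
\]

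Now apply Lemma \ref{lem--weak compactness for bounded section in Hj}. After passing to a further subsequence, $\hat s_j\rightharpoonup s$ weakly in $\mathcal H$ with $s\in V$. Each coefficient then converges: $\langle\hat s_j,e_i\rangle\to\langle s,e_i\rangle=0$, because $e_i\perp V$. Since only finitely many coefficients are involved, $\Vert(P_0-P)\hat s_j\Vert\to0$, which is a contradiction.

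One point needs care: the lemma must be applicable along the relabelled subsequence. This is where the saturation in Lemma \ref{lem--saturation of algebraic limit} is used. Every Uhlenbeck limit along any subsequence of the fixed sequence lies in $M$. The weak $L^2$ limit $s$ is also the $C^\infty_{loc}(B\setminus\{0\})$ Uhlenbeck limit, by the Moser and elliptic estimates, and its germ at $0$ lies in $M$, so $s\in V$.

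The main obstacle is conceptual rather than computational. Weak convergence alone cannot control the component of $\hat s_j$ lying in $V^\perp\cap H_\infty$ when that space is infinite dimensional. It is exactly the finite codimension of $V$, which comes from the $L^2$ estimate of Corollary \ref{cor--L2-convergence for sections with high vanishing order}, that upgrades weak convergence to norm control of this component.
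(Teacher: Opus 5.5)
Your proof is correct and follows the paper's argument. Both proofs split off the finite-dimensional piece $W=H_\infty\ominus V$, show that $P_W\hat s_j\to 0$ because the weak limit from Lemma \ref{lem--weak compactness for bounded section in Hj} lies in $V$, and control the remaining part with Lemma \ref{lem--convergence of Hj to Hinfty}. The differences are cosmetic: you use the triangle inequality where the paper uses $\Vert(I-P)s\Vert^2=\Vert(I-P_0)s\Vert^2+\Vert P_Ws\Vert^2$, and you derive $\dim W<\infty$ explicitly from $\ker\mathcal T\subset V$ instead of citing Corollary \ref{cor--L2-convergence for sections with high vanishing order}.
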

\begin{proof}
    Let $W$ be the orthogonal complement of $V$ in $H_\infty$ and $P_W:\mathcal H\to\mathcal H$ be the orthogonal projection to $W$. We claim that $\Vert P_W\vert_{H_j}\Vert_{\mathrm{op}}\to 0$ as $j\to\infty$.

    Suppose not. Then we can find a subsequence of sections $s_j\in H_j$ such that
    \[
    \Vert P_Ws_j\Vert_{L^2}\geq c>0,\quad \Vert s_j\Vert_{L^2}=1.
    \]
    By Lemma \ref{lem--weak compactness for bounded section in Hj} we may assume $s_j$ weakly converges to a limit $s_\infty\in V$. By Corollary \ref{cor--L2-convergence for sections with high vanishing order}, we know that $W$ is finite dimensional. Therefore $P_Ws_j$ strongly converges to $P_Ws_\infty=0$, contradiction.

    Now by orthogonal decomposition
    \[
    \mathcal H= V\oplus W\oplus H_\infty^{\perp}
    \]
    implies that
    \[
    \Vert(I-P)s\Vert^2=\Vert(I-P_0)s\Vert^2+\Vert P_W\Vert^2.
    \]
    The result then follows from Lemma \ref{lem--convergence of Hj to Hinfty} and our claim.
\end{proof}

\begin{corollary}\label{cor--injectivity of P restricted to Hj for j large}
    For $j$ sufficiently large, $P\vert_{H_j}$ is injective.
\end{corollary}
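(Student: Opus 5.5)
The injectivity will follow from the quantitative bound in Proposition \ref{prop--upper gap tends to zero} together with the Pythagorean identity for the orthogonal projection $P$. Since $P$ is the orthogonal projection of $\mathcal H$ onto the closed subspace $V$, every $s\in\mathcal H$ decomposes as $s=Ps+(I-P)s$ with the two summands orthogonal, so
\[
\Vert s\Vert^2=\Vert Ps\Vert^2+\Vert (I-P)s\Vert^2 .
\]

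Set $\epsilon_j\coloneqq\Vert (I-P)\vert_{H_j}\Vert_{\mathrm{op}}$. By Proposition \ref{prop--upper gap tends to zero}, $\epsilon_j\to 0$, so there is $j_0$ with $\epsilon_j<1$ for all $j\geq j_0$. For $s\in H_j$ and $j\geq j_0$ we then get
\[
\Vert Ps\Vert^2=\Vert s\Vert^2-\Vert (I-P)s\Vert^2\geq (1-\epsilon_j^2)\Vert s\Vert^2 .
\]
Hence $Ps=0$ forces $s=0$, which proves injectivity.

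The argument gives slightly more: $P\vert_{H_j}$ is bounded below with constant $\sqrt{1-\epsilon_j^2}\to 1$. So $P(H_j)$ is closed in $V$, and $P\vert_{H_j}$ is a topological isomorphism onto its image. This is presumably what the subsequent surjectivity argument for $q_j$ will use.

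No step here is a real obstacle. All the analytic content, namely the finite dimensionality of $W$ and the weak compactness in Lemma \ref{lem--weak compactness for bounded section in Hj}, was already absorbed into Proposition \ref{prop--upper gap tends to zero}.
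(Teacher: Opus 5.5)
Your proof is correct and essentially matches the paper's. Both arguments rest only on $\Vert (I-P)\vert_{H_j}\Vert_{\mathrm{op}}<1$ for large $j$, from Proposition \ref{prop--upper gap tends to zero}: the paper argues by contradiction, $\Vert h\Vert=\Vert (I-P)h\Vert<\Vert h\Vert$ for $0\neq h\in H_j$ with $Ph=0$, while you use Pythagoras to get the bound $\Vert Ps\Vert\geq\sqrt{1-\epsilon_j^2}\,\Vert s\Vert$ (the later argument only needs injectivity, to deduce $T_jB_jh=h$ from $P(T_jB_jh-h)=0$).
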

\begin{proof}
    Suppose $0\neq h\in H_j$ with $Ph=0$. Then by Proposition \ref{prop--upper gap tends to zero}
    \[
    0<\Vert h\Vert=\Vert(I-P)h\Vert\leq\Vert(I-P)\vert_{H_j}\Vert_{\mathrm{op}}\cdot\Vert h\Vert<\Vert h\Vert,
    \]
    contradiction.
\end{proof}

We are now ready to prove Theorem \ref{thm--local convergence in a flat family}.
\begin{proof}[Proof of Theorem \ref{thm--local convergence in a flat family}]

   Define a sequence of operators on the fixed space $V$ by
    \[
    C_j\coloneqq PT_j B_\infty:V\to V.
    \]
    Since $PT_\infty B_\infty=\mathrm{id}_V$, Lemma \ref{lem--operator convergence} implies that
    \[
    \Vert C_j-I\Vert_{\mathrm{op}}\leq\Vert B_\infty\Vert_{\mathrm{op}}\cdot\Vert T_j-T_\infty\Vert_{\mathrm{op}}\to 0
    \]
    as $j\to\infty$. Thus $C_j$ is invertible for $j$ large with $\Vert C_j^{-1}\Vert_{\mathrm{op}}\leq 2$.

    Now define the bounded lifting operator:
    \[
    B_j\coloneqq B_\infty C_j^{-1}P\vert_{H_j}: H_j\to\mathcal D,
    \]
    whose operator norm is uniformly bounded by $2\Vert B_\infty\Vert_{\mathrm{op}}$. We claim that it is a right inverse of $T_j$ on $H_j$. To see this, for any $h\in H_j$ we have
    \[
    P(T_j B_jh-h)=C_jC_j^{-1}Ph-Ph=0.
    \]
    By Corollary \ref{cor--injectivity of P restricted to Hj for j large}, we must have $T_jB_jh=h$. In particular, $T_j$ is surjective for all $j$ large. Moreover, for any $s\in A^2(B,\mathcal E_j)$, there exists $a\in\mathcal D$ such that
    \[
    q_ja=s,\quad \Vert a\Vert_{L^2(B)}\leq C\Vert s\Vert_{L^2(B)}.
    \]

    We finally return to sheaf theoretical perspective. We first show that $q_j$ is surjective as a sheaf morphism. For any $z\in B$, by Cartan's theorem A on $B^+$, there exist holomorphic sections $s_i\in H^0(B^+,\mathcal E_j)$, such that the stalk at $z$ is generated by $\{s_{i,z}\}$. Each section is $L^2$ over $B$, so we can find $a_i\in\mathcal D$, such that $q_ja_i=s_i$. Thus columns of $q_j$ span the stalk at $z$.

    Next we show the relation matrix also converges on a possibly smaller ball. Let $\rho_\infty$ be any column of $R_\infty$, which is a $\mathbb C^N$-valued holomorphic function over $B^+$, thus can be regarded as an element in $\mathcal D$. Moreover, $T_\infty\rho_\infty=0$. Now define
    \[
    \rho_j=\rho_\infty-B_jT_j\rho_\infty\in\mathcal D,
    \]
    which satisfies
    \[
    T_j\rho_j=T_j\rho_\infty-T_jB_jT_j\rho_\infty=T_j\rho_\infty-T_j\rho_\infty=0.
    \]
    It follows that $q_j\rho_j=0$ as holomorphic sections on $B$. Furthermore, by Lemma \ref{lem--operator convergence}
    \[
    \Vert\rho_j-\rho_\infty\Vert_{L^2(B)}\leq 2\Vert B_\infty\Vert_{\mathrm{op}}\cdot\Vert T_j-T_\infty\Vert_{\mathrm{op}}\Vert\rho_\infty\Vert\to 0
    \]
    as $j\to\infty$.

    We then apply this construction to \textit{every} column of $R_\infty$, so we get a holomorphic matrix $R_j$ with $q_jR_j=0$. In this way we get a morphism
    \[
    \mathcal O^k_{B}\xrightarrow{R_j}\mathcal O^N_{B},
    \]
    whose image is contained in $\ker q_j$.
    
    The construction here also shows that $\Vert R_j-R_\infty\Vert_{L^2(B)}\to 0$, so by shrinking the ball $B^{\prime}\subset\subset B$ and passing to a subsequence, we can assume $R_j\to R_\infty$ in $C^{\infty}_{loc}(B^{\prime})$. It remains to show $R_j$ induces an isomorphism 
    \[
    R_j:\mathcal O^k_{B^{\prime}}\xrightarrow{\sim}\ker q_j\vert_{B^{\prime}}
    \]
    for all $j$ large.

    Since $q_j$ is surjective, we know that $\ker q_j$ is locally-free, so we may trivialize to write $\det R_j$ as a single holomorphic function. It follows that the zero locus of determinant section of $R_j$ cannot be isolated points. For any $\rho$ small, by smooth convergence we know that $R_j$ is an isomorphism over $B^{\prime}\setminus B_\rho$, so the determinant section is not identically zero. Thus the zero locus can only be either empty or a curve. The latter case is also impossible, since $B^{\prime}$ is Stein and cannot contain any compact positive dimension subvariety. This finishes the proof of Theorem \ref{thm--local convergence in a flat family} except for the identification between algebraic multiplicity and analytic multiplicity, which will be proved below.
\end{proof}

\subsubsection{Computing algebraic multiplicity}\label{subsec--computing algebraic multiplicity}

Now we compute the algebraic multiplicity of $\mathcal F_\infty$, thus finishing the proof of Theorem \ref{thm--local convergence in a flat family}. Since the singularity is isolated on the surface base, we can simply use the relative $K$-theory on a closed ball for the calculation of topological invariants.

Recall that for a pair of topological spaces $(X,Y)$, there is a well-defined relative $K$-theory \cite{AH61}. In fact, it is defined through relative $K$-theory, i.e.
\[
K(X,Y)\coloneqq\widetilde K(X/Y).
\]
Here $Y$ is a closed subspace of a compact topological space $X$. The relative Chern character is thus also well-defined via the relative cohomology theory, i.e.
\[
\ch: K^0(X,Y)\to H^{\mathrm{even}}(X,Y;\mathbb Q).
\]

By \cite[Theorem 2.6.1]{Atiyah67}, element in $K^0(X,Y)$ can be represented by a triple $(E^+, E^-,\sigma)$, where
\[
\sigma:E^+\vert_Y\xrightarrow{\sim} E^-\vert_Y
\]
is an isomorphism between complex vector bundles. Specializing to our setting, we fix a smooth disk $D$ centered at the bubbling point $0$. Then the limit presentation
\[
    0\to\mathcal O_{B^+}^k\xrightarrow{R_\infty}\mathcal O_{B^+}^N\xrightarrow{q_\infty}\mathcal F_\infty\to 0.
    \]
    defines a triple $(\underline{\mathbb C}^k\oplus E_\infty,\underline{\mathbb C}^N,\sigma_\infty^{-1})$, where
    \[
    \sigma_\infty=R_\infty^*+q_\infty:\left(\underline{\mathbb C}^N\vert_{\partial D}\to \underline{\mathbb C}^k\oplus E_\infty\vert_{\partial D}\right)
    \]
    is an isomorphism between complex vector bundles. Here $E_\infty$ is the underlying complex bundle of $\mathcal E_\infty$. Denote by $\mathcal Q=\mathcal F_{\infty}^{\vee\vee}/\mathcal F_{\infty}$, so this construction defines a relative class $\eta_{\mathcal  Q}\in K^0(D,\partial D)$. The following is the local analogy of Proposition \ref{prop--chern character of torsion sheaf}.
    \begin{lemma}\label{lem--length of quotient as relative chern character}
        Under the natural complex orientation, we have
        \[
        \langle\ch_2(\eta_{\mathcal Q}),[D,\partial D]\rangle=\mathrm{length}_{\mathcal O_{D,0}}(\mathcal Q_0).
        \]
    \end{lemma}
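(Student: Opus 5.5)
The plan is to reduce the statement to the global Proposition \ref{prop--chern character of torsion sheaf} by a compactification and excision argument. The class $\eta_{\mathcal Q}\in K^0(D,\partial D)$ depends only on the data near $\partial D$ and on the bundles over $D$, and $\ch_2$ pairs with the relative fundamental class. So we may replace $(D,\partial D)$ by any compact complex surface containing a neighborhood of $0$, via the collapse map $D/\partial D\cong S^4$. Concretely, we embed a small neighborhood of $0$ into $\mathbb P^2$, or into the compactification of a coordinate chart. We then construct a coherent torsion sheaf $\widetilde{\mathcal Q}$ on a compact surface $Y$ that agrees with $\mathcal Q$ near $0$ and is supported at $0$; for instance, take the pushforward of $\mathcal Q_0$, which is a finite length module supported at the point.

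\emph{Step 1} identifies $\eta_{\mathcal Q}$ with the class of $\mathcal Q$ in compactly supported $K$-theory. From the exact sequences
\[
0\to\mathcal O^k\xrightarrow{R_\infty}\mathcal O^N\to\mathcal F_\infty\to0,\qquad 0\to\mathcal F_\infty\to\mathcal E_\infty\to\mathcal Q\to0,
\]
we get a locally free resolution $0\to\mathcal O^k\to\mathcal O^N\oplus 0\to\ldots$ of $\mathcal Q$ in $K$-theory with supports. Namely, $[\mathcal Q]=[E_\infty]-[\underline{\mathbb C}^N]+[\underline{\mathbb C}^k]$, where the trivialization off $0$ is given exactly by $\sigma_\infty$, since $R_\infty^*+q_\infty$ is an isomorphism wherever $\mathcal F_\infty$ is locally free. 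This is the standard Atiyah--Hirzebruch/Grothendieck construction of the class with supports of a complex that is exact off $0$. Because $\ch$ with supports is natural under the extension-by-zero map $K^0(D,\partial D)\to K^0(Y)$, the same identification holds on $Y$. Independence of the resolution follows from the usual homotopy and stabilization arguments for triples, so we may also replace this resolution by any global real-analytic resolution of $\widetilde{\mathcal Q}$ on $Y$. By Grauert's theorem, that resolution is what computes $\ch_2(\widetilde{\mathcal Q})$ in the sense of \cite{sibley-wentworth15}.

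\emph{Step 2} applies Proposition \ref{prop--chern character of torsion sheaf} on $Y$ with $p=2$. The support of $\widetilde{\mathcal Q}$ is the point $0$, with multiplicity $\mathrm{length}_{\mathcal O_{0}}(\mathcal Q_0)$, so $\ch_2(\widetilde{\mathcal Q})$ is Poincar\'e dual to $\mathrm{length}(\mathcal Q_0)[\mathrm{pt}]$. Pairing with $[Y]$ and pulling back along the collapse map $Y\to Y/(Y\setminus D^\circ)\cong D/\partial D$ gives the identity. The orientation is complex in both cases, which fixes the sign.

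As an alternative that avoids the compactification, one can argue by additivity in short exact sequences of finite length modules. Filter $\mathcal Q_0$ by submodules with quotients $\mathbb C_0=\mathcal O/\mathfrak m_0$. Both sides are additive, since relative $K$-classes with supports are additive in exact sequences, and both sides are independent of the chosen resolution. This reduces the statement to $\mathbb C_0$, whose Koszul resolution $0\to\mathcal O\xrightarrow{(-y,x)^T}\mathcal O^2\xrightarrow{(x,y)}\mathcal O\to0$ gives the Bott generator of $K^0(D,\partial D)\cong\mathbb Z$. Its $\ch_2$ integrates to $1$, which is the basic instanton computation of $c_2=1$.

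The main obstacle is Step 1: rigorously showing that the ad hoc triple $(\underline{\mathbb C}^k\oplus E_\infty,\underline{\mathbb C}^N,\sigma_\infty^{-1})$, built from two separate short exact sequences, represents the same relative class as a genuine resolution of $\mathcal Q$. This includes checking that the use of $R_\infty^*$ (via a splitting off $0$) does not change the homotopy class. I would handle this by deforming $R_\infty^*$ to any splitting through the contractible space of splittings, and then citing the independence of the class with supports of the exact-off-$Y$ complex.
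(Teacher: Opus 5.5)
Your argument is correct. Your main route, however, differs from the paper's. The paper's proof is exactly your ``alternative'': a composition series of $\mathcal Q_0$ gives $\eta_{\mathcal Q}=\ell\,\eta_{\mathbb C_0}$, and the Koszul resolution of $\mathbb C_0$ gives $\langle\ch_2(\eta_{\mathbb C_0}),[D,\partial D]\rangle=1$ by direct computation.

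Your primary argument instead moves the point-supported sheaf into a compact surface $Y\supset D$. It then uses excision for classes with supports and the degree-one collapse map $Y\to D/\partial D$, and quotes Proposition~\ref{prop--chern character of torsion sheaf}. This avoids any explicit Chern character computation and shows that the lemma is the local form of that proposition. The cost is that it is heavier. You must compare the class of Grauert's global real-analytic resolution with that of the local holomorphic one, which requires independence of resolution for $K$-theory with supports. Moreover, the global proposition you cite is essentially the same local fact, so the paper's route is more self-contained. Both routes rest on the same implicit input: the class with supports depends only on $\mathcal Q$ and is additive.

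The step you flag as the main obstacle is not one. Splicing the two short exact sequences gives an honest resolution $0\to\mathcal O^k\xrightarrow{R_\infty}\mathcal O^N\xrightarrow{q_\infty}\mathcal E_\infty\to\mathcal Q\to 0$. The map $\sigma_\infty=R_\infty^*+q_\infty$ is literally the standard $d+d^*$ map from its odd part $\underline{\mathbb C}^N$ to its even part $\underline{\mathbb C}^k\oplus E_\infty$. So no deformation through splittings is needed.

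A minor sign point: for a virtual rank-zero class on $D/\partial D\cong S^4$ one has $\ch_2=-c_2$. Your instanton heuristic therefore matches only when $\eta_{\mathbb C_0}$ is written as trivial minus instanton, as in the paper's later identification $\eta_{\mathcal Q}=[E_\infty,E_j,u_j]$.
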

    \begin{proof}
        The proof is standard, so we will be sketchy. Denote by $\ell=\mathrm{length}_{\mathcal O_{D,0}}(\mathcal Q)$. By choosing a composition series of $\mathcal Q$ we can conclude that
        \[
       \eta_{\mathcal Q}=\ell \eta_{\mathbb C_0},
        \]
        where $\mathbb C_0$ is the skyscraper sheaf. The latter admits the Koszul resolution
        
        \[0\longrightarrow\mathcal O
\xrightarrow{\binom{-z_2}{z_1}}
\mathcal O^{\oplus2}
\xrightarrow{(z_1\;\;z_2)}
\mathcal O
\longrightarrow\mathbb C_0
\longrightarrow0,\]
from which we can compute explicitly that $ 
        \langle\ch_2(\eta_{\mathbb C_0}),[D,\partial D]\rangle=1.
        $
        \end{proof}

   Now by the discussion in Section \ref{subsubsec--Hilbert space method}, we know 
   \[
   \sigma_j=R_j^*+q_j:\underline{\mathbb C}^N\to\underline{\mathbb C}^k\oplus E_j
   \]
   is an isomorphism on the \textit{whole} $D$ for all $j$ large, so
   \[
   [\underline{\mathbb C}^N,\underline{\mathbb C}^k\oplus E_j,\sigma_j]=0\in K^0(D,\partial D)
   \]
   for all $j$ large. On the other hand, since $q_j$ converges to $q_\infty$ in the Uhlenbeck sense, we have
   \[
   (\mathrm{id}_{\underline{\mathbb C}^k}\oplus u_j^{-1})\sigma_j\to\sigma_\infty\ \text{in}\ C^{\infty}\ \text{near a neighborhood of}\ \partial D.
   \]
   Thus the boundary isomorphisms $\sigma_j\sigma_\infty^{-1},\mathrm{id}_{\underline{\mathbb C}^k}\oplus u_j$ are homotopic for all $j$ large. Combined this discussion we get for all $j$ large
   \[
   \begin{aligned}
        \eta_{\mathcal Q}&=[\underline{\mathbb C}^k\oplus E_\infty,\underline{\mathbb C}^N,\sigma_\infty^{-1}]\\
        &=[\underline{\mathbb C}^k\oplus E_\infty,\underline{\mathbb C}^N,\sigma_\infty^{-1}]+[\underline{\mathbb C}^N,\underline{\mathbb C}^k\oplus E_j,\sigma_j]\\
        &=[\underline{\mathbb C}^k\oplus E_\infty,\underline{\mathbb C}^k\oplus E_j,\sigma_j\sigma_\infty^{-1}]\\
        &=[\underline{\mathbb C}^k\oplus E_\infty,\underline{\mathbb C}^k\oplus E_j,\mathrm{id}_{\underline{\mathbb C}^k}\oplus u_j]\\
        &=[E_\infty, E_j,u_j].
   \end{aligned}
  \]
  The result then follows from Lemma \ref{lem--length of quotient as relative chern character} and the standard Chern-Simons transgression argument.

\subsection{Global construction}\label{subsec--global construction of flat families}

In this subsection, we globalize the local flat deformation to prove Theorem \ref{thm-global convergence in a flat family}.

Recall that a coherent analytic sheaf $\mathcal G$ on a complex space $X$ can be described by a set of \textit{defining sheaf morphisms} and a set of \textit{relating sheaf morphisms} \cite[Introduction]{siu-trautmann81}. To be more precise, let $\{V_\alpha\}$ be a locally finite open cover of $X$. We say $R_\alpha:\mathcal O^{k_\alpha}\to\mathcal O^{N_\alpha}$ on $V_\alpha$ is a defining sheaf morphism for $\mathcal G$ on $V_\alpha$ if there exists a morphism
\[
q_\alpha:\mathcal O^{N_\alpha}\to\mathcal G\vert_{V_\alpha}
\]
which induces an isomorphism between $\mathrm{coker} R_\alpha$ and $\mathcal G\vert_{V_\alpha}$.

On the other hand, we say that $A_{\alpha\beta}:\mathcal O^{N_\beta}\to\mathcal O^{N_\alpha}$ on $V_{\alpha\beta}\coloneqq V_\alpha\cap V_\beta$ is a relating sheaf morphism for $R_\alpha$ and $R_\beta$, if $A_{\alpha\beta}$ induces an isomorphism between $\mathrm{coker} R_\beta$ and $\mathrm{coker} R_\alpha$ on $V_{\alpha\beta}$. In particular, $q_\alpha A_{\alpha\beta}=q_\beta$ on $V_{\alpha\beta}$.

Note that in contrast to the bundle case, the transition data $\{A_{\alpha\beta}\}$ is not necessarily a cocycle. To see this, on a triple overlap $V_{\alpha\beta\gamma}$ we have
\[
q_\alpha A_{\alpha\beta}A_{\beta\gamma}=q_\beta A_{\beta\gamma}=q_\gamma=q_\alpha A_{\alpha\gamma},
\]
so their difference takes value in $\ker q_\alpha=\Im R_\alpha$. Suppose now $\{V_\alpha\}$ is a Stein cover, so any finite intersection is again Stein \cite[Chapter V, §1.2]{grauert-remmert79}. Therefore by Cartan's theorem B, we can choose a lift on $V_{\alpha\beta\gamma}$
\[
H_{\alpha\beta\gamma}:\mathcal O^{N_\gamma}\to\mathcal O^{k_\alpha},
\]
such that
\[
A_{\alpha\beta}A_{\beta\gamma}-A_{\alpha\gamma}=R_\alpha H_{\alpha\beta\gamma}.
\]
Similarly, we can also choose some $B_{\alpha\beta}:\mathcal O^{k_\beta}\to\mathcal O^{k_\alpha}$ on $V_{\alpha\beta}$, such that
\[
A_{\alpha\beta}R_\beta=R_\alpha B_{\alpha\beta}.
\]

Note that both $\{B_{\alpha\beta}\}$ and $\{H_{\alpha\beta\gamma}\}$ may not be unique in general. While this seems to be an interesting question to address in a more general setting, we will use our surface geometry later to avoid further complications. Since our base is a smooth surface and the target sheaf $\mathcal G$ is torsion-free, it follows that we can assume each $R_\alpha$ is an injective sheaf morphism, thus $\{B_{\alpha\beta}\}$ and $\{H_{\alpha\beta\gamma}\}$ are uniquely determined by $\{q_\alpha\}$ and $\{A_{\alpha\beta}\}$. This fact will be used later.

The precise local-to-global passage we need is the following: 
\begin{theorem}\label{thm--local deformation to global deformation}
    Let $X$ be a compact complex manifold and $\mathcal G_0\to X$ be a coherent sheaf with local projection dimension of $\mathcal G_0$ at most $1$. Let $\{V_\alpha\}$ be a finite Stein open cover of $X$, over which we have length $1$ locally-free resolutions of $\mathcal G_0$, i.e. a sheaf exact sequence on $U_\alpha$
    \[
    0\to\mathcal O^{k_\alpha}\xrightarrow{R_{\alpha,0}}\mathcal O^{N_\alpha}\xrightarrow{q_{\alpha,0}}\mathcal G_0\vert_{U_\alpha}\to 0.
    \]
    We also fix a set of relating morphisms $\{A_{\alpha\beta,0}\}$ on $V_{\alpha\beta}$.

    Let $\{\mathcal G_j\}_{j\geq 1}$ be a set of coherent sheaves of local projective dimension at most $1$, and $\{q_{\alpha,j}\},\{A_{\alpha\beta,j}\}$ be a given set of defining data of $\mathcal G_j$. Suppose 
    \[
\begin{aligned}
R_{\alpha,j}&\longrightarrow R_{\alpha,0}
&&\text{in }C^\infty_{\mathrm{loc}}(V_\alpha),\\
 A_{\alpha\beta,j}
&\longrightarrow A_{\alpha\beta,0}
&&\text{in }C^\infty_{\mathrm{loc}}(V_{\alpha\beta}).
\end{aligned}
\]
Then there exists a finite dimensional complex analytic space germ $(S,0)$, a coherent sheaf $\mathcal G\to X\times (S,0)$ flat over $S$ and a sequence of points $S\ni s_j\to 0$, such that
\[
\mathcal G_0\cong\mathcal G\vert_{X\times\{0\}},\quad \mathcal G_j\cong\mathcal G\vert_{X\times\{s_j\}}, \forall j\gg 1.
\]
\end{theorem}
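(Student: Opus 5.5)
We will realize all the $\mathcal G_j$ as points of one infinite dimensional Banach analytic space of gluing data, and then cut it down to a finite dimensional family using Douady's versality. The intended parameter space is as follows. Shrink each $V_\alpha$ slightly to a relatively compact Stein open set $V'_\alpha\subset\subset V_\alpha$ that still covers $X$. Fix Banach spaces of bounded holomorphic matrices on $\overline{V'_\alpha}$ and $\overline{V'_{\alpha\beta}}$, with the sup norm on slightly larger domains. Consider the set $\mathcal Z$ of tuples $(R_\alpha, A_{\alpha\beta}, B_{\alpha\beta}, H_{\alpha\beta\gamma})$ satisfying the polynomial identities
\[
A_{\alpha\beta}R_\beta=R_\alpha B_{\alpha\beta},\qquad A_{\alpha\beta}A_{\beta\gamma}-A_{\alpha\gamma}=R_\alpha H_{\alpha\beta\gamma}.
\]
The intended properties of $\mathcal Z$ are these:
\begin{itemize}
\item[(a)] Near the base point, the condition that $R_\alpha$ remains injective with locally free cokernel of the right type (flatness) is open. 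This uses the local constancy of $\mathrm{Tor}$ dimensions, or directly the fact that $\det$-type minors of $R_\alpha$ vanish only in codimension $\geq 2$.
\item[(b)] The identities above guarantee that the local cokernels $\mathrm{coker}\,R_\alpha$ glue, via $A_{\alpha\beta}$, to a coherent sheaf. The cocycle defect is absorbed by $H$, and the induced maps on cokernels satisfy the cocycle condition.
\end{itemize}
Hence over $\mathcal Z$ we obtain a family $\mathcal G_{\mathcal Z}\to X\times\mathcal Z$, in Douady's Banach analytic sense. It is flat over $\mathcal Z$ because each local piece is a cokernel of an injective matrix family; by the local criterion, injectivity on every fiber gives flatness.

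Next, place the given data inside $\mathcal Z$. By hypothesis $R_{\alpha,j}\to R_{\alpha,0}$ and $A_{\alpha\beta,j}\to A_{\alpha\beta,0}$ locally smoothly, so they converge uniformly on the shrunken closures. Since each $R_{\alpha,j}$ is injective, $B_{\alpha\beta,j}$ and $H_{\alpha\beta\gamma,j}$ are uniquely determined, as remarked before the statement. Their convergence to $B_{\alpha\beta,0}$ and $H_{\alpha\beta\gamma,0}$ needs an argument. I would obtain it by a uniform division estimate: $R_{\alpha,0}$ has closed range on bounded holomorphic maps, by Cartan B together with the bounded lifting argument used in the proof of Theorem \ref{thm--local convergence in a flat family}, and small perturbations preserve a bounded left inverse on the range. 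Applying this to $A_{\alpha\beta,j}R_{\beta,j}$ and to $A_{\alpha\beta,j}A_{\beta\gamma,j}-A_{\alpha\gamma,j}$, which converge to their limits, gives $z_j\to z_0$ in $\mathcal Z$, with $z_j$ the point representing $\mathcal G_j$ and $z_0$ the point representing $\mathcal G_0$. Thus $\mathcal G_{\mathcal Z,z_j}\cong\mathcal G_j$ for $j\gg1$.

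Finally, we invoke Douady's theorem on the existence of a semi-universal (versal) deformation of the compact coherent sheaf $\mathcal G_0$, or equivalently of the quotient data, in its relative form due to Pourcin \cite{douady74,pourcin69}. This gives a finite dimensional germ $(S,0)$ with a flat family $\mathcal G$ such that every flat deformation of $\mathcal G_0$ over a Banach analytic base, in particular $\mathcal G_{\mathcal Z}$ near $z_0$, is pulled back from $\mathcal G$ by some analytic map $f:(\mathcal Z,z_0)\to(S,0)$. Setting $s_j=f(z_j)\to 0$ then gives $\mathcal G|_{X\times\{s_j\}}\cong\mathcal G_j$ and $\mathcal G|_{X\times\{0\}}\cong\mathcal G_0$.

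The main obstacle is this last step. Douady's versality is classically stated for finite dimensional, or at least reasonable, bases. One must check that $\mathcal Z$ is a genuine Banach analytic space, cut out by analytic equations in a Banach space, and that the pull-back property holds for such bases. My first attempt would be to use Douady's original framework, which is built precisely on Banach analytic spaces. A fallback is to linearize instead: decompose the tangent space of $\mathcal Z$ using the finite dimensionality of $\mathrm{Ext}^1(\mathcal G_0,\mathcal G_0)$, and produce directly a finite dimensional analytic slice through $z_0$ that meets the $\mathrm{Aut}$-orbit of every nearby point.
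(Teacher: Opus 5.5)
Your Steps 1--2 follow the paper's Steps 1--2: the same Banach space of tuples $(R,A,B,H)$ cut out by the polynomial relations, fiberwise injectivity of $R_\alpha$ giving anaflatness, and $b_j\to b_0$. The gap is the final step, which you assert rather than prove. There is no theorem in \cite{douady74} or \cite{pourcin69} giving a versal deformation of a \emph{coherent sheaf} that is versal with respect to Banach analytic bases. Douady's 1974 theorem concerns compact complex spaces. Douady 1966 and Pourcin construct Douady/Quot spaces, which parametrize subspaces or quotients of a fixed object, not sheaves up to isomorphism. The sheaf versal deformation of \cite{siu-trautmann81} is stated only for complex-space bases.

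The missing idea is Nagata's idealization. The paper turns $\mathcal G_{\mathfrak B}$ into the proper anaflat family of compact spaces $\mathfrak Y=(X\times\mathfrak B)[\mathcal G_{\mathfrak B}]$, to which Douady's theorem does apply, yielding $(R,0)$ and $h:\mathfrak B\to R$. One must then recover a sheaf from the versal family of spaces:
\begin{enumerate}
\item transport the retraction pair $X\times\mathfrak B\xrightarrow{\iota}\mathfrak Y\xrightarrow{p}X\times\mathfrak B$;
\item parametrize such pairs by $\mathrm{Mor}_R(\mathfrak X,X\times R)\times_R\mathrm{Mor}_R(X\times R,\mathfrak X)$;
\item cut down to the fiber where $p\circ\iota=\mathrm{id}$;
\item shrink so that $p$ is finite;
\item set $\mathcal G=\ker(p_*\iota^*)$, which is flat as a direct summand of $p_*\mathcal O$.
\end{enumerate}
Even then, $s_j\to 0$ is not automatic. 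The relative morphism spaces are only known to be representable over finite-dimensional bases, so there is no classifying map $\mathfrak B\to S$. The paper instead proves convergence by classifying the graphs of $p_b$ and $\iota_b$ in the absolute Douady space $D(\mathfrak X\times X)$, which does accept Banach bases. Your fallback, a Kuranishi slice from $\mathrm{Ext}^1(\mathcal G_0,\mathcal G_0)$, amounts to re-proving Banach-analytic versality for sheaves. It would have to work on the singular space $\mathcal Z$ with an infinite-dimensional group of presentation changes, so as written it is a program, not an argument.

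There are also smaller problems. Your argument for $B_j\to B_0$ and $H_j\to H_0$ fails as stated. $R_{\alpha,0}$ need not be bounded below on bounded holomorphic maps over a fixed domain: take $R_0=z_1$ on the unit ball and test on $((1+z_2)/2)^n$, which has sup $1$ while $\|z_1((1+z_2)/2)^n\|\lesssim n^{-1/2}$. With a loss of domain, the Banach perturbation argument needs an a priori bound on the quotients that you do not have. The bounded lifting of Theorem \ref{thm--local convergence in a flat family} is about surjectivity of $q$, not injectivity of $R$. The paper's Lemma \ref{lem--convergence after injective relation} instead inverts a nonvanishing minor on $\overline{D'}\times\partial\Delta_r$ and applies the maximum principle on slices. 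Your relations also need $A_{\alpha\alpha}=\mathrm{Id}$ so that the $\overline A_{\alpha\beta}$ are isomorphisms and the local cokernels glue. Finally, in (a) the cokernel is flat, not locally free. Injectivity persists simply because a maximal minor that is nonzero at one point stays nonzero under small perturbations on a connected domain; it has nothing to do with codimension-$2$ vanishing.
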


The proof of Theorem \ref{thm--local deformation to global deformation} is based on the technique of Banach analytic spaces, as developed in the seminal work of Douady \cite{douady66,douady74}. As the proof is lengthy, we postpone it to the Appendix \ref{sec--proof of local-to-global passage} and hereby finish the proof of Theorem \ref{thm-global convergence in a flat family} by assuming Theorem \ref{thm--local deformation to global deformation}.

\begin{proof}[Proof of Theorem \ref{thm-global convergence in a flat family}]
    We first choose a sufficiently small finite Stein cover $V_\alpha$, such that each bubbling point belongs to exactly one of the $V_\alpha$ and the conclusion of Theorem \ref{thm--local convergence in a flat family} holds on such $V_\alpha$. In particular, we can assume the Uhlenbeck convergence is smooth on any overlap $\overline{V_{\alpha\beta}}$ and $\mathcal F_\infty\vert_{V_{\alpha\beta}}=\mathcal E_\infty\vert_{V_{\alpha\beta}}$. For any other $V_\alpha$ where the Uhlenbeck convergence is smooth, by usual H\"ormander's estimate, we can simply take $k_\alpha=0, N_\alpha=\rk\mathcal E$ and $q_{\alpha,j}$ is given by a sequence of convergent holomorphic bases. We may need to refine the cover, but we stick to the notation $\{V_\alpha\}$. According to Theorem \ref{thm--local deformation to global deformation}, it suffices to construct convergent relating morphism $A_{\alpha\beta,j}$ on $V_{\alpha\beta}$.

    Since $V_{\alpha\beta}$ is Stein, by Cartan's theorem B we can choose a holomorphic splitting
    \[
    P_{\alpha\beta}:\mathcal E_\infty\vert_{V_{\alpha\beta}}\to\mathcal O^{N_\alpha},\quad q_{\alpha,\infty}P_{\alpha\beta}=\mathrm{Id}_{\mathcal E_\infty}.
    \]
    We then define
    \[
    T_{\alpha\beta,j}\coloneqq q_{\alpha,j}P_{\alpha\beta}:\mathcal E_\infty\vert_{V_{\alpha\beta}}\to\mathcal E_j\vert_{V_{\alpha\beta}}.
    \]
    Since $q_j$ converges to $q_\infty$ in the Uhlenbeck sense, by possibly refine the cover, we have
    \[
    u_j^* T_{\alpha\beta,j}\to\mathrm{Id}\quad\text{in}\ C^{\infty}(\overline{V_{\alpha\beta}}).
    \]
    It follows that for $j$ large, $T_{\alpha\beta,j}$ is an isomorphism. Thus we can define
    \[
   \begin{aligned}
       &A_{\alpha\beta,j}\coloneqq P_{\alpha\beta}T_{\alpha\beta,j}^{-1}q_{\beta,j}:\mathcal O^{N_\beta}\to\mathcal O^{N_\alpha}&& \text{on}\ V_{\alpha\beta},\\
       & A_{\alpha\beta, \infty}\coloneqq P_{\alpha\beta}q_{\beta,\infty}:\mathcal O^{N_\beta}\to\mathcal O^{N_\alpha}&&\text{on}\ V_{\alpha\beta}.
   \end{aligned}
    \]
    It follows that
    \[
    q_{\alpha,j}A_{\alpha\beta,j}=q_{\beta,j},\quad A_{\alpha\beta,j}\to A_{\alpha\beta,\infty}\ \text{in}\ C^{\infty}_{loc}(V_{\alpha\beta}).
    \]
    This finishes the proof.
\end{proof}

\section{Geometric consequences}

In this section, we prove Theorem \ref{thm--continuity theorem} and Theorem \ref{thm-boundedness of semistable sheaves with fixed topology}.

\subsection{A reduction lemma}

We will use the following reduction lemma, which is of independent interest. The proof is partially inspired by \cite[Proposition 3.6]{BTT2017}.

\begin{lemma}[Common disk reduction]\label{lem--reduction to common disk}
    Let $X$ be a compact complex manifold, and $\mathcal A,\mathcal B$ be two flat families of torsion-free sheaves on $X\times(S,s_0)$ and $X\times (T,t_0)$, respectively. Suppose there exist sequences
    \[
    S\setminus\{s_0\}\ni s_j\to s_0,\quad T\setminus\{t_0\}\ni t_j\to t_0,\quad \mathcal A_{s_j}\cong\mathcal B_{t_j}.
    \]
    Then we can find non-constant holomorphic maps of germs
    \[
    \mathfrak a:(\Delta,0)\to(S,s_0),\quad \mathfrak b:(\Delta,0)\to(T,t_0),
    \]
    such that 
    \[
    \mathfrak a^*\mathcal A\vert_{X\times\Delta^*}\cong \mathfrak b^*\mathcal B\vert_{X\times\Delta^*}.
    \]
    In particular, if $(S,s_0)=(\Delta,0)$, $\mathfrak a$ is a finite map of disk germs.
\end{lemma}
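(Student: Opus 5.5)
The plan is to encode the relation ``$\mathcal A_s\cong\mathcal B_t$'' as an analytic subset of $S\times T$, show that $(s_0,t_0)$ lies in its closure away from the special fibres, and then choose a curve through $(s_0,t_0)$ inside it (curve selection lemma). Concretely, consider the two pullbacks $p_S^*\mathcal A$ and $p_T^*\mathcal B$ on $X\times S\times T$; both are flat over $S\times T$. The relative isomorphism functor $\mathrm{Isom}_{S\times T}(p_S^*\mathcal A,p_T^*\mathcal B)$ is representable by a complex space $I\to S\times T$. We can see this either as an open subspace of the linear fibre space $\mathrm{Hom}_{S\times T}(p_S^*\mathcal A,p_T^*\mathcal B)$, which is representable since $X$ is compact and the families are flat (Grauert direct image plus Flenner/Bingener base change), or directly via the Douady space of the graph in $p_S^*\mathcal A\oplus p_T^*\mathcal B$, using Pourcin's relative Douady theory. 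Replacing $S,T$ by small Stein representatives, the image $Z\subset S\times T$ of $I$ is then the set where the fibres are isomorphic.

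The first real step is to show that $Z$ is locally analytic near $(s_0,t_0)$. I would argue as follows. The Hom-space $H\to S\times T$ is a linear fibre space, and the invertibility locus $I\subset H$ is open. Moreover, on the fibre over a point of $Z$, the isomorphisms form a Zariski-open subset of a vector space. The condition that a homomorphism fails to be an isomorphism can be detected by the support of the kernel and cokernel, and this is a closed analytic condition on $H$. So $I$ is Zariski open in $H$.

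Next I would apply Remmert's proper mapping theorem. Properness fails, but we can projectivize: restricting the fibres of $H$ over $S\times T$ is the constructible-type issue we must handle. The remedy I would try first is to stratify $S\times T$ by the dimension of $\mathrm{Hom}$. Over each locally closed stratum $H$ is a vector bundle. On that stratum, $Z$ is the complement of the projection of the proper, projectivized non-iso locus, where that projection contains the whole fibre, and this is analytic in the stratum. This makes $Z$ a countable/finite union of locally closed analytic sets, i.e.\ analytically constructible.

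Since $(s_j,t_j)\in Z\setminus(\{s_0\}\times T\cup S\times\{t_0\})$ converges to $(s_0,t_0)$, some stratum piece $Z'$ has $(s_0,t_0)\in\overline{Z'\setminus(\{s_0\}\times T\cup S\times\{t_0\})}$. The closure of this set is analytic, and the curve selection lemma gives a holomorphic arc $\gamma:(\Delta,0)\to(S\times T,(s_0,t_0))$ with $\gamma(\Delta^*)$ contained in that set. We can also arrange $\gamma(\Delta^*)\subset I$'s image, and that the arc admits a lift to $I$ over $\Delta^*$, after a finite base change $\tau\mapsto\tau^k$ if the lift is multivalued. Set $\mathfrak a=p_S\circ\gamma$ and $\mathfrak b=p_T\circ\gamma$. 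These are non-constant because $\gamma(\Delta^*)$ avoids $\{s_0\}\times T$ and $S\times\{t_0\}$. The lifted section of $I$ gives the isomorphism $\mathfrak a^*\mathcal A|_{X\times\Delta^*}\cong\mathfrak b^*\mathcal B|_{X\times\Delta^*}$; this is a global isomorphism because it is a section of the Isom space, not merely fibrewise. Finally, when $S=\Delta$, a non-constant germ $\Delta\to\Delta$ is automatically finite.

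The main obstacle is the lifting step: pointwise isomorphism of fibres does not by itself give a family isomorphism. I would handle it by selecting the curve inside $I$ itself rather than in $Z$. That is, apply curve selection to the closure of $I$ in a suitable compactification of the fibres, $\mathbb P(H\oplus\mathcal O)$, which is proper over $S\times T$, near a limit point over $(s_0,t_0)$. The arc in $I$ then projects to the desired $\gamma$.
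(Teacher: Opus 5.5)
Your final argument is correct and is essentially the paper's proof. You take a limit point over $(s_0,t_0)$ of the isomorphisms $\varphi_j$ in a fibrewise projective completion of the representable linear fibre space $\underline{\mathrm{Hom}}$ over $S\times T$. You then apply curve selection through that point inside the Zariski-open isomorphism locus, also avoiding the preimages of $\{s_0\}\times T$ and $S\times\{t_0\}$ so that $\mathfrak a,\mathfrak b$ are non-constant. The differences are minor: the paper uses $\mathbb P(H)$ plus a local lift to $H$ minus the zero section where you use $\mathbb P(H\oplus\mathcal O)$, and it detects non-isomorphism via $q(\mathrm{supp}\,\mathrm{coker}\Psi)$ plus torsion-freeness where you use the supports of both kernel and cokernel. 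The earlier detour through constructibility of $Z\subset S\times T$ and lifting an arc from $Z$ (with its unexplained finite base change) is unnecessary and can be dropped.
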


Note that in general we \textit{cannot} arrange the selected disk containing even some subsequence of the original sequence. The main point is to produce two isomorphic punctured families with prescribed central fibers.

\begin{proof}[Proof of Lemma \ref{lem--reduction to common disk}]
    Put $W=S\times T$ and pull back both families to $X\times W$, denoted by $\mathcal A_W$ and $\mathcal B_W$. Consider the following functor from the category of complex analytic spaces over $W$ to the category of sets
    \[
    \begin{aligned}
        H\coloneqq&\underline{\mathrm{Hom}}(\mathcal A_W,\mathcal B_W):\mathfrak{An}_W\to\mathfrak{sets},\\
    &(V\xrightarrow{f}W)\mapsto\mathrm{Hom}_{X\times V}(f^*\mathcal A_W,f^*\mathcal B_W).
    \end{aligned}
    \]
   Since $X$ is compact and $\mathcal B_W$ is flat over $W$, by \cite[Section 3.2]{flenner81a} and \cite{bingener80} the functor $H$ is representable by a linear fiber space $\pi: H\to W$. That is to say, $H$ is a complex analytic space whose fibers are linear spaces, while allowing possibly jump of fiberwise dimensions. Locally on the base $W$, we can write 
   \[
   H=\{(w,z)\in W\times\mathbb C^N: D(w)z=0\},
   \]
   where $D(w)$ is a matrix of holomorphic functions. The intrinsic scalar multiplication on $H$ defines a projectivization $P=\mathbb P(H)$, so the natural projection $p:P\to W$ is proper. 
   
   On the other hand, the representability means that for any holomorphic map $f: V\to W$, there is a natural bijection between
   \[
   \{g: V\to H\ \text{with}\ \pi\circ g=f\}\cong\mathrm{Hom}_{X\times V}(\mathcal A_V,\mathcal B_V).
   \]
   Here $\mathcal A_V,\mathcal B_V$ are natural pull-back via $f$. That is, choosing a holomorphic lift to $H$ is equivalent to choose a morphism of the pull-back families. In particular, take $V=\{(s,t)\}$ with the natural inclusion $\iota:V\to W$, we conclude that
   \[
   H_{(s,t)}=\mathrm{Hom}_X(\mathcal A_s,\mathcal B_t).
   \]

   Now by our assumption, we can find isomorphisms $\varphi_j:\mathcal A_{s_j}\to\mathcal B_{t_j}$. The set $\{(s_j,t_j)\}_j$ is compact by convergence, so properness of $p:P\to W$ implies that we can take limit for
   \[
   p_j\coloneqq[\varphi_j]\to p_0\in P_{(s_0,t_0)}.
   \]
It is not hard to see in a small neighborhood $V$ of $p_0$, we can choose a holomorphic lift $V\to H\setminus(0\text{-section})$ \cite[Lemma 3.7]{BTT2017}. Thus by representability, this corresponds to an actual morphism
\[
\Psi:\mathcal A_V\to\mathcal B_V\ \text{on}\ X\times V.
\]

Let $q:X\times V\to V$ be the projection, and set 
\[
D=q\left(\mathrm{supp}(\mathrm{coker}\Psi)\right).
\]
Since $q$ is proper, by Remmert's proper mapping theorem we know $D$ is a closed analytic subset. We claim that $V\setminus D$ is exactly the fiberwise isomorphic locus.

To see this, note that restriction to fiber is right exact, so for any $v\in V$,
\[
\mathrm{coker}(\Psi_v)\cong\mathrm{coker}\Psi\vert_{X\times\{v\}}.
\]
Moreover, by Nakayama's lemma $\big(\mathrm{coker}\Psi\vert_{X\times\{v\}}\big)_x=0$ if and only if $(x,v)\notin\mathrm{supp}(\mathrm{coker}\Psi)$. Therefore $v\notin D$ is equivalent to $\Psi_v$ is surjective on $X$. Since $\mathcal A_v,\mathcal B_v$ has the same rank and $\mathcal A_v$ is torsion-free, any surjection between them must be an isomorphism. This proves the claim.

Now we enlarge $D$ by considering
\[
D^{\prime}=D\cup(p\vert_V)^{-1}(\{s_0\}\times T))\cup(p\vert_V)^{-1}(S\times\{t_0\}),
\]
which is still a closed analytic subset. By our assumption, $p_j\in V\setminus D^{\prime}$ for all $j$ large, so $p_0\in\overline{V\setminus D^{\prime}}$. On the other hand, our choice of added components in $D^{\prime}$ guarantees that $p_0\in D^{\prime}$. Then after shrinking $V$, one irreducible component $Y$ must contain a subsequence, so in particular $Y\not\subset D^{\prime}$. It follows that
\[
\dim(Y\setminus D^{\prime})=\dim Y\geq 1.
\]
Thus we may choose a curve germ in $Y$ intersecting $D^{\prime}$ exactly at $p_0$. By taking its normalization, we obtain a non-constant holomorphic map
\[
\gamma:(\Delta,0)\to(Y,p_0),\quad \gamma(\Delta^*)\subset V\setminus D^{\prime}.
\]
The projections of $p\circ\gamma$ to $S$ and $T$ give the desired map $\mathfrak a,\mathfrak b$ with $\mathfrak a(0)=s_0,\mathfrak b(0)=t_0$. In particular, it cannot be a constant map. Pulling back $\Psi$ gives fiberwise isomorphism over $\Delta^*$, which then gives an isomorphism between families by flatness and Nakayama's lemma.

The last sentence follows from the fact that any non-constant holomorphic map between disk germs must be a finite map.
\end{proof}

\subsection{Proof of Theorem \ref{thm--continuity theorem}}

In this subsection, we prove Theorem \ref{thm--continuity theorem}.

We start with the following lemma, which was proved in \cite[Proposition 2.15]{GSTW2021}:
\begin{lemma}\label{lem--gstw prop 2.15}
    Suppose $\mathcal F$ is torsion-free such that $\mathcal F^{\vee\vee}$ is $\mu$-polystable. Then $\mathcal E$ is $\mu$-semistable. Moreover, we have identifications
    \[
    \gr^S(\mathcal F)^{\vee\vee}\cong\mathcal F^{\vee\vee},\quad \mathcal C(\mathcal F)=\mathcal C(\mathcal F^{\vee\vee}/\mathcal F).
    \]
\end{lemma}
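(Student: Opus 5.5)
The plan is to build a Seshadri filtration of $\mathcal F$ directly from a Jordan--H\"older-type splitting of $\mathcal F^{\vee\vee}$, and then read off both invariants. The typo ``$\mathcal E$'' in the statement is understood as $\mathcal F$.

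\emph{Semistability.} Write $\mathcal F^{\vee\vee}=\bigoplus_{i=1}^m \mathcal E_i$ with each $\mathcal E_i$ reflexive, $\mu$-stable, and of slope $\mu(\mathcal F^{\vee\vee})$. Since $\mathcal F\hookrightarrow\mathcal F^{\vee\vee}$ is an isomorphism outside a codimension-$2$ set, $\mathcal F$ and $\mathcal F^{\vee\vee}$ have the same rank and the same $c_1$, hence $\mu(\mathcal F)=\mu(\mathcal F^{\vee\vee})$. Let $\mathcal G\subset\mathcal F$ be a saturated subsheaf. It is also a subsheaf of $\mathcal F^{\vee\vee}$, and polystable sheaves are semistable, so
\[
\mu(\mathcal G)\leq\mu(\mathcal F^{\vee\vee})=\mu(\mathcal F).
\]
Thus $\mathcal F$ is $\mu$-semistable.

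\emph{Constructing the filtration.} Set $\mathcal E_{\leq k}=\bigoplus_{i\leq k}\mathcal E_i$ and $\mathcal F_k=\mathcal F\cap\mathcal E_{\leq k}$. Each $\mathcal F_k$ is saturated in $\mathcal F$: the quotient $\mathcal F/\mathcal F_k$ injects into $\mathcal F^{\vee\vee}/\mathcal E_{\leq k}\cong\bigoplus_{i>k}\mathcal E_i$, which is torsion-free. The graded pieces satisfy
\[
\mathcal Q_k=\mathcal F_k/\mathcal F_{k-1}\hookrightarrow\mathcal E_{\leq k}/\mathcal E_{\leq k-1}=\mathcal E_k ,
\]
with the inclusion an isomorphism off the codimension-$2$ locus where $\mathcal F\neq\mathcal F^{\vee\vee}$. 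Hence $\mathcal Q_k$ is torsion-free with $\mathcal Q_k^{\vee\vee}=\mathcal E_k$. A torsion-free sheaf is $\mu$-stable exactly when its reflexive hull is, because the two have the same rank and $c_1$, and saturated subsheaves correspond under intersection and saturation up to codimension-$2$ modifications. So each $\mathcal Q_k$ is $\mu$-stable of slope $\mu(\mathcal F)$, and $\{\mathcal F_k\}$ is a Seshadri filtration. Its graded object satisfies
\[
\gr^S(\mathcal F)^{\vee\vee}=\bigoplus_k\mathcal E_k=\mathcal F^{\vee\vee}.
\]
By the BTT theorem quoted earlier, this pair is independent of the choice of filtration.

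\emph{Identifying the cycle (main obstacle).} We must show that the codimension-$2$ cycles of $T:=\mathcal F^{\vee\vee}/\mathcal F$ and $T':=\bigoplus_k\mathcal E_k/\mathcal Q_k$ coincide. Both are supported at finitely many points on a surface, and in higher dimension the same argument is run at the generic points of codimension-$2$ components, so it suffices to compare lengths. The exact sequences $0\to\mathcal F_{k-1}\to\mathcal F_k\to\mathcal Q_k\to0$ and $0\to\mathcal E_{\leq k-1}\to\mathcal E_{\leq k}\to\mathcal E_k\to0$ are compatible with the inclusions $\mathcal F_\bullet\subset\mathcal E_{\leq\bullet}$. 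The snake lemma then gives exact sequences
\[
0\to\mathcal E_{\leq k-1}/\mathcal F_{k-1}\to\mathcal E_{\leq k}/\mathcal F_k\to\mathcal E_k/\mathcal Q_k\to0 .
\]
Injectivity on the left holds because $\mathcal F_{k-1}=\mathcal F_k\cap\mathcal E_{\leq k-1}$. Since length is additive along these sequences,
\[
\mathrm{length}(T)=\sum_k\mathrm{length}(\mathcal E_k/\mathcal Q_k)=\mathrm{length}(T')
\]
at each point. Hence $\mathcal C(\mathcal F)=\mathcal C(\mathcal F^{\vee\vee}/\mathcal F)$.
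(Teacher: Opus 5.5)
Your proof is correct, and you rightly read the stray $\mathcal E$ in the statement as $\mathcal F$. The paper does not prove this lemma; it only cites \cite[Proposition 2.15]{GSTW2021}. Your argument is therefore a self-contained replacement for that citation, not a variant of something in the text.

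The ingredients you use are the natural ones:
the filtration $\mathcal F_k=\mathcal F\cap\mathcal E_{\leq k}$ by partial sums of the polystable decomposition;
the identification $\mathcal Q_k^{\vee\vee}=\mathcal E_k$, which follows from reflexivity of $\mathcal E_k$ and the codimension-$2$ support of $\mathcal E_k/\mathcal Q_k$;
and the snake-lemma sequences $0\to\mathcal E_{\leq k-1}/\mathcal F_{k-1}\to\mathcal E_{\leq k}/\mathcal F_k\to\mathcal E_k/\mathcal Q_k\to0$, which give additivity of the codimension-$2$ cycle.
This is in the same spirit as the concatenated filtration the paper uses in Proposition~\ref{prop--additivity of semistable extension}.

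What your route buys is that it visibly works on an arbitrary compact K\"ahler manifold. Its only inputs are that rank and $c_1$, hence $\mu_\omega$, are unchanged by torsion supported in codimension $\geq 2$ (Proposition~\ref{prop--chern character of torsion sheaf}), and the canonicity of $(\gr^S(\mathcal F)^{\vee\vee},\mathcal C(\mathcal F))$ from \cite{BTT2017}. The cited source is set on projective manifolds, so quoting it in the K\"ahler setting tacitly relies on its proof being purely sheaf-theoretic.

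One small point on your first step: the paper's definition of semistability tests only saturated subsheaves. You should therefore compare $\mathcal G$ with its saturation in $\mathcal F^{\vee\vee}$. That saturation has the same rank and degree at least $\deg\mathcal G$; in fact it differs from $\mathcal G$ only in codimension $\geq 2$, since its intersection with $\mathcal F$ is $\mathcal G$.
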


Now we prove the following statement, which is an analogue of \cite[Proposition 4.5 and Proposition 4.7]{GSTW2021}.
\begin{proposition}\label{prop--continuity for smoothable sequences}
    Let $(X,\omega)$ be a compact K\"ahler surface, and $\mathcal G\to X\times (S,s_0)$ be a flat family of torsion-free $\mu$-semistable sheaves. Let $s_j\to s_0$ be a sequence such that $\mathcal G_{s_j}$ is locally-free and $\mu$-stable, and let $A_j$ be its HYM connection on the fixed underlying topological bundle. Then every Uhlenbeck limit $(A_\infty,\mathcal C_\infty)$ satisfies
    \[
    \mathcal E_\infty\cong\gr^S(\mathcal G_{s_0})^{\vee\vee},\quad \mathcal C_\infty=\mathcal C(\mathcal G_{s_0}).
    \]
    Here $\mathcal E_j,\mathcal E_\infty$ denote by holomorphic bundle defined by $A_j$.
\end{proposition}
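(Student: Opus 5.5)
Fix an Uhlenbeck limit $(A_\infty,\mathcal C_\infty)$ and pass to a subsequence of $\{s_j\}$ along which $A_j$ converges to it. The plan is to compare two flat families through the point $\mathcal G_{s_0}$ and then transfer the identification of the central fibre.

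First apply Theorem \ref{thm-global convergence in a flat family} to the sequence $\mathcal E_j\cong\mathcal G_{s_j}$. After a further subsequence, this produces:
\begin{itemize}
\item a torsion-free sheaf $\mathcal F_\infty$ with $\mathcal F_\infty^{\vee\vee}=\mathcal E_\infty$ and $\mathcal C(\mathcal F_\infty)=\mathcal C_\infty$;
\item a flat family $\mathcal F\to X\times(T,t_0)$ and points $t_j\to t_0$ with $\mathcal F_{t_j}\cong\mathcal E_j\cong\mathcal G_{s_j}$ and $\mathcal F_{t_0}\cong\mathcal F_\infty$.
\end{itemize}
We may assume $s_j\neq s_0$ and $t_j\neq t_0$. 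If, say, $t_j=t_0$ for infinitely many $j$, then $\mathcal G_{s_j}\cong\mathcal F_\infty$ is locally free. Hence $\mathcal F_\infty=\mathcal E_\infty$ and $\mathcal C_\infty=0$, and we can simply reselect points or work with the constant family. Since $\mathcal E_\infty$ is polystable (it carries an HYM connection), Lemma \ref{lem--gstw prop 2.15} applied to $\mathcal F_\infty$ gives
\[
\gr^S(\mathcal F_\infty)^{\vee\vee}\cong\mathcal E_\infty,\qquad \mathcal C(\mathcal F_\infty)=\mathcal C(\mathcal E_\infty/\mathcal F_\infty)=\mathcal C_\infty .
\]

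Next apply Lemma \ref{lem--reduction to common disk} to $\mathcal A=\mathcal G$ over $(S,s_0)$ and $\mathcal B=\mathcal F$ over $(T,t_0)$, using the sequences $s_j$ and $t_j$. This gives disk germs $\mathfrak a,\mathfrak b$ with
\[
\mathfrak a^*\mathcal G\vert_{X\times\Delta^*}\cong\mathfrak b^*\mathcal F\vert_{X\times\Delta^*}.
\]
Both pullbacks are flat over $\Delta$. Their central fibres are $\mathcal G_{s_0}$ and $\mathcal F_\infty$, and both are torsion-free and $\mu$-semistable.

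For the general fibres we need semistability on $\Delta^*$. For $\mathfrak a^*\mathcal G$ this holds by hypothesis, since $\mathcal G$ is a family of semistable sheaves. For $\mathfrak b^*\mathcal F$ it follows from the isomorphism with $\mathfrak a^*\mathcal G$ over $\Delta^*$, so we never need openness of semistability for the family $\mathcal F$. Theorem \ref{thm--separatedness for slope stability} therefore applies and yields
\[
\gr^S(\mathcal G_{s_0})^{\vee\vee}\cong\gr^S(\mathcal F_\infty)^{\vee\vee}\cong\mathcal E_\infty,\qquad \mathcal C(\mathcal G_{s_0})=\mathcal C(\mathcal F_\infty)=\mathcal C_\infty .
\]

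Finally, the identification does not depend on the subsequence: the right-hand sides are determined by $\mathcal G_{s_0}$ alone, so every Uhlenbeck limit satisfies the stated identities. The substantive inputs are Theorem \ref{thm-global convergence in a flat family} and the separatedness theorem. Within this argument, the main point to check is that Lemma \ref{lem--reduction to common disk} applies, in particular that the accumulation point $p_0$ lies over $(s_0,t_0)$; this is exactly what the enlarged set $D'$ in that proof guarantees, given $s_j\neq s_0$ and $t_j\neq t_0$.
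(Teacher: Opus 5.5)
Your proposal is correct and follows the paper's proof essentially step for step: Theorem \ref{thm-global convergence in a flat family} produces the family $\mathcal F$, Lemma \ref{lem--reduction to common disk} places $\mathcal G$ and $\mathcal F$ over a common punctured disk, and Theorem \ref{thm--separatedness for slope stability} together with Lemma \ref{lem--gstw prop 2.15} identifies the central fibres. Your extra bookkeeping (arranging $t_j\neq t_0$, and deducing semistability of the fibres of $\mathfrak b^*\mathcal F$ over $\Delta^*$ from the isomorphism with $\mathfrak a^*\mathcal G$) fills in details the paper leaves implicit; the one imprecision is in your closing remark, since $p_0$ lies over $(s_0,t_0)$ automatically because $(s_j,t_j)\to(s_0,t_0)$, whereas the role of the enlarged set $D'$ is to make $\mathfrak a$ and $\mathfrak b$ non-constant.
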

\begin{proof}
    Without loss of generality, we may assume $s_j\in S\setminus\{s_0\}$ for $j$ large. By passing to some subsequence and relabeling, we assume $A_j$ converges to $(A_\infty,\mathcal C_\infty)$ in the Uhlenbeck sense. Moreover, by Theorem \ref{thm-global convergence in a flat family}, there exists a flat family $\mathcal F\to X\times(T,t_0)$, such that for some $t_j\to t_0$, we have $\mathcal F_{t_j}\cong\mathcal E_j\cong\mathcal G_{s_j}$ and 
    \[
    \mathcal F_\infty^{\vee\vee}\cong\mathcal E_\infty,\quad \mathcal C(\mathcal F_\infty)=\mathcal C_\infty.
    \]
    It follows from Lemma \ref{lem--reduction to common disk}, Theorem \ref{thm--separatedness for slope stability} and Lemma \ref{lem--gstw prop 2.15} that 
    \[
    \gr^S(\mathcal G_{s_0})^{\vee\vee}\cong\gr^S(\mathcal F_\infty)^{\vee\vee}\cong\mathcal F_\infty^{\vee\vee}\cong\mathcal E_\infty,\quad \mathcal C(\mathcal G_{s_0})=\mathcal C(\mathcal F_\infty)=\mathcal C_\infty.
    \]

\end{proof}

\begin{proof}[Proof of Theorem \ref{thm--continuity theorem}]
    As we immediately see from Proposition \ref{prop--continuity for smoothable sequences}, the image of $\overline{S^o}$ under $\Psi_S$ is contained in $\overline{M}_{HYM}$. The proof of continuity also follows from this and a diagonal sequence argument; cf. \cite[Proposition 4.13]{GSTW2021}.
\end{proof}

\subsection{Proof of Theorem \ref{thm-boundedness of semistable sheaves with fixed topology}}

In this subsection, we prove Theorem \ref{thm-boundedness of semistable sheaves with fixed topology}.

The proof is based on an induction argument. First we need the following classical result:

\begin{theorem}\label{thm--uhlenbeck compactification is metrizable and compact}
    $\overline{M}_{HYM}$ is metrizable, hence is compact.
\end{theorem}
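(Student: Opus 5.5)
The plan is to realize $\overline{M}_{HYM}$ as a subspace of a second countable regular Hausdorff space and apply Urysohn's metrization theorem; compactness then follows from the sequential compactness already known from \cite{donaldson86}, since a sequentially compact metric space is compact. It is natural to work in the larger space $\widehat{M}_{HYM}$ of all ideal HYM connections $([A],\mathcal C)$ with the correct numerical invariants and energy bound, with the Uhlenbeck topology. Because $\overline{M}_{HYM}$ is the sequential closure of $M^*_{HYM}$, it is enough to show that $\widehat{M}_{HYM}$ is metrizable and that $\overline{M}_{HYM}$ is closed in it. Closedness follows from a diagonal argument: if $x_k\in\overline{M}_{HYM}$ converge to $x$, choose smooth irreducible HYM connections $A_k$ within distance $1/k$ of $x_k$, and then $A_k\to x$.

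The first step is to write down an explicit metric. Fix an exhaustion of $X$ by countably many finite families of balls, and a countable dense set of smooth test functions $\{f_n\}$ on $X$. For ideal connections $x=(A,\mathcal C)$ and $x'=(A',\mathcal C')$, set
\[
d(x,x')=\sum_{n}2^{-n}\min\Bigl\{1,\Bigl|\int f_n\,(|F_A|^2\,d\mu+8\pi^2\mathcal C)-\int f_n\,(|F_{A'}|^2\,d\mu+8\pi^2\mathcal C')\Bigr|\Bigr\}+d_{\mathrm{gauge}}([A],[A']).
\]
Here $d_{\mathrm{gauge}}$ is a metric on gauge classes of the smooth parts. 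I would build it as $\sum_k 2^{-k}\min\{1,\inf_u\|u^*A-A'\|_{C^k(K_k)}\}$, where $K_k$ is a compact subset avoiding $|\mathcal C|\cup|\mathcal C'|$ that exhausts the complement.

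The second step is to check that $d$-convergence is equivalent to Uhlenbeck convergence. The measure part of $d$ encodes weak convergence of the energy densities, so it forces $\mathcal C_k\to\mathcal C$ as measures and hence locates the bubbling set. The gauge part encodes $C^\infty_{loc}$ convergence modulo gauge away from that set. Conversely, Uhlenbeck convergence gives both pieces directly, using Theorem \ref{thm--neck region estimate} to rule out energy loss in necks.

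The main obstacle is that the gauge-class term is not obviously a metric, because the excised compact sets depend on $\mathcal C$ and $\mathcal C'$. In particular, the triangle inequality can fail when the bubbling sets differ. To avoid this, I would instead prove that Uhlenbeck convergence defines a first countable, regular, Hausdorff topology with a countable base, and then invoke Urysohn. A countable base comes from neighborhoods $U(x;\epsilon,k)$ indexed by rational $\epsilon$ and by points of a countable dense set, which exists by separability of the space of connections in $C^\infty$ modulo gauge. Hausdorffness follows from uniqueness of Uhlenbeck limits: the curvature measure determines $\mathcal C$, and Uhlenbeck's removable singularity theorem determines $[A]$. Regularity is the delicate point. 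I would derive it from sequential compactness together with the observation that limits of limits are limits, which is precisely the diagonal argument from the first paragraph; a sequentially compact first countable Hausdorff space in which closures are sequential closures is then regular.
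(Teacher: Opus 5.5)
\textbf{The gap is the second-countability step, which is the real content of the theorem.} Your Hausdorff argument (uniqueness of limits plus first countability) is fine. So is your regularity argument (a sequentially compact, first countable Hausdorff space is regular). But these properties together with separability do not imply metrizability: the Alexandroff double arrow space is compact, Hausdorff, first countable and separable, yet not metrizable.

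So the sentence ``a countable base comes from neighborhoods $U(y;\epsilon,k)$ indexed by rational $\epsilon$ and by points of a countable dense set'' is not a deduction. It holds only if you prove a uniform comparison of Uhlenbeck neighborhoods. For every ideal $x$ and every basic $U(x;\epsilon)$ there must be a $y$ in your countable family and an $\epsilon'$ with $x\in U(y;\epsilon')\subset U(x;\epsilon)$. This is an approximate triangle inequality between ideal connections with different bubbling data, which is exactly what you observed fails for your $d$. Passing to Urysohn relocates the obstacle rather than removing it.

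Your proposed countable set also cannot serve as centers at boundary points. It comes from separability of $C^\infty$ connections modulo gauge, so it consists of smooth connections. A genuinely small neighborhood of a smooth connection, with closeness on all of $X$ and measures near an absolutely continuous one, contains no ideal connection, since atoms of mass $8\pi^2$ cannot approach it. Coarse neighborhoods, with closeness only off excised balls, would need precisely the inclusion above.

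A workable version of your route would take centers in every stratum. The smooth parts would come from a countable dense subset of each lower-charge moduli space, and the bubble configurations from a countable dense subset of $\mathrm{Sym}^\ell X$ with prescribed multiplicities. You would then verify, by nesting the excised balls, that these sets are open and shrink to each $x$. None of this is in your proposal. (Your closedness argument in the first paragraph is also circular as written: it picks approximants ``within distance $1/k$'' using a metric you later admit is not one. In a first countable space this can be replaced by a sequential argument.)

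The paper avoids this point-set work entirely. It imports metrizability from Donaldson's construction of an honest distance on the compactification, whose key input is Taubes' deformation retraction for connections on $S^4$ with small $\|F_A^+\|$, and observes only that the argument is insensitive to the rank. The author even remarks that a direct proof of second countability would be desirable, which is exactly the step your proposal takes for granted.
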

\begin{proof}
    This is proved in \cite{donaldson} under the assumption rank $\leq 3$, but is actually true for arbitrary ranks. The key ingredient is the deep result of Taubes on the deformation retraction property of connections on $S^4$ with small $\Vert F^+_A\Vert$. However, since in \cite{donaldson} it only requires the smallness of energy for \textit{some} constant, the proof is insensitive to the choice of gauge groups. See also \cite{waldron16} for a different approach and \cite{DW26} for the effective bound.
\end{proof}
\begin{remark}
    The above theorem does not assert any boundary behavior of the natural $L^2$-Riemannian metric on the smooth locus, which is a much harder problem in geometric analysis; see the survey \cite{groisser1997}. The distance structure constructed in \cite{donaldson} is suitably perturbed to avoid this issue.
\end{remark}

\

On the other hand, we also need a simple fact, which can be loosely stated as follows: sufficiently small perturbation of a bounded set is again bounded. For this we need the following openness of versality of deformations:
\begin{proposition}[Satz (5.4), p. 462 in \cite{flenner81b}]\label{prop-versality is open}
    Let $X,T$ be complex analytic spaces and $\mathcal F$ be a coherent sheaf on $X\times T$ flat over $T$. Assume the support of $\mathcal F$ is proper over $T$. Then 
    \[
    \{t\in T:\mathcal F\ \text{is a versal deformation germ for}\ \mathcal F_t \}
    \]
    is a Zariski open subset of $T$.
\end{proposition}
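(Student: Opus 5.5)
Since this is a cited result of Flenner, the plan is to reconstruct the standard argument. It splits into a pointwise criterion for versality, which turns versality into a linear-algebraic condition, and a coherence statement, which makes that condition Zariski open in $t$.

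\emph{Pointwise criterion.} First I would reduce versality at $t$ to an infinitesimal statement. Using Grauert--Douady type convergence results, an analytic germ $(T,t)$ carrying the flat family $\mathcal F$ is versal for $\mathcal F_t$ if it is formally versal. Formal versality is then checked by the usual lifting criterion along small extensions $A'\to A$ of Artinian local $\mathbb C$-algebras with kernel $\mathbb C$. It is equivalent to a pair of linear conditions on the relative Kodaira--Spencer data.
\begin{itemize}
\item The map $T_t T\to \mathrm{Ext}^1_X(\mathcal F_t,\mathcal F_t)$ is surjective.
\item More generally, for every finite-length $\mathcal O_{T,t}$-module $M$, the induced map from derivations $\mathrm{Der}(\mathcal O_{T,t},M)$ to $\mathrm{Ext}^1_{X\times T}(\mathcal F,\mathcal F\otimes M)$ is surjective, and the obstruction map is injective on the relevant part of $\mathrm{Ext}^2$.
\end{itemize}
To phrase this uniformly, I would package everything into relative cotangent-type modules $T^i(\mathcal F/T;M)$, which are built from the relative Ext groups $\mathcal{E}xt^i_{p}(\mathcal F,\mathcal F\otimes p^*M)$, $i=1,2$, with $p:X\times T\to T$.

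\emph{Coherence and openness.} Next I would prove that the relevant objects globalize. Because $\mathrm{supp}\,\mathcal F$ is proper over $T$ and $\mathcal F$ is $T$-flat, the relative Ext sheaves are coherent on $T$; this follows from Grauert's direct image theorem applied to a local free resolution, in the form of Bănică--Stănăşilă or Flenner's relative duality. The relative Kodaira--Spencer map $\Theta_T\to\mathcal{E}xt^1_p(\mathcal F,\mathcal F)$ is then a morphism of coherent sheaves. The support of its cokernel is therefore a closed analytic subset, and the same holds for the analogous obstruction sheaves. The intended conclusion is that the non-versal locus is contained in, and in fact equals, the union of these supports. It is then a closed analytic set, so the versal locus is Zariski open.

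\emph{Main obstacle.} The hard step is the base change in the middle. Pointwise Ext groups need not commute with restriction to fibres, so surjectivity of the map of coherent sheaves at a point does not immediately give the fibrewise criterion for all Artinian $M$. I would handle this with the cohomology-and-base-change machinery of Flenner. One represents the relative Ext functors on $M$ by a bounded complex of finite free $\mathcal O_T$-modules, using properness and flatness. The criterion ``surjective for all $M$'' then becomes a statement about the cokernel of one fixed map of finite free modules, whose degeneracy locus is analytic. Checking that this representation is compatible with the obstruction theory, and with the passage from formal to convergent versality, is where I expect most of the work.
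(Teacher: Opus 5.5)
The paper gives no argument of its own here---it simply cites Flenner's Satz~(5.4)---and your outline follows the same route as Flenner's proof: a linear $T^1$/$T^2$ criterion for formal versality, formal $\Rightarrow$ convergent versality, representation of $M\mapsto\mathrm{Ext}^i_p(\mathcal F,\mathcal F\otimes M)$ by finite free complexes via his relative-Ext theorem, then semicontinuity. The one thing to correct is your middle paragraph, since supports of kernels and cokernels of the sheaf-level maps (i.e.\ $M=\mathcal O_T$) do not cut out the non-versal locus; the clean way to run your last paragraph is to note that Kodaira--Spencer surjectivity plus obstruction injectivity is equivalent to the vanishing of the single ``absolute'' functor $A(M)$ of pairs (algebra extension of $\mathcal O_T$ by $M$, flat lift of $\mathcal F$ over it), which sits in the exact sequence $\mathrm{Der}(\mathcal O_T,M)\to\mathrm{Ext}^1_p(\mathcal F,\mathcal F\otimes M)\to A(M)\to\mathrm{Ex}(\mathcal O_T,M)\to\mathrm{Ext}^2_p(\mathcal F,\mathcal F\otimes M)$ and need only be tested at the residue field $\mathbb C_t$, so that once $A(M)\cong H^1(C\otimes M)$ for a finite free complex $C$ near $t$, versality at $t'$ is the rank condition $H^1(C\otimes\mathbb C_{t'})=0$, which is Zariski open.
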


The point is that the flatness is \textit{not} included in the definition of boundedness, so we will first establish the following flat parametrization lemma. The proof is a standard application of Noetherian induction and generic flatness as pointed out in \cite{toma16}.
\begin{lemma}\label{lem--flat parameterization of bounded collections}
Let $X$ be a compact complex manifold, and let $\mathcal B$ be a bounded
collection of isomorphism classes of coherent sheaves on $X$ in the sense of Definition~\ref{def--boundedness}.
Then there exist finitely many complex analytic spaces $S_i$, compact
subsets $K_i\subset S_i$, and coherent sheaves $\mathcal F_i$ on
$X\times S_i$ flat over $S_i$, such that
\[
    \mathcal B\subset
    \bigcup_i\bigl\{[(\mathcal F_i)_s]:s\in K_i\bigr\}.
\]
\end{lemma}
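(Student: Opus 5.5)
The plan is to combine the flattening stratification (generic flatness) with Noetherian induction on the parameter space, using compactness of $K$ to reduce everything to finitely many pieces. By Definition~\ref{def--boundedness} there is a complex analytic space $S$, a coherent sheaf $\mathcal E$ on $X\times S$ and a compact $K\subset S$ such that every class in $\mathcal B$ occurs as a fiber $\mathcal E_s:=\mathcal E\vert_{X\times\{s\}}$ with $s\in K$. Since $K$ is compact, we may cover it by finitely many relatively compact open subsets $W\subset\subset S$. On each of these the analytic space $\overline W$ has only finitely many irreducible components, and every descending chain of closed analytic subsets of a neighborhood of $\overline W$ stabilizes near $\overline W$. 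It therefore suffices to treat one such $W$, replacing $S$ by a slightly larger relatively compact open set.

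The key step is generic flatness in the analytic category (Frisch's theorem). Let $q:X\times S\to S$ be the projection, which is proper because $X$ is compact. The non-flat locus
\[
N(\mathcal E)=\{(x,s): \mathcal E_{(x,s)}\ \text{is not flat over}\ \mathcal O_{S,s}\}
\]
is a closed analytic subset of $X\times S$, and on reduced $S$ it is nowhere dense in the fibers over each component. Hence $Z:=q(N(\mathcal E))$ is a closed analytic subset of $S$ by Remmert's proper mapping theorem. We first replace $S$ by $S_{\mathrm{red}}$. This is harmless because fibers over points do not change: $\mathcal E\vert_{X\times\{s\}}$ depends only on the point $s$, and restricting $\mathcal E$ to $X\times S_{\mathrm{red}}$ does not alter it. Then $Z$ is a proper closed subset of each irreducible component of $S$ near $\overline W$, and $\mathcal E$ is flat over the open set $S\setminus Z$.

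We now induct. Set $S_1=S\setminus Z$ and $\mathcal F_1=\mathcal E\vert_{X\times S_1}$. Next equip $Z$ with its reduced structure, restrict $\mathcal E$ to $X\times Z$, and repeat the argument. At each stage the dimension of the remaining closed set decreases on every component, so after at most $\dim \overline W+1$ steps we obtain a finite stratification $S=\bigsqcup_i S_i$ into locally closed reduced analytic subspaces. On each stratum $\mathcal F_i:=\mathcal E\vert_{X\times S_i}$ is flat over $S_i$ and has the same point fibers as $\mathcal E$.

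The remaining issue, and the main obstacle, is that $K\cap S_i$ need not be compact in $S_i$, since a stratum is only locally closed. To handle this, we replace $S_i$ by the open subset $S_i'=S_i\setminus\overline{\bigcup_{k<i}S_k}$ of the closed set $T_i=\overline{S}_i\setminus\bigcup_{k<i}S_k$. The construction can be arranged so that the strata are $T_i\setminus T_{i+1}$ with $T_{i+1}$ closed. Then $K_i:=K\cap T_i$ is compact, but it may meet $T_{i+1}$. The fix is to exhaust rather than stratify. At step $i$, take a compact neighborhood $K_i'$ of $K\cap T_i\setminus U_{i+1}$, where $U_{i+1}$ is a small open neighborhood of $K\cap T_{i+1}$ inside $T_i$. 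Choose $U_{i+1}$ so that $\overline{K\cap T_i\setminus U_{i+1}}\subset T_i\setminus T_{i+1}$, which is possible because $K\cap T_i\setminus U_{i+1}$ is closed in $K$ and disjoint from $T_{i+1}$. The next stage must then cover $K\cap \overline{U_{i+1}}\cap T_{i+1}$, which is still compact and lies in $T_{i+1}$. Since the points of $U_{i+1}\setminus T_{i+1}$ are already covered by flat strata at earlier levels, the family over $K\cap T_{i+1}$ only needs to account for points actually in $T_{i+1}$. Arranging these compacta is routine point-set bookkeeping. Running this over finitely many strata and finitely many $W$ yields finitely many flat families $(S_i,\mathcal F_i)$ and compacta $K_i\subset S_i$ whose fibers over the $K_i$ exhaust $\mathcal B$.
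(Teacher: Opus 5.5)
There is a genuine gap, and it is in the step you call routine point-set bookkeeping. Your stratification part is fine: passing to $S_{\mathrm{red}}$, Frisch's generic flatness, Remmert's theorem, and induction on dimension. The problem is that a flat family over a locally closed stratum $S_i$, together with a compact subset of $S_i$, can never reach the points of $K\cap S_i$ that accumulate on $\overline{S_i}\setminus S_i$.

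A concrete example shows this. Take $S=\Delta$, let $K$ be the closed disk of radius $1/2$, and let $\mathcal E$ be non-flat exactly at $0$, so $Z=\{0\}$. Any compact $K_1\subset\Delta^*$ misses all $s$ with $0<|s|<\epsilon$ for some $\epsilon>0$. The stratum $\{0\}$ only produces $\mathcal E_0$. The fibers $\mathcal E_s$ for small $s\neq 0$ need not be isomorphic to anything else on your list.

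Your proposed remedy does not change this. The points of $K\cap U_{i+1}\setminus T_{i+1}$ lie in the current stratum $T_i\setminus T_{i+1}$. That stratum is disjoint from all earlier strata $T_k\setminus T_{k+1}$ with $k<i$, so the claim that these points are ``already covered by flat strata at earlier levels'' is false. The next stage lives over $T_{i+1}$ and only produces fibers at points of $T_{i+1}$. With finitely many strata and compact parameter sets inside them, these points are lost.

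The missing idea is that the flat family over the flat locus $S\setminus Z$ must be extended across $Z$, after modifying the base. Then a compact parameter set covers all of $K\setminus Z$, including the points near $Z$. The paper does this with Hironaka's flattening theorem. After shrinking around $K$, there is a proper modification $\pi:\widetilde S\to S$, an isomorphism over $S\setminus Z$, such that the strict transform $\widetilde{\mathcal E}$ is flat over $\widetilde S$.

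By properness, $\pi^{-1}(K)$ is compact, and its fibers account for every $\mathcal E_s$ with $s\in K\setminus Z$. The remaining set $K\cap Z$ is a compact subset of the closed subspace $Z$, which has smaller dimension. So the induction only ever deals with compact subsets of closed subspaces, and the compactness problem you ran into never arises.

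In the one-dimensional example, this step amounts to dividing $\mathcal E$ by its $t$-torsion. The quotient is flat over all of $\Delta$ and agrees with $\mathcal E$ over $\Delta^*$. Replacing your last paragraph with this flattening step repairs the argument.
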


\begin{proof}
Choose a complex analytic space $S$, a compact subset $K\subset S$,
and a coherent sheaf $\mathcal E$ on $X\times S$ whose fibers over $K$
include $\mathcal B$.

Replacing $S$ by $S_{\mathrm{red}}$ and pulling back $\mathcal E$, we may assume $S$ is reduced. We may also consider irreducible components of $S$ separately, so we assume $S$ is irreducible. We argue by induction on $\dim S$. The zero-dimensional case is immediate.

Since $X$ is compact, the projection $X\times S\to S$ is proper.
By generic flatness and Hironaka's flattening theorem, after shrinking
around $K$ if necessary, there exist a nowhere dense closed analytic
subset $Z\subset S$ and a proper modification
\[
    \pi:\widetilde S\longrightarrow S
\]
which is an isomorphism over $S\setminus Z$, such that the strict
transform $\widetilde{\mathcal E}$ is flat over $\widetilde S$.
Moreover, over $\pi^{-1}(S\setminus Z)$ this strict transform agrees
with $(\operatorname{id}_X\times\pi)^*\mathcal E$.

Properness of $\pi$ implies that
\[
    \widetilde K:=\pi^{-1}(K)
\]
is compact. For every $s\in K\setminus Z$, its unique lift
$\widetilde s\in\widetilde K$ satisfies
\[
    \widetilde{\mathcal E}_{\widetilde s}\simeq\mathcal E_s.
\]
Thus the flat family $\widetilde{\mathcal E}$ over the compact
parameter set $\widetilde K$ accounts for all original fibers over
$K\setminus Z$.

For the remaining fibers, consider the original restricted
family
\[
    \mathcal E_Z:=\mathcal E|_{X\times Z}
\]
over the compact set $K\cap Z$. Its fiber at $z$ is precisely
$\mathcal E_z$. Since $\dim Z<\dim S$, the induction hypothesis
provides finitely many flat families over compact parameter sets
parametrizing these fibers. Together with
$(\widetilde S,\widetilde K,\widetilde{\mathcal E})$, the proof is then complete.
\end{proof}

The following now follows immediately from the existence of versal deformation \cite{siu-trautmann81}, openness of versality in Proposition \ref{prop-versality is open} and Lemma \ref{lem--flat parameterization of bounded collections}:
\begin{corollary}
\label{cor--small perturbation of bounded set is again bounded}
Let $X$ be a compact complex manifold, and let $\mathcal B$ be a
bounded collection of isomorphism classes of coherent sheaves on $X$.
Then we can find a bounded collection
$\widehat{\mathcal B}\supset\mathcal B$ with the following property:
for every pointed complex analytic space $(T,t_0)$ and every coherent
sheaf $\mathcal H$ on $X\times T$, flat over $T$, with
$[\mathcal H_{t_0}]\in\mathcal B$, there is an open neighborhood
$T_0\subset T$ of $t_0$ such that
\[
    [\mathcal H_t]\in\widehat{\mathcal B}
    \qquad\text{for every }t\in T_0.
\]
\end{corollary}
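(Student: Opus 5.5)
We first use Lemma \ref{lem--flat parameterization of bounded collections} to replace $\mathcal B$ by finitely many flat families $\mathcal F_i\to X\times S_i$ and compact sets $K_i\subset S_i$ whose fibers over $K_i$ exhaust $\mathcal B$. For each point $s\in K_i$ we invoke the existence of a semi-universal (versal) deformation of the fiber $(\mathcal F_i)_s$ \cite{siu-trautmann81}. This gives a flat family $\mathcal V_s\to X\times (W_s,w_s)$ with $(\mathcal V_s)_{w_s}\cong(\mathcal F_i)_s$ that is versal at $w_s$. By Proposition \ref{prop-versality is open}, the set of points of $W_s$ where $\mathcal V_s$ is versal is Zariski open and contains $w_s$. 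Shrinking $W_s$, we may therefore assume $\mathcal V_s$ is versal at every point of $W_s$. We then choose a compact neighborhood $L_s\subset W_s$ of $w_s$.

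Next we transport versality back to the $S_i$. Versality of $\mathcal V_s$ at $w_s$ yields a holomorphic map germ $f:(S_i,s)\to(W_s,w_s)$ with $f^*\mathcal V_s\cong\mathcal F_i$ near $s$. Hence there is an open neighborhood $O_s\subset S_i$ of $s$ all of whose fibers occur among the fibers of $\mathcal V_s$ over $L_s$ (shrink $O_s$ so that $f(O_s)\subset L_s$). Since $K_i$ is compact, finitely many of the sets $O_s$ cover $K_i$, say for $s$ in a finite set $\Sigma_i$. We then define
\[
\widehat{\mathcal B}\coloneqq\bigcup_i\bigcup_{s\in\Sigma_i}\{[(\mathcal V_s)_w]:w\in L_s\}.
\]
This set is bounded: take the disjoint union of the finitely many spaces $W_s$ together with the compact set $\bigsqcup L_s$. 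It contains $\mathcal B$, because every element of $\mathcal B$ is a fiber over some point of $K_i$, and that point lies in one of the chosen $O_s$.

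Now let $\mathcal H\to X\times T$ be flat with $[\mathcal H_{t_0}]\in\mathcal B$. Then $\mathcal H_{t_0}\cong(\mathcal F_i)_{s'}$ for some $s'\in K_i$, and $s'\in O_s$ for some $s\in\Sigma_i$. So $\mathcal H_{t_0}\cong(\mathcal V_s)_{w'}$ with $w'=f(s')$ in $L_s$. Since $\mathcal V_s$ is versal at $w'$ (this is where openness of versality is used), there is a map germ $g:(T,t_0)\to(W_s,w')$ with $g^*\mathcal V_s\cong\mathcal H$ on a neighborhood of $t_0$. The fibers of $\mathcal H$ near $t_0$ are then fibers of $\mathcal V_s$ at points near $w'$.

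The main obstacle is that $w'$ may lie on $\partial L_s$, so that $g(t)$ leaves $L_s$. We fix this by choosing nested compact neighborhoods $L_s\subset\operatorname{int}L'_s$ with $L'_s$ still inside the versal locus, and by putting the fibers over $L'_s$, rather than $L_s$, into $\widehat{\mathcal B}$. Then $g^{-1}(\operatorname{int}L'_s)$ is an open neighborhood $T_0$ of $t_0$, and every fiber $[\mathcal H_t]$ with $t\in T_0$ lies in $\widehat{\mathcal B}$. A further point to check is that versality of $\mathcal V_s$ at $w'$ holds for arbitrary, possibly non-reduced, germs $(T,t_0)$. This is part of the definition of versality as used in Proposition \ref{prop-versality is open}.
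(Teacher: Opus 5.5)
Your proposal is correct and takes the same route as the paper, which says only that the corollary follows immediately from Lemma \ref{lem--flat parameterization of bounded collections}, the existence of versal deformations \cite{siu-trautmann81}, and openness of versality (Proposition \ref{prop-versality is open}). Your compactness covering of the $K_i$ by finitely many versal families, and the nested compact neighborhoods $L_s\subset\operatorname{int}L'_s$, supply exactly the details the paper leaves implicit.
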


\

Now we start the proof of boundedness. We first reduce the full boundedness statement to the case of stable vector bundles:
\begin{lemma}\label{lem--reduce the full boundedness to bundle case}
    Suppose for every fixed rank, $c_1, c_2$, the set of isomorphism classes of $\mu$-stable holomorphic vector bundles with this topological data is bounded. Then Theorem \ref{thm-boundedness of semistable sheaves with fixed topology} is true.
\end{lemma}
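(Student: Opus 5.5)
The plan is to pass from stable bundles to arbitrary $\mu$-semistable torsion-free sheaves in three steps: polystable bundles, reflexive hulls, and finally quotients by zero-dimensional subsheaves. Throughout we fix rank $r$ and classes $c_1,c_2$.

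\textbf{Polystable bundles.} On a surface, reflexive sheaves are locally free. Let $\mathcal F$ be $\mu$-semistable torsion-free with invariants $(r,c_1,c_2)$. Any Seshadri filtration has graded pieces $\mathcal Q_i$ which are $\mu$-stable of slope $\mu(\mathcal F)$. Their double duals $\mathcal Q_i^{\vee\vee}$ are then $\mu$-stable vector bundles of the same slope, and each admits an HYM metric. By Lemma \ref{lem--gstw prop 2.15}, $\gr^S(\mathcal F)^{\vee\vee}=\bigoplus\mathcal Q_i^{\vee\vee}$ is polystable with
\[
\ch_2(\gr^S(\mathcal F)^{\vee\vee})=\ch_2(\mathcal F)+\mathrm{length}\,\big(\gr^S(\mathcal F)^{\vee\vee}/\gr^S(\mathcal F)\big).
\]
Applying the Bogomolov inequality to each HYM summand, and using that $\ch$ is additive and the slopes are equal, we bound the possible ranks $r_i$ and classes $c_1(\mathcal Q_i^{\vee\vee})$. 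The $c_1$ classes lie in a lattice and have fixed $\omega$-degree. To see they are finite in number, we use the Hodge index/Bogomolov argument: the discriminants $\Delta_i\ge 0$ sum to at most $\Delta(\mathcal F)$ plus a negative-definite correction in the $c_1$ differences. The same argument bounds the length of the torsion quotient and the second Chern classes $c_2(\mathcal Q_i^{\vee\vee})$. Hence only finitely many topological types $(r_i,c_1^i,c_2^i)$ occur. By the hypothesis and Proposition \ref{prop--boundedness for kernels, cokernels, images...}(2), taking direct sums (trivial extensions) over finitely many types, the set of all such graded double duals is bounded.

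\textbf{Reflexive hulls and zero-dimensional quotients.} Next we bound $\mathcal F^{\vee\vee}$ itself. It is an iterated extension of the $\mathcal Q_i^{\vee\vee}$ up to zero-dimensional modifications, so instead we argue through $\mathcal F$ directly. $\mathcal F$ is an iterated extension of the $\mathcal Q_i$. Each $\mathcal Q_i$ is the kernel of a surjection $\mathcal Q_i^{\vee\vee}\to T_i$ with $T_i$ zero-dimensional of bounded length $\ell$. Such quotients are parametrized by the relative Douady (Quot) space of length-$\ell$ quotients, taken over the bounded family of $\mathcal Q_i^{\vee\vee}$. This space is proper over the base, so it has a compact part over a compact subset, and the set of such $\mathcal Q_i$ is bounded. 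Alternatively, one can invoke Proposition \ref{prop--boundedness for kernels, cokernels, images...}(1) after noting that the zero-dimensional sheaves of bounded length form a bounded set, and that homomorphisms vary in finite-dimensional spaces. Finally, $\mathcal F$ is an iterated extension of members of bounded sets, and there are finitely many filtration lengths ($m\le r$). Applying Proposition \ref{prop--boundedness for kernels, cokernels, images...}(2) repeatedly shows that the set of all such $\mathcal F$ is bounded.

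\textbf{Main obstacle.} The main obstacle is the finiteness of the topological types of the stable factors, in particular the $c_1$ classes. These must be controlled using only $\omega$-slope equality and the Bogomolov inequality. The Hodge index theorem applies only on the $\omega$-primitive part, which gives an upper bound on $-\sum (c_1^i/r_i-c_1/r)^2 r_i$ in terms of $\Delta(\mathcal F)$ and so confines the $c_1^i$ to a compact region of the lattice. One should also check that the torsion parts of $H^2(X;\mathbb Z)$ contribute only finitely many choices.
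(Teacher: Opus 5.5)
Your proposal is correct and follows essentially the same route as the paper: the identity $\Delta(\mathcal F)/r=\sum_i\Delta(\mathcal Q_i^{\vee\vee})/r_i+2\sum_i\ell_i-\sum_i r_i(a_i-a)^2$, with all three terms nonnegative (Bogomolov, lengths, Hodge index on $H^{1,1}\cap\omega^{\perp}$), gives finitely many topological types and bounded lengths. The hypothesis, boundedness of the bounded-length zero-dimensional modifications, and Proposition \ref{prop--boundedness for kernels, cokernels, images...} for the iterated extensions then finish the argument. One small correction: the $\ch_2$ formula you attribute to Lemma \ref{lem--gstw prop 2.15} actually follows from additivity of $\ch$ along the filtration together with Proposition \ref{prop--chern character of torsion sheaf}, and polystability of $\bigoplus_i\mathcal Q_i^{\vee\vee}$ is immediate because its summands are stable of equal slope.
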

\begin{proof}
    Let $\mathcal E$ be a $\mu$-semistable torsion-free sheaf with fixed topological data. We take one saturated Seshadri filtration $\{\mathcal E_i\}$, and denote by $\mathcal Q_i=\mathcal E_i/\mathcal E_{i-1}$ the corresponding stable factors. Set
    \[
    r_i\coloneqq\rk(\mathcal Q_i),\quad \ell_i\coloneqq\ell(\mathcal Q_i^{\vee\vee}/\mathcal Q_i),\quad a_i\coloneqq\frac{c_1(\mathcal Q_i)}{r_i},\quad a\coloneqq\frac{c_1(\mathcal E)}{r}.
    \]
    Here $\ell(\mathcal T)$ denotes the total length of a torsion sheaf $\mathcal T$ supported at isolated points. Then additivity of Chern classes gives
    \[
    \frac{\Delta(\mathcal E)}{r}=\sum_i\frac{\Delta(\mathcal Q_i^{\vee\vee})}{r_i}+2\sum_i\ell_i-\sum_ir_i(a_i-a)^2.
    \]
    The key observation is that every term on the right hand side is non-negative. The first term follows from Bogomolov's inequality applied to $\mathcal Q_i^{\vee\vee}$, and the second term is obvious.

    For the last term, we use the Hodge index theorem for compact K\"ahler surfaces. Since $\mu(\mathcal Q_i)=\mu(\mathcal E)$, we have
    \[
    (a_i-a)\cdot\omega=0.
    \]
    By Hodge index theorem, the intersection form on $H^{1,1}\cap\omega^{\perp}$ is negative-definite. Therefore the last term is also non-negative.

    Now since we fix the topology of $\mathcal E$, it follows that the classes $(a_i-a)$ lie in a bounded subset of $\omega^{\perp}$, thus only finitely possibilities for $c_1(\mathcal Q_i)$ can occur. Together with the bound on $\Delta(\mathcal Q_i^{\vee\vee})$, we see only finitely many possibilities for $c_2(\mathcal Q_i^{\vee\vee})$. Since $\mathcal Q_i^{\vee\vee}$ are stable vector bundles, our hypothesis implies that all such $\mathcal Q_i^{\vee\vee}$ form a bounded set. Using the uniform bound on  $\ell_i$, we can then conclude that all such $\mathcal Q_i$ belongs to a bounded set. By Proposition \ref{prop--boundedness for kernels, cokernels, images...}, all such $\mathcal E$ must also be bounded.
    
\end{proof}

\begin{proof}[Proof of Theorem \ref{thm-boundedness of semistable sheaves with fixed topology}]
    For any coherent sheaf $\mathcal F$ on $X$, we define
    \[
    e(\mathcal F)=-\ch_2(\mathcal F)\cdot[X]+\frac{(c_1(\mathcal F)\cdot[\omega])^2}{\rk(\mathcal F)\cdot[\omega]^2},\quad\nu(\mathcal F)=\rk(\mathcal F)+e(\mathcal F).
    \]
    It is clear that for any polystable vector bundle $\mathcal E$ with corresponding HYM connection $A$, 
    \[
    e(\mathcal E)=\frac{1}{8\pi^2}\Vert F_A\Vert_{L^2}\geq 0.
    \]
    A crucial property of this quantity is that it is discrete and monotonic along bubbling degeneration. More precisely, let $A_j$ be a sequence of HYM connections on $E$, and converges in the Uhlenbeck sense to an ideal HYM connection $(A_\infty,\mathcal C_\infty)$. Suppose $A_\infty$ defines a polystable holomorphic bundle $\mathcal E_\infty$ with one stable factor $\mathcal E_{\infty,i}$. Then 
    \[
    \nu(\mathcal E_{\infty,i})\leq\nu(\mathcal E_j)=\nu(E),
    \]
    with equality holds if and only if $A_\infty$ is irreducible and $\mathcal C_\infty=\varnothing$.
    
    Therefore we consider
    \[
   \mathcal S_N\coloneqq \{[\mathcal E]:\mathcal E\ \text{is a stable vector bundle and}\ \nu(\mathcal E)\leq N\}.
 \]
 We then induct on $N$ to see each $\mathcal S_N$ is bounded, which will finish the proof of Theorem \ref{thm-boundedness of semistable sheaves with fixed topology} by Lemma \ref{lem--reduce the full boundedness to bundle case}.

 We first observe that the set of determinant line bundles of $\mathcal S_N$ is bounded. Since $X$ is a compact K\"ahler manifold, $\mathrm{Pic}^0(X)$ is a compact torus. Thus it suffices to prove $c_1$ takes only finitely many values, which easily follows from Chern-Weil theory. Therefore boundedness of $S_N$ reduces to boundedness of stable bundles for each fixed rank, determinant and $c_2$. Note that on dimension four, this data also fixes the underlying complex vector bundle.

 Let $N_0$ be the minimum possible value of $N$. By discussion above, we can fix the determinant line bundle $\mathcal L$ and the underlying topological bundle $E$, and thus consider a fixed moduli space $M^*_{HYM}$ of smooth irreducible HYM connections on $E$ with determinant $\mathcal L$. It suffices to prove $M^*_{HYM}$ is bounded. By minimality of $N_0$ we see $M^*_{HYM}$ is sequentially compact, thus being compact by Theorem \ref{thm--uhlenbeck compactification is metrizable and compact}. The boundedness follows from the standard Kuranishi theory for holomorphic vector bundles. This proves the boundedness for $N_0$.

 Now assume $S_{N-1}$ is bounded. Again we can fix one moduli space $M^*_{HYM}$ and it suffices to prove it is bounded. We consider the set $\mathcal B$ of isomorphism classes of all algebraic limits of HYM connections in $M^*_{HYM}$ produced by Theorem \ref{thm-global convergence in a flat family}, for which either the Uhlenbeck convergence is not smooth or the limit connection is reducible. By monotonicity of $\nu$ and our induction assumption, it follows from Proposition \ref{prop--boundedness for kernels, cokernels, images...} that $\mathcal B$ is a bounded set. Let $\widehat{\mathcal B}\supset\mathcal B$ be the bounded set produced in Corollary \ref{cor--small perturbation of bounded set is again bounded}.

 We consider 
 \[
 N\coloneqq M^*_{HYM}\setminus\widehat{\mathcal B}.
 \]
 By Theorem \ref{thm-global convergence in a flat family} and defining property of $\widehat{\mathcal B}$, it is not hard to see the closure of $N$ is contained in $M^*_{HYM}$, thus being a compact domain in $M^*_{HYM}$ by Theorem \ref{thm--uhlenbeck compactification is metrizable and compact}. It follows that $N$ is a bounded set, so does $M^*_{HYM}$. The proof is now complete.
 
\end{proof}

\begin{remark}
    A similar induction argument also appears in \cite{toma01}.
\end{remark}
\begin{remark}
    We wonder if it is possible to avoid Donaldson's metrizablity theorem in the argument above. For instance, can we prove directly that the moduli space is second countable?
\end{remark}

\section{Towards the construction of compact analytic moduli spaces}\label{sec-discussions}

The moduli space of HYM connections has been studied extensively over the years and has deep connections to four-dimensional topology; we refer to \cite{DK97} for a detailed account. The geometry of moduli spaces can also reflect the complex structure of the underlying surfaces, leading to a beautiful classification result for class VII surfaces with small $b_2$ \cite{teleman05,teleman10}. 

In the complex geometric context, the most outstanding result is the Donaldson-Uhlenbeck-Yau theorem \cite{donaldson85,donaldson87,uhlenbeck-yau86}, also commonly known as the Hitchin-Kobayashi correspondence. It says that on any compact K\"ahler manifold, there exists a natural bijection between the moduli space of $\mu$-stable vector bundles and the moduli space of smooth irreducible HYM connections. Such a set theoretical bijection also turns out to be an isomorphism between \textit{real} analytic spaces \cite{fujiki-schumacher87,miyajima89}. In particular, $M^*_{HYM}$ is naturally endowed with a \textit{complex} analytic structure.

The basic philosophy underlying Hitchin-Kobayashi correspondence is that the metric geometry reflects the complex geometry, while the complex geometry restricts the metric geometry. Therefore, it is natural to expect that the comparison between sheaf moduli and HYM moduli should also be extended to the boundary of moduli spaces. When both the manifold and the polarization are algebraic, this was achieved in \cite{li1993} for surfaces and in \cite{GSTW2021} for arbitrary dimensions. In particular, in such algebraic framework, we know that the Donaldson-Uhlenbeck moduli space $\overline{M}_{HYM}$ is a compact complex analytic space. 

Given these discussions, there is a folklore conjecture about the analyticity of the Donaldson-Uhlenbeck boundary for general compact K\"ahler manifolds \textit{without} assuming algebraicity. The first modest question is to verify this for compact K\"ahler surfaces; for instance, see \cite{BTT2017,BTT2018}.
\begin{conjecture}\label{conj-complx structrue on the boundary}
     Let $(X,\omega)$ be a compact K\"ahler surface. The Donaldson-Uhlenbeck moduli space $\overline{M}_{HYM}$ is a complex analytic space, extending the natural complex analytic structure on $M^*_{HYM}$.
\end{conjecture}

The current paper suggests and supports a plausible folklore strategy for attacking this conjecture. The hope is that combined with the differential geometric techniques, we can still construct an algebraic moduli space for \textit{smoothable} semistable sheaves and then realize such a comparison. Based on the fundamental work in \cite{toma16,toma20,toma2021}, it remains to establish the following three steps.

\

\noindent\textbf{Step 1}. Near the Donaldson-Uhlenbeck boundary, choose a \textit{``polystable''} representative $\mathcal F_0$ mapped to the given ideal HYM connection. Then study the local analytic structure of the versal deformation space quotient by a suitable group of symmetries.

The existence of such a representative is guaranteed by Theorem \ref{thm-global convergence in a flat family}, while it may not be a genuine $\mu$-polystable sheaf. Based on Toma's semistable reduction theorem \cite{toma20} and Rees construction, one may also replace the algebraic limit in Theorem \ref{thm-global convergence in a flat family} by a GM-polystable sheaf still mapped to the given ideal HYM connection.

The harder step is to study the local analytic structure of the suitable quotient space. The existence of versal deformation is guaranteed by \cite{siu-trautmann81} under a fairly general hypothesis, so the key is to establish an analytic version of equivariant Luna's slicing on the versal deformation space of $\mathcal F_0$; cf. \cite[Proposition 2.15]{greb-toma2020}. While it is likely difficult to establish such a result in general, on surface case one may proceed with a length $1$ locally-free resolution constructed in \cite{schuster82}. We can try to deform the resolution together with its gluing data \cite[Proposition 3.1]{toma01}, then remove the freedom that merely changes the presentation of the same sheaf. Working with the deformed locally-free resolution may help us to handle the appearance of singularities.

\noindent\textbf{Step 2}. Glue the local analytic structures and construct a global proper moduli space for smoothable semistable sheaves.

This might be established by extending the fundamental work of \cite{AFS17} to the analytic setting. We also ask whether a more direct gluing approach is possible when the base is merely a surface.

\noindent\textbf{Step 3}. Study the structure of comparison map between sheaf moduli and gauge theoretical moduli, and thereby establishing Conjecture \ref{conj-complx structrue on the boundary}.

The fundamental difficulty is to understand equivalence relation on sheaf moduli, i.e. the construction of local holomorphic functions separating different sheaves. The proof in \cite{li1993,GSTW2021} relies on global structure of moduli space, thus does not extend immediately to general K\"ahler setting. Extensions of Cartan's criterion in \cite{mathieu00} might provide potential gain in general non-algebraic set-up.

\

We also remark that such a strategy goes back to the construction of $\mathbb Q$-Gorenstein smoothable $K$-semistable Fano moduli \cite{OSS16,odaka15,li-wang-xu19}, based on the analytic foundation established in \cite{DS1,CDS15,SSY18}. An analogous sheaf theoretical construction also appears in \cite{greb-toma2020,buchdahl-schumacher21,buchdahl-schumacher22}. In particular, this would also provide a new construction of proper moduli for semistable sheaves even in the algebraic setting.

\

On the other hand, as is well-known in the literature, in general we do not expect a construction of complex analytic moduli spaces in the non-K\"ahler category. For example, the Zariski openness of semistable sheaves fails for general Gauduchon metrics \cite{teleman08}, and the boundedness is also not well formulated there \cite[Remark 4.2]{pavel-toma24}. However, in certain cases complex structures can still be achieved; for instance see \cite{BTT2018}.

\appendix

\section{Proof of Theorem \ref{thm--separatedness for slope stability}}\label{sec--proof of separatedness for slope stability}

Recall that for any $\mu$-semistable torsion-free sheaf $\mathcal F$, the following pair is well-defined \cite[Appendix]{BTT2017}
\[
\Psi(\mathcal F)=(\gr^S(\mathcal F)^{\vee\vee},\mathcal C(\mathcal F)),
\]
where $\gr^S(\mathcal F)$ is the graded object of some Seshadri filtration and $\mathcal C(\mathcal F)$ is the effective codimension $2$ support cycle of the torsion sheaf $\gr^S(\mathcal F)^{\vee\vee}/\gr^S(\mathcal F)$. Denote by $R(\mathcal F)\coloneqq\gr^S(\mathcal F)^{\vee\vee}$. For a general torsion sheaf $\mathcal T$ with at least codimension $2$ support, we denote by $\mathcal C_{\mathcal T}$ its \textit{codimension 2} effective support cycle.

The following additivity should be well-known to experts:
\begin{proposition}\label{prop--additivity of semistable extension}
    Let $(X,\omega)$ be a compact K\"ahler manifold. If
        \[
        0\to \mathcal F\to\mathcal E\to\mathcal G\to 0
        \]
        is an exact sequence of torsion-free $\mu$-semistable sheaves of the same slope. Then
        \[
        R(\mathcal E)=R(\mathcal F)\oplus R(\mathcal G),\quad \mathcal C(\mathcal E)=\mathcal C(\mathcal F)+\mathcal C(\mathcal G).
        \]
\end{proposition}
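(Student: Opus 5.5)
The plan is to build a Seshadri filtration of $\mathcal E$ directly out of Seshadri filtrations of $\mathcal F$ and $\mathcal G$, and then compare reflexive hulls and torsion cycles term by term. Choose Seshadri filtrations $0=\mathcal F_0\subset\cdots\subset\mathcal F_a=\mathcal F$ and $0=\mathcal G_0\subset\cdots\subset\mathcal G_b=\mathcal G$. Let $\pi:\mathcal E\to\mathcal G$ be the quotient map, and define
\[
\mathcal E_i=\mathcal F_i \ (i\le a),\qquad \mathcal E_{a+k}=\pi^{-1}(\mathcal G_k)\ (0\le k\le b).
\]
Then $\mathcal E_{a+k}/\mathcal E_{a+k-1}\cong\mathcal G_k/\mathcal G_{k-1}$ and $\mathcal E_i/\mathcal E_{i-1}=\mathcal F_i/\mathcal F_{i-1}$. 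All of these quotients are torsion-free and $\mu$-stable of the common slope $\mu(\mathcal E)=\mu(\mathcal F)=\mu(\mathcal G)$.

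Each $\mathcal E_i$ is torsion-free, being a subsheaf of $\mathcal E$. The filtration therefore satisfies the definition of a Seshadri filtration of $\mathcal E$, with
\[
\gr^S(\mathcal E)=\gr^S(\mathcal F)\oplus\gr^S(\mathcal G).
\]
By the well-definedness theorem from the Appendix of \cite{BTT2017}, the pair $(R(\mathcal E),\mathcal C(\mathcal E))$ can be computed using any Seshadri filtration, so we may use this one.

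Since double dual commutes with finite direct sums, we get
\[
R(\mathcal E)=\gr^S(\mathcal F)^{\vee\vee}\oplus\gr^S(\mathcal G)^{\vee\vee}=R(\mathcal F)\oplus R(\mathcal G).
\]
The torsion quotient then splits as
\[
\gr^S(\mathcal E)^{\vee\vee}/\gr^S(\mathcal E)\cong \big(\gr^S(\mathcal F)^{\vee\vee}/\gr^S(\mathcal F)\big)\oplus\big(\gr^S(\mathcal G)^{\vee\vee}/\gr^S(\mathcal G)\big).
\]
Each summand is supported in codimension at least $2$, because a torsion-free sheaf agrees with its reflexive hull off a codimension-$2$ set. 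The codimension-$2$ support cycle is additive on direct sums, since the multiplicity along each component $Z$ is the length at the generic point of $Z$, and length is additive. This gives $\mathcal C(\mathcal E)=\mathcal C(\mathcal F)+\mathcal C(\mathcal G)$.

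The main point needing care is the independence of $(R,\mathcal C)$ from the choice of Seshadri filtration; that is exactly the cited theorem from \cite{BTT2017}. Granting it, the only remaining check is routine: the pulled-back pieces $\pi^{-1}(\mathcal G_k)$ are subsheaves of $\mathcal E$ with the correct successive quotients, which follows from the snake lemma or the third isomorphism theorem.
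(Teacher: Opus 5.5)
Your proposal is correct and follows the paper's proof: you append the preimages $\pi^{-1}(\mathcal G_k)$ of a Seshadri filtration of $\mathcal G$ to a Seshadri filtration of $\mathcal F$, note that the graded pieces are exactly those of $\mathcal F$ and $\mathcal G$, and then take reflexive hulls and codimension-$2$ support cycles, using the well-definedness of $(R,\mathcal C)$. Your remarks that double duals and support cycles are additive on direct sums simply spell out the step the paper summarizes as ``by taking double dual and support cycle.''
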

\begin{proof}
    We prove by concatenating the filtration. We choose two saturated Seshadri filtrations of $\mathcal F$ and $\mathcal G$
    \[
    0=\mathcal F_0\subset\cdots\subset\mathcal F_r=\mathcal F,\quad 0=\mathcal G_0\subset\cdots\mathcal G_s=\mathcal G.
    \]
    Denote by $\pi$ the quotient map $\mathcal E\to\mathcal G$, and define $\pi^{-1}(\mathcal G_i)$ as the kernel of the natural surjective morphism
    \[
    \mathcal E\xrightarrow{\pi}\mathcal G\to\mathcal G/\mathcal G_i.
    \]
    It is easy to see that we have the follow two exact sequences
    \[
    0\to\mathcal F\to\pi^{-1}(\mathcal G_i)\to\mathcal G_i\to 0
    \]
    and
    \[
    0\to\pi^{-1}(\mathcal G_i)\to\mathcal E\to\mathcal G/\mathcal G_i\to 0.
    \]
    In particular, the second exact sequence shows that $\pi^{-1}(\mathcal G_i)$ is saturated in $\mathcal E$.

    Now we consider concatenated filtration
    \[
    0=\mathcal F_0\subset\cdots\subset\mathcal F_r=\mathcal F=\pi^{-1}(\mathcal G_0)\subset\pi^{-1}(\mathcal G_1)\subset\cdots\subset\pi^{-1}(\mathcal G_s)=\mathcal E.
    \]
    The successive quotients are 
    \[
    \mathcal F_i/\mathcal F_{i-1},\ 1\leq i\leq r,\quad \mathcal G_j/\mathcal G_{j-1},\ 1\leq j\leq s.
    \]
    In fact, the natural map 
    \[
    \pi^{-1}(\mathcal G_j)\to\mathcal G_j/\mathcal G_{j-1}
    \]
    is surjective with kernel $\pi^{-1}(\mathcal G_{j-1})$, so
    \[
    \pi^{-1}(\mathcal G_j)/\pi^{-1}(\mathcal G_{j-1})\cong\mathcal G_j/\mathcal G_{j-1}.
    \]
 Therefore, this concatenated filtration is indeed a Seshadri filtration of $\mathcal E$. By taking double dual and support cycle, we conclude the identification.

\end{proof}

We will also use the following simple fact:
\begin{lemma}\label{lem--extension of semistable same slope sheaves is semistable}
    Let 
    \[
    0\to\mathcal F\to\mathcal E\to\mathcal G\to 0
    \]
    be an exact sequence of coherent sheaves, such that both $\mathcal F$ and $\mathcal G$ are torsion-free and $\mu$-semistable. Moreover, assume they have the same slope. Then $\mathcal E$ is torsion-free and $\mu$-semistable of the same slope.
\end{lemma}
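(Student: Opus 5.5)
We prove the three assertions in order: torsion-freeness, the slope of $\mathcal E$, and semistability.

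\emph{Torsion-freeness.} Let $\mathcal T\subset\mathcal E$ be the torsion subsheaf. Its image in $\mathcal G$ is torsion, hence zero because $\mathcal G$ is torsion-free. Thus $\mathcal T\subset\mathcal F$, and since $\mathcal F$ is torsion-free, $\mathcal T=0$.

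\emph{Slope.} Rank and degree $\deg_\omega=c_1\cdot[\omega]^{n-1}$ are additive in short exact sequences, since $c_1$ is additive for the Chern character defined above. So $\rk\mathcal E=\rk\mathcal F+\rk\mathcal G$ and $\deg\mathcal E=\deg\mathcal F+\deg\mathcal G$. If both quotients $\deg\mathcal F/\rk\mathcal F$ and $\deg\mathcal G/\rk\mathcal G$ equal $\mu$, then $\mu(\mathcal E)=\mu$.

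\emph{Semistability.} Let $\mathcal S\subset\mathcal E$ be a saturated subsheaf with $0<\rk\mathcal S<\rk\mathcal E$. Set $\mathcal S'=\mathcal S\cap\mathcal F$ and $\mathcal S''=\pi(\mathcal S)\subset\mathcal G$, where $\pi:\mathcal E\to\mathcal G$ is the quotient map. This gives an exact sequence
\[
0\to\mathcal S'\to\mathcal S\to\mathcal S''\to0,
\]
so $\rk$ and $\deg$ of $\mathcal S$ are the sums of those of $\mathcal S'$ and $\mathcal S''$. Both $\mathcal S'$ and $\mathcal S''$ are torsion-free, being subsheaves of torsion-free sheaves.

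The key step is to compare degrees of subsheaves with those of their saturations, because semistability in the paper's definition is stated only for saturated subsheaves. If $\mathcal S''\ne0$, let $\overline{\mathcal S''}$ be its saturation in $\mathcal G$. The quotient $\overline{\mathcal S''}/\mathcal S''$ is torsion. Its degree is nonnegative: $c_1$ of a torsion sheaf is the class of an effective divisor, by Proposition \ref{prop--chern character of torsion sheaf}, or it vanishes when the support has codimension $\ge2$. Pairing with $[\omega]^{n-1}$ gives $\deg\mathcal S''\le\deg\overline{\mathcal S''}$. If $\rk\overline{\mathcal S''}<\rk\mathcal G$, semistability of $\mathcal G$ gives $\deg\overline{\mathcal S''}\le\mu\rk\mathcal S''$. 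If $\rk\overline{\mathcal S''}=\rk\mathcal G$, then $\overline{\mathcal S''}=\mathcal G$ and equality holds trivially. The same argument applied to $\mathcal S'\subset\mathcal F$ gives $\deg\mathcal S'\le\mu\rk\mathcal S'$; if $\mathcal S'=0$ the bound holds trivially.

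Adding the two bounds yields $\deg\mathcal S\le\mu\rk\mathcal S$, that is, $\mu(\mathcal S)\le\mu(\mathcal E)$. The only nontrivial input is the nonnegativity of the degree of torsion quotients, which holds on compact K\"ahler manifolds since $\omega^{n-1}$ is positive on effective divisors.
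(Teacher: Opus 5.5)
Your proof is correct and follows the same route as the paper's. Torsion-freeness comes from the image in $\mathcal G$, the slope from additivity of rank and degree, and semistability from splitting a subsheaf into $\mathcal S\cap\mathcal F$ and $\pi(\mathcal S)$. Your extra step, passing to saturations via nonnegativity of the degree of torsion sheaves and treating the full-rank case, fills in a detail the paper uses without comment, since its definition of semistability only tests saturated subsheaves.
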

\begin{proof}
    We first prove torsion-freeness. Any torsion element in $\mathcal E$ would be mapped to zero in the torsion-free sheaf $\mathcal G$, so lies in $\mathcal F$. Since $\mathcal F$ is also torsion-free, it must be zero.

    Denote by $\mu$ the slope of $\mathcal F$ and $\mathcal G$. By additivity of degree and rank, we get
    \[
    \deg\mathcal E=\deg\mathcal F+\deg\mathcal G=\mu\cdot(\rk\mathcal F+\rk\mathcal G)=\mu\cdot\rk\mathcal E,
    \]
    so $\mu(\mathcal E)=\mu$.

    Finally we prove the semistability of $\mathcal E$. Let $\mathcal E_0\subset\mathcal E$ be a subsheaf, and we consider
    \[
    \mathcal F_0=\mathcal E_0\cap\mathcal F,\quad \mathcal G_0=\mathrm{im}(\mathcal E_0\to\mathcal G).
    \]
    There is a natural exact sequence
    \[
    0\to\mathcal F_0\to\mathcal E_0\to\mathcal G_0\to 0,
    \]
    so
    \[
    \det\mathcal E_0=\deg\mathcal F_0+\deg\mathcal G_0.
    \]
    On the other hand, by semistability of $\mathcal F,\mathcal G$, we have
    \[
    \deg\mathcal F_0\leq\mu\cdot \rk\mathcal F_0,\quad\deg\mathcal G_0\leq\mu\cdot\rk\mathcal G_0.
    \]
    It follows that
    \[
    \deg\mathcal E_0\leq\mu\cdot(\rk\mathcal F_0+\rk\mathcal G_0)=\mu\cdot\rk\mathcal E_0,
    \]
    i.e. $\mu(\mathcal E_0)\leq\mu=\mu(\mathcal E)$, so $\mathcal E$ is semistable.
\end{proof}

Now we prove Theorem \ref{thm--separatedness for slope stability}.
\begin{proof}[Proof of Theorem \ref{thm--separatedness for slope stability}]
    As in the first step of the proof of \cite[Theorem 6.4]{toma20}, by possibly shrinking the disk, we can assume there exists a morphism $\varphi:\mathcal E\to\mathcal F$, such that $\varphi_t$ is an isomorphism for every $t\in\Delta^*$. It follows that $\varphi$ is injective, since $\ker\varphi$ is a subsheaf of $\mathcal E$ supported in $X\times\{0\}$, which must be zero by flatness of $\mathcal E$. Hence $\mathcal E$ is identified with a subsheaf of $\mathcal F$.

    The quotient $\mathcal F/\mathcal E$ is also supported in $X\times\{0\}$, so by compactness of $X$ we can find $N>0$, such that $t^N\mathcal F\subset\mathcal E$. We now define a finite sequence of subsheaves
    \[
    \mathcal E^{(j)}\coloneqq\mathcal E\cap t^j\mathcal F,\ 0\leq j\leq N.
    \]
    Each sheaf is a subsheaf of $\mathcal E$, so cannot have $t$-torsion. It follows that they are all flat over $\Delta$.

    For any $j\geq 0$, the multiplication map 
    \[
    \mathcal F\xrightarrow{\cdot t^j}t^j\mathcal F
    \]
    is an isomorphism of sheaves over $X\times\Delta$. The injectivity follows from flatness of $\mathcal F$, while the surjectivity follows from the definition of $t^j\mathcal F$. In particular, the central fiber of $t^j\mathcal F$ is isomorphic to 
    \[
    \frac{t^j\mathcal F}{t^{j+1}\mathcal F}\cong\frac{\mathcal F}{t\mathcal F}=\mathcal F_0,
    \]
    which is non-zero. Note that this is different from the natural inclusion map $t^j\mathcal F\to\mathcal F$, where the induced map on central fiber is zero.

    Consider $E_j=\mathcal E^{(j)}\vert_{X\times\{0\}}$. Using the above identification $t^j\mathcal F\cong\mathcal F$, the inclusion $\mathcal E^{(j)}\subset t^j\mathcal F$ induces a morphism between central fibers
    \[
    \varphi_j: E_j\to\mathcal F_0.
    \]
    We now induct on $j$ to prove that $E_j$ is torsion-free and semistable of the common slope $\mu=\mu(\mathcal E_0)=\mu(\mathcal F_0)$. Moreover, $\Psi(E_j)=\Psi(\mathcal E_0)$. Since $\mathcal E^{(N)}=t^N\mathcal F\cong\mathcal F$, the result then follows.

    The assertion is trivial when $j=0$. Now assume the assertion for $E_j$, and we study $E_{j+1}$. Denote by $K_j=\ker\varphi_j$ and $I_j=\Im\varphi_j$. By induction assumption, both are subsheaves of torsion-free sheaf, so they are all torsion-free. Let's assume both have positive rank, since the other cases are easier to handle. By semistability of $E_j$ and $\mathcal F_0$, we have two-side inequalities
    \[
    \mu(I_j)\geq\mu(E_j)=\mu,\quad \mu(I_j)\leq\mu(\mathcal F_0)=\mu,
    \]
    so $\mu(I_j)=\mu$. It follows that $\mu(K_j)=\mu$. Since $\mathcal F_0$ is semistable of slope $\mu$ and $I_j$ is its subsheaf of the same slope, it must also be semistable. Likewise the inclusion $K_j\subset E_j$ implies that $K_j$ is semistable.

    Now we relate $K_j, I_j$ to $E_{j+1}$. Denote by $\iota: X\times\{0\}\hookrightarrow X\times\Delta$ the natural inclusion. Consider the natural morphism
   \[
   \mathcal E^{(j)}
\hookrightarrow t^j\mathcal F
\twoheadrightarrow
\frac{t^j\mathcal F}{t^{j+1}\mathcal F}
\cong \iota_*\mathcal F_0,
\]
whose image is $\iota_*I_j$. The kernel is thus
\[
\mathcal E^{(j)}\cap t^{j+1}\mathcal F=\mathcal E^{(j+1)}.
\]
It follows that we have the exact sequence
\[
0\to\mathcal E^{(j+1)}\to\mathcal E^{(j)}\to\iota_* I_j\to 0.
\]
By restricting this sequence to the central fiber, we get
\[
0\to\mathrm{Tor}_1^{\mathcal O_{\Delta,0}}(\iota_*I_j,\mathcal O_{\Delta,0}/(t))\to E_{j+1}\to E_j\to I_j\to 0.
\]
By general theory, we know that
\[
\mathrm{Tor}_1^{\mathcal O_{\Delta,0}}(\iota_*I_j,\mathcal O_{\Delta,0}/(t))\cong\ker(t:\iota_* I_j\to\iota_* I_j)\cong I_j,
\]
since $I_j$ is a subsheaf of $\mathcal F_0=\mathcal F/t\mathcal F$. Using that $K_j=\ker(E_j\to I_j)$, we thus obtain an exact sequence
\begin{equation}\label{equa--exact seq 1 in separatedness}
    0\to I_j\to E_{j+1}\to K_j\to 0.
\end{equation}
It follows from Lemma \ref{lem--extension of semistable same slope sheaves is semistable} that $E_{j+1}$ is torsion-free and semistable of slope $\mu$. On the other hand, we have the defining exact sequence
\begin{equation}\label{equa--exact seq 2 in separatedness}
    0\to K_j\to E_j\to I_j\to 0.
\end{equation}
Applying Proposition \ref{prop--additivity of semistable extension} to both \eqref{equa--exact seq 1 in separatedness} and \eqref{equa--exact seq 2 in separatedness}, we conclude that $\Psi(E_{j+1})=\Psi(E_j)$.

\end{proof}

\section{Proof of Theorem \ref{thm--local deformation to global deformation}}\label{sec--proof of local-to-global passage}

In this section, we give a detailed account of Douady's theory on deformation of compact analytic spaces over possibly infinite dimensional parameter base, and then prove the local-to-global passage theorem.

\subsection{Idea of proof}\label{subsec--idea of proof for local-to-global passage theorem}
We start by explaining the underlying idea of the proof.

Recall the discussion in Section \ref{subsec--global construction of flat families}, that under our assumption on local projective dimension, each $R_\alpha$ is assumed to be injective as a sheaf morphism. Therefore, the liftings $B_{\alpha\beta}$ and $H_{\alpha\beta\gamma}$ are uniquely determined by $\{q_\alpha\}$ and $A_{\alpha\beta}$. It is then natural to expect that under such a hypothesis, those liftings also converge to the corresponding limit lifting.
\begin{lemma}\label{lem--convergence after injective relation}
    Let $R_j$ and $T_j$ be holomorphic matrices on an open subset $V$ of a complex manifold. Suppose 
    \[
    R_j\to R_0,\quad R_jT_j\to R_0 T_0\ \text{both in }\ C^{\infty}_{loc}(V).
    \]
   If $R_0:\mathcal O_V^k\to\mathcal O_V^N$ is injective as a sheaf morphism, then $T_j\to T_0$ in $C^{\infty}_{loc}(V)$.
\end{lemma}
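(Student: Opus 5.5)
The plan is to reduce to a statement about dividing by a single holomorphic function, and then use a one-variable maximum principle to get uniform bounds across its zero set. The convergence is local, so it suffices to prove it near a fixed point $x\in V$. We may also work on a connected coordinate polydisc around $x$.

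First we use the injectivity of $R_0:\mathcal O_V^k\to\mathcal O_V^N$. It means $R_0$ has generic rank $k$ on every connected component. So on a connected neighbourhood of $x$ there is a choice of $k$ rows $I$ whose $k\times k$ minor $M_0=(R_0)_I$ has $D_0\coloneqq\det M_0\not\equiv 0$. Let $M_j=(R_j)_I$ and $D_j=\det M_j$, so $D_j\to D_0$ in $C^\infty_{loc}$. Multiplying the $I$-rows of $R_jT_j$ by the adjugate gives the identity
\[
D_j\,T_j=\mathrm{adj}(M_j)\,(R_jT_j)_I .
\]
Both factors on the right converge in $C^\infty_{loc}$, so $D_jT_j\to \mathrm{adj}(M_0)(R_0T_0)_I=D_0T_0$ locally uniformly.

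The main obstacle is dividing by $D_j$ uniformly near the zero set of $D_0$. Away from $\{D_0=0\}$ there is nothing to do: on compact sets there $|D_j|$ is bounded below for large $j$, and $T_j=D_j^{-1}(D_jT_j)$ converges smoothly. Near a zero $x$ of $D_0$ we argue as in the Weierstrass preparation theorem. After a linear change of coordinates, $D_0(x',\cdot)$ is not identically zero in the last variable $z_n$. Choose a polydisc $\Delta'\times\Delta_r$ around $x$ such that $D_0\neq 0$ on $\overline{\Delta'}\times\{|z_n-x_n|=r\}$. By uniform convergence, $|D_j|\geq c>0$ on this compact set for all large $j$. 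Hence on that set
\[
|T_j|\le c^{-1}\,\sup|D_jT_j|\le C,
\]
uniformly in $j$. Now for each fixed $z'\in\Delta'$, the function $T_j(z',\cdot)$ is holomorphic on the disc $\Delta_r$. The maximum principle therefore gives $|T_j|\le C$ on the whole polydisc $\Delta'\times\Delta_r$.

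With this uniform bound, Montel's theorem and Cauchy estimates show that every subsequence of $\{T_j\}$ has a further subsequence converging in $C^\infty_{loc}$ to some holomorphic limit $T$. On the dense open set $\{D_0\neq0\}$ we have already shown $T_j\to T_0$, so $T=T_0$ there and hence $T=T_0$ everywhere by continuity. Since every subsequential limit is $T_0$, the full sequence converges, $T_j\to T_0$ in $C^\infty_{loc}(V)$.
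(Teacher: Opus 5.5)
Your proposal is correct and follows essentially the same route as the paper's proof: pick a $k\times k$ row minor whose determinant is a nonzero germ, use that it stays uniformly invertible on a set of the form $\overline{D'}\times\partial\Delta_r$, and apply the maximum principle on the one-variable slices. The only difference is that you first obtain a uniform bound on $T_j$ and then identify the limit via Montel's theorem and density of $\{D_0\neq0\}$. The paper instead applies the maximum principle directly to $T_j-T_0$, using $R_j(T_j-T_0)\to0$, which gives uniform convergence at once without the subsequence argument.
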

\begin{proof}
    We write $R_j$ as an $N\times k$ matrix and $T_j$ as a $k\times m$ matrix.
The case $k=0$ is immediate. It suffices to work with a neighborhood of a fix $x\in V$. Since $R_0$ is an injective
map of free sheaves on a complex manifold, some $k\times k$ row minor
$M_0$ has determinant $d_0$ which is nonzero as a \textit{holomorphic germ} at $x$.
Let $M_j$ be the same row minor of $R_j$.

If $d_0(x)\ne0$, the inverses $M_j^{-1}$ exist and are uniformly
bounded on a sufficiently small neighborhood of $x$, for all large $j$, so the result is immediate. 

Suppose instead that $d_0(x)=0$. Choose local coordinates
$(z',w)$ centered at $x$ such that $d_0(0,w)$ is not identically zero.
There are a small disc $\Delta_r=\{|w|<r\}$ and a polydisc $D'$ in
the $z'$-coordinates such that
\[
 \overline{D'\times\Delta_r}\subset V,
 \qquad |d_0|\ge c>0
 \quad\hbox{on }K:=\overline{D'}\times\partial\Delta_r.
\]
Indeed, first choose a circle on which $d_0(0,w)$ has no zeros, and
then shrink $D'$. Uniform convergence of $R_j$ on $K$ implies that on such $K$, $M_j^{-1}$ exists and is uniformly bounded for all large $j$.

By our hypothesis, it is easy to see that 
\[
R_j(T_j-T_0)\to 0\quad\hbox{locally uniformly on }V.
\]
Selecting the rows defining $M_j$ and multiplying by $M_j^{-1}$ gives
$(T_j-T_0)\to0$ uniformly on $K$. Finally we apply the maximum principle on $z^{\prime}$-slice to each
entry as a function of $w$, for every $z'\in D'$. This gives uniform
convergence on $D'\times\Delta_r$.
\end{proof}

\begin{corollary}\label{cor--convergence of liftings B and H}
    Under the hypothesis of Theorem \ref{thm--local deformation to global deformation}, we have
    \[
   \begin{aligned}
      & B_{\alpha\beta,j}\to B_{\alpha\beta, 0}&& \text{in}\ C^{\infty}_{loc}(V_{\alpha\beta}),\\
       & H_{\alpha\beta\gamma,j}\to H_{\alpha\beta\gamma,0}&& \text{in}\ C^{\infty}_{loc}(V_{\alpha\beta\gamma}).
   \end{aligned}
    \]
\end{corollary}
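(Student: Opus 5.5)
The plan is to apply Lemma \ref{lem--convergence after injective relation} twice, once for each lifting. The only input beyond the hypotheses of Theorem \ref{thm--local deformation to global deformation} is that each limit relation matrix $R_{\alpha,0}$ is injective as a sheaf morphism. This holds because the given length $1$ resolutions are exact on the left. I also read the hypothesis so that the $R_{\alpha,j}$ are the relation matrices of the defining data of $\mathcal G_j$, with each $R_{\alpha,j}$ injective, since $\mathcal G_j$ has local projective dimension at most $1$. So $B_{\alpha\beta,j}$ and $H_{\alpha\beta\gamma,j}$ are uniquely determined by the identities
\[
A_{\alpha\beta,j}R_{\beta,j}=R_{\alpha,j}B_{\alpha\beta,j},\qquad A_{\alpha\beta,j}A_{\beta\gamma,j}-A_{\alpha\gamma,j}=R_{\alpha,j}H_{\alpha\beta\gamma,j},
\]
and the same identities hold with $j$ replaced by $0$.

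\textbf{The liftings $B_{\alpha\beta,j}$.} Work on $V_{\alpha\beta}$ with $R_j:=R_{\alpha,j}$ and $T_j:=B_{\alpha\beta,j}$. By hypothesis $R_{\alpha,j}\to R_{\alpha,0}$ in $C^\infty_{loc}(V_\alpha)$, hence also on $V_{\alpha\beta}$. The product $R_jT_j=A_{\alpha\beta,j}R_{\beta,j}$ is a product of two $C^\infty_{loc}$-convergent holomorphic matrices, so it converges to $A_{\alpha\beta,0}R_{\beta,0}=R_{\alpha,0}B_{\alpha\beta,0}$. Since $R_{\alpha,0}$ stays injective after restriction to the open set $V_{\alpha\beta}$, Lemma \ref{lem--convergence after injective relation} gives $B_{\alpha\beta,j}\to B_{\alpha\beta,0}$ in $C^\infty_{loc}(V_{\alpha\beta})$.

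\textbf{The liftings $H_{\alpha\beta\gamma,j}$.} Work on $V_{\alpha\beta\gamma}$ with $R_j:=R_{\alpha,j}$ and $T_j:=H_{\alpha\beta\gamma,j}$. The product $R_jT_j=A_{\alpha\beta,j}A_{\beta\gamma,j}-A_{\alpha\gamma,j}$ converges in $C^\infty_{loc}(V_{\alpha\beta\gamma})$ to $A_{\alpha\beta,0}A_{\beta\gamma,0}-A_{\alpha\gamma,0}=R_{\alpha,0}H_{\alpha\beta\gamma,0}$, because each $A$ converges on its double overlap and hence on the triple overlap. Lemma \ref{lem--convergence after injective relation} again yields the claim.

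\textbf{Remaining point.} The lemma as proved gives locally uniform convergence, so I still need to upgrade to $C^\infty_{loc}$. For holomorphic matrices this is automatic: locally uniform convergence implies convergence of all derivatives on smaller compact sets by Cauchy's estimates. No step is a serious obstacle. The one thing to check is that the uniqueness of the limit liftings $B_{\alpha\beta,0}$ and $H_{\alpha\beta\gamma,0}$ identifies the limit of $T_j$ with the specific $T_0$ fixed in the setup. This follows from the injectivity of $R_{\alpha,0}$.
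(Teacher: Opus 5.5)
Your proposal is correct and is essentially the paper's proof: apply Lemma \ref{lem--convergence after injective relation} on $V_{\alpha\beta}$ and $V_{\alpha\beta\gamma}$ to the identities $R_{\alpha,j}B_{\alpha\beta,j}=A_{\alpha\beta,j}R_{\beta,j}$ and $R_{\alpha,j}H_{\alpha\beta\gamma,j}=A_{\alpha\beta,j}A_{\beta\gamma,j}-A_{\alpha\gamma,j}$, using only the injectivity of $R_{\alpha,0}$. Your Cauchy-estimate upgrade from locally uniform to $C^\infty_{loc}$ convergence is a detail the paper leaves implicit. Injectivity of $R_{\alpha,j}$ is not needed by the lemma itself; it only makes the liftings $B_{\alpha\beta,j}$ and $H_{\alpha\beta\gamma,j}$ unique.
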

\begin{proof}
   Apply Lemma~\ref{lem--convergence after injective relation} to
\[
\begin{aligned}
 R_{\alpha,j}B_{\alpha\beta,j}
 &=A_{\alpha\beta,j}R_{\beta,j}
   \longrightarrow R_{\alpha,0}B_{\alpha\beta,0},\\
 R_{\alpha,j}H_{\alpha\beta\gamma,j}
 &=A_{\alpha\beta,j}A_{\beta\gamma,j}-A_{\alpha\gamma,j}
   \longrightarrow R_{\alpha,0}H_{\alpha\beta\gamma,0},
\end{aligned}
\]
in $C^\infty_{\mathrm{loc}}$ on the corresponding overlaps.
\end{proof}

Given Corollary \ref{cor--convergence of liftings B and H}, the proof of Theorem \ref{thm--local deformation to global deformation} will go as follows. By working with a refinement $\{U_\alpha\}$ of $\{V_\alpha\}$, we can consider the Banach space of holomorphic matrices on $U_\alpha, U_{\alpha\beta}$ and $U_{\alpha\beta\gamma}$ endowed with the $L^{\infty}$-norm. We then take the finite product to get a Banach space $\mathcal E$, so each collection of \textit{arbitrary} holomorphic matrices $(R, A, B,H)$ defines an element in $\mathcal E$. On the other hand, as we have seen in Section \ref{sec--proof of local-to-global passage}, the genuine sheaf construction requires the following compatibility conditions:
\begin{equation}\label{equa--defining equation for mathfrak B}
    \begin{aligned}
   & A_{\alpha\beta}R_\beta= R_\alpha B_{\alpha\beta},&&\text{on}\ U_{\alpha\beta},\\
    & A_{\alpha\beta}A_{\beta\gamma}-A_{\alpha\gamma}=R_\alpha H_{\alpha\beta\gamma},&&\text{on}\ U_{\alpha\beta\gamma},\\
    & A_{\alpha\alpha}=\mathrm{Id},&&\text{on}\ U_\alpha.
\end{aligned}
\end{equation}
We regard this as a holomorphic defining equation on $\mathcal E$, and obtain a Banach analytic space $\mathfrak B$. Our convergent data $(R_j, A_j, B_j, H_j)$ naturally define a sequence of points $b_j\in\mathfrak B$ converging to $b_0$. Moreover, the defining equation \eqref{equa--defining equation for mathfrak B} tautologically defines a sheaf $\mathcal G_{\mathfrak B}$ on $X\times\mathfrak B$, such that
\[
\mathcal G_{\mathfrak B}\vert_{X\times\{b_0\}}=\mathcal G_0,\quad \mathcal G_{\mathfrak B}\vert_{X\times\{b_j\}}=\mathcal G_j.
\]
One would be able to check directly that, $\mathcal G_{\mathfrak B}$ is \textit{anaflat} over $\mathfrak B$, a notion introduced by Douady \cite{douady66} generalizing coherence and flatness in finite dimensional setting.

The fundamental work in \cite{douady74} establishes the existence of versal deformations in complex analytic geometry. More importantly, the versality holds even for anaflat families over Banach analytic parameter spaces. This allows us to produce a finite dimensional flat family from the tautological sheaf $\mathcal G_{\mathfrak B}$ over $X\times\mathfrak B$.

There is one more technical point we need to address. The versal deformation in \cite{douady74} was constructed for finite dimensional compact analytic spaces, while we are concerned with sheaf theoretical deformations. To pass from coherent sheaf to analytic space, we use a classical construction called \textit{square-zero extension}, also known as \textit{Nagata's principle of idealization} \cite[Chapter I, §1]{Nagata62}. The basic idea is as follows. Let $R$ be a commutative unit ring and $M$ be a $R$-module. We define a ring structure on $R[M]\coloneqq R\oplus M$ via
\[
(a,u)\cdot(b,v)\coloneqq(ab,av+bu).
\]
The inclusion of $R$ into $R[M]$ and the projection to the first factor are natural ring homomorphisms
\[
p^*:R\to R[M],\quad\iota^*: R[M]\to R,\quad \iota^*\circ p^*=\mathrm{id}.
\]
Moreover, the module $M$ now sits inside $R[M]$ as an ideal with $M^2=0$:
\[
\{0\}\oplus M\hookrightarrow R\oplus M,
\]
which can also be covered as
\[
M\cong\ker(R[M]\to R).
\]
This transfers between $R$-modules and ideals in $R[M]$.

 Globalizing this, given a complex analytic space $X$ and a coherent analytic sheaf $\mathcal F$ over $X$, we can define a ringed space
 \[
 X[\mathcal F]\coloneqq(X,\mathcal O_X[\mathcal F]),
 \]
 which is isomorphic to the first infinitesimal neighborhood of the zero section $\iota: X\hookrightarrow\mathrm{V}(\mathcal F)$, thus being an analytic space. There are natural morphisms
 \[
 X\xrightarrow{\iota}X[\mathcal F]\xrightarrow{p}X,\quad p\circ\iota=\mathrm{id},
 \]
 so the underlying sheaf can be recovered as
 \[
 \mathcal F
=
\ker\!\left(
p_*\iota^*:p_*\mathcal O_{X[\mathcal F]}\longrightarrow\mathcal O_X
\right).
 \]

\subsection{Review of Douady's theory}

In this subsection, we record the fundamental ingredients from \cite{douady66,douady74} and \cite{pourcin69}.

We start with the definition of Banach analytic spaces, following \cite[\S3]{douady66}. The key distinction to finite dimensional setting comes from the fact that it is not enough to encode just the information of \textit{scalar} holomorphic functions. Instead, Douady defined the structure sheaf by considering $G$-valued holomorphic functions for \textit{all} Banach space $G$. He called them $\mathcal K$-functored spaces, using the appropriate categorical language.

To be precise, the local model of Douady's Banach analytic space is as follows. Let $E,F$ be two Banach spaces over $\mathbb C$, $U$ an open subset of $E$, and $f: U\to F$ is an analytic map. The underlying topological space is considered to be $X=f^{-1}(0)$. 

For any Banach space $G$, the $G$-valued holomophic functions on $X$ is defined to be
\[
\Phi(U,G)\coloneqq\frac{\mathcal H(U,G)}{\{\Lambda f:\Lambda\in\mathcal H(U, L(F,G))\}},
\]
where $\mathcal H(U,G)$ denotes the set of holomorphic maps between $U,G$ and $\mathcal L(F,G)$ denotes the Banach space of all bounded linear operators between $F,G$. The composition is defined to be
\[
\Lambda f: U\to G,\quad x\mapsto\Lambda(x)(f(x))\in G.
\]
That is, any two representatives differ by a function vanishing on $X$. 

To encode all Banach-space-valued functions, we introduce:
\begin{definition}
    Let $\mathcal K$ be a category. A $\mathcal K$-functored space is a topological space $X$ equipped with a functor $\Phi$ from $\mathcal K$ to the category of sheaves of sets on $X$.
\end{definition}

\begin{definition}
    Let $\mathcal K$ be the category of open subsets of Banach spaces and analytic mappings. Then a Banach analytic space is a $\mathcal K$-functored space, such that every point has an open neighborhood isomorphic to a model described above.
\end{definition}

Note that any Banach analytic space is naturally a ringed space by taking the structure sheaf $\Phi(\mathbb C)$. However, the ringed space structure is not enough to capture the analytic structure as defined above. As a simple example, we take
\[
D=\bigl(\{0\},\mathbf C[\varepsilon]/(\varepsilon^2)\bigr),
\]
and $G$ is a \textit{non-reflexive} Banach space. Take $b\in G^{**}\setminus G$ and define
\[
\alpha_b:\mathcal O_{G,0}\longrightarrow
\mathbf C[\varepsilon]/(\varepsilon^2),
\qquad
\alpha_b(h)=h(0)+b\bigl(dh(0)\bigr)\varepsilon.
\]
One can verify that $\alpha_b$ is a morphism between ringed spaces, but is not induced from an analytic map. We refer to \cite{EW17} for detailed discussions.

We next define the notion of anaflatness over Banach analytic parameter base. For this purpose, we need to first define anaflatness for open subsets of $\mathbb C^n$.

\begin{definition}[\S8.3, Définition in \cite{douady66}]
    Let $U$ be an open subset of $\mathbb C^n$, $S$ be a Banach analytic space and $\mathcal F$ be a sheaf of $\mathcal O_{U\times S}$-modules. We say $\mathcal F$ is $S$-anaflat, if for every $(x,s)\in U\times S$, there exists a finite locally-free resolution 
    \[
    0\to\mathcal L_p\to\cdots\to\mathcal L_0\to\mathcal F\to 0
    \]
    of $\mathcal F$ on a neighborhood of $(x,s)\in U\times S$, such that the fiberwise complex
    \[
    0\to\mathcal L_p(s)\to\cdots\to\mathcal L_0(s)\to\mathcal F(s)\to 0
    \]
    is exact.
\end{definition}
Here we follow the notation of \cite{douady66}, i.e. $\mathcal L(s)$ denotes the restriction of $\mathcal L$ to $U\times\{s\}$, which helps to distinguish it from a stalk.

The following suggests its connection to the usual notion of flatness:
\begin{proposition}[\S8.3, Proposition 1 in \cite{douady66}]
    Suppose $S$ is finite dimensional, then $\mathcal F$ is $S$-anaflat if and only if it is coherent and flat in the usual sense.
\end{proposition}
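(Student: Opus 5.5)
The plan is to prove both implications locally at a point $(x,s)\in U\times S$, working with the stalk $M=\mathcal F_{(x,s)}$. It is a finitely generated module over the local ring $B=\mathcal O_{U\times S,(x,s)}$, and $B$ is a local $A$-algebra for $A=\mathcal O_{S,s}$. Since $S$ is finite dimensional, both $A$ and $B$ are noetherian local rings and the homomorphism $A\to B$ is local. So the usual local criterion of flatness applies in its Bourbaki form: a finite $B$-module $M$ is $A$-flat if and only if $\mathrm{Tor}_1^A(M,A/\mathfrak m_A)=0$. I will also use that restriction to the fiber $U\times\{s\}$ amounts to $-\otimes_A A/\mathfrak m_A$. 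Each locally free $\mathcal O_{U\times S}$-module is $S$-flat, since $\mathcal O_{U\times S}$ is flat over $\mathcal O_S$.

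\emph{Anaflat $\Rightarrow$ coherent and flat.} Near $(x,s)$, the given resolution exhibits $\mathcal F$ as the cokernel of a morphism $\mathcal L_1\to\mathcal L_0$ of locally free sheaves. Hence $\mathcal F$ is finitely presented over the coherent sheaf of rings $\mathcal O_{U\times S}$ (Oka), and so $\mathcal F$ is coherent. For flatness I use that the $\mathcal L_i$ are $A$-flat. Then the resolution computes $\mathrm{Tor}^A_i(M,A/\mathfrak m_A)$ as the homology of the restricted complex $\mathcal L_\bullet(s)$, taken at the stalk at $x$. Fiberwise exactness kills this homology in positive degrees, so in particular $\mathrm{Tor}_1^A(M,A/\mathfrak m_A)=0$. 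The local criterion then gives flatness at $(x,s)$. Note that anaflatness also requires fiberwise exactness at points $(x',s')$ near $(x,s)$, but flatness at each point is all that is claimed.

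\emph{Coherent and flat $\Rightarrow$ anaflat.} I build the resolution by syzygies. Choose a surjection $\mathcal L_0=\mathcal O^{N_0}\to\mathcal F$ near $(x,s)$ and let $\mathcal K_1$ be its kernel, which is coherent. Since $\mathcal F$ and $\mathcal L_0$ are flat, $\mathcal K_1$ is flat over $S$. The vanishing of $\mathrm{Tor}_1^A(\mathcal F,\cdot)$ gives exactness of $0\to\mathcal K_1(s')\to\mathcal L_0(s')\to\mathcal F(s')\to0$ for every nearby $s'$. Iterating this $n$ times produces flat syzygies $\mathcal K_n\to\mathcal L_{n-1}\to\cdots\to\mathcal L_0\to\mathcal F\to 0$ which remain exact on every fiber. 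The fiber $\mathcal K_n(s)$ is an $n$-th syzygy of the coherent sheaf $\mathcal F(s)$ on $U\subset\mathbb C^n$. By Hilbert's syzygy theorem it is projective over the regular local ring $\mathcal O_{U,x}$, hence free.

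It remains to show that $\mathcal K_n$ itself is locally free near $(x,s)$. Lift a basis of $\mathcal K_n(s)_x$ to a morphism $\mathcal O^r\to\mathcal K_n$. By Nakayama this morphism is surjective near $(x,s)$, and I call its kernel $\mathcal K'$. Flatness of $\mathcal K_n$ gives $\mathcal K'(s)=\ker(\mathcal O^r(s)\to\mathcal K_n(s))=0$ at $x$. Since $\mathcal K'$ is coherent, Nakayama then gives $\mathcal K'=0$ near $(x,s)$. Setting $\mathcal L_n=\mathcal K_n$ yields a finite locally free resolution whose restriction to every nearby fiber is exact, which is exactly anaflatness.

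\emph{Main obstacle.} I expect the only delicate point to be the local flatness criterion in this analytic setting. Here $M$ is finite over $B$ but not over $A$, so one must check that $B$ is noetherian and that $A\to B$ is local, so that the Bourbaki form of the criterion applies. The rest is formal commutative algebra.
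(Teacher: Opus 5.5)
Your proof is correct. The local criterion of flatness gives anaflat $\Rightarrow$ flat; it applies because $\mathcal O_{S,s}\to\mathcal O_{U\times S,(x,s)}$ is a local homomorphism of noetherian local rings and $\mathcal F_{(x,s)}$ is finite over the target. For the converse, flat syzygies restricted to every nearby fibre, the syzygy theorem over $\mathcal O_{U,x}$, and Nakayama show that $\mathcal K_n$ is locally free.

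The paper gives no proof of this statement; it only quotes Douady, \S8.3. What you wrote is the standard argument behind that citation. It uses only the standard facts that $\mathcal O_{U\times S}$ is $\mathcal O_S$-flat and that $\mathcal O_{U\times S,(x,s)}/\mathfrak m_{S,s}\mathcal O_{U\times S,(x,s)}\cong\mathcal O_{U,x}$.
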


Now for general finite dimensional analytic space, we introduce:
\begin{definition}[\S8.7, Définition in \cite{douady66}]
    Let $X$ be a finite dimensional analytic space and $S$ be a Banach analytic space. We say a sheaf $\mathcal F$ of $\mathcal O_{X\times S}$-modules is $S$-anaflat, if for every $x\in X$, there exists an open neighborhood $X_1$ of $x$ embedded as a closed subspace of an open subset $U$ of $\mathbb C^n$, such that the sheaf $(\iota_S)_*\mathcal F$ is $S$-anaflat over $U\times S$.
\end{definition}

This definition is well-defined due to the following lemma:
\begin{lemma}[\S8.7, Lemma 2 in \cite{douady66}]\label{lem--B2 well-definedness of anaflatness}
    Let $U^{\prime}$ ($U^{\prime\prime}$, respectively) be an open subset of $\mathbb C^{n^{\prime}}$ ($\mathbb C^{n^{\prime\prime}}$, respectively). Take $U=U^{\prime}\times U^{\prime\prime}\subset\mathbb C^{n^{\prime}+n^{\prime\prime}}$. Let $\mathcal F^{\prime}$ be a sheaf of $\mathcal O_{U^{\prime}\times S}$-modules on $U^{\prime}\times S$ and $\mathcal F=(\iota_S)_*\mathcal F^{\prime}$. Then $\mathcal F$ is $S$-anaflat if and only if $\mathcal F^{\prime}$ is $S$-anaflat.
\end{lemma}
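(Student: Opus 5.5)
The plan is to reduce to $n''=1$ by induction, writing $U^{\prime}\times U^{\prime\prime}=(U^{\prime}\times U_1^{\prime\prime})\times U_2^{\prime\prime}$ and factoring $\iota$ into codimension-one embeddings. Here $\iota:U^{\prime}\hookrightarrow U$ is $x^{\prime}\mapsto(x^{\prime},0)$ with $0\in U^{\prime\prime}$, and $p:U\times S\to U^{\prime}\times S$ is the projection. The main tool is the Koszul complex $K_\bullet$ of the coordinates $z^{\prime\prime}=(z^{\prime\prime}_1,\dots,z^{\prime\prime}_{n^{\prime\prime}})$ on $U\times S$. It is a finite free resolution of $\mathcal O_{U^{\prime}\times S}$, and it stays exact after restriction to every fibre $U\times\{s\}$, because the $z^{\prime\prime}$ are independent of $s$. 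So we may assume $z=z^{\prime\prime}$ is a single coordinate.

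\emph{($\Leftarrow$).} Suppose $0\to\mathcal L^{\prime}_p\to\cdots\to\mathcal L^{\prime}_0\to\mathcal F^{\prime}\to0$ is a local free resolution near $(x^{\prime},s)$ that is exact on the fibre at $s$. Pulling back by $p$ gives a free resolution $p^*\mathcal L^{\prime}_\bullet$ of $p^*\mathcal F^{\prime}$, since the pullback is just tensoring with the variable $z$. Multiplication by $z$ is injective on $p^*\mathcal L^{\prime}_\bullet$ and on $p^*\mathcal F^{\prime}$, with cokernels $\mathcal L^{\prime}_\bullet$ and $\mathcal F^{\prime}$. The total complex of the double complex $p^*\mathcal L^{\prime}_\bullet\otimes K_\bullet$ is therefore a finite free resolution of $p^*\mathcal F^{\prime}\otimes\mathcal O/(z)=(\iota_S)_*\mathcal F^{\prime}=\mathcal F$. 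This follows from the standard spectral sequence of a double complex with exact rows and columns. The construction commutes with restriction to $U\times\{s\}$: the fibre of the total complex is the total complex built from $\mathcal L^{\prime}_\bullet(s)$. That complex is exact by hypothesis, and $z$ is again a non-zero-divisor on its pullback. Hence the fibrewise complex is exact and $\mathcal F$ is $S$-anaflat.

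\emph{($\Rightarrow$).} This is the harder direction and the main obstacle. Restricting a resolution $\mathcal L_\bullet$ of $\mathcal F$ to $z=0$ does \emph{not} give a resolution of $\mathcal F^{\prime}$, since $\mathrm{Tor}_1(\mathcal F,\mathcal O/(z))\cong\mathcal F^{\prime}\neq0$. First I would establish a two-out-of-three property for anaflatness. Given $0\to\mathcal A\to\mathcal B\to\mathcal C\to0$ which stays exact fibrewise, if two terms are $S$-anaflat then so is the third. The proof is by horseshoe and mapping-cone constructions on the resolutions, with fibrewise exactness tracked through the long exact homology sequence.

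Next, set $\mathcal K=\ker(\mathcal L_0\to\mathcal F)$. Then $\mathcal K$ is anaflat, via the truncated resolution. The sequence $0\to\mathcal L_0\xrightarrow{z}\mathcal K\to\mathcal K/z\mathcal L_0\to0$ is fibrewise exact, since $\mathcal K(s)\subset\mathcal L_0(s)$ and $z$ is a non-zero-divisor on the free module $\mathcal L_0(s)$. This exhibits $\mathcal F^{\prime}$, after the identification, as the cokernel of $\mathcal K/z\mathcal L_0\to\mathcal L_0/z\mathcal L_0$, which is a map of sheaves killed by $z$. The circularity to break is that $\mathcal K/z\mathcal L_0$ is again a pushforward from $U^{\prime}\times S$.

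To break it I would argue by descending induction on the length $p$ of the resolution, applied to $\mathcal K/z\mathcal L_0$, which has a resolution one step shorter modulo the free piece. The base case $p=0$ means $\mathcal F$ is locally free. That is impossible for a nonzero sheaf killed by $z$, and it closes the induction. If this bookkeeping fails, my fallback is Douady's criterion via privileged polydiscs: anaflatness is equivalent to the Banach modules of sections over privileged polydiscs forming a direct summand family. That criterion is visibly compatible with the splitting $\mathcal O_U=\mathcal O_{U^{\prime}}\widehat\otimes\mathcal O_{U^{\prime\prime}}$.
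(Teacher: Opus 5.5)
The paper only quotes this lemma from Douady, so your argument has to stand on its own.

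\textbf{The direction ($\Leftarrow$).} This is fine up to one point. Over a Banach analytic $S$ you should not invoke flatness of $\mathcal O_{U\times S}$ over $p^{-1}\mathcal O_{U'\times S}$. Instead, note that the $z$-independent free complex built from $\mathcal L'_\bullet$ has exact fibres, because they are pulled back along the flat map $U\to U'$. Then apply Proposition~\ref{prop--property of anflat sheaf}(1).

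\textbf{The gap in ($\Rightarrow$).} Your inductive step works for $p\ge 2$. There the cone of a lift $\mathcal L_0\to\mathcal L_1$ of $z$ resolves $\mathcal K/z\mathcal L_0$ fibrewise exactly in length $p-1$, and Proposition~\ref{prop--property of anflat sheaf}(2), applied to $0\to\mathcal K'\to\mathcal O^{r_0}\to\mathcal F'\to0$, finishes.

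At $p=1$, however, nothing gets shorter. Here $\mathcal K=\mathcal L_1$, and the resolution you obtain is $0\to\mathcal L_0\xrightarrow{\lambda}\mathcal L_1\to\mathcal K/z\mathcal L_0\to0$, again of length one. It cannot be shorter, for exactly the reason you give at $p=0$: $\mathcal K/z\mathcal L_0$ is killed by $z$. So the induction never reaches $p=0$. Its real base case is $p=1$, and there your reduction is circular.

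For example, take $S$ and $U'$ to be points, $\mathcal F=\mathcal O/(z)$, and the resolution $\mathcal O^2\xrightarrow{\operatorname{diag}(z,1)}\mathcal O^2$. Then $\mathcal K/z\mathcal L_0\cong\mathcal O/(z)$, which is the sheaf you started with. In general, at $p=1$ one has $d\lambda=\lambda d=z$. The sheaves $\mathcal F$ and $\mathcal K/z\mathcal L_0$ are the two cokernels of this matrix factorization of $z$, so each step just swaps their roles.

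\textbf{What is needed.} The whole content of ($\Rightarrow$) is therefore the case $p=1$, which needs its own argument. The privileged-polydisc fallback, as written, is a pointer rather than a proof. One way to handle $p=1$:
\begin{enumerate}
\item Start from $0\to\mathcal O^r\xrightarrow{d}\mathcal O^r\to\mathcal F\to0$, fibrewise exact; the two ranks agree because $\mathcal F(s)$ is torsion. Write $z=d\lambda$; since $d$ is injective, also $\lambda d=z$.
\item Reduce mod $z$. This gives $d_0\lambda_0=\lambda_0 d_0=0$ on $U'\times S$ with $\mathcal F'=\operatorname{coker}d_0$. Injectivity of $d(s)$ gives $\ker d_0(s)=\operatorname{im}\lambda_0(s)$.
\item By the finite-dimensional case, $\operatorname{pd}_{\mathcal O_U}\iota_*M=\operatorname{pd}_{\mathcal O_{U'}}M+1$ for $M\neq0$. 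Hence $\mathcal F'(s)$ is locally free of some rank $k$ near $x'$, so $d_0(\cdot,s)$ has constant rank $r-k$.
\item After holomorphic changes of bases near $(x',s)$, one gets $d_0=\operatorname{diag}(I_{r-k},e)$ with $e(\cdot,s)\equiv0$. Correspondingly $\lambda_0=\operatorname{diag}(0,\mu)$ with $e\mu=\mu e=0$.
\item Fibrewise exactness now says $\mu(\cdot,s)$ is surjective, so $\mu(x',s)$ is invertible. Hence $\mu$ is invertible near $(x',s)$, and $e\mu=0$ forces $e=0$.
\end{enumerate}
Thus $\mathcal F'\cong\mathcal O^k$ near $(x',s)$, which supplies the missing base case.
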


We will use the following basic property of anaflat sheaves. The reader should compare this with the usual property of flatness.
\begin{proposition}[\S8.3, Proposition 2 and Proposition 3 in \cite{douady66}]\label{prop--property of anflat sheaf}
    The following holds:
    \begin{itemize}
        \item [(1)] Let $\mathcal F,\mathcal F^{\prime},\mathcal F^{\prime\prime}$ be $S$-anaflat sheaves on $U\times S$, and
    \[
    \mathcal F\to\mathcal F^{\prime}\to\mathcal F^{\prime\prime}
    \]
    be a complex. Fix any $(x,s)\in U\times S$. If 
    \[
    \mathcal F(s)\to\mathcal F^{\prime}(s)\to\mathcal F^{\prime\prime}(s)
    \]
    is exact at $x$, then the original sequence of exact in a neighborhood of $(x,s)$.
    \item [(2)] Let $0\to\mathcal F^{\prime}\to\mathcal F\to\mathcal F^{\prime\prime}\to 0$ be an exact sequence of sheaves of $\mathcal O_{U\times S}$-modules on $U\times S$. If both $\mathcal F^{\prime},\mathcal F^{\prime\prime}$ are $S$-anaflat, then $\mathcal F$ is anaflat. On the other hand, if both $\mathcal F^{\prime},\mathcal F$ are $S$-anaflat, then $\mathcal F^{\prime\prime}$ is $S$-anflat if and only if $\mathcal F^{\prime}(s)\to\mathcal F(s)$ is injective for every $s\in S$.
    \end{itemize}

\end{proposition}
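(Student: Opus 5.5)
The plan is to reduce both statements to one assertion about complexes of \emph{free} sheaves over $U\times S$, and then transfer it to anaflat sheaves through their resolutions. The key assertion, which I will call $(\ast)$, is the following. Let $\mathcal L_\bullet$ be a bounded complex of finite free $\mathcal O_{U\times S}$-modules, and fix $(x,s)\in U\times S$. If the fiber complex $\mathcal L_\bullet(s)$ is exact at $x$ in some degree, then $\mathcal L_\bullet$ is exact in that degree near $(x,s)$. Moreover, the kernels and images are then $S$-anaflat and restrict to the kernels and images of $\mathcal L_\bullet(s)$.

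To prove $(\ast)$ I would follow Douady's privileged-polydisc method. Choose a compact polydisc $K\ni x$ that is privileged for the finitely many coherent sheaves $\ker/\mathrm{im}$ of $\mathcal L_\bullet(s)$. On the Banach spaces $B(K)^{n_i}$ of holomorphic functions bounded on the interior of $K$, the fiber differentials $d_i(s)$ are then bounded operators with closed range, and their kernels and images have topological complements. The differentials $d_i(t)$ depend analytically on $t\in S$, so they form an analytic family of bounded operators. Exactness at $t=s$ with split ranges is an open condition under small perturbation: one writes $d_i(t)=d_i(s)+\text{small}$ and solves $d_i(t)u=v$ by a Neumann series on the chosen complements. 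This produces local solutions depending holomorphically on $t$ near $s$, hence exactness of the sheaf complex near $(x,s)$. It also shows that the kernel bundle is a direct summand varying analytically, which is exactly anaflatness of the kernel. I expect this step to be the main obstacle. There is no Noetherian local flatness criterion or Nakayama argument over an infinite-dimensional base, and the splitting has to be produced by hand. Privileged polydiscs are what make the ranges closed and complemented.

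For (1), pick resolutions $\mathcal L_\bullet\to\mathcal F$, $\mathcal L'_\bullet\to\mathcal F'$ and $\mathcal L''_\bullet\to\mathcal F''$ as in the definition of anaflatness, shrinking to a common neighborhood of $(x,s)$. Lift the maps $\mathcal F\to\mathcal F'\to\mathcal F''$ to chain maps; this uses freeness, and I would take the lifts on a Stein neighborhood. Then form the total complex (iterated mapping cone) $\mathcal M_\bullet$ of the resulting double complex. Its homology in the relevant degree computes the homology of $\mathcal F\to\mathcal F'\to\mathcal F''$. Fiberwise exactness of each resolution means that restriction to the fiber over $s$ commutes with taking homology of each column. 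So $\mathcal M_\bullet(s)$ is exact at $x$ in the relevant degree if and only if $\mathcal F(s)\to\mathcal F'(s)\to\mathcal F''(s)$ is exact at $x$. Applying $(\ast)$ to $\mathcal M_\bullet$ then gives exactness near $(x,s)$.

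For (2), first treat the case where $\mathcal F'$ and $\mathcal F''$ are anaflat. Apply the horseshoe lemma to their resolutions, lifting $\mathcal L''_0\to\mathcal F''$ to $\mathcal F$ by freeness, to obtain a finite free resolution $\mathcal L'_\bullet\oplus\mathcal L''_\bullet\to\mathcal F$. Restricting to the fiber over $s$ gives a short exact sequence of fiber complexes, because the pieces are split termwise. The long exact homology sequence then shows the fiber complex resolves $\mathcal F(s)$, so $\mathcal F$ is anaflat. Now suppose $\mathcal F'$ and $\mathcal F$ are anaflat. Lift the inclusion to a chain map $\mathcal L'_\bullet\to\mathcal L_\bullet$; its mapping cone $\mathrm{Cone}_\bullet$ is a finite free resolution of $\mathcal F''$, since $\mathcal F'\to\mathcal F$ is injective. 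The long exact sequence of the cone at the fiber over $s$ identifies $H_0(\mathrm{Cone}_\bullet(s))$ with $\mathcal F''(s)$. It also identifies the only possible higher homology of the fiber cone with $\ker(\mathcal F'(s)\to\mathcal F(s))$. Hence the fiber cone is exact, which is the anaflatness of $\mathcal F''$, precisely when $\mathcal F'(s)\to\mathcal F(s)$ is injective. For the converse direction, anaflatness of $\mathcal F''$ supplies a fiberwise-exact resolution, and comparison with the cone forces the same vanishing.
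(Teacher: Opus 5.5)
The paper gives no argument for this proposition; it simply quotes Douady's \S8.3. Your outline is therefore best judged as a reconstruction, and as such it is sound and rests on the same privileged-polydisc machinery as Douady's theory. What your organization buys is that the Banach analysis is confined to the single lemma $(\ast)$, which is just statement (1) for bounded complexes of finite free sheaves. You then get (1) in general from the total complex of the column resolutions. Its spectral sequence degenerates both over $U\times S$ and on the fibre over $s$.

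Your argument for (2) uses no analysis at all, and that is correct. Anaflatness at $(x,s)$ only asks for a resolution whose fibre over that same $s$ is exact. So the horseshoe resolution, the cone of a lift $\mathcal L'_\bullet\to\mathcal L_\bullet$, and the comparison theorem settle both halves formally. All of these are built by lifting finitely many germs, so no Stein theory is needed (nor readily available over a Banach base).

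Two small corrections. First, the lifted composite $\tilde g\tilde f$ is only null-homotopic, not zero. The iterated cone must therefore be formed with a chosen homotopy rather than from a genuine double complex. Second, drop the ``moreover'' clause of $(\ast)$. It is never used, and a Banach-subbundle statement on one polydisc does not by itself produce the finite free resolutions that anaflatness requires.

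The step that is genuinely too quick is ``this produces local solutions depending holomorphically on $t$, hence exactness.'' Solving $d_{i+1}(t)u=v$ pointwise for $t\in S$ does not give exactness of sheaves over a non-reduced base. This matters here, since the paper explicitly allows the Banach analytic space $\mathfrak B$ used later to be non-reduced.

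The fix is to work on the ambient open set $W$ of the Banach space in which $S=f^{-1}(0)$, where you only know $d_id_{i+1}=\Lambda f$ and $d_iv=\Lambda' f$. Choose complements $D$ of the kernel of $d_{i+1}(s)$ and $C$ of its image, so that $B(K)^{n_i}=d_{i+1}(t)(D)\oplus C$ for $t\in W$ near $s$. Write $v=d_{i+1}(t)w+c$, holomorphically in $t$. Then $d_i(t)c$ lies in the ideal generated by $f$. Because the image of $d_i(s)$ is closed \emph{and complemented}, $d_i(t)|_C$ has a bounded left inverse depending holomorphically on $t$. This forces $c$ itself into the ideal, so $d_{i+1}w=v$ in $\mathcal O_{U\times S}$. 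This is exactly where complemented images, not merely closed ranges, are needed.

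A second omission: solvability on one privileged polydisc $K$ around $x$ only handles germs at $(x,s)$. For a nearby point $(y,t)$ you must first show that the fibre complex $\mathcal L_\bullet(t)$ is exact on $\mathring K$. Direct exactness on $K$ persists for $t$ near $s$. Theorem A for the fixed coherent complex $\mathcal L_\bullet(t)$ on a slightly larger open polydisc then shows that its $B(K)$-sections generate the kernel stalks. Then rerun the argument centred at $(y,t)$ with a polydisc privileged for $\mathcal L_\bullet(t)$.

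With these two points supplied, your proof is complete modulo Douady's existence theorem for privileged polydiscs.
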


Therefore, a deformation of a finite dimensional analytic space would be a Banach analytic space, whose structure sheaf is anaflat over the parameter space. The first main result we will use is the following existence theorem for versal deformations:
\begin{theorem}[VII. 8, Théorème principal in \cite{douady74}]\label{thm--existence of versal deformation for compact analytic spaces}
    Let $X$ be a compact finite dimensional complex analytic space. Then its versal deformation exists as a finite dimensional complex analytic space germ, i.e. there exists a finite dimensional analytic space germ $(R,0)$ and a flat family of compact analytic spaces $\mathfrak X\to (R,0)$ with an isomorphism $\mathfrak X_0\cong X$, such that the following holds. Given any family $\mathfrak Y\to (S,0)$ proper and anflat over the Banach analytic parameter base $S$ with $\mathfrak Y_0\cong X$, there exists a morphism $h:(S,0)\to(R,0)$, such that $h^*\mathfrak X\cong\mathfrak Y$ in a neighborhood of $0\in S$.
\end{theorem}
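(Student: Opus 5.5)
The plan is to reproduce Douady's strategy in three stages. First, parametrize all deformations of $X$ by a Banach analytic space of gluing data. Second, show that this space is locally universal for anaflat families. Third, cut it down to a finite dimensional germ by a Kuranishi-type slice argument, whose finiteness comes from the compactness of restriction maps between nested polycylinders.

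\textbf{Step 1 (Banach analytic space of gluing data).} Cover $X$ by finitely many open sets $X_\alpha$. Embed each as a closed subspace of a polycylinder $P_\alpha\subset\mathbb C^{n_\alpha}$, chosen \emph{privileged} for $\mathcal O_{X_\alpha}$ in the sense of \cite{douady66}. Privileged means that the ideal of $X_\alpha$ induces a closed, complemented subspace of the Banach algebra $B(P_\alpha)$ of bounded holomorphic functions on $P_\alpha$. Also fix slightly smaller privileged polycylinders $P'_\alpha\subset\subset P_\alpha$ that still cover $X$. A deformation is then encoded by the following data:
\begin{itemize}
\item points of the Banach Grassmannian $G(B(P_\alpha)^{m})$ of complemented subspaces, which are the deformed ideals;
\item holomorphic transition maps between the corresponding pieces on overlaps, again viewed as elements of Banach spaces of bounded holomorphic maps.
\end{itemize}
These data are subject to analytic compatibility equations: the transition maps must preserve the ideals and satisfy the cocycle condition. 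Exactly as in the construction of $\mathfrak B$ above, these equations cut out a Banach analytic space $Z$. Over $Z$ there is a tautological family $\mathfrak X_Z\to Z$ with fiber $X$ at the base point. Anaflatness of $\mathfrak X_Z$ follows from privilege, because the tautological ideals admit fiberwise exact Banach resolutions, and Proposition \ref{prop--property of anflat sheaf} propagates exactness.

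\textbf{Step 2 (local universality of $Z$).} Let $\mathfrak Y\to(S,0)$ be a proper anaflat family with $\mathfrak Y_0\cong X$. After shrinking $S$, we would extend the embeddings $X_\alpha\hookrightarrow P_\alpha$ to relative embeddings of pieces of $\mathfrak Y$ into $P_\alpha\times S$. The tool here is lifting generators and relations through the fiberwise exact resolutions given by anaflatness, in the spirit of Lemma \ref{lem--B2 well-definedness of anaflatness}. Privilege is open, so the pieces remain privileged for $s$ near $0$. This produces a morphism $S\to Z$ pulling back $\mathfrak X_Z$ to $\mathfrak Y$. Thus $Z$ is versal, but infinite dimensional.

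\textbf{Step 3 (reduction to a finite dimensional germ).} Two points of $Z$ give isomorphic spaces when they differ by changes of the embeddings. The group of such changes also acts through Banach manifolds of holomorphic maps. We would form the linearized complex at the base point, namely
\[
\text{infinitesimal automorphisms of the charts}\;\longrightarrow\;T_0Z\;\longrightarrow\;\text{obstruction space}.
\]
This complex differs from the \v{C}ech complex computing $\mathrm{Ext}^1(\Omega_X,\mathcal O_X)$ and $\mathrm{Ext}^2(\Omega_X,\mathcal O_X)$ only by the restrictions $P_\alpha\to P'_\alpha$. These restrictions are compact operators, and in fact nuclear. Hence, by the Cartan--Serre/Schwartz perturbation argument, the complex has closed range and finite dimensional cohomology, and every subspace we need is complemented.

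Next choose a closed complement to the orbit directions and apply the Banach implicit function theorem. This yields a slice through $0$ transverse to the group orbit. The slice is the zero set of an analytic map from a finite dimensional space $T^1$ into a finite dimensional space $T^2$, and we take this finite dimensional germ as $(R,0)$, with $\mathfrak X$ the restriction of $\mathfrak X_Z$. For versality, take the map $S\to Z$ from Step 2 and use the implicit function theorem again to move its image into the slice by a holomorphic family of chart changes. This gives $h:(S,0)\to(R,0)$ with $h^*\mathfrak X\cong\mathfrak Y$.

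\textbf{Main obstacle.} I expect the hard step to be Step 3. The chart changes lose domain: composition with a holomorphic map sends $B(P_\alpha)$ only into $B(P'_\alpha)$, so the action is not a smooth Banach Lie group action on a single space. The implicit function theorem must therefore be run on a scale of shrinking polycylinders. The compactness of restriction is used twice there: once to get finite dimensionality, and once to absorb the loss of domain. My first attempt would be Douady's device of working with pairs of nested privileged polycylinders (his ``cuirasses''). One solves the equations on the larger polycylinders and controls them on the smaller ones, so that the loss of domain is traded for a compact perturbation of an isomorphism.
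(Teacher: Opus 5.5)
The paper gives no proof of this statement. It quotes Douady's theorem and uses it as a black box, in its Banach-base form: it is applied to the anaflat family $\mathfrak Y\to\mathfrak B$, whose base is infinite dimensional and whose central fiber is the non-reduced space $X[\mathcal G_0]$. Your outline is essentially Douady's own argument. You use privileged polycylinders and a Banach analytic space of gluing data whose tautological family is anaflat and locally versal over Banach bases. You then take a transverse slice whose finite dimensionality comes from compactness of restriction between nested polycylinders, and you handle the loss of domain with cuirasses. Like the paper, you defer the genuinely hard step (the slice argument with loss of domain) to Douady, so your route is the one the paper relies on.

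One correction, which is not a gap in the logic. The cohomology of your linearized complex is not $\mathrm{Ext}^i(\Omega_X,\mathcal O_X)$. It is the hypercohomology of $[\Theta_{P}|_X\to\mathcal Hom(I/I^2,\mathcal O_X)]$ built from the local embeddings, i.e. $\mathbb T^1_X$ (and a piece of $\mathbb T^2_X$) of the cotangent complex. In general $\mathrm{Ext}^1(\Omega_X,\mathcal O_X)\subset\mathbb T^1_X$, and the inclusion can be strict for non-reduced $X$, which is exactly the case here.

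For example, take $X[\mathcal O_X]=X\times\operatorname{Spec}\mathbb C[\epsilon]/(\epsilon^2)$. Locally $\mathcal T^1\cong\mathcal O_X\neq 0$, while the sheaf $\mathcal Ext^1(\Omega,\mathcal O)$ vanishes. So the first-order deformation $\epsilon^2=t$ is invisible to $\mathrm{Ext}^1(\Omega,\mathcal O)$. Your complex is still the hypercohomology of a bounded complex of coherent sheaves on a compact space, so its cohomology is finite dimensional and your finiteness conclusion is unaffected. Only the labels need to change.
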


 Note that it is important for us that the versality holds even for Banach analytic parameter bases.

 The second main result we need is the following representability theorem established in \cite{douady66}. One should regard this as the analytic version of the Quot schemes.
 \begin{theorem}[\S9.7, Théorème 1 and \S9.8, Théorème 2 in \cite{douady66}]
     Let $X$ be a finite-dimensional complex space and $\mathcal E$ be a coherent sheaf on $X$. Define a functor
\[
\mathfrak Q_{X,\mathcal E}:\mathsf{BanAn}^{\mathrm{op}}\longrightarrow\mathsf{Sets}
\]by
\[
\mathfrak Q_{X,\mathcal E}(T)
=
\left\{
p_X^*\mathcal E\twoheadrightarrow\mathcal Q:
\begin{array}{l}
\mathcal Q\text{ is }T\text{-anaflat},\\
\operatorname{Supp}\mathcal Q\to T\text{ is proper}
\end{array}
\right\}\big/\cong,
\]
where quotients are considered up to isomorphism compatible with the quotient map. Then $\mathfrak Q_{X,\mathcal E}$ is represented by an analytic space $H_X(\mathcal E)$, i.e. for any Banach analytic space $T$, there is a natural bijection:
\[
\mathrm{Hom}_{\mathsf{BanAn}}\!\left(T,H_X(\mathcal E)\right)
\;\cong\;
\mathfrak Q_{X,\mathcal E}(T).
\]
Moreover, the representing space $H_X(\mathcal E)$ is Hausdorff and finite-dimensional at every point.
 \end{theorem}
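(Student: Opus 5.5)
The plan follows Douady's original strategy: build $H_X(\mathcal E)$ as a Banach analytic subspace of a product of Banach Grassmannians, then show it is finite dimensional. First, the problem reduces to the case $X$ open in $\mathbb C^n$ and $\mathcal E=\mathcal O^N/\mathcal K$. A quotient $\mathcal Q$ of $p_X^*\mathcal E$ is the same as a quotient of $p_X^*\mathcal O^N$ whose kernel contains $p_X^*\mathcal K$. The latter condition is closed and analytic, so it cuts out a Banach analytic subspace. Since $\operatorname{Supp}\mathcal Q\to T$ is proper, near any point of $T$ the support meets a compact set, which I cover by finitely many compact polycylinders $K_1,\dots,K_r$. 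For each $K_i$ I take the Banach space $B(K_i)$ of functions continuous on $K_i$ and holomorphic in its interior.

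The key local notion is Douady's \emph{privileged} polycylinder for a coherent sheaf $\mathcal G$. This means the Koszul/Cartan resolution of $\mathcal G$ restricts to an exact, \emph{direct} (split) sequence of Banach spaces $B(K)\otimes$. Two facts are needed here.
\begin{itemize}
\item Given $\mathcal Q_0$, privileged polycylinders exist in abundance (for almost every choice of radii, by induction on $n$ using Weierstrass division with bounds).
\item A $T$-anaflat quotient with fibre $\mathcal Q_0$ is, near $t_0$, determined by the closed submodule $\ker\bigl(B(K_i)^N\to\mathcal Q_t(K_i)\bigr)$. This submodule admits a topological complement varying holomorphically in $t$.
\end{itemize}
Thus a quotient gives a point of the Banach Grassmannian $G(B(K_i)^N)$ of closed direct subspaces, which is a Banach manifold. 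Conversely, the conditions that cut out genuine quotients are analytic equations in the Grassmannian coordinates. These conditions are: the subspace is a $B(K_i)$-submodule (stable under multiplication by $z_1,\dots,z_n$), the restrictions agree on overlaps $K_i\cap K_j$, and the resulting sheaf is privileged on a slightly smaller polycylinder. This yields a Banach analytic space $\mathcal H$ in the $\mathcal K$-functored sense. Anaflatness of the tautological quotient then follows from Proposition \ref{prop--property of anflat sheaf}, using the split resolutions. Universality follows by reversing the construction over an arbitrary Banach analytic $T$; it is here that the full $\mathcal K$-functored structure, not just the ringed structure, is used.

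Next I prove Hausdorffness and local finite-dimensionality. For Hausdorffness: two quotients inducing the same closed submodule on every $K_i$ coincide on the interiors, and the interiors cover the support. So distinct points are separated by the continuous Grassmannian coordinates. For finite-dimensionality, I first compute the Zariski tangent space of $\mathcal H$ at $\mathcal Q_0$ as $\mathrm{Hom}(\ker(\mathcal E\to\mathcal Q_0),\mathcal Q_0)$. This is finite dimensional because $\mathcal Q_0$ has compact support (Cartan--Serre finiteness). I then invoke Douady's lemma: a Banach analytic space whose tangent space at a point is finite dimensional is, near that point, a finite dimensional analytic space. This lemma is proved by projecting to the tangent space along a complement and applying the implicit function theorem to the defining map $f$ on the complement.

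The main obstacle I expect is the privileged-polycylinder step. One must show that privileged polycylinders exist, and that privilege is an open condition in the family, so that the kernels really vary in the Grassmannian of \emph{direct} subspaces. Without that splitting, the Grassmannian chart and the implicit-function argument both fail. I would attack it following Douady's induction on dimension via Weierstrass preparation with uniform estimates on the polydisc.
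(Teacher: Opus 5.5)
The paper does not prove this statement; it only quotes it from Douady. Your outline (privileged polycylinders, Banach Grassmannians of direct subspaces, the tautological anaflat quotient, universality over Banach analytic bases) does follow Douady's architecture. The step that fails is finite dimensionality.

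The lemma you attribute to Douady says: a Banach analytic space whose Zariski tangent space at a point is finite dimensional is finite dimensional near that point. This is false. Take $E=F=\ell^2$ and $f=L$ with $Le_n=e_n/n$, a compact injective operator with dense range. Then $X=f^{-1}(0)$ is a point and $T_0X=\ker L=0$. If $X$ were finite dimensional, it would therefore be the reduced point. The $E$-valued function $x\mapsto x$ would then vanish on $X$, i.e.\ $x=\Lambda(x)Lx$ near $0$ for some analytic $\Lambda$ with values in $\mathcal L(F,E)$. Differentiating at $0$ gives $\mathrm{Id}_E=\Lambda(0)L$, which is compact, a contradiction.

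Your sketch breaks exactly here. After writing $E=\ker df(a)\oplus C$, solving $f=0$ for the $C$-component requires $df(a)|_C$ to be an isomorphism onto a closed, complemented subspace of $F$. Finite dimensionality of $\ker df(a)$ gives neither closed range nor a complement, so the lemma holds only when $df(a)$ is direct. Privilege does not supply this either. It makes the kernels direct subspaces, so $\mathcal H$ sits inside a Banach manifold. But it says nothing about the range of the linearized module and gluing equations that cut $\mathcal H$ out, and nothing in your construction forces that range to be closed.

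The missing ingredient is compactness, which is how Douady obtains finite dimensionality:
\begin{itemize}
\item Take a second system of privileged polycylinders $K_i'\Subset K_i$ whose interiors still cover the supports. Near $\mathcal Q_0$ a quotient is determined by its kernels on the $K_i'$, and restriction identifies the Banach analytic space built on $(K_i)$ with the one built on $(K_i')$.
\item Consequently the identity of $\mathcal H$ extends locally to an ambient analytic map $\varphi$ whose derivative factors through the restriction operators $B(K_i)\to B(K_i')$. These are compact by Montel's theorem.
\item Then $\mathrm{Id}-\varphi'(a)$ is Fredholm, hence direct with finite-dimensional kernel. By the implicit function theorem, $(\mathrm{id}-\varphi)^{-1}(0)$ is a finite-dimensional analytic space near $a$.
\item $\mathcal H$ is a closed Banach analytic subspace of it. The zero set of a Banach-valued analytic map on an open subset of $\mathbb C^k$ is an ordinary analytic space, so $\mathcal H$ is finite dimensional.
\end{itemize}
Your identification of the tangent space with $\mathrm{Hom}(\ker(\mathcal E\to\mathcal Q_0),\mathcal Q_0)$ is correct, but it is a consequence of this argument, not a substitute for it.

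Two lesser points. First, the reduction to $X$ open in $\mathbb C^n$ with $\mathcal E=\mathcal O^N/\mathcal K$ is only local. The global construction needs Douady's cuirasses, with auxiliary privileged polycylinders on overlaps, since $K_i\cap K_j$ is not a polycylinder and may involve two different charts. Second, your Hausdorff argument only separates two points lying in one chart. Points whose charts are built on different polycylinder systems need a further argument, for instance comparing both kernels on polycylinders privileged for both quotients.
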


Here we do not require that the ambient space $X$ is compact, instead $\operatorname{Supp}\mathcal Q$ is required to be proper over $T$. Similar to Grothendieck's construction in algebraic geometry, there are two consequences of the existence of this analytic Quot space. First, by taking $\mathcal E=\mathcal O_X$, we obtain the analytic version of Hilbert schemes, now known as the Douady space:
 \begin{theorem}\label{thm--existence of absolute Douady space}
Let $X$ be a finite-dimensional Hausdorff complex space, and let
$\mathsf{BanAn}$ denote the category of complex Banach analytic spaces.

Define the functor
\[
\mathfrak D_X :
\mathsf{BanAn}^{\mathrm{op}} \longrightarrow \mathsf{Sets}
\]
by
\[
\mathfrak D_X(T)
=
\left\{
\Gamma \subset X \times T :
\begin{array}{l}
\Gamma \text{ is a closed analytic subspace},\\
\mathcal O_\Gamma \text{ is } T\text{-anaflat},\\
\Gamma \to T \text{ is proper}
\end{array}
\right\}.
\]

Then $\mathfrak D_X$ is represented by a Hausdorff complex space
$D(X)$, which is finite-dimensional at every point. Therefore, for every Banach
analytic space $T$, there is a natural bijection
\[
\operatorname{Hom}_{\mathsf{BanAn}}(T,D(X))
\;\cong\;
\mathfrak D_X(T),
\]
induced by pulling back the universal closed analytic subspace
\[
\mathcal U \subset X\times D(X).
\]
\end{theorem}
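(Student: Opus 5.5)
The plan is to obtain the statement as the special case $\mathcal E=\mathcal O_X$ of the representability theorem for the analytic Quot functor $\mathfrak Q_{X,\mathcal E}$ stated just above. The key step is to identify the two functors: for every Banach analytic space $T$ I will construct a bijection $\mathfrak D_X(T)\cong\mathfrak Q_{X,\mathcal O_X}(T)$, natural in $T$. Given a closed analytic subspace $\Gamma\subset X\times T$ with ideal sheaf $\mathcal I_\Gamma$, one takes the quotient $p_X^*\mathcal O_X=\mathcal O_{X\times T}\twoheadrightarrow\mathcal O_{X\times T}/\mathcal I_\Gamma=\mathcal O_\Gamma$, regarded as a sheaf on $X\times T$ via pushforward along the closed embedding. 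Conversely, a quotient $\mathcal O_{X\times T}\twoheadrightarrow\mathcal Q$ has kernel an ideal $\mathcal I$, and $\Gamma:=V(\mathcal I)$ with structure sheaf $\mathcal O_{X\times T}/\mathcal I\cong\mathcal Q$ is a closed analytic subspace. Two quotients are isomorphic compatibly with the quotient maps exactly when their kernels coincide, so these assignments are mutually inverse.

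Next I will check that the side conditions correspond. Since $\mathcal Q$ is a quotient of the structure sheaf, $\operatorname{Supp}\mathcal Q=|\Gamma|$, so properness of $\operatorname{Supp}\mathcal Q\to T$ is the same as properness of $\Gamma\to T$. Anaflatness of $\mathcal O_\Gamma$ over $T$ is by definition a condition on the $\mathcal O_{X\times T}$-module pushed into local embeddings $X_1\hookrightarrow U\subset\mathbb C^n$. This condition is independent of the chosen embedding by Lemma \ref{lem--B2 well-definedness of anaflatness}, so the two anaflatness conditions literally coincide.

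Finally, I will verify naturality: for a morphism $g:T'\to T$, pulling back the quotient $\mathcal O_{X\times T}\twoheadrightarrow\mathcal O_\Gamma$ gives $\mathcal O_{X\times T'}\twoheadrightarrow(\mathrm{id}\times g)^*\mathcal O_\Gamma=\mathcal O_{\Gamma\times_T T'}$, by right exactness of pullback. Hence the bijection intertwines base change of subspaces with base change of quotients. Setting $D(X):=H_X(\mathcal O_X)$ then represents $\mathfrak D_X$, with universal subspace $\mathcal U$ cut out by the kernel of the universal quotient. Hausdorffness and finite dimensionality at every point are inherited from the Quot theorem.

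The main obstacle is purely foundational. In the Banach analytic (functored-space) setting one must make sure that a closed analytic subspace of $X\times T$ is fully determined by its ideal in $\Phi(\mathbb C)$. The concern, illustrated by the non-reflexive example above, is that the ringed-space structure alone does not encode the analytic structure. I would handle this by invoking Douady's convention in \cite{douady66} that closed subspaces of $X\times T$, with $X$ finite dimensional, are defined by coherent-type ideals of $\mathcal O_{X\times T}$. Under that convention the Banach-valued function sheaves on $\Gamma$ are induced from those of $X\times T$ modulo the ideal, and the identification above is legitimate.
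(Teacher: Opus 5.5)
Your proposal is correct and follows the paper's route: the paper obtains this theorem simply by specializing Douady's representability theorem for the analytic Quot functor $\mathfrak Q_{X,\mathcal E}$ to $\mathcal E=\mathcal O_X$. Your identification of closed subspaces with quotients of $\mathcal O_{X\times T}$ (matching supports, anaflatness and base change) fills in details the paper leaves implicit, and the foundational point you flag about subspaces being determined by their ideals is likewise covered by the conventions of \cite{douady66}, which the paper relies on without comment.
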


The second consequence is the construction of the morphism space. We can embed the morphism space $\mathsf{Mor}(X,Y)$ into the Hilbert scheme $\mathsf{Hilb}_{X\times Y}$ as an open subset via the graph construction: a subspace $\Gamma\subset X\times Y$ is a graph precisely when the natural projection $\Gamma\to X$ is an isomorphism. However, what we do need is the following relative version, which is the consequence of the relative Douady's theorem established in \cite[Théorème 1]{pourcin69}.

\begin{theorem}\label{thm--existence of relative morphism space}
Let $\mathsf{An}_S$ denote the category of finite dimensional Hausdorff complex spaces. Let $X \to S$ and $Y \to S$ be objects
in $\mathsf{An}_S$, and assume that $X \to S$ is proper and flat.

For each complex space $T \to S$, set
\[
X_T := X \times_S T,
\qquad
Y_T := Y \times_S T.
\]
Define the functor
\[
\mathfrak M_{X,Y/S} :
(\mathsf{An}_S)^{\mathrm{op}} \longrightarrow \mathsf{Sets},
\qquad
T \longmapsto \operatorname{Hom}_T(X_T,Y_T).
\]

Then $\mathfrak M_{X,Y/S}$ is represented by a complex space
\[
M = \operatorname{Mor}_S(X,Y) \longrightarrow S,
\]
which is Hausdorff and finite-dimensional at every point. Therefore, for every complex space $T \to S$, there is a natural bijection
\[
\operatorname{Hom}_S(T,M)
\;\cong\;
\operatorname{Hom}_T(X_T,Y_T).
\]
These bijections are induced by pulling back the universal
$M$-morphism
\[
\mathrm{ev} : X_M \longrightarrow Y_M.
\]
\end{theorem}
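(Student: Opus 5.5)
The plan is to realize $\operatorname{Mor}_S(X,Y)$ as an open subspace of the relative Douady space of $Z:=X\times_S Y\to S$, via the graph construction. I would invoke \cite[Th\'eor\`eme 1]{pourcin69} for the family $Z\to S$. It provides a Hausdorff complex space $D:=D_S(Z)\to S$, finite dimensional at every point, which represents the following functor on $\mathsf{An}_S$: to $T\to S$ it assigns the set of closed subspaces $\Gamma\subset Z_T=X_T\times_T Y_T$ that are flat and proper over $T$. Let $\mathcal U\subset Z_D$ be the universal subspace, and let $p:\mathcal U\to X_D$ be the composition of the inclusion with the first projection.

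\textbf{Step 1 (graphs are points of $D$).} For $f\in\operatorname{Hom}_T(X_T,Y_T)$, the graph $\Gamma_f=(\mathrm{id},f)(X_T)\subset Z_T$ is a closed subspace, since $Y_T\to T$ is separated, everything being Hausdorff. The projection identifies $\Gamma_f$ with $X_T$, which is proper and flat over $T$ because $X\to S$ is. Hence $f\mapsto\Gamma_f$ is an injective natural transformation from $\mathfrak M_{X,Y/S}$ into the functor represented by $D$. Its image consists exactly of those $\Gamma$ for which $p_T:\Gamma\to X_T$ is an isomorphism: given such a $\Gamma$, set $f=\mathrm{pr}_2\circ p_T^{-1}$. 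This description is compatible with base change.

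\textbf{Step 2 (the isomorphism locus is open).} Let
\[
M:=\{d\in D:\ p_d:\mathcal U_d\to X_d \text{ is an isomorphism}\}.
\]
I would show that $M$ is open, and that $p$ is an isomorphism over $M$. This is the main obstacle. The fibers of $p$ are finite near $p_d^{-1}(X_d)$, and $p$ is proper, so $p$ is finite over a neighborhood $W$ of $d$. Then $\phi:\mathcal O_{X_W}\to p_*\mathcal O_{\mathcal U_W}$ is a morphism of coherent sheaves, and both sheaves are flat over $W$: the first by flatness of $X\to S$, the second by flatness of $\mathcal U$ and finiteness of $p$.

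For surjectivity, restrict $\phi$ to the fiber over $d$. Because $p$ is finite, this restriction is $\phi_d$, which is surjective. By Nakayama's lemma, $\mathrm{coker}\,\phi$ vanishes near $X_d$. Its support is closed, and the projection to $W$ is proper, so $\mathrm{coker}\,\phi=0$ over a neighborhood of $d$.

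For injectivity, use the exact sequence $0\to\ker\phi\to\mathcal O_{X_W}\to p_*\mathcal O_{\mathcal U}\to0$. Flatness of the quotient gives $(\ker\phi)_d=\ker(\phi_d)=0$. Applying Nakayama's lemma and properness once more, $\ker\phi=0$ near $d$.

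Thus $\phi$ is an isomorphism over an open set $W'\ni d$. Since $p$ is finite and $\phi$ is an isomorphism, $p$ itself is an isomorphism over $W'$, which gives $W'\subset M$. Conversely, for any $T\to D$ whose pulled-back projection is an isomorphism, taking fibers shows that $T$ maps into $M$.

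\textbf{Step 3 (conclusion).} Put the open subspace structure on $M$. By Steps 1 and 2, a morphism $T\to D$ factors through $M$ exactly when the corresponding $\Gamma$ is a graph. Hence $\operatorname{Hom}_S(T,M)\cong\operatorname{Hom}_T(X_T,Y_T)$, naturally in $T$, with the universal morphism $\mathrm{ev}=\mathrm{pr}_2\circ p^{-1}:X_M\to Y_M$. $M$ is Hausdorff and finite dimensional at every point because it is open in $D$.
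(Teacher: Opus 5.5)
Your proposal is correct and follows essentially the paper's route. The paper obtains $\operatorname{Mor}_S(X,Y)$ from the graph construction inside Pourcin's relative Douady space of $X\times_S Y\to S$, citing \cite[Th\'eor\`eme 1]{pourcin69}, and simply asserts that the locus where the universal subspace projects isomorphically onto $X$ is open; your Step 2 actually proves this openness (finiteness of $p$ near $d$ via semicontinuity of fiber dimension and properness, then flatness, Nakayama and properness to make $\mathcal O_{X_W}\to p_*\mathcal O_{\mathcal U_W}$ an isomorphism near $d$), and that argument is sound.
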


 Theorem \ref{thm--existence of versal deformation for compact analytic spaces}, Theorem \ref{thm--existence of absolute Douady space} and Theorem \ref{thm--existence of relative morphism space} will be all we need from the literature.

\subsection{Proof of main theorem}

Now we are ready to provide the details of the proof. The overall proof is wordy and basically fills in details for the outline in Section \ref{subsec--idea of proof for local-to-global passage theorem}.

\begin{proof}[Proof of Theorem~\ref{thm--local deformation to global deformation}]
All analytic spaces below are allowed to be nonreduced. As before, for any sheaf $\mathcal F$ on $Z\times S$, we write $\mathcal F(s)$ for the restriction to $Z\times\{s\}$ when
it is useful to distinguish it from a stalk.

\medskip
\noindent\textit{Step 1. Construction of the Banach analytic parameter space.}

Choose a finite cover of $X$ by connected coordinate polydiscs
$U_a$ and denote by the refinement map $a\mapsto\rho(a)$ such that
$\overline{U_a}\subset V_{\rho(a)}$. We then restrict the given
presentations and overlap matrices to this cover, and relabel its
indices by $\alpha,\beta,\gamma$. 

Now we fix this refinement. It is clear that the restricted
matrices are bounded on their respective domains, and by Corollary \ref{cor--convergence of liftings B and H}, all four
collections $R_j,A_j,B_j,H_j$ converge in the corresponding $L^{\infty}$-norms.

For an open set $W\subset X$, put
\[
 \mathcal H^{p,q}(W)
 :=H^\infty\bigl(W,\operatorname{Hom}(\mathbb C^q,\mathbb C^p)\bigr),
 \qquad \|F\|_W:=\sup_{x\in W}\|F(x)\|.
\]
By general theory, this defines a Banach space. The desired Banach space in
Section~\ref{subsec--idea of proof for local-to-global passage theorem}
is the following finite product:
\begin{equation}\label{equa--B3 Banach product}
\begin{aligned}
 \mathcal E={}&
 \prod_\alpha\mathcal H^{N_\alpha,k_\alpha}(U_\alpha)
 \times\prod_{\alpha,\beta}
       \mathcal H^{N_\alpha,N_\beta}(U_{\alpha\beta})\\
 &\times\prod_{\alpha,\beta}
       \mathcal H^{k_\alpha,k_\beta}(U_{\alpha\beta})
 \times\prod_{\alpha,\beta,\gamma}
       \mathcal H^{k_\alpha,N_\gamma}(U_{\alpha\beta\gamma}).
\end{aligned}
\end{equation}
An element of $\mathcal E$ will be denoted by $(R,A,B,H)$. 

Let $\mathcal T$ be the
finite product of the spaces
\[
 \mathcal H^{N_\alpha,k_\beta}(U_{\alpha\beta}),\quad
 \mathcal H^{N_\alpha,N_\gamma}(U_{\alpha\beta\gamma}),\quad
 \mathcal H^{N_\alpha,N_\alpha}(U_\alpha),
\]
and define $\Phi:\mathcal E\to\mathcal T$ by
\begin{equation}\label{equa--B3 Banach equations}
 \Phi(R,A,B,H)=
 \left(
 \begin{array}{c}
 (A_{\alpha\beta}R_\beta-R_\alpha B_{\alpha\beta})_{\alpha,\beta}\\
 (A_{\alpha\beta}A_{\beta\gamma}-A_{\alpha\gamma}
                 -R_\alpha H_{\alpha\beta\gamma})_{\alpha,\beta,\gamma}\\
 (A_{\alpha\alpha}-\mathrm{Id})_\alpha
 \end{array}
 \right).
\end{equation}
It is clear that $\Phi$ is a
holomorphic polynomial map of Banach spaces, so its zero locus defines a Banach analytic space.

Let $b_0=(R_0,A_0,B_0,H_0)$, and $P\subset\mathcal E$ be a
sufficiently small open neighborhood of $b_0$. We are concerned with the Banach analytic space germ
\[
 \mathfrak B:=\Phi^{-1}(0)\cap P.
\]
By construction, the given data define points
\begin{equation}\label{equa--B3 Banach convergence}
 b_j=(R_j,A_j,B_j,H_j)\in\mathfrak B,
 \qquad b_j\longrightarrow b_0,
\end{equation}
for all sufficiently large $j$.

\medskip
\noindent\textit{Step 2. The tautological anaflat family of sheaves.}

This is standard. We define
\[
 \mathcal G_\alpha
 :=\operatorname{coker}\left(
 R_\alpha:\mathcal O_{U_\alpha\times\mathfrak B}^{k_\alpha}
       \longrightarrow
       \mathcal O_{U_\alpha\times\mathfrak B}^{N_\alpha}
 \right).
\]
The first equation in \eqref{equa--B3 Banach equations} makes
$A_{\alpha\beta}$ descend to a morphism
$\overline A_{\alpha\beta}:\mathcal G_\beta\to\mathcal G_\alpha$
on the overlap, while the other equations give
\[
 \overline A_{\alpha\beta}\overline A_{\beta\gamma}
     =\overline A_{\alpha\gamma},
 \qquad \overline A_{\alpha\alpha}=\mathrm{Id}.
\]
Moreover, by taking $\gamma=\alpha$ we see $\overline A_{\alpha\beta}$ and $\overline A_{\beta\alpha}$
are inverse. It follows that these maps glue the local cokernels to a
sheaf $\mathcal G_{\mathfrak B}$ on $X\times\mathfrak B$. From the construction, it's clear that
\begin{equation}\label{equa--B3 tautological fibres}
 \mathcal G_{\mathfrak B}(b_0)\cong\mathcal G_0,
 \qquad \mathcal G_{\mathfrak B}(b_j)\cong\mathcal G_j.
\end{equation}

We then verify the anaflatness. For each $\alpha$ with $k_\alpha>0$,
the injective sheaf map $R_{\alpha,0}$ has a $k_\alpha\times
k_\alpha$ minor which is nonzero at some point $x_\alpha\in
U_\alpha$. After shrinking $P$, that minor remains nonzero at
$x_\alpha$ for every parameter $b\in\mathfrak B$. Since
$U_\alpha$ is connected, its determinant is not identically zero
on any nonempty open subset of $U_\alpha$. Thus
\[
 R_\alpha(b):\mathcal O_{U_\alpha}^{k_\alpha}
       \longrightarrow\mathcal O_{U_\alpha}^{N_\alpha}
\]
is injective as a sheaf map for every $b\in\mathfrak B$. For
$k_\alpha=0$ there is nothing to prove.

We now apply Proposition \ref{prop--property of anflat sheaf} to the complex
\[
 0\longrightarrow\mathcal O^{k_\alpha}
        \xrightarrow{R_\alpha}\mathcal O^{N_\alpha}.
\]
It follows from fiberwise injectivity that $R_\alpha$ is an injective morphism between $\mathcal O_{U_\alpha\times\mathfrak B}$-sheaves. We thus have a finite resolution
sequence
\begin{equation}\label{equa--B3 universal resolution}
 0\longrightarrow\mathcal O_{U_\alpha\times\mathfrak B}^{k_\alpha}
 \xrightarrow{R_\alpha}
 \mathcal O_{U_\alpha\times\mathfrak B}^{N_\alpha}
 \longrightarrow\mathcal G_\alpha\longrightarrow0
\end{equation}
whose restriction to every fiber is exact. Hence $\mathcal G_{\mathfrak B}$ is $\mathfrak B$-anaflat.

\medskip
\noindent\textit{Step 3. The square-zero extension as a proper anaflat family.}

Define the sheaf of algebras
\[
 \mathcal A_{\mathfrak B}
 :=\mathcal O_{X\times\mathfrak B}\oplus\mathcal G_{\mathfrak B},
 \qquad (f,u)(g,v)=(fg,fv+gu).
\]
We spell out why it defines an analytic space with underlying topological space $X\times\vert\mathfrak B\vert$.

In
$U_\alpha\times\mathbb C^{N_\alpha}\times\mathfrak B$, we impose the equations
\begin{equation}\label{equa--B3 square-zero equations}
 \begin{aligned}
 w_lw_m&=0 &&(1\le l,m\le N_\alpha),\\
 \sum_{l=1}^{N_\alpha}(R_\alpha)_{lk}(x,b)w_l&=0
     &&(1\le k\le k_\alpha).
 \end{aligned}
\end{equation}
Modulo the first quadratic equations, a holomorphic function has only its
constant and linear terms in $w$: the constant terms idenfity the $\mathcal O_{U_\alpha\times\mathfrak B}$ part, while the second linear equation identify the
linear part with $\mathcal G_\alpha$. Thus the resulting local
algebra is $\mathcal O_{U_\alpha\times\mathfrak B}\oplus
\mathcal G_\alpha$. The isomorphisms $\overline A_{\alpha\beta}$
give algebra isomorphisms on overlaps, satisfying the cocycle
condition. It follows that we can glue these local spaces to obtain:
\[
 \mathfrak Y:=(X\times\mathfrak B)[\mathcal G_{\mathfrak B}].
\]
The inclusion of the first summand and the augmentation give maps
\begin{equation}\label{equa--B3 Banach retraction}
 X\times\mathfrak B\xrightarrow{\iota_{\mathfrak B}}
 \mathfrak Y\xrightarrow{p_{\mathfrak B}}X\times\mathfrak B,
 \qquad p_{\mathfrak B}\circ\iota_{\mathfrak B}=\mathrm{id}.
\end{equation}
Since $X$ is compact, $\mathfrak Y\to\mathfrak B$ is a proper morphism.

We then check the anaflatness. Let
$j_\alpha:U_\alpha\times\mathfrak B\hookrightarrow
U_\alpha\times\mathbb C^{N_\alpha}\times\mathfrak B$
be the zero section. Regarding structure sheaves as modules on this
ambient space, there is an exact sequence
\[
 0\longrightarrow (j_\alpha)_*\mathcal G_\alpha
 \longrightarrow\mathcal O_{\mathfrak Y|_{U_\alpha}}
 \longrightarrow(j_\alpha)_*\mathcal O_{U_\alpha\times\mathfrak B}
 \longrightarrow0.
\]
We claim that both $(j_\alpha)_*\mathcal G_\alpha$ and $(j_\alpha)_*\mathcal O_{U_\alpha\times\mathfrak B}$ are anaflat over $\mathfrak B$. Given this, It follows from Proposition \ref{prop--property of anflat sheaf} that $\mathfrak Y\to\mathfrak B$ is a proper anaflat family
of compact analytic spaces, whose fiber at $b_j$ is given by $\mathfrak Y_{b_j}=X[\mathcal G_j]$. 

The anaflatness of $(j_\alpha)_*\mathcal O_{U_\alpha\times\mathfrak B}$ follows from Lemma \ref{lem--B2 well-definedness of anaflatness}, since $j_\alpha$ is a closed embedding. For $(j_\alpha)_*\mathcal G_\alpha$, we use the exact sequence
\[
0\longrightarrow(({j_\alpha})_*\mathcal O_{U_\alpha\times\mathfrak B})^{k_\alpha}
\xrightarrow{(j_\alpha)_*R_\alpha}
((j_\alpha)_*\mathcal O_{U_\alpha\times\mathfrak B})^{N_\alpha}
\longrightarrow (j_\alpha)_*\mathcal G_\alpha
\longrightarrow0.
\]
The first and second term are anaflat by Lemma \ref{lem--B2 well-definedness of anaflatness} again. Since the fiberwise map $R_\alpha$ is injective, by Proposition \ref{prop--property of anflat sheaf} we know the coker $(j_\alpha)_*\mathcal G_\alpha$ is also anaflat.

\medskip
\noindent\textit{Step 4. Reduction to finite-dimensional family.}

We now apply Theorem~\ref{thm--existence of versal deformation for compact analytic spaces}
to the compact analytic space $X[\mathcal G_0]$ and the anaflat family
$\mathfrak Y\to\mathfrak B$. It follows that we can find a finite-dimensional germ
$(R,0)$, a proper flat family $\pi:\mathfrak X\to R$, a
holomorphic map of germs
\[
 h:(\mathfrak B,b_0)\longrightarrow(R,0),
\]
and an isomorphism $\theta:h^*\mathfrak X\xrightarrow{\sim}
\mathfrak Y$ over $\mathfrak B$. Transporting
\eqref{equa--B3 Banach retraction} by $\theta$, one get holomorphic maps
\[
\widetilde p:=p_{\mathfrak B}\circ\theta:
h^*\mathfrak X\longrightarrow X\times\mathfrak B,\quad \widetilde\iota:=\theta^{-1}\circ\iota_{\mathfrak B}:
X\times\mathfrak B\longrightarrow h^*\mathfrak X,\quad \tilde p\circ\tilde\iota=\mathrm{id}.
\]
In particular, by restricting to fibers, we get holomorphic maps
\begin{equation}\label{equa--B3 transported maps}
 p_{b_j}:\mathfrak X_{h(b_j)}\longrightarrow X,
 \qquad \iota_{b_j}:X\longrightarrow\mathfrak X_{h(b_j)},
 \qquad p_{b_j}\circ\iota_{b_j}=\mathrm{id}_X.
\end{equation}

Recall that the sheaf can be recovered from the projection and zero section map. For our purpose, we need a family version of this inverse construction, so our next goal is to incorporate $(p_{b_j},\iota_{b_j})$ together
into a finite-dimensional parameter space. 

Both $\mathfrak X\to R$ and $X\times R\to R$ are proper and
flat, so by Theorem~\ref{thm--existence of relative morphism space}, the morphism spaces
\[
 \begin{aligned}
 M_p&:=\operatorname{Mor}_R(\mathfrak X,X\times R),\\
 M_\iota&:=\operatorname{Mor}_R(X\times R,\mathfrak X)
 \end{aligned}
\]
are finite dimensional analytic spaces. We will denote by $(r,p)$ a point in $M_p$. In fact, applying the universal property to the one-point space $T=\mathrm{pt}_r$ mapped to $R$ with image $r$, we then get
\[
\operatorname{Hom}_R(\mathrm{pt}_r,M_p)
\;\cong\;
\operatorname{Hom}(\mathfrak X_r,X).
\]
The left side is precisely the set of points of $M_p$ over $r$, while the right side is the set of holomorphic maps
\[
p:\mathfrak X_r\to X.
\]
Given this notation, the universal morphism
\[
\mathcal P:\mathfrak X\times_R M_p\to X\times M_p
\]
over $(r,p)\in M_p$ is precisely $p:\mathfrak X_r\to X$. Similarly, we denote by $(r,\iota)$ a point in $M_\iota$.

Put 
\[
M:=M_p\times_R M_\iota.
\]
We pull back both universal morphisms to $M$, so obtain
\[
X\times M\xrightarrow{\iota_M}\mathfrak X_M\xrightarrow{p_M}X\times M.
\]
Therefore, $p_M\circ\iota_M$ gives an $M$-morphism of
$X\times M$ to itself. The representing property of
$\operatorname{Mor}(X,X)$ thus gives a holomorphic map
\[
 c:M\longrightarrow\operatorname{Mor}(X,X),
 \qquad (r,p,\iota)\longmapsto p\circ\iota.
\]
Consider the fiber of $c$ over $\mathrm{id}_X$ as an analytic subspace, and let $(S,0)$ be the germ at $(0,p_{b_0},\iota_{b_0})$ of this fiber. Set $\mathfrak X_S:=\mathfrak X\times_R S$, which is
proper and flat over $S$. It then carries universal maps
\begin{equation}\label{equa--B3 finite retraction}
 X\times S\xrightarrow{\iota}\mathfrak X_S
 \xrightarrow{p}X\times S,
 \qquad p\circ\iota=\mathrm{id}_{X\times S},
\end{equation}
whose restriction to $h(b_j)$ is $(p_{b_j},\iota_{b_j})$.

Finally, we need to show that 
$s_j\coloneqq (h(b_j),p_{b_j},\iota_{b_j})$ converges to $0$ in $S$. This seems to be a naive thing to show, as long as one can get a continuous classifying map $f:\mathfrak B\to S$ mapping $b$ to $(h(b),p_b,\iota_b)$. As we did not find such a statement for Banach analytic bases (cf. the statement of Theorem \ref{thm--existence of relative morphism space} from \cite{pourcin69}, where we restrict to the category of finite dimensional analytic spaces), we spell out some details by using Theorem \ref{thm--existence of absolute Douady space} which allows Banach analytic bases. In the end, this should be the same as establishing the Banach version of universal property for (relative) morphism spaces.

Consider the fixed finite-dimensional complex space
\[
 Q:=\mathfrak X\times X,
\]
with its map to $R$ through $\pi:\mathfrak X\to R$. There are two closed analytic
subspaces of $Q\times\mathfrak B$: the graph of the transported
projection $\tilde p$ and the graph of the transported zero section $\tilde\iota$. The fibers over $b$ are
\[
 \begin{aligned}
 \Gamma_{p,b}&=\{(z,p_b(z)):z\in\mathfrak X_{h(b)}\},\\
 \Gamma_{\iota,b}&=\{(\iota_b(x),x):x\in X\}.
 \end{aligned}
\]
Globally the first graph is isomorphic over
$\mathfrak B$ to $h^*\mathfrak X\cong\mathfrak Y$, and the
second to $X\times\mathfrak B$. Thus both graph families are
proper and anaflat over $\mathfrak B$.

Theorem~\ref{thm--existence of absolute Douady space} now gives
holomorphic maps
\[
 \gamma_p,\gamma_\iota:\mathfrak B\longrightarrow D(Q)
\]
classifying these graph families. It follows from
\eqref{equa--B3 Banach convergence} that
\begin{equation}\label{equa--B3 graph convergence}
 \bigl(h(b_j),\gamma_p(b_j),\gamma_\iota(b_j)\bigr)
 \longrightarrow
 \bigl(0,\gamma_p(b_0),\gamma_\iota(b_0)\bigr)
 \quad\hbox{in }R\times D(Q)\times D(Q).
\end{equation}

We recall how this convergence detects the topology of $M$.
The relative Douady space $D(Q/R)$ is locally a closed analytic
subspace of $R\times D(Q)$. It imposes the condition that a
compact subspace $\Gamma\subset Q$ lie in the fiber $Q_r$ as an
\emph{analytic subspace}. This is the construction in
\cite[\S4]{pourcin69}.

Inside $D(Q/R)$, the locus where the projection
$\Gamma\to\mathfrak X_r$ is an isomorphism is open and is
identified with $M_p$. Similarly, the locus where
$\Gamma\to X$ is an isomorphism is open and is identified with
$M_\iota$. Thus the graph
construction give locally closed embeddings
\[
 M_p\hookrightarrow R\times D(Q),
 \qquad M_\iota\hookrightarrow R\times D(Q).
\]
Consequently $M$ has the local topology induced
from $R\times D(Q)\times D(Q)$. The same is true for its analytic
subspace $S$. Each triple in the left side of
\eqref{equa--B3 graph convergence} corresponds to
$(h(b_j),p_{b_j},\iota_{b_j})$, so we have the convergence
\begin{equation}\label{equa--B3 finite parameter convergence}
 s_j:=(h(b_j),p_{b_j},\iota_{b_j})\in S,
 \qquad s_j\longrightarrow0.
\end{equation}

\medskip
\noindent\textit{Step 5. Recover the sheaf family.}

We first shrink $S$ so that the projection $p$ in
\eqref{equa--B3 finite retraction} is finite. To see this, note that the map $p$ is proper: its graph is a closed embedding
into $\mathfrak X_S\times_S(X\times S)$, and projection from
this product to $X\times S$ is a base change of the proper map
$\mathfrak X_S\to S$. 

The central map is isomorphic to
\[
 X[\mathcal G_0]\longrightarrow X,
\]
which is finite since $\mathcal G_0$ is coherent. By the upper semicontinuity of fiber dimensions \cite[Chapter II, \S8.1, Proposition 8.2]{Demailly},
the locus $Z\subset\mathfrak X_S$ where $p$ has a positive
dimensional fiber germ is closed and misses the central fiber.
Its image in $S$ is closed because $\mathfrak X_S\to S$ is
proper. Shrinking $S$ to avoid this image, we can arrange that all fibers of $p$ are zero-dimensional. Thus $p$ is a proper holomorphic map with zero-dimensional
fibers, hence being finite.

By Grauert's coherence theorem, the direct image
$\mathcal A:=p_*\mathcal O_{\mathfrak X_S}$ is a coherent
$\mathcal O_{X\times S}$-algebra. Define
\begin{equation}\label{equa--B3 final sheaf}
 \mathcal G:=\ker\left(
 p_*\iota^*:\mathcal A\longrightarrow\mathcal O_{X\times S}
 \right).
\end{equation}
The identity $p\circ\iota=\mathrm{id}$ means that the unit map
$p^*:\mathcal O_{X\times S}\to\mathcal A$ splits the
augmentation $p_*\iota^*$. Therefore, as modules we have
\begin{equation}\label{equa--B3 final splitting}
 \mathcal A\cong\mathcal O_{X\times S}\oplus\mathcal G.
\end{equation}
In particular $\mathcal G$ is coherent. Moreover, by finiteness of $p$, we know that $\mathcal A$ is flat over $S$. As a direct summand, it follows that $\mathcal G$ is also flat over $S$.

Finally, since finite direct image commutes with base change, together with the splitting
\eqref{equa--B3 final splitting}, we can identify the restriction
of \eqref{equa--B3 final sheaf} to any fiber with
\[
 \mathcal G|_{X\times\{s\}}
 \cong\ker\left(
 (p_s)_*\mathcal O_{(\mathfrak X_S)_s}
       \xrightarrow{(p_s)_*\iota_s^*}\mathcal O_X
 \right).
\]
It follows that
\[
 \mathcal G|_{X\times\{0\}}\cong\mathcal G_0,
 \qquad
 \mathcal G|_{X\times\{s_j\}}\cong\mathcal G_j
 \quad(j\gg1).
\]
\end{proof}

\bibliographystyle{alpha}
\bibliography{ref.bib}

\begin{thebibliography}{GSTW21}

\bibitem[AFS17]{AFS17}
Jarod Alper, Maksym Fedorchuk, and David~Ishii Smyth.
\newblock {Second flip in the Hassett--Keel program: existence of good moduli spaces}.
\newblock {\em Compositio Mathematica}, 153(8):1584--1609, 2017.

\bibitem[AH61]{AH61}
M.F. Atiyah and F.~Hirzebruch.
\newblock {Vector bundles and homogeneous spaces}.
\newblock In {\em Differential geormetry. Proceedings of the Symposium in Pure Mathematics}, volume~3, pages 7--38, 1961.

\bibitem[Ati67]{Atiyah67}
M.~F. Atiyah.
\newblock {\em {K}-theory}.
\newblock W. A. Benjamin, Inc., New York--Amsterdam, 1967.
\newblock Lecture notes by D. W. Anderson.

\bibitem[Ban90a]{bando1990a}
Shigetoshi Bando.
\newblock {Bubbling out of Einstein manifolds}.
\newblock {\em Tohoku Mathematical Journal, Second Series}, 42(2):205--216, 1990.

\bibitem[Ban90b]{bando1990b}
Shigetoshi Bando.
\newblock {Correction and addition: bubbling out of Einstein manifolds}.
\newblock {\em Tohoku Mathematical Journal, Second Series}, 42(4):587--588, 1990.

\bibitem[Bin80]{bingener80}
J{\"u}rgen Bingener.
\newblock {Darstellbarkeitskriterien f{\"u}r analytische Funktoren}.
\newblock In {\em Annales scientifiques de l'{\'E}cole Normale Sup{\'e}rieure}, volume~13, pages 317--347, 1980.

\bibitem[BS21]{buchdahl-schumacher21}
Nicholas Buchdahl and Georg Schumacher.
\newblock {An analytic application of geometric invariant theory}.
\newblock {\em Journal of Geometry and Physics}, 165:104237, 2021.

\bibitem[BS22]{buchdahl-schumacher22}
Nicholas Buchdahl and Georg Schumacher.
\newblock {An analytic application of Geometric Invariant Theory II: Coarse moduli spaces}.
\newblock {\em Journal of Geometry and Physics}, 175:104467, 2022.

\bibitem[BTT17]{BTT2017}
Nicholas Buchdahl, Andrei Teleman, and Matei Toma.
\newblock A continuity theorem for families of sheaves on complex surfaces.
\newblock {\em Journal of topology}, 10(4):995--1028, 2017.

\bibitem[BTT18]{BTT2018}
Nicholas Buchdahl, Andrei Teleman, and Matei Toma.
\newblock {On the Donaldson--Uhlenbeck compactification of instanton moduli spaces on class VII surfaces}.
\newblock {\em The Quarterly Journal of Mathematics}, 69(4):1423--1473, 2018.

\bibitem[CDS15]{CDS15}
Xiuxiong Chen, Simon Donaldson, and Song Sun.
\newblock {K{\"a}hler-Einstein metrics on Fano manifolds. I, II, III.}
\newblock {\em Journal of the American Mathematical Society}, 28(1):183--278, 2015.

\bibitem[CS21]{chen-sun2021}
Xuemiao Chen and Song Sun.
\newblock {Reflexive sheaves, Hermitian--Yang--Mills connections, and tangent cones}.
\newblock {\em Inventiones mathematicae}, 225(1):73--129, 2021.

\bibitem[Dem]{Demailly}
Jean-Pierre Demailly.
\newblock {\em {Complex analytic and differential geometry}}.

\bibitem[DK97]{DK97}
Simon~Kirwan Donaldson and Peter~B Kronheimer.
\newblock {\em The geometry of four-manifolds}.
\newblock Oxford university press, 1997.

\bibitem[Don83]{donaldson83}
Simon~K Donaldson.
\newblock An application of gauge theory to four-dimensional topology.
\newblock {\em Journal of Differential Geometry}, 18(2):279--315, 1983.

\bibitem[Don84]{donaldson84}
Simon~K Donaldson.
\newblock Instantons and geometric invariant theory.
\newblock {\em Communications in Mathematical Physics}, 93(4):453--460, 1984.

\bibitem[Don85]{donaldson85}
Simon~K Donaldson.
\newblock {Anti self-dual Yang-Mills connections over complex algebraic surfaces and stable vector bundles}.
\newblock {\em Proceedings of the London Mathematical Society}, 3(1):1--26, 1985.

\bibitem[Don86]{donaldson86}
Simon~K Donaldson.
\newblock Connections, cohomology and the intersection forms of 4-manifolds.
\newblock {\em Journal of Differential Geometry}, 24(3):275--341, 1986.

\bibitem[Don87]{donaldson87}
Simon~K Donaldson.
\newblock Infinite determinants, stable bundles and curvature.
\newblock {\em Duke Mathematical Journal}, 54(1):231--247, 1987.

\bibitem[Don88]{donaldson}
Simon~K Donaldson.
\newblock {Compactification and completion of Yang-Mills moduli spaces}.
\newblock In {\em Differential Geometry: Proceedings of the 3rd International Symposium, held at Pe{\~n}iscola, Spain, June 5--12, 1988}, pages 145--160. Springer, 1988.

\bibitem[Dou66]{douady66}
Adrien Douady.
\newblock Le probl{\`e}me des modules pour les sous-espaces analytiques compacts d'un espace analytique donn{\'e}.
\newblock In {\em Annales de l'institut Fourier}, volume~16, pages 1--95, 1966.

\bibitem[Dou74]{douady74}
Adrien Douady.
\newblock Le probl{\`e}me des modules locaux pour les espaces {$\mathbb C$}-analytiques compacts.
\newblock In {\em Annales scientifiques de l'{\'E}cole Normale Sup{\'e}rieure}, volume~7, pages 569--602, 1974.

\bibitem[DS14]{DS1}
Simon Donaldson and Song Sun.
\newblock {Gromov--Hausdorff limits of K{\"a}hler manifolds and algebraic geometry}.
\newblock {\em Acta Mathematica}, 213(1):63--106, 2014.

\bibitem[DS17]{DS2}
Simon Donaldson and Song Sun.
\newblock {Gromov--Hausdorff limits of K{\"a}hler manifolds and algebraic geometry, II}.
\newblock {\em Journal of Differential Geometry}, 107(2):327--371, 2017.

\bibitem[DW26]{DW26}
Anuk Dayaprema and Alex Waldron.
\newblock {Parabolic Gap Theorems for the Yang-Mills Energy}.
\newblock {\em Mathematical Physics, Analysis and Geometry}, 29(1):12, 2026.

\bibitem[EW17]{EW17}
Michel Egeileh and Tilman Wurzbacher.
\newblock Infinite-dimensional manifolds as ringed spaces.
\newblock {\em Publications of the Research Institute for Mathematical Sciences}, 53(1):187--209, 2017.

\bibitem[Fle81a]{flenner81b}
Hubert Flenner.
\newblock {Ein kriterium f{\"u}r die offenheit der versalit{\"a}t}.
\newblock {\em Mathematische Zeitschrift}, 178(4):449--473, 1981.

\bibitem[Fle81b]{flenner81a}
Hubert Flenner.
\newblock {Eine Bemerkung {\"u}ber relative Ext-Garben}.
\newblock {\em Mathematische Annalen}, 258(2):175--182, 1981.

\bibitem[FS87]{fujiki-schumacher87}
Akira Fujiki and Georg Schumacher.
\newblock {The moduli space of Hermite-Einstein bundles on a compact K{\"a}hler manifold}.
\newblock {\em Proceedings of the Japan Academy, Series A, Mathematical Sciences}, 63(3):69--72, 1987.

\bibitem[FU12]{freed-uhlenbeck}
Daniel~S Freed and Karen~K Uhlenbeck.
\newblock {\em Instantons and four-manifolds}, volume~1.
\newblock Springer Science \& Business Media, 2012.

\bibitem[GP97]{groisser-parker97}
David Groisser and Thomas~H Parker.
\newblock {Sharp decay estimates for Yang--Mills fields}.
\newblock {\em Communications in Analysis and Geometry}, 5(3):439--474, 1997.

\bibitem[GR79]{grauert-remmert79}
Hans Grauert and Reinhold Remmert.
\newblock {\em {Theory of Stein spaces}}.
\newblock Springer Science \& Business Media, 1979.

\bibitem[Gra58]{grauert58}
Hans Grauert.
\newblock {On Levi's problem and the imbedding of real-analytic manifolds}.
\newblock {\em Annals of Mathematics}, 68(2):460--472, 1958.

\bibitem[Gro60]{grothendieck60}
Alexander Grothendieck.
\newblock {Techniques de construction et th{\'e}oremes d’existence en g{\'e}om{\'e}trie alg{\'e}brique. IV. Les sch{\'e}mas de Hilbert}.
\newblock {\em S{\'e}minaire Bourbaki}, 6(221):249--276, 1960.

\bibitem[Gro97]{groisser1997}
David Groisser.
\newblock {The $L^2$ metric in gauge theory: an introduction and some applications}.
\newblock {\em Banach Center Publications}, 39(1):317--329, 1997.

\bibitem[GSTW21]{GSTW2021}
Daniel Greb, Benjamin Sibley, Matei Toma, and Richard Wentworth.
\newblock {Complex algebraic compactifications of the moduli space of Hermitian Yang--Mills connections on a projective manifold}.
\newblock {\em Geometry \& Topology}, 25(4):1719--1818, 2021.

\bibitem[GT20]{greb-toma2020}
Daniel Greb and Matei Toma.
\newblock {Moduli spaces of sheaves that are semistable with respect to a K{\"a}hler polarisation}.
\newblock {\em Journal de l’{\'E}cole polytechnique—Math{\'e}matiques}, 7:233--261, 2020.

\bibitem[Li93]{li1993}
Jun Li.
\newblock {Algebraic geometric interpretation of Donaldson's polynomial invariants}.
\newblock {\em Journal of differential geometry}, 37(2):417--466, 1993.

\bibitem[Li20]{li20}
Yang Li.
\newblock {Local Nahm Transform and Singularity Formation of ASD Connections}.
\newblock {\em Communications in Mathematical Physics}, 375(2):1041--1078, 2020.

\bibitem[LWX19]{li-wang-xu19}
Chi Li, Xiaowei Wang, and Chenyang Xu.
\newblock {On the proper moduli spaces of smoothable K{\"a}hler--Einstein Fano varieties}.
\newblock {\em Duke Math. J.}, 168(8):1387--1459, 2019.

\bibitem[Mat00]{mathieu00}
David Mathieu.
\newblock {Universal reparametrization of a family of cycles: a new approach to meromorphic equivalence relations}.
\newblock In {\em Annales de l'institut Fourier}, volume~50, pages 1155--1189, 2000.

\bibitem[Miy89]{miyajima89}
Kimio Miyajima.
\newblock {Kuranishi family of vector bundles and algebraic description of the moduli space of Einstein-Hermitian connections}.
\newblock {\em Publications of the Research Institute for Mathematical Sciences}, 25(2):301--320, 1989.

\bibitem[Nag62]{Nagata62}
Masayoshi Nagata.
\newblock {\em {Local rings}}.
\newblock Interscience Publishers, 1962.

\bibitem[Nak99]{nakajima99}
Hiraku Nakajima.
\newblock {\em {Lectures on Hilbert schemes of points on surfaces}}, volume~18.
\newblock American Mathematical Soc., 1999.

\bibitem[NV19]{NV19}
Aaron Naber and Daniele Valtorta.
\newblock {Energy identity for stationary Yang Mills}.
\newblock {\em Inventiones mathematicae}, 216(3):847--925, 2019.

\bibitem[Oda15]{odaka15}
Yuji Odaka.
\newblock {Compact moduli spaces of K{\"a}hler--Einstein Fano varieties}.
\newblock {\em Publications of the Research Institute for Mathematical Sciences}, 51(3):549--565, 2015.

\bibitem[OSS16]{OSS16}
Yuji Odaka, Cristiano Spotti, and Song Sun.
\newblock {Compact moduli spaces of del Pezzo surfaces and K{\"a}hler--Einstein metrics}.
\newblock {\em Journal of Differential Geometry}, 102(1):127--172, 2016.

\bibitem[Pou69]{pourcin69}
Genevieve Pourcin.
\newblock Th{\'e}or{\`e}me de {D}ouady au-dessus de $s$.
\newblock {\em Annali della Scuola Normale Superiore di Pisa-Scienze Fisiche e Matematiche}, 23(3):451--459, 1969.

\bibitem[PT24]{pavel-toma24}
Mihai Pavel and Matei Toma.
\newblock Slope-semistability and moduli of coherent sheaves: a survey.
\newblock {\em arXiv preprint arXiv:2407.13485}, 2024.

\bibitem[Rad93]{rade93}
Johan Rade.
\newblock {Decay estimates for Yang-Mills fields: two new proofs}.
\newblock {\em Global analysis in modern mathematics (Orono, ME, 1991; Waltham, MA, 1992)}, pages 91--105, 1993.

\bibitem[Sch82]{schuster82}
Hans~Werner Schuster.
\newblock {Locally free resolutions of coherent sheaves on surfaces.}
\newblock {\em J. Reine Angew. Math.}, (337):159--165, 1982.

\bibitem[SSY18]{SSY18}
Cristiano Spotti, Song Sun, and Chengjian Yao.
\newblock {Existence and deformations of K{\"a}hler--Einstein metrics on smoothable $\mathbb Q$-Fano varieties}.
\newblock {\em Duke Math. J.}, 165(16):3043--3083, 2018.

\bibitem[ST81]{siu-trautmann81}
Yum-Tong Siu and G{\"u}nther Trautmann.
\newblock {\em Deformations of coherent analytic sheaves with compact supports}, volume 238.
\newblock American Mathematical Soc., 1981.

\bibitem[Sun25]{sun25}
Song Sun.
\newblock {Bubbling of Kähler-Einstein metrics}.
\newblock {\em Pure and Applied Mathematics Quarterly}, 21:1317--1348, 2025.

\bibitem[SW15]{sibley-wentworth15}
Benjamin Sibley and Richard~A Wentworth.
\newblock {Analytic cycles, Bott--Chern forms, and singular sets for the Yang--Mills flow on K{\"a}hler manifolds}.
\newblock {\em Advances in mathematics}, 279:501--531, 2015.

\bibitem[SZ23]{sun-zhang2023}
Song Sun and Junsheng Zhang.
\newblock {No semistability at infinity for Calabi-Yau metrics asymptotic to cones}.
\newblock {\em Inventiones mathematicae}, 233(1):461--494, 2023.

\bibitem[Tel05]{teleman05}
Andrei Teleman.
\newblock {Donaldson theory on non-K{\"a}hlerian surfaces and class VII surfaces with $b_2=1$}.
\newblock {\em Inventiones mathematicae}, 162(3):493--521, 2005.

\bibitem[Tel08]{teleman08}
Andrei Teleman.
\newblock Families of holomorphic bundles.
\newblock {\em Communications in Contemporary Mathematics}, 10(04):523--551, 2008.

\bibitem[Tel10]{teleman10}
Andrei Teleman.
\newblock {Instantons and curves on class VII surfaces}.
\newblock {\em Annals of Mathematics}, pages 1749--1804, 2010.

\bibitem[Tom01]{toma01}
Matei Toma.
\newblock {Compact moduli spaces of stable sheaves over non-algebraic surfaces}.
\newblock {\em Documenta Mathematica}, 6:9--27, 2001.

\bibitem[Tom16]{toma16}
Matei Toma.
\newblock {Bounded sets of sheaves on K{\"a}hler manifolds}.
\newblock {\em Journal f{\"u}r die reine und angewandte Mathematik (Crelles Journal)}, 2016(710):77--93, 2016.

\bibitem[Tom20]{toma20}
Matei Toma.
\newblock Properness criteria for families of coherent analytic sheaves.
\newblock {\em Algebraic Geometry}, 7(4):486--502, 2020.

\bibitem[Tom21]{toma2021}
Matei Toma.
\newblock Bounded sets of sheaves on relative analytic spaces.
\newblock {\em Annales Henri Lebesgue}, 4:1531--1563, 2021.

\bibitem[TT]{TM}
A.~Teleman and M.~Toma.
\newblock Semistable sheaves on compact complex surfaces.
\newblock {\em work in progress}.

\bibitem[UY86]{uhlenbeck-yau86}
Karen Uhlenbeck and Shing-Tung Yau.
\newblock {On the existence of Hermitian-Yang-Mills connections in stable vector bundles}.
\newblock {\em Communications on Pure and Applied Mathematics}, 39(S1):S257--S293, 1986.

\bibitem[Wal16]{waldron16}
Alex Waldron.
\newblock {Instantons and singularities in the Yang--Mills flow}.
\newblock {\em Calculus of Variations and Partial Differential Equations}, 55(5):113, 2016.

\bibitem[Zha25]{zhang2025}
Junsheng Zhang.
\newblock {On polynomial convergence to tangent cones for singular K{\"a}hler--Einstein metrics}.
\newblock {\em Journal f{\"u}r die reine und angewandte Mathematik (Crelles Journal)}, 2025(826):271--302, 2025.

\bibitem[ZZ26]{ZZ26}
Junsheng Zhang and Keshu Zhou.
\newblock {On the geometry of non-collapsed polarized cscK surfaces}.
\newblock {\em arXiv preprint arXiv:2606.02816}, 2026.

\end{thebibliography}

\end{document}